\newcommand{\Cbb}{\ensuremath{\mathbb{C}}}
\newcommand{\C}{\mathrm{C}}
\newcommand{\E}{\ensuremath{\mathbb{E}}}
\newcommand{\N}{\ensuremath{\mathbb{N}}}
\newcommand{\B}{\mathrm{B}} %Bounded operators
\let\H\relax %\H ne fait plus rien !
\newcommand{\H}{\mathrm{H}}
\newcommand{\I}{\mathrm{I}} %matrice identit\'e
\let\L\relax %\L ne fait plus rien !
\newcommand{\L}{\mathrm{L}}
\newcommand{\scr}{\mathscr}
\newcommand{\M}{\mathrm{M}}
\newcommand{\CC}{\mathrm{CC}}
\let\cal\relax
\newcommand{\cal}{\mathcal}
\newcommand{\R}{\ensuremath{\mathbb{R}}}
\newcommand{\T}{\ensuremath{\mathbb{T}}}
\newcommand{\W}{\mathrm{W}}
\let\S\relax %\ker ne fait plus rien !
\newcommand{\S}{\mathrm{S}} %espace des états
\newcommand{\dist}{\mathrm{dist}} %distance
\newcommand{\Id}{\mathrm{Id}}
\newcommand{\Reg}{\mathrm{Reg}}
\newcommand{\la}{\langle}
\newcommand{\ra}{\rangle}
\newcommand{\MK}{\mathrm{MK}} %Monge-Kantorovitch distance
\renewcommand{\leq}{\ensuremath{\leqslant}}
\renewcommand{\geq}{\ensuremath{\geqslant}}
\newcommand{\qed}{\hfill \vrule height6pt  width6pt depth0pt}
\newcommand{\bnorm}[1]{ \big\| #1  \big\|}
\newcommand{\norm}[1]{\left\Vert#1\right\Vert}
\newcommand{\mk}{\medskip}
\newcommand{\xra}{\xrightarrow}
\newcommand{\co}{\colon}
\newcommand{\otpb}{\hat{\ot}}
\newcommand{\ot}{\otimes}
\newcommand{\ovl}{\overline}
\DeclareMathOperator{\sgn}{\mathrm{sgn}} %signature
\newcommand{\dsp}{\displaystyle}
\let\i\relax %\i ne fait plus rien !
\newcommand{\i}{\mathrm{i}}
\newcommand{\ov}{\overset}
\newcommand{\sa}{\mathrm{sa}}
\renewcommand{\d}{\mathop{}\mathopen{}\mathrm{d}} %op\'erateur diff\'erentiel
\newcommand{\e}{\mathrm{e}} %constante e
\renewcommand{\d}{\mathop{}\mathopen{}\mathrm{d}}
\DeclareMathOperator{\Span}{span} %sev engendre
\DeclareMathOperator{\Lip}{\mathrm{Lip}} %Lipschitz
\DeclareMathOperator{\supp}{supp} %support
\DeclareMathOperator{\tr}{Tr} %trace
\let\ker\relax %\ker ne fait plus rien !
\DeclareMathOperator{\ker}{Ker} %range
\DeclareMathOperator{\Ran}{Ran} %range
\DeclareMathOperator{\dom}{dom} %domaine
\let\Re\relax %\Re ne fait plus rien
\DeclareMathOperator{\Re}{Re} %partie r\'eelle
\DeclareMathOperator*{\esssup}{esssup} 
\newtheorem{thm}{Theorem}[section]
\newtheorem{defi}[thm]{Definition}
\newtheorem{prop}[thm]{Proposition}
\newtheorem{conj}[thm]{Conjecture}
\newtheorem{cor}[thm]{Corollary}
\newtheorem{lemma}[thm]{Lemma}
\newtheorem{remark}[thm]{Remark}
\newtheorem{example}[thm]{Example}
\newenvironment{proof}[1][]{\noindent {\it Proof #1} : }{\hbox{~}\qed
\smallskip
}
\numberwithin{equation}{section}
\let\OLDthebibliography\thebibliography
\renewcommand\thebibliography[1]{
  \OLDthebibliography{#1}
  \setlength{\parskip}{0pt}
  \setlength{\itemsep}{0pt plus 0.3ex}
}
\newcommand\reallywidehat[1]{\arraycolsep=0pt\relax%
\begin{array}{c}
\stretchto{
  \scaleto{
    \scalerel*[\widthof{\ensuremath{#1}}]{\kern-.5pt\bigwedge\kern-.5pt}
    {\rule[-\textheight/2]{1ex}{\textheight}} %WIDTH-LIMITED BIG WEDGE
  }{\textheight} % 
}{0.5ex}\\           % THIS SQUEEZES THE WEDGE TO 0.5ex HEIGHT
#1\\                 % THIS STACKS THE WEDGE ATOP THE ARGUMENT
\rule{-1ex}{0ex}
\end{array}
}
\begin{document}
\selectlanguage{english}
\title{\bfseries{Sobolev algebras on Lie groups and noncommutative geometry}}
\date{}
\author{\bfseries{C\'edric Arhancet}}

\maketitle

%of all quantum channels induced by Fourier multipliers acting on an arbitrary finite quantum group $\QG$. 

%%%%%%%%%%%%%%%%%%%%%%%%%%%%%%%%%%%%%%%%%%%%%%%%%%%%%%%%%%%%%%%%
%%%%%%%%%%%%%%%%%%%%%%%%%%%%%%%%%%%%%%%%%%%%%%%%%%%%%%%%%%%%%%%%
\begin{abstract}
We show that there exists a quantum compact metric space which underlies the setting of each Sobolev algebra associated to a subelliptic Laplacian $\Delta=-(X_1^2+\cdots+X_m^2)$ on a compact connected Lie group $G$ if $p$ is large enough, more precisely under the (sharp) condition $p > \frac{d}{\alpha}$ where $d$ is the local dimension of $(G,X)$ and where $0 < \alpha \leq 1$. We also provide locally compact variants of this result and generalizations for real second order subelliptic operators. We also introduce a compact spectral triple (=noncommutative manifold) canonically associated to each subelliptic Laplacian on a compact group. In addition, we show that its spectral dimension is equal to the local dimension of $(G,X)$. Finally, we prove that the Connes spectral pseudo-metric allows us to recover the Carnot-Carath\'eodory distance. 
\end{abstract}

%%%%%%%%%%%%%%%%%%%%%%%%%%%%%%%%%%%%%%%%%%%%%%%%%%%%%%%%%%%%%%%%
%%%%%%%%%%%%%%%%%%%%%%%%%%%%%%%%%%%%%%%%%%%%%%%%%%%%%%%%%%%%%%%%

\makeatletter
 \renewcommand{\@makefntext}[1]{#1}
 \makeatother
 \footnotetext{
%This work is partially supported by ??.\\
%ANR Project OSQPI (ANR-11-BS01-0008)2010 
\noindent {\it 2020 Mathematics subject classification:}
46L87, 46L89, 46L30, 58B34, 22E99.
%Primary: 
%22E99: Topological groups, Lie groups None of the above, but in this section
% 	58B34   	Noncommutative geometry (à la Connes)
% 	46L30   	States of selfadjoint operator algebras
%46L89 Other "noncommutative'' mathematics based on $C^*$-algebra theory
%46L87   	Noncommutative differential geometry
%20G42 Quantum groups (quantized function algebras) and their representations
%46L51 Noncommutative measure and integration.
%46L52 Noncommutative function spaces
%46M35 Abstract interpolation of topological vector spaces [See also 46B70]
%46L07 Operator spaces and completely bounded maps [See also 47L25]
%43A22 Homomorphisms and multipliers of function spaces on groups, semigroups, etc.
%43A15 Lp-spaces and other function spaces on groups, semigroups, etc.
\\
{\it Key words and phrases}: Sobolev algebra, Carnot-Carath\'eodory distance, subelliptic Laplacian on Lie groups, quantum locally compact metric space, spectral triples, spectral dimension.}%noncommutative $L^p$-spaces, Fourier
%multipliers, Schur multipliers,

\tableofcontents

%%%%%%%%%%%%%%%%%%%%%%%%%%%%%%%%%%%%%%%%%%%%%%%%%%%%%%%%%%%%%%%%%%%%%%%%%%%%%%%%%%%%%%%%%%%%%%%%%%%%%%%%%%%%%%%%%%%%%%%%%%%%%%%%%%%%%%%%%%%%%%%%%%%%%%
\section{Introduction}
%%%%%%%%%%%%%%%%%%%%%%%%%%%%%%%%%%%%%%%%%%%%%%%%%%%%%%%%%%%%%%%%%%%%%%%%%%%%%%%%%%%%%%%%%%%%%%%%%%%%%%%%%%%%%%%%%%%%%%%%%%%%%%%%%%%%%%%%%%%%%%%%%%%%%%
%(here $\Delta_p(f)=-\sum_{i=1}^{n} \frac{\partial^2 f}{\partial x_i^2}$ if $f \in \C^\infty(\R^n)$)

Suppose $1<p<\infty$. If $\Delta_p \co \dom\Delta_p \subset \L^p(\R^n) \to \L^p(\R^n)$ is the (positive) Laplacian and if $\alpha \in \R$, we can consider the fractional powers $\Delta_p^{\frac{\alpha}{2}}$ and $(\Id+\Delta_p)^{\frac{\alpha}{2}}$. If $\alpha<0$, these operators are the Riesz potential and the Bessel potential of order $-\alpha$. The last one was independently introduced by Aronszajn and Smith \cite{ArS61} and Calder\'on \cite{Cal61}, which is nowadays a classical notion in harmonic analysis, see \cite[p.~131]{Ste70} and \cite[Definition 1.2.4 p.~13]{Gra14}. 
 
Strichartz proved in \cite[Theorem 2.1, Chap.~2]{Str67} that the Bessel potential space $\L^p_\alpha(\R^n)\ov{\mathrm{def}}{=}\big\{f \in \L^p(\R^n) :  \text{ there exists } g \in \L^p(\R^n) \text{ such that }f=(\Id+\Delta_p)^{-\frac{\alpha}{2}}g \big\}$ is an algebra for the pointwise product for any $1 < p < \infty$ and any $\alpha > 0$ such that $\alpha p > n$. Note that by \cite[Theorem 12.3.4 p.~301]{MCSA01}, we have
$$
\L^p_\alpha(\R^n) 
= \dom \Delta_p^{\frac{\alpha}{2}}.
$$
%Now, this theorem has become a classical result, see \cite[Exercise 6.11 p. 162]{Ste70}. 
%Hytonen II, aussi
Indeed, for any $1 < p < \infty$ and any $\alpha > 0$, Kato and Ponce showed in their work \cite[Lemma X.4 p.~906]{KaP88} on Navier-Stokes equations, that $\L^p_\alpha(\R^n) \cap \L^\infty(\R^n)$ is an algebra for the pointwise product (see also \cite[Theorem 2.2.12 p.~81]{GPH15}). 
%with
%\begin{equation}
%\label{Leibniz-rules-Rn-1}
%\norm{fg}_{\L^p_\alpha(\R^n)} 
%\leq C_{\alpha,p} \big[\norm{f}_{\L^p_\alpha(\R^n)} \norm{g}_{\infty} + \norm{f}_\infty \norm{g}_{\L^p_\alpha(\R^n)}\big]
%\end{equation}
%generalizing some observation of Strichartz where $\norm{f}_{\L^p_\alpha(\R^n)} \ov{\mathrm{def}}{=} \bnorm{(\Id-\Delta_p)^{\frac{\alpha}{2}}(f)}_{p}$. 
This is a stronger result since by the Sobolev embedding theorem \cite[Theorem 1.2.4 (c) p.~14]{AdH96} %\cite[Theorem 1.3.5 (c)]{Gra14}
we have a continuous inclusion $\L^p_\alpha(\R^n) \subset \L^\infty(\R^n)$ if $\alpha p > n$. The proof is a simple consequence of the inequality
\begin{equation}
\label{Leibniz-rules-Rn-1}
\norm{fg}_{\L^p_\alpha(\R^n)} 
\lesssim_{\alpha,p} \norm{f}_{\L^p_\alpha(\R^n)} \norm{g}_{\L^\infty(\R^n)} + \norm{f}_{\L^\infty(\R^n)} \norm{g}_{\L^p_\alpha(\R^n)}
\end{equation}
for any $f,g \in \L^p_\alpha(\R^n) \cap \L^\infty(\R^n)$, where we use the graph norm of the closed operator $\Delta_p^{\frac{\alpha}{2}}$
$$
\norm{f}_{\L^p_\alpha(\R^n)} \ov{\mathrm{def}}{=} \norm{f}_{\L^p(\R^n)} + \bnorm{\Delta_p^{\frac{\alpha}{2}}(f)}_{\L^p(\R^n)}
\approx \bnorm{(\Id+\Delta_p)^{\frac{\alpha}{2}}(f)}_{\L^p(\R^n)}.
$$ 
The motivation of this result was the estimate of $\norm{(\Id+\Delta_p)^{\frac{\alpha}{2}}(fg)-f(\Id+\Delta_p)^{\frac{\alpha}{2}}(g)}_{\L^p(\R^n)}$ for any Schwartz functions $f$ and $g$. This commutator estimate is needed in the study of some nonlinear partial differential equations. We refer to \cite{Li19} and references therein for a comprehensive view of the state of the art in this kind of inequalities and to \cite[Section 12.3]{MCSA01} for several equivalent definitions of the Banach space $\L^p_\alpha(\R^n)$.

In 1996, in their study of Schr\"odinger semigroups, Gulisashvili and Kon considered in \cite{GuK96} the homogeneous Sobolev space $\dot\L^p_\alpha(\R^n)$ which is the completion of the space $\dom \Delta_p^{\frac{\alpha}{2}}$ with respect to the norm
\begin{equation}
\label{norm-Lp-dot}
\norm{f}_{\dot{\L}^p_\alpha(\R^n)} \ov{\mathrm{def}}{=} \bnorm{ \Delta_p^{\frac{\alpha}{2}}(f)}_{\L^p(\R^n)}.
\end{equation}
Note that there exist several definitions of this abstract space. We refer to \cite{MPS20} for more information. In this paper, we only use functions of this space belonging to $\dom \Delta_p^{\frac{\alpha}{2}}$. If $\alpha >0$, Gulisashvili and Kon observed that $\dot\L^p_\alpha(\R^n)\cap \L^\infty(\R^n)$ is also an algebra for the pointwise product. This result is again a consequence of the Leibniz's rule \cite[Theorem 1.4]{GuK96}
\begin{equation}
\label{Leibniz-rules-Rn}
\norm{fg}_{\dot{\L}^p_\alpha(\R^n)} 
\lesssim_{\alpha,p} \norm{f}_{\dot{\L}^p_\alpha(\R^n)} \norm{g}_{\L^\infty(\R^n)} + \norm{f}_{\L^\infty(\R^n)} \norm{g}_{\dot{\L}^p_\alpha(\R^n)}.
\end{equation}
Coulhon, Russ and Tardivel-Nachef \cite{CRT01} extended this result to the case of a unimodular connected Lie group $G$ with polynomial volume growth by replacing the Laplacian by a subelliptic Laplacian $-X_1^2- \cdots -X_m^2$ where $X\ov{\mathrm{def}}{=} (X_1, \ldots, X_m)$ is a family of left-invariant H\"ormander vector fields. Replacing $\R^n$ by $G$, they obtain generalizations 
\begin{equation}
\label{Leibniz-Coulhon-bis}
\norm{fg}_{\L^p_\alpha(G)} 
\lesssim_{\alpha,p} \norm{f}_{\L^p_\alpha(G)} \norm{g}_{\L^\infty(G)} + \norm{f}_{\L^\infty(G)} \norm{g}_{\L^p_\alpha(G)}, \quad \alpha>0
\end{equation} 
for any $f,g \in \L^p_\alpha(G) \cap \L^\infty(G)$ and
\begin{equation}
\label{Leibniz-Coulhon}
\norm{fg}_{\dot{\L}^p_\alpha(G)} 
\lesssim_{\alpha,p} \norm{f}_{\dot{\L}^p_\alpha(G)} \norm{g}_{\L^\infty(G)} + \norm{f}_{\L^\infty(G)} \norm{g}_{\dot{\L}^p_\alpha(G)}, \quad 0<\alpha \leq 1
\end{equation}
($\alpha>0$ if $G$ is nilpotent) of the Leibniz's rules \eqref{Leibniz-rules-Rn-1} and \eqref{Leibniz-rules-Rn}. Furthermore, they obtained algebras $\L^p_\alpha(G) \cap \L^\infty(G)$ and $\dot{\L}^p_\alpha(G) \cap \L^\infty(G)$ called Sobolev algebras. Note that Bohnke has previously proved in \cite[Th\'eor\`eme 1]{Bon85} that $\L^p_\alpha(G)$ is an algebra for the pointwise product if $G$ is a stratified Lie group and if $\alpha p >d$ where $d$ is the local dimension of the group. With the help of Sobolev embedding theorem $\L^p_\alpha(G) \subset \L^\infty(G)$ under the condition $\alpha p >d$, we can see that this is a particular case of the results of \cite{CRT01}. See \cite{BPTV19} and \cite{PeV18} for the more complicated case of non-unimodular Lie groups.%Note that the intersection $\L^p_\alpha(G) \cap \L^\infty(G)$ is equal to the space $\L^p_\alpha(G)$ by a suitable Sobolev embedding theorem if $\alpha p >d$ where $d$ is the local dimension of $(G,X)$. 

The concept of quantum compact metric space has its origins in Connes' paper \cite{Con89} of 1989, in which he shows that we can recover the geodesic distance $\dist$ of a compact oriented Riemannian spin manifold $M$ using the Dirac operator $\scr{D}$, by the formula
\begin{equation}
\label{Connes-for-1}
\dist(x,y)
=\sup_{f \in \mathrm{C}(M), \norm{[\scr{D},f]} \leq 1} |f(x)-f(y)|, \quad x,y \in M
\end{equation}
where the supremum is taken on all the continuous functions such that the commutator $[\scr{D},f]\ov{\mathrm{def}}{=} \scr{D}f-f\scr{D}$ extends to a contractive operator.  Recall that $\scr{D}$ is an unbounded operator acting on the Hilbert space of $\L^2$-spinors and that the functions of $\mathrm{C}(M)$ act on the same Hilbert space by multiplication operators. Indeed, it is well-known that the commutator $[\scr{D},f]$ induces a bounded operator if and only if $f$ is a Lipschitz function and in this case the Lipschitz constant of $f$ is equal to the norm $\norm{[\scr{D},f]}$. Moreover, this space of functions is norm dense in the space $\C(M)$ of continuous functions. See \cite[Chapter 6]{Con94} for more information and we refer to \cite{Var1} for a complete proof. If we identify the points $x,y$ as pure states $\omega_x$ and $\omega_y$ on the unital $\C^*$-algebra $\C(M)$, we can see this formula as 
$$
\dist(\omega_x,\omega_y)
=\sup_{f \in \C(M), \norm{[\scr{D},f]} \leq 1} |\omega_x(f)-\omega_y(f)|, \quad x,y \in M.
$$

After many years, Rieffel \cite{Rie04} axiomatized this formula replacing the algebra $\mathrm{C}(M)$ by a unital $\C^*$-algebra $A$, $f \mapsto \norm{[\mathscr{D},f]}$ by a seminorm $\norm{\cdot}$ defined on a dense subspace of $A$ and $\omega_x,\omega_y$ by arbitrary states of $A$ obtaining essentially the formula \eqref{MongeKant-def} below and giving rise to a theory of \textit{quantum} compact metric spaces. With this notion, Rieffel was able to define a quantum analogue of Gromov-Hausdorff distance and to give a meaning to many approximations found in the physics literature, as the case of matrix algebras converging to a sphere. Moreover, as research in noncommutative metric geometry progressed, some additional conditions are often added as Leibniz's rules
$$
\norm{ab} 
\lesssim \norm{a}\norm{b}_{A} +\norm{a}_A\norm{b}, \quad a, b \in \dom \norm{\cdot}
$$ 
which reminds \eqref{Leibniz-rules-Rn-1} and \eqref{Leibniz-rules-Rn}. 

If $G$ is a compact connected Lie group $G$ and if $\alpha p >d$, we show that $\big(\C(G),\norm{\cdot}_{\dot{\L}^p_\alpha(G)}\big)$ is a quantum compact metric space which underlies the setting of each Leibniz's rule \eqref{Leibniz-Coulhon} associated to a subelliptic Laplacian $\Delta$ on a compact connected Lie group $G$. Here $\C(G)$ is the algebra of continuous function on $G$. We also provide locally compact variants for non-compact groups in Section \ref{Sec-locally} with the help of seminorms $\norm{\cdot}_{\L^p_\alpha(G)}$. Finally, our approach is flexible and should be adaptable to different contexts. The present work  gets its inspiration from the papers \cite{Rie02}, \cite{JuM10} and \cite{ArK22}.

%It is likewise that our result remains true in the case of a \textit{non-compact} connected Lie group by using the theory of quantum \textit{locally compact} metric space and in other similar contexts.

%More precisely, we show that $\big(\C(G),\norm{\cdot}_{\dot{\L}^p_\alpha(G)}\big)$ is a quasi-Leibniz quantum compact metric space, where $\norm{\cdot}_{\dot{\L}^p_\alpha(G)}$ is defined on $\C(G) \cap \dom \Delta_p^{\frac{\alpha}{2}}$. 

%cite 
%Subelliptic estimates and function spaces on nilpotent Lie groups,
%
%Folland, E.M. Stein, Estimates for the ¯ @b complex and analysis on the Heisenberg group

Spectral triples are generalizations of the setting of Hodge-Dirac operators and  Dirac operators on compact oriented Riemannian (spin) manifolds. This notion has emerged as a mean to encode geometric information of spaces in operator and spectral theory. It is at the heart of noncommutative geometry and used to describe quantum spaces providing efficient tools for such an analysis. Remarkably, this notion also provide a framework for the study of classical spaces as fractals (e.g. \cite{CGIS14}) or orbifolds. We refer to \cite{CoM08} for an extensive list of examples.

We introduce a spectral triple associated to each subelliptic Laplacian on a compact connected Lie group $G$ and we show that the spectral dimension is equal to the local dimension of $(G,X)$ where $X\ov{\mathrm{def}}{=} (X_1, \ldots, X_m)$ is the family of left-invariant H\"ormander vector fields which defines the subelliptic Laplacian \eqref{def-de-Delta}. In retrospect, our proof of this computation is quite simple. However, a variant shows a link between what we call the \textit{local Coulhon-Varopoulos dimension} of a suitable (symmetric) markovian semigroup $(T_t)_{t \geq 0}$ acting on $\L^\infty(\Omega)$ where $\Omega$ is a finite measure space and the spectral dimension of the spectral triple defined by an associated canonical Hodge-Dirac operator. We will investigate this more general setting in a future publication \cite{Arh22}, also providing generalizations to suitable markovian semigroups acting on von Neumann algebras. 

Recall that this dimension is defined as the infimum of positive real numbers $d$ such that 
\begin{equation}
\label{Varo-Coulhon-dim}
\norm{T_t}_{\L^1(\Omega) \to \L^\infty(\Omega)} 
\lesssim \frac{1}{t^{\frac{d}{2}}}, \quad 0 < t \leq 1,
\end{equation}
see \cite{Cou90} and \cite[p.~187]{CoS91} (see also \cite{CKS87} for a related work). Note that the terminology \textit{ultracontractivity} is equally used in \cite[Section 7.3.2]{Are04} and in \cite{AtE17}. This notion is also referred in \cite{Xio17} under the more suitable term \textit{local ultracontractivity}. We warn the reader that really different definitions of ultracontractivity coexist in the literature, e.g. see \cite{CoM93}, \cite{CoM03}, \cite[p.~89]{Dav89} and \cite{GrT12}. In the case of a connected Lie group $G$ equipped with a family $X$ of left-invariant H\"ormander vector fields, the inequality \eqref{Varo-Coulhon-dim} is satisfied for the heat semigroup whose generator is the opposite $-\Delta$ of the subelliptic Laplacian and the local dimension $d$ of $(G,X)$.

%\cite{BeF18} \cite{BBR12} \cite{BPV20} \cite{BCF16}\cite{CoR18} \cite{Fen18} \cite{GMN14}\cite{GaS12}\cite{FiR17}\cite{Dun05}

%Spectral triples have
%There
%are many variations on the definition of a spectral triple in the literature, though
%they all share the following common properties.
%been constructed to describe a geometry over such quantum spaces as quantum
%tori [7], quantum spheres [12], quantum groups [13], group C*-algebras [8], C*-
%crossed-products [19] — i.e. quantum orbit spaces — among many others. Spectral
%triples also provide a framework for the study of singular geometries, such as the
%geometry of fractals [6, 25] and orbifolds [17]. Connes discovered, moreover, that
%spectral triples can be used to describe mathematical physics models, by means of a
%functional associated to them, called the spectral action [9]. In essence, noncommutative
%geometry offers the very exciting possibility, as discussed for instance in [43],
%to encode physics into geometry, in the same spirit as general relativity encodes
%gravity in Lorentzian geometry.

\paragraph{Structure of the paper}
The paper is organized as follows. Section \ref{sec-preliminaries} gives background on operator theory. The aim of Section \ref{sec-Background-Lie} is to describe our setting related to Lie groups and to prove some preliminary useful results. In Section \ref{Sec-quantum-compact-matric-spaces}, we show the existence of our quantum compact metric spaces. Section \ref{Sec-locally} is devoted to give \textit{locally} compact variants of these quantum compact metric spaces. In Section \ref{Sec-spectral-triples}, we introduce new compact spectral triples and we describe some properties. In particular we compute the spectral dimension. In Section \ref{Section-distances}, we investigate the links between the Connes spectral pseudo-distance and the Carnot-Caratheodory distance but also with the intrinsic pseudo-distance associated to some Dirichlet form. We prove an analogue of formula \eqref{Connes-for-1} for the Carnot-Caratheodory distance. Finally, we state in Section \ref{Sec-open-problems} two natural conjectures on the functional calculus of subelliptic Laplacians and their associated Hodge-Dirac operators.

%In Section \ref{Sec-spectral-triples}, we introduce new spectral triples.
%  Section  In the last section \ref, we describe .

%%%%%%%%%%%%%%%%%%%%%%%%%%%%%%%%%%%%%%%%%%%%%%%%%%%%%%%%%%%%%%%%%%%%%%%%%%%%%%%%%%%%%%%%%%%%%%%%%%%%
\section{Preliminaries on operator theory}
\label{sec-preliminaries}

\paragraph{Minkowski's inequality} Suppose that $(\Omega_1,\mu_1)$ and $(\Omega_2,\mu_2)$ are two $\sigma$-finite measure spaces and consider a measurable function $f \co \Omega_1 \times \Omega_2 \to \mathbb{C}$. We will use the following classical inequality \cite[A.1 p.~271]{Ste70}
%\begin{equation}
%\label{Minkowski}
%\left[\int_{\Omega_2} \left|\int_{\Omega_1}F(x,y) \d\mu_1(x)\right|^2 \d\mu_2(y)\right]^{\frac{1}{2}} 
%\leq \int_{\Omega_1} \left(\int_{\Omega_2}|F(x,y)|^2 \d\mu_2(y)\right)^{\frac{1}{2}}\d\mu_1(x)
%\end{equation}
\begin{equation}
\label{Minkowski-Lp}
\left[\int_{\Omega_2} \left|\int_{\Omega_1} f(x,y) \d\mu_1(x)\right|^p \d\mu_2(y)\right]^{\frac{1}{p}} 
\leq \int_{\Omega_1} \left(\int_{\Omega_2} |f(x,y)|^p \d\mu_2(y)\right)^{\frac{1}{p}}\d\mu_1(x).
\end{equation}

\paragraph{Dunford-Pettis theorem}  %e.g.   \cite{ArB94}?, 
Let $\Omega$ be a $\sigma$-finite measure space such that $\L^1(\Omega)$ is separable. A particular case of Dunford-Pettis theorem, e.g. \cite[p.~528]{Rob91} \cite[Section 3]{GrT12}%Dunford/Schwartz, un peu Abra livre
, says that if $T \co \L^1(\Omega) \to \L^\infty(\Omega)$ is a bounded operator then there exists a function $K \in \L^\infty(\Omega \times \Omega)$ such that for any $f \in \L^1(\Omega)$ we have $(Tf)(x) = \int_\Omega K(x,y)f(y) \d y$ for almost all $x \in \Omega$. Moreover, we have
%Let $K \in \L^2(\Omega \times \Omega)$. If $f \in \L^2(\Omega)$, we can consider $
%T_K(f) \ov{\mathrm{def}}{=} \int_\Omega K(x,y)f(y)\d y$. 
%\cite[Theorem 2.4]{AtE17}, Aliprantis p210
%says that $T_K$ induces a bounded operator from $\L^2(\Omega)$ into $\L^\infty(\Omega)$ and that
\begin{equation}
\label{Dunford-Pettis-1}
\norm{T}_{\L^1(\Omega) \to \L^\infty(\Omega)}
=\norm{K}_{\L^\infty(\Omega \times \Omega)}.
\end{equation}
Conversely, such a function $K$ defines a bounded operator $T \co \L^1(\Omega) \to \L^\infty(\Omega)$. We also have a similar result for a bounded operator $T \co \L^2(\Omega) \to \L^\infty(\Omega)$. In this case, the equality \eqref{Dunford-Pettis-1} is replaced by
\begin{equation}
\label{Dunford-Pettis-2}
\norm{T}_{\L^2(\Omega) \to \L^\infty(\Omega)}
=\norm{K}_{\L^\infty(\Omega,\L^2(\Omega))}.
\end{equation}
%\paragraph{Closed and weak* closed operators}
%An operator $T \co \dom T \subseteq X \to Y$ is closed if 
%\begin{align}
%\label{Def-operateur-ferme}
%\MoveEqLeft
%\text{for any sequence $(x_n)$ of $\dom T$ with $x_n \to x$ and $T(x_n) \to y$ with $x \in X$ and $y \in Y$ }\\
%&\text{implies that $x \in \dom T$ and $T(x)=y$.}   \nonumber         
%\end{align}

%\paragraph{Closure of an unbounded operator}

\paragraph{Operator theory} 
%Suppose that $\Omega$ is a (localizable) measure space. Let $(T_t)_{t  \geq 0}$ be a semigroup of $*$-preserving selfadjoint maps on $\L^\infty(\Omega)$. That means that $T_t(\ovl{f})=\ovl{T_t(f)}$ for any $f \in \L^\infty(\Omega)$ and any $t \geq 0$. If $t \geq 0$ and if $T_t$ induces a bounded operator $T_t \co \L^2(\Omega) \to \L^{\infty}(\Omega)$ then $T_{2t}$ induces a bounded operator $T_{2t} \co \L^1(\Omega)\to \L^{\infty}(\Omega)$ and we have
%\begin{equation}
%\label{norms-comparaison}
%\norm{T_{2t}}_{\L^1(\Omega)\to \L^{\infty}(\Omega)}
%\leq \norm{T_t}_{\L^2(\Omega) \to \L^{\infty}(\Omega)}^2.
%\end{equation}
%The proof uses the formula $T_{2t}=T_tT_t$ and a simple argument of duality. 

Recall the characterization of the domain of the closure $\ovl{T}$ of a closable unbounded operator $T \co \dom T \subset Y \to Z$ between Banach spaces $Y$ and $Z$. We have 
\begin{equation}
\label{Domain-closure}
x \in \dom \ovl{T} \text{ iff there exists } (x_n) \subset \dom T \text{ such that } x_n \to x
\text{ and } T(x_n) \to y \text{ for some } y.  
\end{equation}
The following is \cite[Corollary 5.6 p.~144]{Tha92}.

\begin{thm}
\label{Th-unit-eq}
Let $T$ be a closed densely defined operator on a Hilbert space $H$. Then the operator $T^*T$ on $(\ker T)^\perp$ is unitarily equivalent to the operator $TT^*$ on $(\ker T^*)^\perp$.
\end{thm}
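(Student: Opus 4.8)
The plan is to use the polar decomposition of the closed densely defined operator $T$. Recall that we may write $T = U|T|$, where $|T| \ov{\mathrm{def}}{=} (T^*T)^{\frac{1}{2}}$ is the unique positive self-adjoint square root of the positive self-adjoint operator $T^*T$, and where $U$ is a partial isometry with initial space $\ovl{\Ran |T|} = (\ker T)^\perp$ and final space $\ovl{\Ran T} = (\ker T^*)^\perp$. In particular $U$ restricts to a unitary $U_0 \co (\ker T)^\perp \to (\ker T^*)^\perp$ between the two Hilbert spaces appearing in the statement. I would also note at the outset that, since $T^*T$ is self-adjoint with $\ker(T^*T) = \ker T$, it reduces to the reducing subspace $(\ker T)^\perp$ and restricts there to a self-adjoint operator, and likewise $TT^*$ restricts to a self-adjoint operator on $(\ker T^*)^\perp$; so the statement makes sense and it suffices to conjugate one by $U_0$ to obtain the other.

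First I would record the basic relations of the polar decomposition: $U^*U$ is the orthogonal projection onto $(\ker T)^\perp$, $UU^*$ is the orthogonal projection onto $(\ker T^*)^\perp$, and $U^*U|T| = |T|$ since $\ovl{\Ran |T|} = (\ker T)^\perp$. Taking adjoints with the usual care for unbounded operators yields $T^* = |T|U^*$, from which one reads off the polar decomposition of $T^*$ as $T^* = U^*|T^*|$ with $|T^*| = (TT^*)^{\frac{1}{2}} = U|T|U^*$.

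The heart of the argument is then the operator identity $TT^* = U(T^*T)U^*$, understood as an equality including domains. Concretely, for $x \in \dom(T^*T) \cap (\ker T)^\perp$ one has $U^*Ux = x$, so $x \in \dom|T|^2 = \dom(T^*T)$ and
$$
TT^*(U x) = U|T|^2 U^* U x = U|T|^2 x = U(T^*T) x,
$$
which in particular shows $Ux \in \dom(TT^*)$. Applying the same reasoning to $T^*$ in place of $T$ gives the reverse inclusion, so that $U_0$ maps $\dom(T^*T) \cap (\ker T)^\perp$ bijectively onto $\dom(TT^*) \cap (\ker T^*)^\perp$. Restricting everything to $(\ker T)^\perp$ and $(\ker T^*)^\perp$, on which $U_0$ is unitary, the displayed identity becomes $U_0^{-1}(TT^*)U_0 = T^*T$, which is exactly the desired unitary equivalence.

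The main obstacle is the domain bookkeeping rather than any deep idea: for unbounded operators the manipulations $T^* = |T|U^*$ and $TT^* = U|T|^2U^*$ must be justified on the correct domains, and one has to verify that conjugation by the partial isometry $U$ carries $\dom(T^*T)$ onto $\dom(TT^*)$ after intersecting with the respective orthogonal complements. Once the polar decomposition and these domain identifications are in hand, the unitary equivalence is immediate.
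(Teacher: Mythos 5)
Your proof is correct: the polar-decomposition argument is sound, including the points that actually require care --- that $T^*=|T|U^*$ (legitimate since $U$ is bounded and $|T|$ self-adjoint), that $TT^*=U|T|^2U^*$ holds with domains because $U^*U$ is the projection onto $(\ker T)^\perp=\ovl{\Ran |T|}$, and that $U$ restricts to a bijection from $\dom (T^*T)\cap(\ker T)^\perp$ onto $\dom(TT^*)\cap(\ker T^*)^\perp$, with the reverse inclusion obtained by symmetry from the polar decomposition $T^*=U^*|T^*|$. The paper gives no proof of this statement, quoting it directly from \cite[Corollary 5.6 p.~144]{Tha92}; the argument in that reference is essentially the same polar-decomposition argument you wrote out, so your route coincides with the cited source.
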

If $T$ is densely defined, by \cite[Problem 5.27 p.~168]{Kat76}, we have
\begin{equation}
\label{lien-ker-image}
\ker T^*
=(\Ran T)^\perp.
\end{equation}
We will also use the following classical equalities \cite[2.8.45 p.~171]{KaR97}
\begin{equation}
\label{Kadison-image}
\Ran{T^*T}
=\Ran{T^*}
\quad \text{and} \quad 
\ker T^*T
=\ker T. 
\end{equation}
If $A$ is sectorial operator on a \textit{reflexive} Banach space $Y$, we have by \cite[Proposition 2.1.1 (h)]{Haa06} a decomposition
\begin{equation}
\label{decompo-reflexive}
Y
=\ker A \oplus \ovl{\Ran A}.
\end{equation}

\paragraph{Fractional powers}
See \cite{Haa06} and \cite{KuW04} for more information on fractional powers. Let $A$ be a sectorial operator on a Banach space $Y$. If $A$ is densely defined and if $\alpha$ is a complex number with $0 < \Re \alpha <n$, where $n$ is an integer, then the space $\dom A^n$ is a core of $A^\alpha$ by \cite[p.~62]{Haa06}, i.e. $\dom A^n$ is dense in $\dom A^\alpha$ for the graph norm of $A^\alpha$, and we have
\begin{equation}
\label{Balakrishnan}
A^\alpha(x)
=\frac{\Gamma(n)}{\Gamma(\alpha)\Gamma(n-\alpha)} \int_{0}^{\infty} t^{\alpha-1} \big(A(t+A)^{-1}\big)^n x \d t
, \quad x \in \dom A^n.
\end{equation}
For any complex numbers $\alpha,\beta$ with $\Re \alpha, \Re \beta > 0$ we have $A^\alpha A^{\beta} = A^{\alpha+\beta}$. By \cite[p.~62]{Haa06} and \cite[Corollary 3.1.11]{Haa06}, for any $\alpha \in \Cbb$ with $\Re \alpha > 0$ we have
\begin{equation}
\label{inclusion-range}
\Ran A^\alpha 
\subset \ovl{\Ran A} 
\quad \text{and} \quad
\ker A^\alpha=\ker A.
\end{equation}
%For $t,s>0$, we let $f_{t}(s)\ov{\mathrm{def}}{=}\frac{t}{\sqrt{4\pi s^3}} \e^{-\frac{t^2}{4s}}$. 
%It is well-known \cite[Lemma 1.6.7]{ABHN11} that
%\begin{equation}
%\label{integrale-utile-1}
%\int_{0}^{+\infty} \e^{-s \lambda} f_{t}(s) \d s
%=\e^{-t \sqrt{\lambda}}.
%\end{equation}
%Let $A$ be the negative generator of a semigroup $(\e^{-tA})_{t \geq 0}$ acting on a Banach space $X$. By \cite[Example 3.4.6]{Haa06}, for any $x \in X$ we have
%\begin{equation}
	%\label{subordinated-formula}
	%\e^{-t A^\frac{1}{2}}x
	%=\int_{0}^{+\infty} f_t(s) \e^{-sA}(x) \d s.
%\end{equation}
%Voir\textbf{ aussi livre Haase:} \cite[]{KuW04}
%\begin{prop}
%\label{prop-Bakry-estimate-Markov}
%Let $(T_t)_{t \geq 0}$ be a strongly continuous bounded semigroup acting on a Banach space $X$ with (negative) generator $A$. We have
%\begin{equation}
%\label{Bakry-estimate-Markov}
%\bnorm{\big(\Id_{X}+A\big)^{\frac{1}{2}}(x)}_{X}	
%\approx \norm{x}_{X}+\bnorm{A^{\frac{1}{2}}(x)}_{X}, \quad x \in X.
%\end{equation}
%\end{prop}
%injective\footnote{\thefootnote. It is not clear if this assumption is really necessary.}
If $A$ is a sectorial operator on a Banach space $Y$ and if $\Re \alpha>0$, then by \cite[p.~137]{Ege15} the graph norms of the operators $A^\alpha$ and $(\Id+A)^{\alpha}$ are equivalent, i.e.%pas clair si on peut enlever injectif\cite[Lemma 6.3.2]{Haa06} 
\begin{equation}
\label{equiv-Egert}
\norm{A^\alpha x}_Y+\norm{x}_Y
\approx \norm{(\Id+A)^{\alpha}x}_Y, \quad x \in \dom A^\alpha.
\end{equation}
The proof uses \cite[p.~28]{Ouh05}, the equality $\dom (\Id+A)^{\alpha}=\dom A^{\alpha}$ of \cite[Proposition 3.1.9 p.~65]{Haa06} and the boundedness of the operator $(\Id+A)^{-\alpha}$. See \cite[Lemma 15.22 p.~294]{KuW04} and \cite[Lemma 6.3.2 p.~148]{Haa06} for the particular case where $A$ is injective.

%If $A$ is injective, we can replace the right-hand side by $\norm{(\Id+A)^{\alpha}x}_Y$ by .

\paragraph{Compactness of fractional powers} Let $\Omega$ be a finite measure space. Consider a weak* continuous semigroup $(T_t)_{t \geq 0}$ of selfadjoint positive unital contractions on $\L^\infty(\Omega)$ with weak* (negative) generator $A_\infty$. A classical argument 
%Hytonen square
shows that each operator $T_t$ is integral preserving. Such a semigroup induces a strongly continuous semigroup $(T_{t,p})_{t \geq 0}$ on $\L^p(\Omega)$ and its generator $A_p$ is sectorial if $1<p<\infty$.

%\cite[Corollary 1.27]{Hel09}
There exists a weak* continuous conditional expectation $\E \co \L^\infty(\Omega) \to \L^\infty(\Omega)$ on the fixed subalgebra $\{f \in \L^\infty(\Omega) : T_t(f)=f \text{ for all } t \geq 0\}$. This subset is equal to $\ker A_\infty$. We sketch the argument.  By \cite[Proposition 3.1.4 p.~120]{Eme07}, the induced semigroup $(T_{t,1})_{t \geq 0}$ on the space $\L^1(\Omega)$ is mean ergodic. In particular, we have a bounded projection $Q \co \L^1(\Omega) \to \L^1(\Omega)$ onto $\ker A_1$ along $\ovl{\Ran A_1}$ which is clearly contractive, satisfying $Q(1)=1$. By \cite[Corollary 5.52 p.~222]{AbA02}, $Q$ is a conditional expectation. We conclude by duality that the suitable conditional expectation $\E$ exists, see \cite[Exercise 9 p.~159]{EFHN15}.%Moreover, $\E$ is integral preserving since each operator $T_t$ is integral preserving.%\cite[p. 337]{EnN00}, A careful reading of the proof shows that
%if $A_p$ is the $\L^p$-realization of the generator of the semigroup.

If $\{f \in \L^\infty(\Omega) : T_t(f)=f \text{ for all } t \geq 0\}=\Cbb 1$, the condition expectation is given by $\E(f)=\big(\int_\Omega f\big)1$. We use the notation $\L_0^p(\Omega)$ for the subspace $\ker \E_p$ of $\L^p(\Omega)$. It is the space of functions with mean 0. We have $\L_0^p(\Omega)=\ovl{\Ran A_p}$. Finally, for $1 \leq p \leq q \leq \infty$, consider the property
\begin{equation}
\label{Rnpq}
\norm{T_t}_{\L^p_0(\Omega) \to \L^q_0(\Omega)}
\lesssim \frac{1}{t^{\frac{n}{2}(\frac{1}{p}-\frac{1}{q})}}, \quad 0 <t\leq 1,
\end{equation}
which is a \textit{local} version of the property \cite[$(\mathrm{R}_n^{pq})$ p.~619]{JuM10}
$$
\norm{T_t}_{\L^p_0(\Omega) \to \L^q_0(\Omega)}
\lesssim \frac{1}{t^{\frac{n}{2}(\frac{1}{p}-\frac{1}{q})}}, \quad t>0.
$$
By an interpolation argument similar to the one of \cite[Lemma 1.1.2]{JuM10}, each of these properties holds for one pair $1 \leq p < q \leq \infty$ if and only if it holds for all $1 \leq p \leq q \leq \infty$. See also \cite[Section 7.3.2 p.~65]{Are04} for a variant.

Recall the following result \cite[Theorem 9]{CwK95} (see also \cite[Theorem 5.5]{KMS03}) which allows to obtain compactness via complex interpolation.

\begin{thm}
\label{Th-interpolation-Kalton}
Suppose that $(X_0,X_1)$ and $(Y_0,Y_1)$ are Banach couples and that $X_0$ is a $\mathrm{UMD}$-space. Let $T \co X_0+X_1 \to Y_0+Y_1$ such that its restriction $T_0 \co X_0 \to Y_0$ is compact and such that $T_1 \co X_1 \to Y_1$ is bounded. Then for any $0 < \theta < 1$ the map $T \co (X_0,X_1)_\theta \to (Y_0,Y_1)_\theta$ is compact.
\end{thm}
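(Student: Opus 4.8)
The plan is to reduce the statement, via the Calder\'on description of the complex method, to a total boundedness assertion and then to isolate the exact place where the compactness of $T_0$ and the UMD property of $X_0$ cooperate. Recall that a vector $x$ lies in the closed unit ball of $(X_0,X_1)_\theta$ if and only if $x=f(\theta)$ for some $f$ in the unit ball of the Calder\'on space $\mathcal{F}(X_0,X_1)$ of functions that are bounded and continuous on the closed strip $\{0 \leq \Re z \leq 1\}$, analytic in its interior, with $t \mapsto f(\i t)$ bounded continuous into $X_0$ and $t \mapsto f(1+\i t)$ bounded continuous into $X_1$. Since evaluation at $\theta$ is a metric quotient map of $\mathcal{F}(X_0,X_1)$ onto $(X_0,X_1)_\theta$, it suffices to prove that $f \mapsto T(f(\theta))$ carries the unit ball of $\mathcal{F}(X_0,X_1)$ into a totally bounded subset of $(Y_0,Y_1)_\theta$. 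The point is that $z \mapsto T(f(z))$ again lies in $\mathcal{F}(Y_0,Y_1)$, and its boundary traces are $T_0 f(\i\,\cdot)$ on the line $\Re z=0$, where $T_0$ is \emph{compact}, and $T_1 f(1+\i\,\cdot)$ on $\Re z=1$, where $T_1$ is merely bounded.

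First I would use the harmonic reconstruction of the value at an interior point of the strip, writing, for $g=T\circ f$,
$$
g(\theta)=\int_{\R} P_0(\theta,t)\,g(\i t)\,\d t+\int_{\R} P_1(\theta,t)\,g(1+\i t)\,\d t,
$$
where $P_0(\theta,\cdot)$ and $P_1(\theta,\cdot)$ are the fixed, integrable Poisson kernels of the strip. Because $f(\i\,\cdot)$ ranges over a bounded family in $X_0$, the values $T_0 f(\i t)$ all lie in the fixed compact set $Q:=\ovl{T_0(B_{X_0})}\subset Y_0$; thus the first integral is an average, against the integrable kernel $P_0(\theta,\cdot)$, of a family of functions valued in a single compact set. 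If this family were equicontinuous in $t$ we could invoke a vector-valued Arzel\`a--Ascoli theorem to conclude that $\{T_0 f(\i\,\cdot)\}$ is relatively compact in a space of continuous $Y_0$-valued functions, and the Poisson average would then produce a relatively compact set in $Y_0$; combined with the boundedness of the $\Re z=1$ contribution this yields total boundedness in $(Y_0,Y_1)_\theta$. To create the missing equicontinuity I would insert a vertical regularization $f \mapsto f*\varphi_\delta$ (convolution in the imaginary variable with a smooth approximate identity of compactly supported Fourier transform), which commutes with $T_0$ and band-limits the boundary traces; for the regularized family the Arzel\`a--Ascoli step goes through and $\{T((f*\varphi_\delta)(\theta))\}$ is totally bounded.

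The main obstacle, and the sole place where the UMD hypothesis is indispensable, is the removal of this regularization uniformly over the unit ball: I must bound $\sup_{f}\norm{T(f(\theta))-T((f*\varphi_\delta)(\theta))}_{(Y_0,Y_1)_\theta}$ by a quantity tending to $0$ with $\delta$, and naively this difference is only bounded, not small, on each separate boundary. The resolution is to exploit that $f$ is \emph{analytic}: the two boundary traces are intertwined by the vertical Hilbert transform of the strip, which by Bourgain's theorem is bounded on $L^p(\R;X_0)$ precisely because $X_0$ is a UMD space. This boundedness (equivalently, that of the analytic/Riesz projection) legitimizes replacing a general $f\in\mathcal{F}(X_0,X_1)$ by an analytic function reconstructed from regularized boundary data with uniformly controlled norm, and it transfers the vertical smoothness imposed on the $\Re z=0$ trace to the whole analytic extension, so that the error between $f$ and $f*\varphi_\delta$ is governed by an $L^p(\R;X_0)$-approximation which is uniform on the relatively compact family produced by $T_0$. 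It is exactly this asymmetry --- compactness and UMD required only on the index-$0$ side, mere boundedness on the index-$1$ side --- that matches the statement. I would finish by a diagonal argument letting $\delta\to 0$, using that a uniform limit of maps with totally bounded range again has totally bounded range.
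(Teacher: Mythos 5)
First, a point of order: the paper does not prove this statement at all --- it is quoted from \cite[Theorem 9]{CwK95} (see also \cite[Theorem 5.5]{KMS03}) and invoked only to repair \cite[Lemma 1.1.6]{JuM10}, so your proposal has to be measured against the Cwikel--Kalton proof, and there it has two genuine gaps. The first is your fixed-$\delta$ compactness claim. The Poisson representation writes $g(\theta)=a+b$ where $a$ (the $\Re z=0$ contribution) ranges over a norm-compact subset of $Y_0$ and $b$ (the $\Re z=1$ contribution) over a bounded subset of $Y_1$; but a compact set plus a bounded set is in general not totally bounded even in $Y_0+Y_1$, and the interpolation norm is stronger still. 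The only standard device for recombining such information, Calder\'on's inequality $\norm{y}_{(Y_0,Y_1)_\theta}\leq \norm{y}_{Y_0}^{1-\theta}\norm{y}_{Y_1}^{\theta}$, applies to a \emph{single} vector of $Y_0\cap Y_1$, not to a sum of pieces lying in different spaces, so your Arzel\`a--Ascoli step does not produce an $\epsilon$-net in $(Y_0,Y_1)_\theta$. This step is repairable, but along a different route than the one you wrote: for band-limited $f_\delta$ the value $f_\delta(\theta)$ can be reproduced from \emph{each} boundary separately (multiplying the band-supported Fourier transform of a trace by $e^{\pm\theta\xi}$ is an $L^1$-kernel operation, at the cost of a factor $e^{c/\delta}$), so the single vector $Tf_\delta(\theta)$ lies simultaneously in $e^{c/\delta}$ times the closed absolutely convex hull of $T_0(B_{X_0})$ (a compact subset of $Y_0$) and in a bounded ball of $Y_1$; only then does Calder\'on's inequality convert a $Y_0$-net into a $(Y_0,Y_1)_\theta$-net.

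The second gap is fatal as stated and concerns the uniform removal of the regularization, which you rightly identify as the crux but do not prove. The family of boundary traces $\big\{T_0 f(\i\,\cdot)\big\}$ takes values in the compact set $Q$, but it is \emph{not} relatively compact in any space of functions of $t$: membership of $f$ in the unit ball of $\mathcal{F}(X_0,X_1)$ imposes no modulus of continuity on the trace (functions of the type $x\,e^{\i\lambda z+\delta(z-\i t_0)^2}$, suitably normalized, oscillate arbitrarily fast with bounded traces), so there is no equicontinuity, and mollification $h\mapsto h*\varphi_\delta$ does not converge uniformly on families that are merely bounded with compact-set values. Hence the phrase ``uniform on the relatively compact family produced by $T_0$'' assumes exactly what must be proved; moreover the smallness you need is of $\norm{T(f-f*\varphi_\delta)(\theta)}_{(Y_0,Y_1)_\theta}$, which requires exhibiting analytic competitors of small $\mathcal{F}(Y_0,Y_1)$-norm, not boundary $L^p$-estimates. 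If boundedness of the vector-valued Hilbert transform yielded this in the soft way you describe, the theorem would be a one-page consequence of UMD; the actual Cwikel--Kalton argument is a genuinely harder Fourier-analytic one, in which UMD enters through the boundedness of the vector-valued Riesz projection on the index-$0$ side to split $f$ into a low-frequency part with finitely many coefficient vectors --- to which the compactness of $T_0$ is applied --- plus high-frequency tails that are themselves analytic functions whose norms are geometrically small at $\Re z=\theta$; this frequency splitting, performed only on the $X_0$-boundary, is what matches the one-sided hypotheses. In short, your outline locates the right ingredients and the right asymmetry, but both central estimates are asserted rather than proved, and the compactness premise underlying the second one is false.
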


The following is \cite[Theorem 1.1.7]{JuM10}. Note that the proof of this result uses \cite[Lemma 1.1.6]{JuM10} whose proof unfortunately seems false in light of the classical problem \cite[Problem 5.4]{KMS03}. However, \cite[Lemma 1.1.6]{JuM10} can be replaced by Theorem \ref{Th-interpolation-Kalton}. 

\begin{prop}
\label{prop-interpolation-1}
Let $\Omega$ be a finite measure space. Let $(T_t)_{t \geq 0}$ be a weak* continuous semigroup of selfadjoint positive contractions on $\L^\infty(\Omega)$ with $\{x \in \L^\infty(\Omega) : T_t(x)=x \text{ for all } t \geq 0\}=\Cbb 1$ satisfying $\norm{T_t}_{\L_0^1(\Omega) \to \L^\infty(\Omega)} \lesssim \frac{1}{t^{\frac{n}{2}}}$ for some $n$ and such that $A^{-1}$ is compact on $\L_0^2(\Omega)$. Then for all $1 \leq p < q \leq \infty$ such that $\frac{2\Re z}{n} > \frac{1}{p}-\frac{1}{q}$ the operator $A^{-z} \co \L_0^p(\Omega) \to \L_0^q(\Omega)$ is compact.
\end{prop}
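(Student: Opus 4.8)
The plan is to factorize $A^{-z}$ as a product of three negative fractional powers, routed through an intermediate space $\L^r_0(\Omega)$ with $1<r<\infty$ and $p\le r\le q$, so that the two outer factors are bounded while the middle factor is compact; since a product of bounded operators with at least one compact factor is compact, this gives the result. Concretely, I would write $z=z_3+z_0+z_4$ with $\Re z_0,\Re z_3,\Re z_4>0$ and consider
\[
\L^p_0(\Omega) \xra{A^{-z_4}} \L^r_0(\Omega) \xra{A^{-z_0}} \L^r_0(\Omega) \xra{A^{-z_3}} \L^q_0(\Omega),
\]
whose composition equals $A^{-z}$ by the law of exponents $A^{-\alpha}A^{-\beta}=A^{-(\alpha+\beta)}$ for the injective sectorial operator $A$ on the mean-zero subspace. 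Three ingredients are required: a boundedness estimate for the outer factors, compactness of $A^{-z_0}$ on $\L^2_0(\Omega)$, and the propagation of this compactness to every $\L^r_0(\Omega)$.

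For boundedness I would use the subordination formula $A^{-z}=\frac{1}{\Gamma(z)}\int_0^\infty t^{z-1}T_t\,\d t$, which is valid on each $\L^r_0(\Omega)$ since $A$ is injective with dense range there. From $\norm{A^{-z}}_{\L^p_0\to\L^q_0}\le \frac{1}{|\Gamma(z)|}\int_0^\infty t^{\Re z-1}\norm{T_t}_{\L^p_0\to\L^q_0}\,\d t$ and splitting the integral at $t=1$: for $0<t\le 1$ the local ultracontractivity gives $\norm{T_t}_{\L^p_0\to\L^q_0}\lesssim t^{-\frac n2(\frac1p-\frac1q)}$, so the part near $0$ converges precisely when $\Re z>\frac n2(\frac1p-\frac1q)$; for $t\ge 1$, compactness of $A^{-1}$ forces a spectral gap $\lambda_1:=\inf\mathrm{spec}(A|_{\L^2_0})>0$, whence $\norm{T_t}_{\L^2_0\to\L^2_0}\le \e^{-\lambda_1 t}$, and writing $T_t=T_{1/4}T_{t-1/2}T_{1/4}$ together with the fixed-time ultracontractivity of $T_{1/4}$ yields $\norm{T_t}_{\L^p_0\to\L^q_0}\lesssim \e^{-\lambda_1 t}$. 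Hence $A^{-z}\co\L^p_0(\Omega)\to\L^q_0(\Omega)$ is bounded whenever $\frac{2\Re z}{n}>\frac1p-\frac1q$, which is all the outer factors need.

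Compactness of $A^{-z_0}$ on $\L^2_0(\Omega)$ is immediate: compactness of $A^{-1}$ makes $A|_{\L^2_0}$ a positive self-adjoint operator with compact resolvent, hence with discrete spectrum $\lambda_1\le\lambda_2\le\cdots\to\infty$, and the self-adjoint functional calculus represents $A^{-z_0}$ as a diagonal operator with eigenvalues $\lambda_k^{-z_0}\to 0$, which is compact. To propagate this to $\L^r_0(\Omega)$ for arbitrary $1<r<\infty$, I would invoke Theorem \ref{Th-interpolation-Kalton} with $X_0=Y_0=\L^2_0(\Omega)$ (a closed subspace of a UMD space, hence UMD) carrying the compact operator $A^{-z_0}$, and $X_1=Y_1=\L^{r_1}_0(\Omega)$ with $r_1\in\{1,\infty\}$ carrying the bounded operator $A^{-z_0}$; since the conditional expectation provides a consistent bounded projection onto the mean-zero subspaces, $(\L^2_0,\L^{r_1}_0)_\theta=\L^r_0$, and the interpolated operator $A^{-z_0}\co\L^r_0(\Omega)\to\L^r_0(\Omega)$ is compact. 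Taking $r_1=\infty$ reaches every $r\in(2,\infty)$ and $r_1=1$ every $r\in(1,2)$.

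Finally, given a target $(p,q,z)$ with $\frac{2\Re z}{n}>\frac1p-\frac1q$, I would choose $r\in(1,\infty)$ with $p\le r\le q$ (possible since $1\le p<q\le\infty$) and split $\Re z=\Re z_4+\Re z_0+\Re z_3$ with $\frac{2\Re z_4}{n}>\frac1p-\frac1r$, $\frac{2\Re z_3}{n}>\frac1r-\frac1q$, and $\Re z_0>0$; this is feasible because the two boundary requirements sum to $\frac1p-\frac1q<\frac{2\Re z}{n}$, leaving positive slack for $z_0$. Then $A^{-z_4}$ and $A^{-z_3}$ are bounded by the boundedness step, $A^{-z_0}$ is compact on $\L^r_0(\Omega)$, and the composite $A^{-z}=A^{-z_3}A^{-z_0}A^{-z_4}$ is compact. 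I expect the main obstacle to be exactly the treatment of the extreme exponents $p=1$ and $q=\infty$: these cannot be reached by a single complex interpolation from $\L^2_0(\Omega)$ with admissible endpoints, which is precisely what forces the factorization through an intermediate $\L^r_0(\Omega)$ together with the separate, interpolation-based proof that $A^{-z_0}$ is compact on all these spaces --- and it is this last point where Theorem \ref{Th-interpolation-Kalton} substitutes for the problematic \cite[Lemma 1.1.6]{JuM10}.
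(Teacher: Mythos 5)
Your proposal is correct and follows exactly the route the paper prescribes: the paper offers no independent proof of Proposition \ref{prop-interpolation-1} beyond invoking \cite[Theorem 1.1.7]{JuM10} together with the remark that the defective compactness-interpolation lemma \cite[Lemma 1.1.6]{JuM10} must be replaced by Theorem \ref{Th-interpolation-Kalton}, and that is precisely what you implement. Your factorization $A^{-z}=A^{-z_3}A^{-z_0}A^{-z_4}$ through an intermediate $\L^r_0(\Omega)$, the subordination-plus-spectral-gap boundedness estimate, the compactness of $A^{-z_0}$ on the Hilbert (hence $\mathrm{UMD}$) space $\L^2_0(\Omega)$, and its propagation to $\L^r_0(\Omega)$ via Theorem \ref{Th-interpolation-Kalton} reproduce the Junge--Mei argument with the Cwikel--Kalton substitution, including the correct diagnosis that the endpoints $p=1$ and $q=\infty$ force the three-factor decomposition rather than a single interpolation from $\L^2_0(\Omega)$.
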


%%%%%%%%%%%%%%%%%%%%%%%%%%%%%%%%%%%%%%%%%%%%%%%%%%%%%%%%%%%%%%%%%%%%%%%%%%%%%%%%%%%%%%%%%%%
\section{Background and preliminaries results on Lie groups}
\label{sec-Background-Lie}

\paragraph{Convolution}
If $G$ is a \textsl{unimodular} locally compact group equipped with a Haar measure $\mu_G$,  recall that the convolution product of two functions $f$ and $g$ is given, when it exists, by
\begin{equation}
\label{Convolution-formulas}
(f*g)(s)
\ov{\mathrm{def}}{=} \int_G f(r)g(r^{-1}s) \d\mu_G(r)
=\int_G f(sr^{-1})g(r) \d\mu_G(r).
\end{equation}

\paragraph{Carnot-Carath\'eodory distances on connected Lie groups} Let $G$ be a connected Lie group with neutral element $e$. We consider a finite sequence $X \ov{\mathrm{def}}{=}(X_1,\ldots,X_m)$ of left invariant vector fields which generate the Lie algebra $\frak{g}$ of the group $G$ such that the vectors $X_1(e),\ldots, X_m(e)$ are linearly independent. We say that it is a family of left-invariant H\"ormander vector fields.

%For any $k \in \{ 1,\ldots,m \} $ and any $x \in G$, we define
%$X_k|_x \in \mathrm{T}_{x}G$ by 
%$$
%X_k|_x f
%\ov{\mathrm{def}}{=} \frac{\d}{\d t} f((\exp t a_k) x) \Big|_{t=0}.
%$$
%Then $X_k$ is a smooth right invariant vector field on $G$.

Let $\gamma \co [c,d] \to G$ be an absolutely continuous 
% des infos dans LENGTH STRUCTURES ON MANIFOLDS WITH CONTINUOUS RIEMANNIAN METRICS
path such that $\dot\gamma(t)$ belongs to the subspace $\Span \{ X_1|_{\gamma(t)}, \ldots, X_m|_{\gamma(t)} \}$ for almost all $t \in [c,d]$. If $\dot\gamma(t) = \sum_{k=1}^m \dot\gamma_k(t) \, X_k|_{\gamma(t)}$ for almost all $t \in [c,d]$, where $\dot\gamma_k(t) \in \R$ and where each $\dot\gamma_k$ is measurable, we can define the $p$-length of $\gamma$ by 
\begin{equation}
\label{Def-lpgamma}
\ell_p(\gamma) 
\ov{\mathrm{def}}{=} \int_c^d  \Big( \sum_{k=1}^m |\dot\gamma_k(t)|^p \Big)^{\frac{1}{p}} \d t
\end{equation}
%Since $G$ is connected \textbf{it follows from a theorem of Carath\'eodory} \cite{Car09} that f
which belongs to $[0,\infty]$. For any $s,s' \in G$ there exists such a path $\gamma \co [0,1] \to G$ with finite length with $\gamma(0) = s$ and $\gamma(1) = s'$. If $s,s' \in G$ and $1<p<\infty$ then we define the real number $\dist_\CC^p(s,s')$ between $s$ and $s'$ to be the infimum of the length of all such paths with $\gamma(0) = s$ and $\gamma(1) = s'$:
\begin{equation}
\label{distance-Carnot}
\dist_\CC^p(s,s')
\ov{\mathrm{def}}{=} \inf_{\gamma(0)=s,\gamma(1)=s'} \ell_p(\gamma).
\end{equation}
See \cite[p.~39]{VSCC92} and \cite[p.~22]{DtER03} if $p=2$. In this case, we recover the Carnot-Carath\'eodory distance $\dist_\CC(s,s')$. We refer also to \cite{Nag05}.% Then it is known that $\dist_\CC$ is a left invariant metric on $G$ inducing the same topology that the one of $G$.

If $f \co G \to \mathbb{C}$ is a smooth function and if $\gamma \co [0,1] \to G$ is an absolutely continuous path with tangents in the subspace spanned by $X_1,\ldots,X_m$ then by \cite[p.~64]{ElR01} we have
\begin{equation}
\label{Chain-rule}
\frac{\d }{\d t}f(\gamma(t))
=\sum_{k=1}^{m} \dot\gamma_k(t)(X_k f)(\gamma(t)) \quad \text{a.e. $t \in [0,1]$}
\end{equation}
i.e.
\begin{equation*}
%\label{Chain-rule-2}
\frac{\d }{\d t}f(\gamma(t))
=\big\langle \dot\gamma(t), (X f)(\gamma(t)) \big\rangle \quad \text{a.e. $t \in [0,1]$}.
\end{equation*}

%Let $U \co G \to \B(H)$ be a unitary representation of $G$ in a Hilbert space $H$. We denote by $H_\infty$ the subspace of $H$ consisting of vectors $\xi \in H$ for which the vector-valued mapping $G \to H$, $s \mapsto U_s(\xi)$ is smooth. 
We will need the following elementary inequality. In the following statement, each $\dom X_{k,p}$ is the domain of $X_k$ on $\L^p(G)$, i.e. $X_{k,p} \co \dom X_{k,p} \subset \L^p(G) \to \L^p(G)$. Recall that connected Lie groups are $\sigma$-finite under Haar measure. Let $\lambda \co G \to \B(\L^p(G))$, $s \mapsto (f \mapsto f(s^{-1} \cdot))$ be the left regular representation of $G$.

The following result is a variant of \cite[Lemma VIII.1.1 p.~106]{VSCC92}. %\textbf{VOIR AUSSI p 59 et p70}

\begin{lemma}
\label{Lemma-useful-estimate}
Suppose $1<p<\infty$ and $\frac{1}{p}+\frac{1}{p^*}=1$. Then for any $s \in G$ and any $f$ belonging to $\dom X_{1,p} \cap \cdots \cap \dom X_{m,p}$, we have
\begin{equation}
\label{Ine-pratique}
\norm{(\Id-\lambda_s)f}_{\L^p(G)}
\leq \dist_\CC^{p^*}(s,e) \bigg(\sum_{k=1}^m \norm{X_{k,p}f}_{\L^p(G)}^p \bigg)^{\frac{1}{p}}.
\end{equation}
\end{lemma}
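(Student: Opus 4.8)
The plan is to reduce to smooth functions, represent $f-\lambda_s f$ as a line integral of the vector fields $X_k$ along a nearly length-minimizing horizontal path joining $x$ to $s^{-1}x$, and then integrate over $G$ using Hölder's inequality, Minkowski's inequality, and the invariance of the Haar measure.

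First I would reduce to the case where $f$ is smooth. Each $X_{k,p}$ is closed and $\lambda_s$ is a contraction on $\L^p(G)$, so, using the characterization \eqref{Domain-closure}, if the inequality holds for $f$ in a core of $\dom X_{1,p}\cap\cdots\cap\dom X_{m,p}$ (e.g. smooth functions) it extends to the whole intersection by choosing $f_n$ smooth with $f_n\to f$ and $X_{k,p}f_n\to X_{k,p}f$ in $\L^p(G)$ simultaneously for all $k$. Assuming now $f$ smooth, fix $s\in G$ and $\epsi>0$ and use \eqref{distance-Carnot} to pick a horizontal path $\gamma\co[0,1]\to G$ with $\gamma(0)=e$, $\gamma(1)=s$, $\dot\gamma(t)\in\Span\{X_1|_{\gamma(t)},\dots,X_m|_{\gamma(t)}\}$ and $\ell_{p^*}(\gamma)\le\dist_\CC^{p^*}(s,e)+\epsi$ (the distance being symmetric under path reversal). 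For each fixed $x\in G$ the curve $t\mapsto\gamma(t)^{-1}x$ joins $x$ to $s^{-1}x$ with velocity coefficients $\dot\gamma_k(t)$, so the fundamental theorem of calculus together with the chain rule \eqref{Chain-rule} gives
\begin{equation*}
f(x)-f(s^{-1}x)=\int_0^1\sum_{k=1}^m\dot\gamma_k(t)\,(X_kf)(\gamma(t)^{-1}x)\,\d t ,
\end{equation*}
and Hölder's inequality applied to the inner sum with the conjugate exponents $p^*$ and $p$ yields the pointwise bound
\begin{equation*}
|f(x)-f(s^{-1}x)|\le\int_0^1\Big(\sum_{k=1}^m|\dot\gamma_k(t)|^{p^*}\Big)^{\frac1{p^*}}\Big(\sum_{k=1}^m|(X_kf)(\gamma(t)^{-1}x)|^{p}\Big)^{\frac1p}\d t .
\end{equation*}

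Next I would take the $\L^p(G)$-norm in the variable $x$ and apply Minkowski's integral inequality \eqref{Minkowski-Lp} with $\Omega_1=[0,1]$ and $\Omega_2=G$ to move the $\L^p$-norm inside the $t$-integral. For each fixed $t$ the substitution $u=\gamma(t)^{-1}x$ together with the left-invariance of the Haar measure gives $\int_G|(X_kf)(\gamma(t)^{-1}x)|^p\,\d x=\norm{X_{k,p}f}_{\L^p(G)}^p$, which is independent of $t$. Hence
\begin{equation*}
\norm{(\Id-\lambda_s)f}_{\L^p(G)}\le\int_0^1\Big(\sum_{k=1}^m|\dot\gamma_k(t)|^{p^*}\Big)^{\frac1{p^*}}\d t\;\Big(\sum_{k=1}^m\norm{X_{k,p}f}_{\L^p(G)}^p\Big)^{\frac1p}=\ell_{p^*}(\gamma)\Big(\sum_{k=1}^m\norm{X_{k,p}f}_{\L^p(G)}^p\Big)^{\frac1p} .
\end{equation*}
Since $\ell_{p^*}(\gamma)\le\dist_\CC^{p^*}(s,e)+\epsi$ and $\epsi>0$ is arbitrary, the claimed inequality \eqref{Ine-pratique} follows for smooth $f$, and then in general by the reduction of the first step.

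The hard part will be the rigor of the representation formula rather than the estimates themselves. The chain-rule identity is transparent for smooth $f$, but for $f$ merely lying in the $\L^p$-domains the objects $X_{k,p}f$ are a.e.-defined $\L^p$-functions and not classical derivatives, so the whole argument relies on the density/closedness reduction and on being able to choose an approximating sequence along which all $m$ quantities $X_{k,p}f_n$ converge at once. A secondary technical point is the joint measurability in $(t,x)$ and the finiteness needed to legitimately invoke Minkowski's inequality \eqref{Minkowski-Lp} (and Fubini) along the path; selecting $\gamma$ Lipschitz with measurable components $\dot\gamma_k$ and $f$ smooth is what keeps the integrand well behaved and makes these steps valid.
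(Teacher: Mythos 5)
Your proposal reproduces the paper's argument almost step for step: reduction to $f\in\C^\infty_c(G)$ via the core/closedness argument (the paper defers to exactly the approximation used in Proposition \ref{Prop-domain}, which is what you invoke), a pointwise line-integral representation of $(\Id-\lambda_s)f$ along a translated horizontal path, H\"older on the $k$-sum with exponents $(p^*,p)$, Minkowski's inequality \eqref{Minkowski-Lp} combined with invariance of the Haar measure, and finally the infimum over paths via \eqref{distance-Carnot}. The only cosmetic difference is that the paper runs the computation at the conjugate exponent (estimating $\norm{(\Id-\lambda_s)f}_{p^*}$ by the $p$-length of the path) and relabels $p\leftrightarrow p^*$ at the end, while you work directly at exponent $p$.

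There is, however, one incorrect intermediate assertion. You take $\gamma$ horizontal from $e$ to $s$ and claim that the curve $\eta(t)=\gamma(t)^{-1}x$ has ``velocity coefficients $\dot\gamma_k(t)$''. This is false in general: group inversion does not preserve the frame $(X_1,\ldots,X_m)$, it sends $X_k$ to $-Y_k$ where $Y_k$ is the corresponding right-invariant field (the paper itself uses $I_*X_k=-Y_k$ in the proof of Lemma \ref{lemmaLipCC-grad}). Hence $\eta$ is horizontal for the frame $(Y_k)$ with coefficients $-\dot\gamma_k(t)$, and rewriting $\dot\eta(t)$ in terms of the $X_k$'s twists the coefficients by the adjoint action of the group, which need not even keep them in the span of the $X_k$'s; so \eqref{Chain-rule} does not yield your displayed identity (it does for abelian $G$, which may be why the slip is easy to make). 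The repair is precisely the paper's choice: pick $\gamma$ horizontal from $e$ to $s^{-1}$ with $\ell_{p^*}(\gamma)\leq \dist_\CC^{p^*}(e,s^{-1})+\epsi$ and translate on the \emph{other} side, i.e.\ use the curve $t\mapsto\gamma(t)x$, which is horizontal with the same coefficients because the operators $X_k=\d\lambda(a_k)$ used in this paper differentiate on the left of the argument and therefore commute with right translations; one then concludes with the observation $\dist_\CC^{p^*}(e,s^{-1})=\dist_\CC^{p^*}(s,e)$, which follows from translation invariance of the distance. Since your final bound only uses the infimum of lengths, and $\dist_\CC^{p^*}(e,s)=\dist_\CC^{p^*}(e,s^{-1})$, the inequality \eqref{Ine-pratique} you arrive at is unaffected; but as written, the representation formula at the heart of your estimate is unjustified.
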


\begin{proof}
Let $f \in \C_c^\infty(G)$. Let $s \in G$ and let $\gamma \co [0,1] \mapsto G$ be an absolutely continuous path from $e$ to $s^{-1}$. For any $s' \in G$, we have
\begin{align*}
\label{}
\MoveEqLeft
((\Id-\lambda_s)f)(s')
=f(s')-(\lambda_sf)(s')
=f(s')-f(s^{-1}s')  \\    
&=-\int_{0}^{1} \frac{\d}{\d t}f(\gamma(t)s') \d t
\ov{\eqref{Chain-rule}}{=}-\int_{0}^{1} \sum_{k=1}^{m} \dot\gamma_k(t) (X_kf)(\gamma(t)s') \d t.
\end{align*} 
Consequently, using H\"older's inequality, we obtain
\begin{align}
\MoveEqLeft
\label{Divers-356}
\big|((\Id-\lambda_s)f)(s') \big|            
\leq \int_{0}^{1}  \bigg(\sum_{k=1}^{m} \dot\gamma_k(t)^p\bigg)^{\frac{1}{p}} \bigg(\sum_{k=1}^{m} \big[(X_k f)(\gamma(t)s')\big]^{p^*}\bigg)^{\frac{1}{p^*}} \d t.
\end{align}
Using Minkowski's inequality \eqref{Minkowski-Lp} and left invariance in the last equality, we deduce that 
\begin{align*}
\MoveEqLeft
\norm{(\Id-\lambda_s)f}_{p^*}
= \bigg[\int_G \big|((\Id-\lambda_s)f)(s') \big|^{p^*}  \d \mu_G(s')\bigg]^{\frac{1}{p^*}} \\
&\ov{\eqref{Divers-356}}{\leq} \bigg[\int_G \bigg(\int_{0}^{1} \bigg(\sum_{k=1}^{m} \dot\gamma_k(t)^p\bigg)^{\frac{1}{p}} \bigg(\sum_{k=1}^{m} \big[(X_k f)(\gamma(t)s')\big]^{p^*}\bigg)^{\frac{1}{p^*}} \d t\bigg)^{p^*}  \d \mu_G(s')\bigg]^{\frac{1}{p^*}}  \\
&\ov{\eqref{Minkowski-Lp}}{\leq} \int_{0}^{1} \bigg[\int_G  \bigg(\sum_{k=1}^{m} \dot\gamma_k(t)^p\bigg)^{\frac{p^*}{p}} \bigg(\sum_{k=1}^{m} \big[(X_k f)(\gamma(t)s')\big]^{p^*}\bigg) \d \mu_G(s')\bigg]^{\frac{1}{p^*}} \d t\\
%&= \int_{0}^{1} \bigg(\sum_{k=1}^{n} \dot\gamma_k(t)^2\bigg)^{\frac{1}{2}} \bigg(\int_G \sum_{k=1}^{n} \big[(X_k f)(\gamma(t)s')\big]^2 \d s'\bigg)^{\frac{1}{2}} \d t\\
&=\int_{0}^{1} \bigg(\sum_{k=1}^{m} \dot\gamma_k(t)^p\bigg)^{\frac{1}{p}} \bigg[\sum_{k=1}^{m}\int_G  \big[(X_k f)(\gamma(t)s')\big]^{p^*} \d \mu_G(s')\bigg]^{\frac{1}{p^*}} \d t \\
&= \int_{0}^{1} \bigg(\sum_{k=1}^{m} \dot\gamma_k(t)^p\bigg)^{\frac{1}{p}} \bigg[\sum_{k=1}^{m} \norm{X_kf}_{\L^{p^*}(G)}^{p^*} \bigg]^{\frac{1}{p^*}} \d t.
\end{align*}
Hence by taking the infimum over all possible paths, and observing that $\dist_\CC^{p^*}(e,s^{-1})=\dist_\CC^{p^*}(s,e)$ we obtain \eqref{Ine-pratique} with \eqref{distance-Carnot}. We conclude by using an approximation argument as in the proof of the next Proposition \ref{Prop-domain} for a general $f$.
\end{proof}

\paragraph{Growth of volume and dimensions}
Let $G$ be a connected Lie group equipped with a family $X \ov{\mathrm{def}}{=} (X_1, \ldots, X_m)$ of left-invariant H\"ormander vector fields and a left Haar measure $\mu_G$. For any $r > 0$ and any $x \in G$, we denote by $B(x,r)$ the open ball with respect to the Carnot-Carath\'eodory metric centered at $x$ and of radius $r$, and by $V(r) \ov{\mathrm{def}}{=} \mu_G(B(x,r))$ the Haar measure of any ball of radius $r$. It is well-known, e.g. \cite[p.~124]{VSCC92} that there exist $d \in \N^*$, $c,C > 0$ such that
\begin{equation}
\label{local-dim}
c r^d 
\leq V(r) 
\leq C r^d, \quad r \in ]0, 1[.
\end{equation}
The integer $d$ is called the local dimension of $(G,X)$.

When $r \geq 1$, only two situations may occur, independently of the choice of $X$ (see e.g. \cite[p.~26]{DtER03}): either $G$ has polynomial volume growth, which means that there exist $D \in \N$ and $c',C' > 0$ such that
\begin{equation}
\label{poly-growth}
c' r^D 
\leq V(r) 
\leq 
C' r^D, \quad r \geq 1
\end{equation}
or $G$ has exponential volume growth, which means that there exist $c_1,C_1, c_2,C_2 > 0$ such that
$$
c_1 \e^{c_2 r}
\leq  V(r)
\leq C_1 \e^{C_2 r}, \quad r \geq 1.
$$
When $G$ has polynomial volume growth, the integer $D$ in \eqref{poly-growth} is called the dimension at infinity of $G$. Note that, contrary to $d$, it only depends on $G$ and not on $X$, see \cite[Chapter 4]{VSCC92}. 

By \cite[II.4.5 p.~26]{DtER03} or \cite[p.~381]{Rob91}, each connected Lie group of polynomial growth is unimodular. By \cite[pp.~256-257]{Rob91} and \cite[p.~26]{DtER03}, a connected compact Lie group has polynomial volume growth with $D=0$. Recall finally that connected nilpotent Lie groups have polynomial volume growth by \cite[p.~28]{DtER03}.

%\begin{example}
%\normalfont
%FALSE The compact Lie group $\SO(3)$ of rotations in $\R^3$ is three dimensional. It is known that its local dimension is 4 by \cite[p. 322]{Rob91} and that its dimension at infinity is 2 by \cite[Example II.4.18]{DtER03}.
%\end{example}

\begin{example}
\normalfont
\label{Ex-dimension-torus}
The local dimension of the abelian compact Lie group $\T^n$ is $n$ by \cite[p.~274]{Rob91} and its dimension at infinity is of 0 since $\T^n$ is compact.
\end{example}

\begin{example}
\normalfont
\label{Ex-dimension-Stratified}
The local dimension and the dimension at infinity of a stratified Lie group are equal by \cite[II.4.15]{DtER03}. The three-dimensional Heisenberg group $\mathbb{H}_3$ (equipped with its canonical stratification) is a stratified group and its dimensions are equal to 4 by \cite[Example II.4.16]{DtER03}.
\end{example}

Let $G$ be a unimodular connected Lie group endowed with a family $(X_1, \ldots, X_m)$ of left-invariant H\"ormander vector fields and let $\mu_G$ be a Haar measure. We consider the subelliptic Laplacian $\Delta$ on $G$ defined by
\begin{equation}
\label{def-de-Delta}
\Delta 
\ov{\mathrm{def}}{=} -\sum_{k=1}^{m} X_k^2.
\end{equation}
For $1 \leq p < \infty$, let $\Delta_p \co \dom \Delta_p \subset \L^p(G)\to \L^p(G)$ be the smallest closed extension of the closable unbounded operator $\Delta|\C^\infty_c(G)$ to $\L^p(G)$. Note that the domain $\dom \Delta_p^{\frac{\alpha}{2}}$ is closed under the adjoint operation $f \mapsto \ovl{f}$. 
%\footnote{\thefootnote. Let $f \in \dom \Delta_p$. We know that $\C^\infty_c(G)$ is core of $\Delta_p$. Hence there exists a sequence $(f_n)$ of $\C^\infty_c(G)$ such that $f_n \to f$ and $\Delta_p(f_n) \to \Delta_p(f)$. We have $\ovl{f_n} \to \ovl{f}$ and $\Delta(\ovl{f_n}) = \ovl{\Delta(f_n)}=\ovl{\Delta_p(f_n)} \to \ovl{\Delta_p(f)}$. By \eqref{Domain-closure}, we conclude that $\ovl{f} \in \dom \Delta_p$ and that $\Delta_p(\ovl{f})=\ovl{\Delta_p(f)}$.}

We denote by $(T_t)_{t \geq 0}$ the associated weak* continuous semigroup of selfadjoint unital (i.e. $T_t(1)=1$) positive contractive operators on $\L^\infty(G)$, see \cite[pp.~20-21]{VSCC92}, \cite[p.~21]{DtER03} and \cite[p.~301]{Rob91}. By \cite[Proposition 4.13 p.~323]{Rob91} and \cite[Proposition 11.3.1 p.~20]{DtER03}, for any $t>0$, the operator $T_t \co \L^p(G) \to \L^p(G)$ is a convolution operator by a positive function $K_t$ of $\L^1(G)$.

Suppose $1<p<\infty$ and that the Lie group $G$ has polynomial volume growth. By \cite[Theorem 2]{Ale92} and \cite[p.~339]{CRT01}, for any $f \in \C^\infty_c(G)$ we have
\begin{equation}
\label{Riesz1}
\bnorm{\Delta_p^{\frac12}(f)}_{\L^p(G)}
\approx_p \sum_{k=1}^{m} \bnorm{X_k(f)}_{\L^p(G)}.
\end{equation}
Since $\dom \Delta_p$ is a core of $\Delta_p^{\frac12}$, a classical argument \cite[p.~29]{Ouh05} reveals that the subspace $\C^\infty_c(G)$ is a core of the operator $\Delta_p^{\frac{1}{2}}$.

The following observation is a natural complement of the equivalences \eqref{Riesz1}. Since it is always written in the literature without proof, we give an argument. Note that the subspace $\dom X_{1,p} \cap \cdots \cap \dom X_{m,p}$ is considered in the paper \cite[p.~194]{BEM13} and in the book \cite[p.~15]{DtER03} and respectively denoted by $\W_{1,2}'(G)$ and $\L_{2,1}'(G)$.

\begin{prop}
\label{Prop-domain}
Let $G$ be a unimodular connected Lie group with polynomial volume growth. Suppose $1<p<\infty$. We have $\dom \Delta_p^{\frac12} = \dom X_{1,p} \cap \cdots \cap \dom X_{m,p}$. Moreover, for any $f \in \dom \Delta_p^{\frac12}$, we have \eqref{Riesz1}.
\end{prop}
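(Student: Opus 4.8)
The plan is to deduce both assertions at once from the already established equivalence \eqref{Riesz1} on the core $\C^\infty_c(G)$, together with the closedness of all operators at play. Write $W \ov{\mathrm{def}}{=} \dom X_{1,p} \cap \cdots \cap \dom X_{m,p}$, equipped with $\norm{f}_W \ov{\mathrm{def}}{=} \norm{f}_{\L^p(G)} + \sum_{k=1}^m \norm{X_{k,p}f}_{\L^p(G)}$; since each $X_{k,p}$ is closed, $(W,\norm{\cdot}_W)$ is a Banach space, as is $\dom \Delta_p^{\frac12}$ for the graph norm of $\Delta_p^{\frac12}$. For the inclusion $\dom \Delta_p^{\frac12} \subseteq W$ and the ``Moreover'' part simultaneously, let $f \in \dom \Delta_p^{\frac12}$. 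Recalling that $\C^\infty_c(G)$ is a core of $\Delta_p^{\frac12}$, pick $(f_n) \subset \C^\infty_c(G)$ with $f_n \to f$ and $\Delta_p^{\frac12}f_n \to \Delta_p^{\frac12}f$ in $\L^p(G)$. Applying the estimate $\sum_k \norm{X_k g}_{\L^p(G)} \lesssim \norm{\Delta_p^{\frac12}g}_{\L^p(G)}$ contained in \eqref{Riesz1} to $g = f_n - f_m$ shows that each $(X_k f_n)_n$ is Cauchy, hence convergent; closedness of $X_{k,p}$ gives $f \in \dom X_{k,p}$ with $X_k f_n \to X_{k,p}f$. Thus $f \in W$, and passing to the limit in \eqref{Riesz1} yields \eqref{Riesz1} for $f$.

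The reverse inclusion $W \subseteq \dom \Delta_p^{\frac12}$ reduces to the claim that $\C^\infty_c(G)$ is dense in $(W,\norm{\cdot}_W)$. Indeed, granting this, for $f \in W$ take $(f_n) \subset \C^\infty_c(G)$ with $f_n \to f$ in $W$; the opposite estimate $\norm{\Delta_p^{\frac12}g}_{\L^p(G)} \lesssim \sum_k \norm{X_k g}_{\L^p(G)}$ from \eqref{Riesz1} shows $(\Delta_p^{\frac12}f_n)$ is Cauchy, so closedness of $\Delta_p^{\frac12}$ forces $f \in \dom \Delta_p^{\frac12}$. To prove the density claim I would regularize in two steps. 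First, mollify by left convolution: fix an approximate identity $(\phi_\epsi) \subset \C^\infty_c(G)$ of nonnegative functions supported near $e$ with $\int_G \phi_\epsi = 1$, and set $f_\epsi \ov{\mathrm{def}}{=} \phi_\epsi * f$. Using \eqref{Convolution-formulas} and the left-invariance of $X_k$ one obtains the commutation relation $X_{k,p}(\phi_\epsi * f) = \phi_\epsi * (X_{k,p}f)$ --- for $f \in \C^\infty_c(G)$ by differentiating under the integral sign, and for general $f \in \dom X_{k,p}$ by closedness of $X_{k,p}$ and the fact that $\C^\infty_c(G)$ is a core of $X_{k,p}$. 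Moreover $f_\epsi$ is smooth (the variable enters only through the smooth factor $\phi_\epsi$), and as $\epsi \to 0$ we have $f_\epsi \to f$ and $X_{k,p}f_\epsi = \phi_\epsi * (X_{k,p}f) \to X_{k,p}f$ in $\L^p(G)$, i.e. $f_\epsi \to f$ in $W$.

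Second, truncate: choose cutoffs $(\chi_n) \subset \C^\infty_c(G)$ with $0 \leq \chi_n \leq 1$, $\chi_n \to 1$ pointwise, $\sup_n \norm{X_k \chi_n}_{\L^\infty(G)} < \infty$, and with the supports of the $X_k\chi_n$ receding to infinity. Since $f_\epsi$ is smooth, the Leibniz rule $X_k(\chi_n f_\epsi) = \chi_n X_k f_\epsi + (X_k \chi_n) f_\epsi$ holds classically, $\chi_n f_\epsi \in \C^\infty_c(G)$, and $\chi_n f_\epsi \to f_\epsi$ in $W$ as $n \to \infty$: the first term converges by dominated convergence, while $\norm{(X_k\chi_n)f_\epsi}_{\L^p(G)} \leq \norm{X_k\chi_n}_{\L^\infty(G)} \norm{f_\epsi \mathbf{1}_{\supp X_k\chi_n}}_{\L^p(G)} \to 0$. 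A diagonal argument over $\epsi \to 0$ and $n \to \infty$ then produces a sequence in $\C^\infty_c(G)$ converging to $f$ in $W$, establishing the density claim.

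The main obstacle is the construction of the cutoffs $\chi_n$ with horizontal derivatives $X_k \chi_n$ uniformly bounded (in fact one can arrange $\norm{X_k\chi_n}_{\L^\infty(G)} = O(1/n)$) and with escaping supports: this is where the sub-Riemannian geometry enters. The natural choice $\chi_n = \eta(\dist_\CC(e,\cdot)/n)$, for a fixed $\eta \in \C^\infty(\R)$ equal to $1$ near $0$ and vanishing past $2$, has the correct gradient bound because $\dist_\CC(e,\cdot)$ is $1$-Lipschitz along the horizontal directions, whence $\norm{X_k\chi_n}_{\L^\infty(G)} \leq \frac{1}{n}\norm{\eta'}_{\L^\infty(\R)}$; the only nuisance is that $\dist_\CC(e,\cdot)$ is merely Lipschitz, which I would remedy by a further left-convolution smoothing of $\chi_n$ (preserving the bound $\norm{X_k\chi_n}_{\L^\infty(G)} = O(1/n)$ since $X_k(\phi * \chi_n) = \phi * (X_k\chi_n)$). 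Everything else --- the commutation of convolution with $X_{k,p}$ and the two limiting arguments --- is routine once suitable cutoffs are in hand.
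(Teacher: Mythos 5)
Your proposal is correct, and its first half coincides with the paper's proof: for the inclusion $\dom \Delta_p^{\frac12} \subseteq \dom X_{1,p} \cap \cdots \cap \dom X_{m,p}$ together with the estimate, the paper uses exactly your argument --- approximate $f$ by the core $\C^\infty_c(G)$ in the graph norm of $\Delta_p^{\frac12}$, apply \eqref{Riesz1} to differences $f_n-f_l$ to get a Cauchy sequence in each $\dom X_{k,p}$, and conclude by closedness of $X_{k,p}$, passing the estimate to the limit. Where you genuinely add something is the reverse inclusion: the paper disposes of it with the single sentence ``the proof of the reverse inclusion and estimate are similar'', which tacitly presupposes that $\C^\infty_c(G)$ is dense in $W = \dom X_{1,p} \cap \cdots \cap \dom X_{m,p}$ for the norm $\norm{f}_{\L^p(G)} + \sum_{k}\norm{X_{k,p}f}_{\L^p(G)}$ (this is the space denoted $\L'_{p,1}$ in \cite{DtER03}, where the density is established). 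That density is the real content of the reverse direction, and your two-step regularization --- mollification commuting with the $X_k$, then cutoffs $\eta(\dist_\CC(e,\cdot)/n)$ whose horizontal gradients are $O(1/n)$ because $\dist_\CC(e,\cdot)$ is $1$-Lipschitz in the horizontal directions (compare the paper's own Lemma \ref{lemmaLipCC-grad}, which gives exactly the equality $\Lip^p_\CC(f)=\norm{\nabla f}_{\L^\infty(G,\ell^{p^*}_m)}$ you need), followed by a further convolution smoothing --- is a complete proof of it. Note that you use implicitly that closed $\dist_\CC$-balls are compact, so that $\chi_n \in \C^\infty_c(G)$; this is true for connected Lie groups with H\"ormander systems, and polynomial growth is not even needed for the density step, only for \eqref{Riesz1} itself.

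One caveat of convention rather than substance: which convolution commutes with the $X_k$ depends on how the fields are realized. In Lemma \ref{Lemma-core} the paper takes $X_k = \d\lambda(a_k)$, generators of the left regular representation, and these commute with \emph{right} convolution: $\nabla \Reg_j f = \Reg_j(\nabla f)$ with $\Reg_j f = f * \varphi_j$. Your identity $X_{k,p}(\phi_\epsi * f) = \phi_\epsi * (X_{k,p} f)$ is instead the one valid for the standard differential-geometric normalization $X_k f(s) = \frac{\d}{\d t} f(s\exp(t a_k))\big|_{t=0}$, which is the convention behind the paper's chain rule \eqref{Chain-rule}. Both choices work --- one simply mollifies on the side that commutes with the chosen normalization --- so this is a matter of aligning conventions with the paper, not a gap; the remaining ingredients (the core property of $\C^\infty_c(G)$ for $X_{k,p}$, smoothness of the mollification via the unimodular change of variables, the classical Leibniz rule, dominated convergence, and the diagonal extraction) are all sound.
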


\begin{proof}
Let $f \in \dom \Delta_p^{\frac{1}{2}}$. The subspace $\C^\infty_c(G)$ is dense in $\dom \Delta_p^{\frac{1}{2}}$ equipped with the graph norm. Hence we can find a sequence $(f_n)$ of $\C^\infty_c(G)$ such that $f_n \to f$ and $\Delta_p^{\frac{1}{2}}(f_n) \to \Delta_p^{\frac{1}{2}}(f)$ in $\L^p(G)$. For any integers $n,l$ and any $1 \leq k \leq m$, we obtain
\begin{align*}
\MoveEqLeft
\norm{f_n-f_l}_{\L^p(G)} + \norm{X_k(f_n) - X_k(f_l)}_{\L^p(G)} \\
&\ov{\eqref{Riesz1}}{\lesssim_p} \norm{f_n-f_l}_{\L^p(G)} + \bnorm{\Delta_p^{\frac{1}{2}}(f_n)-\Delta_p^{\frac{1}{2}}(f_l)}_{\L^p(G)}
\end{align*} 
which shows that $(f_n)$ is a Cauchy sequence in $\dom X_{k,p}$. By the closedness of $X_{k,p}$ we infer that this sequence converges to some $g \in \dom X_{k,p}$ equipped with the graph norm. Since $\dom X_{k,p}$ equipped with the graph norm is continuously embedded into $\L^p(G)$, we have $f_n \to g$ in $\L^p(G)$, and therefore $f=g$ since $f_n \to f$. It follows that $f \in \dom X_{k,p}$. This proves the inclusion $\dom \Delta_p^{\frac{1}{2}} \subset \dom X_{k,p}$. Moreover, for any integer $n$, we have 
$$
\norm{X_k(f_n)}_{\L^p(G)} 
\ov{\eqref{Riesz1}}{\lesssim_p}\bnorm{\Delta_p^{\frac{1}{2}}(f_n)}_{\L^p(G)}.
$$
Since $f_n \to f$ in $\dom X_{k,p}$ and in $\dom \Delta_p^{\frac{1}{2}}$ both equipped with the graph norm, we conclude that
$$
\bnorm{X_k(f)}_{\L^p(G)} 
\lesssim_p\bnorm{\Delta_p^{\frac{1}{2}}(f)}_{\L^p(G)}.
$$
The proof of the reverse inclusion and estimate are similar.
\end{proof}

Suppose $1 \leq p < \infty$ and $\alpha > 0$. When $f \in \dom \Delta^{\frac{\alpha}{2}}_p$, we let
\begin{equation}
\label{Def-Lpalpha}
\norm{f}_{\dot{\L}^p_\alpha(G)}
\ov{\mathrm{def}}{=} \bnorm{\Delta_p^{\frac{\alpha}{2}}(f)}_{\L^p(G)}.
%\quad \text{and} \quad
%\norm{f}_{\L^p_\alpha(G)}
%\ov{\mathrm{def}}{=} \bnorm{(\Id+\Delta_p)^{\frac{\alpha}{2}}(f)}_{p}.
\end{equation}
It is related to Sobolev towers, see \cite[Section II.5]{EnN00} and \cite[Section 15.E]{KuW04}. Note that $\norm{\cdot}_{\dot{\L}^p_\alpha(G)}$ is a seminorm on the subspace $\dot{\L}^p_\alpha(G)$ of $\L^p(G)$ (and even a norm if $G$ is not compact). In this paper, we have no intention to define and to use a Banach space $\dot{\L}^p_\alpha(G)$.

Assume that the unimodular connected Lie group $G$ has \textit{polynomial volume growth}. For any $\alpha \in [0,1]$ and any $p \in ]1, +\infty[$, by \cite[Theorem 3]{CRT01} the space $\dot{\L}^p_\alpha(G) \cap \L^\infty(G)$ is an algebra under pointwise product. In \cite[pp.~289-290]{CRT01}, the authors give a simple proof of the case $\alpha=1$. More precisely for all $f,g \in \dot{\L}^p_\alpha(G) \cap \L^\infty(G)$ we have $fg \in \dot{\L}^p_\alpha(G) \cap \L^\infty(G)$ and \eqref{Leibniz-Coulhon}. If $G$ is in addition nilpotent, the conclusion holds for all $\alpha \geq 0$. See also \cite[Theorem 1.4]{GuK96} for the particular case $G=\R^n$ with some generalizations.
In the following statement, the seminorm $\norm{\cdot}_{\dot{\L}^p_\alpha(G)}$ is defined on 
\begin{equation}
\label{def-domaine}
\dom \norm{\cdot}_{\dot{\L}^p_\alpha(G)} 
\ov{\mathrm{def}}{=} \C_0(G) \cap \dom \Delta_p^{\frac{\alpha}{2}}
\end{equation}
where $\C_0(G)$ is the Banach space of complex-valued continuous functions on $G$ that vanish at infinity. Recall that $\C_0(G)$ is equipped with the restriction of the norm $\norm{\cdot}_{\L^\infty(G)}$. If the group $G$ is compact, we have of course the equality $\C_0(G)=\C(G)$ where $\C(G)$ is the Banach space of complex-valued continuous functions on $G$.

\begin{lemma}
\label{Lemma-recapitulatif}
Let $G$ be a connected unimodular Lie group. Suppose $1<p<\infty$ and $\alpha>0$.
\begin{enumerate}
	\item The $\Cbb$-subspace $\dom \norm{\cdot}_{\dot\L^p_\alpha(G)}$ is dense in the Banach space $\C_0(G)$.
	\item The subspace $\dom \norm{\cdot}_{\dot\L^p_\alpha(G)}$ is closed under the adjoint operation $f \mapsto \ovl{f}$.
	\item If $G$ is compact, we have
\begin{equation}
\label{equ-2-proof-prop-Fourier-quantum-compact-metric}
\left\{ f \in \dom \norm{\cdot}_{\dot\L^p_\alpha(G)} : \: \norm{f}_{\dot\L^p_\alpha(G)} = 0 \right\} 
= \Cbb 1_{\C(G)}.
\end{equation}
If $G$ has polynomial volume growth and is non-compact, we have
\begin{equation}
\label{equ-2-proof-prop-Fourier-quantum-compact-metric-bis}
\left\{ f \in \dom \norm{\cdot}_{\dot\L^p_\alpha(G)} : \: \norm{f}_{\dot\L^p_\alpha(G)} = 0 \right\} 
= \{0\}.
\end{equation}
\item If $G$ is compact, the seminorm $\norm{\cdot}_{\dot{\L}^p_\alpha(G)}$ is lower semicontinuous.
\end{enumerate}
\end{lemma}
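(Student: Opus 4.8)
The plan is to establish the four assertions separately, each time reducing the claim to a structural property of the sectorial operator $\Delta_p$, of its functional calculus, or of its heat semigroup $(T_t)_{t \geq 0}$. I expect the non-compact case of assertion (3) to be the only genuinely non-routine point.

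For assertion (1), I would first check that $\C_c^\infty(G) \subseteq \dom \norm{\cdot}_{\dot\L^p_\alpha(G)}$. Indeed $\C_c^\infty(G) \subseteq \C_0(G)$, and since $\Delta$ maps $\C_c^\infty(G)$ into itself we get $\C_c^\infty(G) \subseteq \dom \Delta_p^n$ for every integer $n$, hence $\C_c^\infty(G) \subseteq \dom \Delta_p^{\frac{\alpha}{2}}$ because $\dom \Delta_p^n \subseteq \dom \Delta_p^{\frac{\alpha}{2}}$ whenever $n \geq \frac{\alpha}{2}$. As $\C_c^\infty(G)$ is uniformly dense in $\C_0(G)$, density follows at once. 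For assertion (2), I would use that the vector fields $X_k$ are real, so each $\Id+\Delta_p$ commutes with the antilinear isometry $C \co f \mapsto \ovl{f}$, and therefore so do the resolvents $(t+\Delta_p)^{-1}$ for $t>0$. Applying $C$ to the Balakrishnan representation \eqref{Balakrishnan} (whose constant and weight $t^{\frac{\alpha}{2}-1}$ are real) on the core $\dom \Delta_p^n$ gives $\ovl{\Delta_p^{\frac{\alpha}{2}}(f)} = \Delta_p^{\frac{\alpha}{2}}(\ovl{f})$ there, and closedness of $\Delta_p^{\frac{\alpha}{2}}$ propagates this to the whole domain; since $C$ also preserves $\C_0(G)$, the domain is stable under conjugation. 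This is the fact already recorded just after \eqref{def-de-Delta}.

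For assertion (3), the starting observation is that $\norm{f}_{\dot\L^p_\alpha(G)} = 0$ is equivalent to $f \in \ker \Delta_p^{\frac{\alpha}{2}} = \ker \Delta_p$ by \eqref{inclusion-range}, and that $\ker \Delta_p$ coincides with the fixed space $\{f \co T_t(f) = f \text{ for all } t\}$ of the semigroup. If $G$ is compact, then $f = T_t(f) = K_t * f$ is smooth (the subelliptic heat kernel is smooth by hypoellipticity), hence $f \in \L^2(G)$; integrating by parts and using the skew-adjointness of each $X_k$ on the unimodular group gives $\sum_{k=1}^m \norm{X_k f}_{\L^2(G)}^2 = \la \Delta f, f \ra = 0$, so $X_k f = 0$ for all $k$, and the H\"ormander condition together with connectedness of $G$ forces $f$ to be constant; conversely constants lie in the kernel, which yields $\Cbb 1_{\C(G)}$. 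If $G$ is non-compact of polynomial growth, I would instead exploit decay: interpolating the global ultracontractive bound $\norm{T_t}_{\L^1(G) \to \L^\infty(G)} \lesssim t^{-\frac{D}{2}}$ (valid for $t \geq 1$, with $D \geq 1$ the dimension at infinity, since $D=0$ would force finite volume and hence compactness) against $\norm{T_t}_{\L^\infty(G) \to \L^\infty(G)} \leq 1$ gives $\norm{T_t}_{\L^p(G) \to \L^\infty(G)} \lesssim t^{-\frac{D}{2p}} \to 0$; consequently $\norm{f}_{\L^\infty(G)} = \norm{T_t(f)}_{\L^\infty(G)} \leq \norm{T_t}_{\L^p(G) \to \L^\infty(G)} \norm{f}_{\L^p(G)} \to 0$, so $f = 0$. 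This use of the polynomial-growth heat kernel estimate is the step I regard as the main obstacle.

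For assertion (4), lower semicontinuity is understood with respect to the uniform norm of $\C(G)$, so it suffices to show that each sublevel set $\{f \in \C(G) \cap \dom \Delta_p^{\frac{\alpha}{2}} \co \norm{\Delta_p^{\frac{\alpha}{2}}(f)}_{\L^p(G)} \leq c\}$ is closed. Given $f_n \to f$ uniformly in this set, compactness of $G$ gives $f_n \to f$ in $\L^p(G)$ as well; since $1<p<\infty$ makes $\L^p(G)$ reflexive, the bounded sequence $\big(\Delta_p^{\frac{\alpha}{2}}(f_n)\big)$ has a subsequence converging weakly to some $g$ with $\norm{g}_{\L^p(G)} \leq c$ by weak lower semicontinuity of the norm. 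The graph of the closed operator $\Delta_p^{\frac{\alpha}{2}}$ is a norm-closed convex subset of $\L^p(G) \times \L^p(G)$, hence weakly closed, so $(f,g)$ belongs to it: this gives $f \in \dom \Delta_p^{\frac{\alpha}{2}}$ and $\Delta_p^{\frac{\alpha}{2}}(f) = g$, whence $\norm{f}_{\dot\L^p_\alpha(G)} = \norm{g}_{\L^p(G)} \leq c$, and $f \in \C(G)$ as a uniform limit of continuous functions.
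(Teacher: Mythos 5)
Your proof is correct. Assertions (1), (2) and (4) follow essentially the paper's own route: density via $\C^\infty_c(G) \subset \dom \Delta_p^n$ and the core property, conjugation-stability via the Balakrishnan formula \eqref{Balakrishnan} on the core plus closedness of $\Delta_p^{\frac{\alpha}{2}}$, and lower semicontinuity via weak compactness of bounded sets in $\L^p(G)$ together with the weak closedness of the (closed convex) graph — the paper uses a subnet where you use reflexivity to extract a subsequence, an immaterial difference.

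Assertion (3) is where you genuinely diverge, in both sub-cases. After the common reduction $\norm{f}_{\dot\L^p_\alpha(G)}=0 \Leftrightarrow f \in \ker\Delta_p^{\frac{\alpha}{2}} = \ker\Delta_p$ via \eqref{inclusion-range}, the paper stays at the level of $\L^p$-identities: it invokes Proposition \ref{Prop-domain} and the Riesz-transform equivalence \eqref{Riesz1} to get $X_k f = 0$ in $\L^p(G)$ for every $k$, then the Carnot--Carath\'eodory translation estimate of Lemma \ref{Lemma-useful-estimate} to conclude $\lambda_s f = f$ for all $s$, whence $f$ is constant (compact case) or zero (non-compact case, since a nonzero constant is not in $\L^p$ over infinite measure). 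You instead treat the compact case by elliptic regularity — hypoellipticity makes any element of $\ker\Delta_p$ smooth, and the $\L^2$ integration by parts $\sum_k \norm{X_k f}_{\L^2}^2 = \langle \Delta f, f\rangle = 0$ plus the H\"ormander condition forces constancy — and the non-compact case by large-time ultracontractive decay, $\norm{T_t}_{\L^p \to \L^\infty} \lesssim t^{-\frac{D}{2p}} \to 0$ combined with $T_t f = f$, where your observation that $D \geq 1$ for a non-compact group of polynomial growth is correct (bounded ball volumes would give finite Haar measure, hence compactness). Both routes are sound; the trade-off is that your argument avoids the deep equivalence \eqref{Riesz1} of Alexopoulos and Lemma \ref{Lemma-useful-estimate}, at the cost of importing two other classical external inputs (H\"ormander's hypoellipticity theorem and the global on-diagonal heat kernel bound $\norm{T_t}_{\L^1 \to \L^\infty} \lesssim V(\sqrt{t})^{-1}$ from \cite{VSCC92}), whereas the paper's choice reuses machinery it has already set up and needs again later (e.g.\ in Proposition \ref{Prop-loc-1} and Theorem \ref{Th-recover-metric}), and its translation-invariance conclusion handles the compact and non-compact cases uniformly.
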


\begin{proof}
1. Recall that the space $\dom \Delta_p^n$ is a core of the operator $\Delta_p^{\frac{\alpha}{2}}$ if $\frac{\alpha}{2}<n$. Consequently the domain $\dom \norm{\cdot}_{\dot\L^p_\alpha(G)}\ov{\eqref{def-domaine}}{=} \C_0(G) \cap \dom \Delta_p^{\frac{\alpha}{2}}$ contains the subspace $\C^\infty_c(G)$ of $\C_0(G)$. Note that this subspace is dense in $\C_0(G)$ by regularization by \cite[Theorem 2.11]{Mag92}. We infer that the $\Cbb$-subspace $\dom \norm{\cdot}_{\L^p_\alpha(G)}$ is dense in the Banach space $\C_0(G)$. 

2. Note that the space $\C_0(G)$ is obviously closed under the adjoint operation $f \mapsto \ovl{f}$. We will show that $\dom \norm{\cdot}_{\dot{\L}^p_\alpha(G)} $ is equally closed under the same operation. 

Let $f \in \dom \Delta_p^{\frac{\alpha}{2}}$. We know that the subspace $\dom \Delta_p^n$ is core of $\Delta_p^{\frac{\alpha}{2}}$. Hence there exists a sequence $(f_j)$ of $\dom \Delta_p^n$ such that $f_j \to f$ and $\Delta_p^{\frac{\alpha}{2}}(f_j) \to \Delta_p^{\frac{\alpha}{2}}(f)$. We have $\ovl{f_j} \to \ovl{f}$ and $\Delta_p^{\frac{\alpha}{2}}(\ovl{f_j}) = \ovl{\Delta_p^{\frac{\alpha}{2}}(f_j)} \to  \ovl{\Delta_p^{\frac{\alpha}{2}}(f)}$ where the equality can be seen with \eqref{Balakrishnan}. By \eqref{Domain-closure}, we conclude that $\ovl{f} \in \dom \Delta_p^{\frac{\alpha}{2}}$ and that $\Delta_p^{\frac{\alpha}{2}}(\ovl{f})=\ovl{\Delta_p^{\frac{\alpha}{2}}(f)}$. We conclude that $\dom \norm{\cdot}_{\L^p_\alpha(G)}$ is closed under the adjoint operation $f \mapsto \ovl{f}$. 

3. We have $\Delta_p(1) \ov{\eqref{def-de-Delta}}{=} -\sum_{k=1}^{m} X_k^2(1)= 0$. Hence the constant function $1$ belongs to $\ker \Delta_p \ov{\eqref{inclusion-range}}{=} \ker \Delta_p^{\frac \alpha2}$. We conclude that $\norm{1}_{\dot{\L}^p_\alpha(G)}=\bnorm{\Delta_p^{\frac\alpha2}(1)}_p=0$.

In the other direction, if $\norm{f}_{\dot{\L}^p_\alpha(G)} = 0$, we have $\bnorm{\Delta_p^{\frac\alpha2}(f)}_p=0$. Hence $f$ belongs to $\ker \Delta_p^{\frac\alpha2}$. By \eqref{inclusion-range}, we deduce that $\bnorm{\Delta_p^{\frac12}(f)}_p=0$. Then according to Proposition \ref{Prop-domain} and \eqref{Riesz1}, we have $\norm{X_k(f)}_p = 0$ for any $k$. By Lemma \ref{Lemma-useful-estimate}, we infer that $\lambda_s(f)=f$ for any $s \in G$. If $G$ is compact, we conclude that the function $f$ is constant, that is $f \in \Cbb 1$ and that $f=0$ if $G$ is not compact.

4. Let $f \in \C(G)$ and $(f_n)$ be a sequence of elements of $\C(G) \cap \dom \Delta^{\frac\alpha2}_p$ such that $(f_n)$ converges to $f$ for the norm topology of $\C(G)$ and $\norm{f_n}_{\dot{\L}^p_\alpha(G)} \leq 1$ for any $n$, that is $\bnorm{\Delta_p^{\frac{\alpha}{2}}(f_n)}_{\L^p(G)} \leq 1$ by \eqref{Def-Lpalpha}. We have to prove that $f$ belongs to $\dom \Delta^{\frac\alpha2}_p$ and that $\norm{f}_{\dot{\L}^p_\alpha(G)} \leq 1$. 

Since $\norm{\cdot}_{\L^p(G)} \leq \norm{\cdot}_{\C(G)}$, the sequence $(f_n)$ converges to $f$ for the norm topology of $\L^p(G)$, hence for the weak topology of $\L^p(G)$. Note that the sequence $\Delta_p^{\frac{\alpha}{2}}(f_n)$ is bounded in the Banach space $\L^p(G)$. Since bounded sets are weakly relatively compact by \cite[Theorem 2.8.2]{Meg98}, there exists a weakly convergent subnet $\big(\Delta_p^{\frac{\alpha}{2}}f_{n_j}\big)$. Then $\big(f_{n_j},\Delta_p^{\frac{\alpha}{2}}f_{n_j}\big)$ is a weakly convergent net in the graph of the closed operator $\Delta_p^{\frac{\alpha}{2}}$. Note that this graph is closed and convex, hence weakly closed by \cite[Theorem 2.5.16]{Meg98}. Thus the limit of $\big(f_{n_j},\Delta_p^{\frac{\alpha}{2}}f_{n_j}\big)$ belongs again to the graph and is of the form $\big(g,\Delta_p^{\frac{\alpha}{2}}g\big)$ for some $g \in \dom \Delta_p^{\frac{\alpha}{2}}$. In particular, $(f_{n_j})$ converges weakly to $g$ and $\Delta_p^{\frac{\alpha}{2}}(f_{n_j})$ converges weakly to $\Delta_p^{\frac{\alpha}{2}}(g)$. We infer that $f=g$. We conclude that $f$ belongs to $\dom \Delta_p^{\frac{\alpha}{2}}$. Moreover, using the weakly lower semicontinuity of the norm \cite[Theorem 2.5.21]{Meg98}, we obtain
\begin{align*}
\MoveEqLeft
\norm{f}_{\dot{\L}^p_\alpha(G)}
\ov{\eqref{Def-Lpalpha}}{=}\bnorm{\Delta_p^{\frac{\alpha}{2}}(f)}_{\L^p(G)}
\leq \liminf_j \bnorm{\Delta_p^{\frac{\alpha}{2}}(f_{n_j})}_{\L^p(G)}
\leq 1.
\end{align*}
%Let $x \in \C(G)$ and $(x_n)$ be a sequence of elements of $\C(G) \cap \dom \Delta^{\frac12}_p$ such that $\norm{x_n-x}_{\C(G)} \to 0$ and $\norm{x_n}_{\dot{\L}^p_1(G)} \leq 1$ for any $n$, that is $\bnorm{\Delta_p^{\frac{1}{2}}(x_{n})}_{p} \leq 1$ by \eqref{Def-Lpalpha}. We have to show that $x$ belongs to $\dom \Delta^{\frac12}_p$ and that $\norm{x}_{\dot{\L}^p_1(G)} \leq 1$. Since $\norm{\cdot}_{\L^p(G)} \leq \norm{\cdot}_{\C(G)}$, we have 
%$$
%\norm{x_n-x}_{\L^p(G)} \to 0.
%$$  
%Then $\big(x_n,\Delta_p^{\frac{1}{2}}(x_n)\big)$ is a sequence of elements of the graph of the closed operator $\Delta_p^{\frac{1}{2}}$, which is bounded for the graph norm. Note that this closed graph is convex, hence weakly closed by \cite[Theorem 2.5.16]{Meg98}. Since bounded sets are weakly relatively compact by \cite[Theorem 2.8.2]{Meg98}, there exists a subnet $\big(x_{n_i},\Delta_p^{\frac{1}{2}}(x_{n_i})\big)$ which converges weakly to an element $\big(z,\Delta_p^{\frac{1}{2}}(z)\big)$ of the graph of $\Delta_p^{\frac{1}{2}}$. In particular, the net $(x_{n_i})$ converges weakly to $z$ and the net $\big(\Delta_p^{\frac{1}{2}}(x_{n_i})\big)$ converges weakly to $\Delta_p^{\frac{1}{2}}(z)$. Then necessarily $x=z$. By \eqref{Def-operateur-ferme}, we conclude that $x$ belongs to $\dom \Delta_p^{\frac{1}{2}}$. Moreover, with \cite[Theorem 2.5.21]{Meg98}, we have by weak convergence
%\begin{align*}
%\MoveEqLeft
%\norm{x}_{\dot{\L}^p_1(G)}
%\ov{\eqref{Def-Lpalpha}}{=}\bnorm{\Delta_p^{\frac{1}{2}}(x)}_{p} 
%\leq \liminf_{i} \bnorm{\Delta_p^{\frac{1}{2}}(x_{n_i})}_{p}
%\leq 1.
%\end{align*}
\end{proof}

%%%%%%%%%%%%%%%%%%%%%%%%%%%%%%%%%%%%%%%%%%%%%%%%%%%%%%%%%%%%%%%%%%%%%%%%%%%%%%%%%%%%%%%%%%%%%%%%%%%%%
\section{Quantum compact metric spaces}
\label{Sec-quantum-compact-matric-spaces}

\paragraph{Lipschitz pairs and quantum compact metric spaces} 
Following \cite[Definition 2.3]{Lat16a}, a Lipschitz pair $(A,\norm{\cdot})$ is a $\mathrm{C}^*$-algebra $A$ equipped with a seminorm $\norm{\cdot}$ defined on a dense subspace $\dom \norm{\cdot}$ of the selfadjoint part $(uA)_\sa$ such that
\begin{equation}
\label{point1}
\left\{a \in \dom \norm{\cdot} : \norm{a} = 0 \right\} 
= \R1_{uA}
\end{equation}
where $uA$ is the unitization of the algebra $A$. If $A$ is in addition unital, we say that $(A,\norm{\cdot})$ is a unital Lipschitz pair.

Recall that a state of a $\mathrm{C}^*$-algebra $A$ is a positive linear form $\varphi$ on $A$ with $\norm{\varphi}=1$. If $X$ is a compact topological space and if $A=\C(X)$, a state is the integral associated to a regular Borel measure of probability on $X$. 

A pair $(A,\norm{\cdot})$ is a quantum compact metric space when:
\begin{enumerate}
\item $(A,\norm{\cdot})$ is a unital Lipschitz pair. 

%\footnote{\thefootnote. The original formulation of the distance $\dist_{\mk}(\mu,\nu) $ between two probability measures $\mu$ and $\nu$ on $X$ is given by the infimum of $\int_{X \times X} d(x,y) \d \pi(x,y)$ over all Radon probability measures $\pi$ on $X \times X$ whose marginals are given by $\mu$ and $\nu$.}
\item The Monge-Kantorovich metric on the set $\S(A)$ of the states of $A$, defined for any two states $\varphi,\psi \in \S(A)$ by:
\begin{equation}
\label{MongeKant-def}
\dist_{\MK}(\varphi, \psi) 
\ov{\mathrm{def}}{=} \sup \left\{ |\varphi(a) - \psi(a)| : a \in \dom \norm{\cdot} \text{ and }\norm{a} \leq 1 \right\}\text{,}
\end{equation}
induces the weak* topology on $\S(A)$.
\end{enumerate}
In this case, we say that $\norm{\cdot}$ is a Lip-norm. We refer to the nice surveys \cite{Lat16a} and \cite{Rie04} and references therein for more information.

\begin{example}
\label{ex-Lip-functions}
\normalfont
If $(X,\dist)$ is a compact metric space, a fundamental example \cite[Example 2.6]{Lat16a}, \cite[Example 2.9]{Lat16b} is given by $(\C(X),\Lip)$ where $\C(X)$ is the commutative $\C^*$-algebra of continuous functions on $X$ and where $\Lip$ is the Lipschitz seminorm, defined for any Lipschitz function $f \co X \to \Cbb$ by 
\begin{equation}
\label{Lip-eq}
\Lip(f) 
\ov{\mathrm{def}}{=} \sup\left\{\frac{|f(x)-f(y)|}{\dist(x,y)} : x,y\in X,x\not=y \right\}.
\end{equation}
The set of real Lipschitz functions is norm-dense in $\C(X)_\sa$ by the Stone-Weierstrass theorem. Indeed, $\Lip(X)$ contains the constant functions. Moreover, $\Lip(X)$ separates points in $X$. If $x_0,y_0 \in X$ with $x_0 \not=y_0$, we can use the lipschitz function $f \co X\to \R$, $x \mapsto \dist(x,y_0)$ since we have $f(x_0)>0=f(y_0)$. Moreover, it is immediate that a function $f$ has zero Lipschitz constant if and only if it is constant on $X$, i.e. \eqref{point1} is satisfied. 

%Now, following \cite[Definition 2.2]{Rie5} (see also \cite[Theorem 2.42]{Lat16}, \cite[Definition 1.2]{Lat3} or \cite[Definition 2.6]{Lat6}), we introduce a notion of quantum compact metric space. Recall that a linear functional $\varphi$ on an order-unit space $\A$ is a state \cite[p.~72]{Alf1} if $\norm{\varphi}=\varphi(1_\A)=1$.

%\begin{defi}
%\label{Def-quantum-compact-metric-space}
%A quantum compact metric space $(\A,\norm{\cdot})$ is a unital Lipschitz pair whose associated Monge-Kantorovich metric
%\begin{equation}
%\label{Distance-Monge}
%\dist_{\mk}(\varphi,\psi) 
%\ov{\mathrm{def}}{=}\sup \big\{|\varphi(a)-\psi(a)| : a \in \A, \norm{a} \leq 1 \big\}, \quad \varphi, \psi \in \S(\A)
%\end{equation}
%metrizes the weak* topology restricted to the state space $\S(\A)$ of $\A$. When a Lipschitz pair $(\A,\norm{\cdot})$ is a quantum compact metric space, the seminorm $\norm{\cdot}$ is referred to as a Lip-norm.
%\end{defi}

In the case of $(\C(X),\Lip)$, the equality \eqref{MongeKant-def} gives the dual formulation of the classical Kantorovich-Rubinstein metric \cite[Remark 6.5]{Vil09} for Borel probability measures $\mu$ and $\nu$ on $X$
\begin{equation}
\label{Distance-Monge-commutatif}
\dist_{\mk}(\mu,\nu) 
\ov{\mathrm{def}}{=} \sup \left\{\left|\int_X f \d\mu- \int_X f\d \nu \right| : f \in \C(X)_\sa, \Lip(f) \leq 1 \right\}.
\end{equation}
which is a basic concept in optimal transport theory \cite{Vil09}. Considering the Dirac measures $\delta_x$ and $\delta_y$ at points $x,y \in X$ instead of $\mu$ and $\nu$, we recover the distance $\dist(x,y)$ with the formula \eqref{Distance-Monge-commutatif}.

\paragraph{Characterizations of quantum compact metric spaces} The compatibility of Monge-Kantorovich metric with the weak* topology is hard to check directly in general. Fortunately, there exists a condition which is more practical. This condition is inspired by the fact that Arz\'ela-Ascoli's theorem shows that for any $x \in X$ the set
$$
\big\{f \in \C(X)_\sa :  \Lip(f) \leq 1, f(x)= 0 \big\}
$$
is norm relatively compact and it is known that this property implies that \eqref{Distance-Monge-commutatif} metrizes the weak* topology on the space of Borel probability measures on $X$. Now, we give sufficient conditions in order to obtain quantum compact metric spaces, \cite[Theorem 2.43]{Lat16a}. See also \cite[Proposition 1.3]{OzR05}.

\begin{prop}
\label{Prop-carac-quantum}
Let $(A,\norm{\cdot})$ be a unital Lipschitz pair. The following assertions are equivalent:
\begin{enumerate}
\item[(a)] $(A,\norm{\cdot})$ is a quantum compact metric space,

\item[(b)] there exists a state $\mu \in \S(A)$ such that the set $
\left\{ a \in A_\sa : \norm{a} \leq 1, \mu(a) = 0 \right\}$ is relatively compact in $A$ for the topology of the norm of $A$,

\item[(c)] for all states $\mu \in \S(A)$, the set $\left\{ a \in A_\sa : \norm{a} \leq 1, \mu(a) = 0 \right\}$ is relatively compact in $A$ for the topology of the norm of $A$.
%\item the set:
%\begin{equation*}
%\left\{ a\in\sa{\A} : \norm{a} \leq 1, \|a\|_\A \leq 1 \right\}
%\end{equation*}
%is norm precompact in $\A$ and $\diam{\StateSpace(\A)}{\Kantorovich{\Lip}} < \infty$.
\end{enumerate}
\end{prop}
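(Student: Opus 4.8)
The plan is to prove the cyclic chain $(a)\Rightarrow(c)\Rightarrow(b)\Rightarrow(a)$, where $(c)\Rightarrow(b)$ is immediate because one only has to select a single state. Throughout I would use two standard facts: since $A$ is unital, $\S(A)$ is weak* compact by the Banach--Alaoglu theorem, and for selfadjoint $a$ one has $\norm{a}_A=\sup_{\varphi\in\S(A)}|\varphi(a)|$. I would also record a reduction. Writing $B_\mu\ov{\mathrm{def}}{=}\{b\in A_\sa:\norm{b}\leq 1,\ \mu(b)=0\}$ for a state $\mu$, note that if $a\in\dom\norm{\cdot}$ with $\norm{a}\leq 1$ then $a-\mu(a)1_{uA}\in B_\mu$ (the seminorm annihilates constants) and $\varphi(a)-\psi(a)=\varphi(a-\mu(a)1_{uA})-\psi(a-\mu(a)1_{uA})$ for all states $\varphi,\psi$, because $\varphi(1_{uA})=\psi(1_{uA})=1$. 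Hence $\dist_{\MK}(\varphi,\psi)=\sup_{a\in B_\mu}|\varphi(a)-\psi(a)|$.

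For $(b)\Rightarrow(a)$, assume $B_\mu$ is norm relatively compact, hence norm bounded; the reduction then shows $\dist_{\MK}$ has finite diameter and is a genuine metric. It remains to prove that the weak* topology and the $\dist_{\MK}$ topology coincide on $\S(A)$, and since $\dist_{\MK}$ is Hausdorff while the weak* topology is compact, it suffices to show the identity map $(\S(A),\text{weak*})\to(\S(A),\dist_{\MK})$ is continuous. This is where compactness of $B_\mu$ is decisive: given $\epsilon>0$, cover the compact set $\overline{B_\mu}$ by finitely many norm balls of radius $\epsilon/4$ with centres $a_1,\dots,a_N\in B_\mu$; if a net $\varphi_i\to\varphi$ weak*, then eventually $|\varphi_i(a_j)-\varphi(a_j)|<\epsilon/4$ for each of the finitely many $j$, and a three-term estimate using $\norm{\varphi_i}=\norm{\varphi}=1$ gives $\sup_{a\in B_\mu}|\varphi_i(a)-\varphi(a)|<\epsilon$ eventually, i.e. $\dist_{\MK}(\varphi_i,\varphi)\to 0$. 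A continuous bijection from a compact space onto a Hausdorff space is a homeomorphism, so the two topologies agree and $(a)$ holds.

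For $(a)\Rightarrow(c)$, fix an arbitrary state $\mu$. Since $\dist_{\MK}$ induces the compact weak* topology, $(\S(A),\dist_{\MK})$ is a compact metric space; in particular $\dist_{\MK}$ has finite diameter $R$, and taking $\psi=\mu$ in the reduction gives $\norm{a}_A\leq R$ for all $a\in B_\mu$. The key step is to realize $B_\mu$ as a family of functions and apply Arzel\`a--Ascoli. The assignment $a\mapsto\hat a$, $\hat a(\varphi)\ov{\mathrm{def}}{=}\varphi(a)$, is a linear isometry of $(A_\sa,\norm{\cdot}_A)$ into $(\C(\S(A),\R),\norm{\cdot}_\infty)$ with closed range. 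For $a\in B_\mu$ the function $\hat a$ satisfies $\norm{\hat a}_\infty=\norm{a}_A\leq R$ and is $1$-Lipschitz for $\dist_{\MK}$, since $|\hat a(\varphi)-\hat a(\psi)|=|\varphi(a)-\psi(a)|\leq\dist_{\MK}(\varphi,\psi)$; because $\dist_{\MK}$ metrizes the topology of $\S(A)$, this uniform Lipschitz bound is exactly equicontinuity. Arzel\`a--Ascoli then shows $\{\hat a:a\in B_\mu\}$ is relatively compact in $\C(\S(A),\R)$, and pulling back through the isometric embedding yields relative compactness of $B_\mu$ in $A_\sa$, which is $(c)$.

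The main obstacle is the implication $(a)\Rightarrow(c)$: the crux is to convert the abstract hypothesis that $\dist_{\MK}$ induces the weak* topology into the concrete statement that $B_\mu$ corresponds to a uniformly bounded, uniformly Lipschitz --- hence equicontinuous --- family of continuous functions on the compact metric space $(\S(A),\dist_{\MK})$, so that Arzel\`a--Ascoli applies, and then to transport this compactness back to $A$ through the isometry $a\mapsto\hat a$. The finiteness of the diameter of $(\S(A),\dist_{\MK})$, which relies essentially on the unitality of $A$ (the normalization of states), is the small but indispensable preliminary underpinning both nontrivial implications.
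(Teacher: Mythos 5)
Your proof is correct and follows essentially the standard route: the paper itself gives no proof of this proposition but cites \cite[Theorem 2.43]{Lat16a} and \cite[Proposition 1.3]{OzR05}, whose arguments are exactly yours — the reduction to the pivoted set $B_\mu$, the compact-to-Hausdorff continuous bijection for (b)$\Rightarrow$(a), and for (a)$\Rightarrow$(c) the Arzel\`a--Ascoli theorem on $(\S(A),\dist_{\MK})$ transported through the isometry $a \mapsto \hat{a}$ using $\norm{a}_A = \sup_{\varphi \in \S(A)} |\varphi(a)|$ for $a \in A_\sa$. The only point worth making explicit is that $\dist_{\MK}$ is a priori an \emph{extended} metric, so the finiteness of the diameter in (a)$\Rightarrow$(c) needs one extra line: the set where $\dist_{\MK}(\mu,\cdot)$ is finite is open and closed in the metric topology, and $\S(A)$ is weak*-compact and convex, hence connected, so this set is all of $\S(A)$ and the diameter bound $R$ exists, after which your Lipschitz/Arzel\`a--Ascoli argument applies verbatim.
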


\paragraph{Quasi-Leibniz quantum compact metric space} The Lipschitz seminorm $\mathrm{Lip}$ of Example \ref{ex-Lip-functions} associated to a compact metric space $(X,\dist)$ enjoys a natural property with respect to the multiplication of functions in $\C(X)$, called the Leibniz property for any Lipschitz functions $f,g \co X \to \Cbb$:
\begin{equation}
\label{equa-Leibniz}
\Lip(fg)
\leq \norm{f}_{\C(X)}\Lip(g) + \Lip(f)\norm{g}_{\C(X)}.
\end{equation}

Moreover, the Lipschitz seminorm is lower-semicontinuous with respect to the norm of the algebra $\C(X)$, i.e. the uniform convergence norm on $X$. 
\end{example}

We want to have these additional properties for a quantum compact metric space $(A,\norm{\cdot})$. Unfortunately, because of difficulties with Lipschitz seminorms, Latr\'emoli\`ere has not chosen a direct generalization of \eqref{equa-Leibniz} in this work on quantum compact metric spaces. He introduced the following definition by considering the Jordan-Lie-algebra of selfadjoint elements. We say that a quantum compact metric space $(A,\norm{\cdot})$ is a $(C,0)$-quasi-Leibniz quantum compact metric space if $\norm{\cdot}$ is Jordan-Lie subalgebra of $A$ and if for any $a,b \in \dom \norm{\cdot}$ we have
\begin{equation}
\label{Jordan-Lie}
\norm{a \circ b}
\leq C\big[\norm{a} \norm{b}_A + \norm{a}_A \norm{b}\big]
\quad\text{and} \quad
\norm{\{a,b\}}
\leq C\big[\norm{a}\norm{b}_A + \norm{a}_A \norm{b}\big]
\end{equation}
for some constant $C >0$, where we use the Jordan product $a \circ b \ov{\mathrm{def}}{=} \frac{1}{2}(ab+ba)$ and the Lie product $\{a,b\} \ov{\mathrm{def}}{=} \frac{1}{2 \i} (ab-ba)$ and if $\norm{\cdot}$ is lower semicontinuous, i.e. 
\begin{equation}
\label{semicontinuous-1}
\{ x \in \dom \norm{\cdot} : \norm{x} \leq 1 \}
\end{equation}
is closed for the topology of the norm of $A$.

%It is natural to wonder whether the norm of some Lipschitz pair gives back the weak* topology of the ``quantum metric'', that is the Monge-Kantorovich metric on states of $\A$ (see below). This desired property gives then our first central notion of this subsection.

%The definition \eqref{Distance-Monge} can be seen as a noncommutative version of this notion.  

The following is essentially \cite[Proposition 2.17]{Lat16b} and Proposition \ref{Prop-carac-quantum}. It is our main tool for checking the definition of quasi-Leibniz quantum compact metric spaces.

\begin{prop}
\label{Prop-carac-quantum2}
Let $A$ be a unital $\C^*$-algebra and $\norm{\cdot}$ be a seminorm defined on a dense $\Cbb$
-subspace $\dom \norm{\cdot}$ of $A$, such that 
\begin{enumerate}
	\item $\dom \norm{\cdot}$ is closed under the adjoint operation,
	\item $\{a \in \dom \norm{\cdot}: \norm{a} = 0\} = \Cbb 1_A$,
	\item there exists a constant $C>0$ such that for all $a, b \in \dom \norm{\cdot}$, we have
\begin{equation}
\label{Leibniz}
\norm{ab} 
\leq  C\big[\norm{a}_A \norm{b} + \norm{a} \norm{b}_A\big],
\end{equation}
\item there exists a state $\mu \in \S(A)$ such that the set $\{a \in \dom \norm{\cdot}: \norm{a} \leq 1, \mu(a) = 0\}$ is relatively compact in $A$ for the topology of the norm of $A$. 
\item $\norm{\cdot}$ is lower semicontinuous.
\end{enumerate}
If $\norm{\cdot}_\sa$ is the restriction of $\norm{\cdot}$ to $A_\sa \cap \dom\norm{\cdot}$, then $(A_\sa,\norm{\cdot}_\sa)$ is a $(C,0)$-quasi-Leibniz quantum compact metric space.
\end{prop}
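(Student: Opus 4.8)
The plan is to verify, clause by clause, that the restricted pair meets the definition of a $(C,0)$-quasi-Leibniz quantum compact metric space, extracting the metric part from Proposition \ref{Prop-carac-quantum} and the algebraic part from hypothesis 3. Throughout write $\dom \norm{\cdot}_\sa \ov{\mathrm{def}}{=} A_\sa \cap \dom \norm{\cdot}$ for the domain of the restricted seminorm, and recall that hypothesis 3 tacitly forces $\dom \norm{\cdot}$ to be an associative subalgebra of $A$, since the symbol $\norm{ab}$ only makes sense if $ab \in \dom \norm{\cdot}$.

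First I would check that $(A_\sa,\norm{\cdot}_\sa)$ is a unital Lipschitz pair. For density, given $x \in A_\sa$ I pick $a_n \in \dom \norm{\cdot}$ with $a_n \to x$; since $\dom \norm{\cdot}$ is a $\Cbb$-subspace closed under the adjoint (hypothesis 1), the self-adjoint averages $\frac12(a_n+a_n^*)$ lie in $\dom \norm{\cdot}_\sa$ and converge to $\frac12(x+x^*)=x$, so $\dom \norm{\cdot}_\sa$ is dense in $A_\sa$. For the kernel, hypothesis 2 gives $\{a \in \dom \norm{\cdot} : \norm{a}=0\}=\Cbb 1_A$; intersecting with the self-adjoint part and noting that $\lambda 1_A$ is self-adjoint exactly when $\lambda \in \R$, I obtain $\{a \in \dom \norm{\cdot}_\sa : \norm{a}_\sa = 0\}=\R 1_A$, which is the required normalization.

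Next I would invoke Proposition \ref{Prop-carac-quantum}, criterion (b), to promote this Lipschitz pair to a quantum compact metric space. The set $\{a \in \dom \norm{\cdot}_\sa : \norm{a}_\sa \leq 1,\ \mu(a)=0\}$ is precisely the intersection with $A_\sa$ of the set furnished by hypothesis 4 for the very same state $\mu$; being a subset of a relatively compact set, it is itself relatively compact, so criterion (b) holds. For the quasi-Leibniz structure, I note that both products of self-adjoint elements, $a\circ b=\frac12(ab+ba)$ and $\{a,b\}=\frac{1}{2\i}(ab-ba)$, are again self-adjoint and remain in $\dom \norm{\cdot}$ because it is an associative subalgebra; hence $\dom \norm{\cdot}_\sa$ is a Jordan-Lie subalgebra. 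Applying hypothesis 3 to both $ab$ and $ba$ yields the \emph{same} bound $C\big[\norm{a}_A\norm{b}+\norm{a}\norm{b}_A\big]$, and since $\norm{a\circ b}\leq \frac12(\norm{ab}+\norm{ba})$ and $\norm{\{a,b\}}\leq \frac12(\norm{ab}+\norm{ba})$ by homogeneity and the triangle inequality, both inequalities in \eqref{Jordan-Lie} follow at once. Finally, lower semicontinuity of $\norm{\cdot}_\sa$, i.e.\ closedness of \eqref{semicontinuous-1}, is inherited from hypothesis 5, because the unit ball for $\norm{\cdot}_\sa$ is the intersection of the closed unit ball of $\norm{\cdot}$ with the closed set $A_\sa$.

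All of these steps are elementary, so the proof is essentially a packaging argument of the type recorded in \cite[Proposition 2.17]{Lat16b}. The two points that warrant genuine care — and thus the likeliest places for a gap — are the transfer from the full $\Cbb$-subspace $\dom \norm{\cdot}$ to its self-adjoint part $\dom \norm{\cdot}_\sa$, which relies entirely on the adjoint-invariance of hypothesis 1, and the observation that the associative Leibniz rule of hypothesis 3 symmetrically controls $ab$ and $ba$, which is exactly what allows the Jordan and Lie products to be dominated by the single right-hand side appearing in \eqref{Jordan-Lie}.
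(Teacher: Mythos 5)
Your proposal is correct and follows exactly the route the paper intends: the paper offers no detailed argument, simply declaring the result ``essentially'' \cite[Proposition 2.17]{Lat16b} combined with Proposition \ref{Prop-carac-quantum}, and your write-up supplies precisely that packaging — self-adjoint averaging for density, criterion (b) of Proposition \ref{Prop-carac-quantum} for compactness, the symmetric bound on $ab$ and $ba$ from \eqref{Leibniz} to control the Jordan and Lie products, and intersection with the closed set $A_\sa$ for lower semicontinuity. Your two flagged care points (adjoint-invariance and the $ab$/$ba$ symmetry) are indeed the only nontrivial steps, and both are handled correctly.
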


\paragraph{New quantum compact metric spaces} Let $G$ be a connected Lie group equipped with a family $X=(X_1, \ldots, X_m)$ of left-invariant H\"ormander vector fields and a left Haar measure $\mu_G$. In this section, we suppose that $G$ is \textit{compact}. For the introduction of new quantum compact metric spaces, we need some preliminary results related to some estimates of the heat kernel. For any $s \in G$, a particular case of \cite[Theorem V.4.3]{VSCC92} gives 
\begin{equation}
\label{estimate-kernel}
0 \leq K_t(s)
\lesssim \frac{1}{t^{\frac{d}{2}}}, \quad 0 < t \leq 1 
\end{equation}
where the local dimension $d$ is defined in \eqref{local-dim}. %See also \cite[Theorem IV.7.1 p. 171]{DtER03} for a generalization.

%\footnote{\thefootnote. The assertion ``is compact because it is the strong limit of a sequence of finite rank operators'' . }

The following is essentially \cite[pp.~339-341]{Rob91}. Since a point of \cite[pp.~339-341]{Rob91} is misleading and since it is fundamental for us, we give an argument relying on the same nice ideas.
 
\begin{lemma}
\label{Lemma-compact-resolvent}
The operator $\Delta_2 \co \dom(\Delta_2)  \subset \L^2(G) \to \L^2(G)$ has compact resolvent and we have the estimate
\begin{equation}
\label{estimates-L1-Linfty}
\norm{T_t}_{\L^1(G) \to \L^\infty(G)}
\lesssim \frac{1}{t^{\frac{d}{2}}}, \quad 0 <t \leq 1.
\end{equation} 
\end{lemma}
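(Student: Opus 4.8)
The plan is to establish the two assertions separately: the ultracontractive estimate \eqref{estimates-L1-Linfty} follows almost immediately from the pointwise heat kernel bound \eqref{estimate-kernel}, while the compactness of the resolvent is obtained by showing that each $T_t$ is a Hilbert-Schmidt operator on $\L^2(G)$ and then exploiting the selfadjointness of $\Delta_2$.

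For the estimate, I would use that, for $t>0$, the operator $T_t \co \L^p(G) \to \L^p(G)$ is convolution by the positive kernel $K_t \in \L^1(G)$, so that $(T_tf)(s)=\int_G K_t(r^{-1}s)f(r)\d\mu_G(r)$ and the associated integral kernel on $G\times G$ is $(s,r)\mapsto K_t(r^{-1}s)$. By left invariance of the Haar measure, the essential supremum of this kernel over $G\times G$ equals $\norm{K_t}_{\L^\infty(G)}$. Hence, either directly from $|(T_tf)(s)|\leq \norm{K_t}_{\L^\infty(G)}\norm{f}_{\L^1(G)}$ or from the Dunford-Pettis identity \eqref{Dunford-Pettis-1}, we obtain $\norm{T_t}_{\L^1(G)\to\L^\infty(G)}=\norm{K_t}_{\L^\infty(G)}$, and \eqref{estimate-kernel} gives \eqref{estimates-L1-Linfty}.

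For the compact resolvent, the decisive step is that $T_t$ is Hilbert-Schmidt on $\L^2(G)$ for every $t>0$. Since $G$ is compact and $\norm{K_t}_{\L^\infty(G)}\lesssim t^{-\frac{d}{2}}<\infty$ by \eqref{estimate-kernel}, the kernel $K_t$ lies in $\L^2(G)$. Using left invariance, the squared Hilbert-Schmidt norm of the integral operator with kernel $(s,r)\mapsto K_t(r^{-1}s)$ is
\begin{equation*}
\int_G\int_G |K_t(r^{-1}s)|^2\,\d\mu_G(s)\,\d\mu_G(r)=\mu_G(G)\,\norm{K_t}_{\L^2(G)}^2<\infty,
\end{equation*}
so $T_t$ is Hilbert-Schmidt, hence compact, on $\L^2(G)$. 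It remains to pass from the compactness of the $T_t$ to the compactness of the resolvent of $\Delta_2$. Since $\Delta_2$ is a positive selfadjoint operator and its semigroup $(T_t)$ is compact for each $t>0$, the spectral theorem shows that $\sigma(\Delta_2)$ consists of eigenvalues of finite multiplicity accumulating only at $+\infty$, which is equivalent to $\Delta_2$ having compact resolvent. Equivalently, $(T_t)$ is analytic, so $t\mapsto T_t$ is norm continuous on $]0,\infty[$ and the resolvent $\int_0^\infty \e^{-\lambda t}T_t\,\d t$ is a norm limit of compact operators.

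I expect the only genuinely delicate point — and likely the misleading step in \cite[pp.~339--341]{Rob91} — to be this last passage: compactness of a single $T_{t}$ does not by itself give a compact resolvent, and one really must use the selfadjointness (equivalently, analyticity) of the semigroup. The remaining ingredients, namely identifying the integral kernel as $K_t(r^{-1}s)$ and collapsing the iterated integral via left invariance, are routine.
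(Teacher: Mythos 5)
Your proof is correct and follows essentially the same route as the paper: the Dunford--Pettis identity with the kernel bound \eqref{estimate-kernel} for the $\L^1 \to \L^\infty$ estimate, the Hilbert--Schmidt computation via invariance of the Haar measure for compactness of each $T_t$, and the passage to a compact resolvent using norm continuity of the selfadjoint (hence analytic) semigroup, which is exactly what the paper invokes via \cite[Theorem 4.29 p.~119]{EnN00}. The only deviation --- writing the kernel as $K_t(r^{-1}s)$ instead of the paper's $K_t(sr^{-1})$ --- is an immaterial convolution convention that changes neither the essential supremum nor the Hilbert--Schmidt norm.
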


\begin{proof}
Note that for any $0 <t \leq 1$, we have
\begin{equation}
\label{Estimation-norm2-kernel}
\norm{K_t}_{\L^2(G)}
\lesssim \norm{K_t}_{\L^\infty(G)}
\ov{\eqref{estimate-kernel}}{\lesssim} \frac{1}{t^{\frac{d}{2}}}. 
\end{equation}
By translation invariance of the normalized Haar measure of $G$, we deduce that
\begin{equation}
\label{Schmidt-calculus}
\int_{G \times G} |K_t(s r^{-1})|^2 \d s\d r
=\int_{G} \bigg(\int_{G}|K_t(s r^{-1})|^2 \d s\bigg) \d r
=\int_{G} \bigg(\int_{G}|K_t(s)|^2 \d s\bigg) \d r
\ov{\eqref{Estimation-norm2-kernel}}{\leq} \frac{1}{t^d}.
\end{equation}
For any $t>0$, we deduce by \cite[Exercise 2.8.38 p.~170]{KaR97} and \eqref{Convolution-formulas} that $T_t \co \L^2(G) \to \L^2(G)$ is a Hilbert-Schmidt operator. 
%with 
%$$ 
%\norm{T_t}_{S^2_G}^2 
%=\norm{K_t}_{\L^2(G)}^2
%\ov{\eqref{Estimation-norm2-kernel}}{\lesssim} \frac{1}{t^{\frac{d}{2}}}
%$$
%where $S^2_G \ov{\mathrm{def}}{=} S^2(\L^2(G))$ is the Banach space of Hilbert-Schmidt operators on the Hilbert space $\L^2(G)$. 
By \cite[Theorem 4.29 p.~119]{EnN00}, we conclude that the operator $\Delta_2$ has compact resolvent.
%Now if $\Re \lambda < 0$ the resolvent $(\lambda -\Delta_2 )^{-1}$ is defined by the strongly convergent Laplace transform
%\begin{equation}
%\label{}
%(\lambda -\Delta_2)^{-1} 
%= \int_{0}^{\infty} e^{\lambda t}T_t \d t. 
%\end{equation}
Finally, for any $t>0$, we have
$$
\norm{T_t}_{\L^1(G) \to \L^\infty(G)}
\ov{\eqref{Dunford-Pettis-1}}{=} \underset{s,r \in G}{\esssup}\ |K_t(sr^{-1})|
\ov{\eqref{estimate-kernel}}{\lesssim} \frac{1}{t^{\frac{d}{2}}}.
$$
%$$
%\norm{T_t}_{\L^1(G) \to \L^\infty(G)}
%\ov{leq} \norm{T_t}_{\L^2(G) \to \L^\infty(G)}^2
%\ov{\eqref{Dunford-Pettis}}{=} \norm{K_t}_{\L^2(G)}
%\ov{\eqref{Estimation-norm2-kernel}}{\lesssim} \frac{1}{t^{\frac{d}{2}}}.
%$$
\end{proof}

\begin{lemma}
\label{Lemma-compactness}
The operator $\Delta_2^{-1} \co \L^2_0(G) \to \L^2_0(G)$ is compact. 
\end{lemma}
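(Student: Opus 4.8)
The plan is to deduce compactness of $\Delta_2^{-1}$ on $\L^2_0(G)$ directly from Lemma \ref{Lemma-compact-resolvent}. First I would recall that, by Lemma \ref{Lemma-compact-resolvent}, the operator $\Delta_2$ on $\L^2(G)$ has compact resolvent, meaning that $(\lambda \Id + \Delta_2)^{-1}$ is a compact operator on $\L^2(G)$ for some (hence every) $\lambda$ in the resolvent set. Since $G$ is compact and connected, the fixed subalgebra $\{f : T_t(f) = f \text{ for all } t\}$ equals $\Cbb 1$, so by the discussion preceding Proposition \ref{prop-interpolation-1} we have the orthogonal decomposition $\L^2(G) = \Cbb 1 \oplus \L^2_0(G)$ with $\L^2_0(G) = \ovl{\Ran \Delta_2}$ the space of mean-zero functions, and this decomposition reduces $\Delta_2$.

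The key step is to identify $\Delta_2^{-1}$ with the restriction to $\L^2_0(G)$ of a compact resolvent operator. On $\L^2_0(G)$ the operator $\Delta_2$ is injective with dense range (by \eqref{Kadison-image} and the mean-ergodic decomposition), so its inverse $\Delta_2^{-1} \co \L^2_0(G) \to \L^2_0(G)$ is well defined on $\Ran \Delta_2$. I would then observe the resolvent identity
\begin{equation*}
\Delta_2^{-1}
= (\Id + \Delta_2)^{-1} + \Delta_2^{-1}(\Id + \Delta_2)^{-1}
= (\Id + \Delta_2)^{-1}\big(\Id + \Delta_2^{-1}\big)
\end{equation*}
interpreted on $\L^2_0(G)$, or more cleanly use that on the reducing subspace $\L^2_0(G)$ the bounded operator $(\Id + \Delta_2)^{-1}$ is compact (as a restriction of a compact operator) and that $\Delta_2^{-1} = (\Id + \Delta_2)^{-1}(\Id + \Delta_2)\Delta_2^{-1}$, where $(\Id + \Delta_2)\Delta_2^{-1} = \Delta_2^{-1} + \Id$ is bounded on $\L^2_0(G)$. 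Since the composition of a compact operator with a bounded operator is compact, this yields compactness of $\Delta_2^{-1}$.

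Alternatively, and perhaps most transparently, I would argue spectrally: the compact resolvent of $\Delta_2$ gives an orthonormal basis of eigenfunctions with eigenvalues $\lambda_n \to \infty$, and the constant function $1$ spans the eigenspace for the eigenvalue $0$; restricting to $\L^2_0(G)$ discards only this zero eigenvalue, so $\Delta_2^{-1}$ acts diagonally with eigenvalues $\lambda_n^{-1} \to 0$, hence is compact. The main obstacle, such as it is, is purely bookkeeping: one must check that $\Delta_2$ restricted to $\L^2_0(G)$ is boundedly invertible (equivalently that $0$ is isolated in the spectrum), which is immediate from the compact resolvent since the spectrum is discrete with no finite accumulation point, so the nonzero eigenvalues are bounded away from $0$. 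No serious analytic difficulty arises beyond correctly invoking the reducing decomposition $\L^2(G) = \Cbb 1 \oplus \L^2_0(G)$ established above.
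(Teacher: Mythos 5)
Your proof is correct and takes essentially the same route as the paper's: both start from the compact resolvent of Lemma \ref{Lemma-compact-resolvent}, restrict to the reducing subspace $\L^2_0(G)=(\ker \Delta_2)^\perp$, and transfer compactness from the resolvent to $\Delta_2^{-1}$ via the resolvent identity. Your explicit verification that the compact resolvent forces the nonzero spectrum to be bounded away from $0$ (so that $\Delta_2^{-1}$ is bounded on $\L^2_0(G)$) merely spells out a step the paper leaves implicit.
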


\begin{proof}
%\textbf{CHECK We adapt \cite[page 3]{EcI18}} (see also \cite[page 4818]{CGIS14}
By Lemma \ref{Lemma-compact-resolvent}, the operator $\Delta_2 \co \dom \Delta_2 \subset \L^2(G) \to \L^2(G)$ has compact resolvent. Note that $\ker \Delta_2$ is an eigenspace, hence a reducing subspace for the selfadjoint operator $\Delta_2$. So for any $\lambda$ in the resolvent subset $\rho(\Delta_2)$, we have a well-defined operator $(\lambda-\Delta_2)^{-1} \co (\ker \Delta_2)^\perp \to (\ker \Delta_2)^\perp$ which is compact by composition. By the resolvent identity \cite[p.~273]{Haa06}, we deduce that $\Delta_2^{-1} \co (\ker \Delta_2)^\perp \to (\ker \Delta_2)^\perp$ is also compact by \cite[p.~3]{EcI18}. Recall that $(\ker \Delta_2)^\perp=\L^2_0(G)$. We conclude that $\Delta_2^{-1} \co \L^2_0(G) \to \L^2_0(G)$ is compact. 
\end{proof}
%\footnote{\thefootnote. We have $R(\mu,A)=(\lambda-\mu)R(\mu,A)R(\lambda,A)+R(\lambda,A)$ if $A$ is a closed operator.}

For the next statement, the domain of $\norm{\cdot}_{\dot{\L}^p_\alpha(G)}$ is defined as in \eqref{def-domaine}.

\begin{thm}
\label{Th-quantum-metric}
Let $G$ be a compact connected Lie group equipped with a family $(X_1, \ldots, X_m)$ of left-invariant H\"ormander vector fields. Suppose $0 < \alpha \leq 1$ (or $0<\alpha$ if $G$ is nilpotent) and $\frac{d}{\alpha} < p < \infty$ where $d$ is the local dimension defined in \eqref{local-dim}. Then $\big(\C(G),\norm{\cdot}_{\dot{\L}^p_\alpha(G)}\big)$ defines a $(C_{\alpha,p},0)$-quasi-Leibniz quantum compact metric space for some constant $C_{\alpha,p}>0$.
\end{thm}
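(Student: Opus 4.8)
The plan is to verify the five hypotheses of Proposition \ref{Prop-carac-quantum2} for the unital $\C^*$-algebra $A=\C(G)$ equipped with the seminorm $\norm{\cdot}_{\dot\L^p_\alpha(G)}$ on the dense subspace $\dom\norm{\cdot}_{\dot\L^p_\alpha(G)}=\C(G)\cap\dom\Delta_p^{\frac{\alpha}{2}}$. Indeed, most of the work has already been assembled in the preliminary lemmas. Density of the domain (and the fact that it contains $\C_c^\infty(G)=\C^\infty(G)$ since $G$ is compact) is Lemma \ref{Lemma-recapitulatif}(1); stability under the adjoint $f\mapsto\ovl f$ is Lemma \ref{Lemma-recapitulatif}(2); the kernel condition $\{f:\norm{f}_{\dot\L^p_\alpha(G)}=0\}=\Cbb 1$ is exactly \eqref{equ-2-proof-prop-Fourier-quantum-compact-metric} from Lemma \ref{Lemma-recapitulatif}(3) in the compact case; lower semicontinuity is Lemma \ref{Lemma-recapitulatif}(4); and the quasi-Leibniz inequality \eqref{Leibniz} is precisely the Coulhon-Russ-Tardivel-Nachef estimate \eqref{Leibniz-Coulhon} valid for $0<\alpha\leq 1$ (or all $\alpha>0$ when $G$ is nilpotent), which holds on $\dot\L^p_\alpha(G)\cap\L^\infty(G)\supset\dom\norm{\cdot}_{\dot\L^p_\alpha(G)}$ with some constant $C_{\alpha,p}$. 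So four of the five conditions are immediate citations, and the output is a $(C_{\alpha,p},0)$-quasi-Leibniz quantum compact metric space with the constant furnished by \eqref{Leibniz-Coulhon}.

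The genuinely new point to establish is hypothesis (4): exhibiting a state $\mu\in\S(\C(G))$ for which the set $L_\mu\ov{\mathrm{def}}{=}\{f\in\dom\norm{\cdot}_{\dot\L^p_\alpha(G)}:\norm{f}_{\dot\L^p_\alpha(G)}\leq 1,\ \mu(f)=0\}$ is relatively compact in $\C(G)$ for the uniform norm. The natural choice is the state $\mu(f)=\int_G f\,\d\mu_G$ given by the normalized Haar measure, so that $\mu(f)=0$ means $f\in\L^p_0(G)$, i.e. $f$ has mean zero. The strategy is to realize $L_\mu$ as the image of a bounded set under a compact map into $\C(G)$. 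Writing $f=\Delta_p^{-\frac{\alpha}{2}}g$ with $g=\Delta_p^{\frac{\alpha}{2}}(f)\in\L^p_0(G)$ and $\norm{g}_{\L^p(G)}\leq 1$, one sees that $L_\mu$ is contained in the image under $\Delta_p^{-\frac{\alpha}{2}}$ of the closed unit ball of $\L^p_0(G)$. Thus it suffices to show that $\Delta_p^{-\frac{\alpha}{2}}\co\L_0^p(G)\to\C(G)\hookrightarrow\L^\infty(G)$ is compact, which is a Rellich-type embedding for the subelliptic setting.

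I would deduce this compactness from Proposition \ref{prop-interpolation-1}, whose hypotheses are exactly what the earlier lemmas have verified for the heat semigroup $(T_t)_{t\geq 0}$ generated by $-\Delta$: it is a weak* continuous semigroup of selfadjoint positive unital contractions on $\L^\infty(G)$ with fixed space $\Cbb 1$ (since $G$ is connected), the ultracontractive bound $\norm{T_t}_{\L_0^1(G)\to\L^\infty(G)}\lesssim t^{-d/2}$ for $0<t\leq 1$ is \eqref{estimates-L1-Linfty} from Lemma \ref{Lemma-compact-resolvent}, and $\Delta_2^{-1}$ is compact on $\L_0^2(G)$ by Lemma \ref{Lemma-compactness}. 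Proposition \ref{prop-interpolation-1} then gives compactness of $\Delta^{-z}\co\L_0^p(G)\to\L_0^q(G)$ whenever $\frac{2\Re z}{d}>\frac1p-\frac1q$; taking $z=\frac{\alpha}{2}$ and $q=\infty$, the condition reads $\frac{\alpha}{d}>\frac1p$, that is $p>\frac{d}{\alpha}$, which is exactly the hypothesis of the theorem and explains its sharpness. Since $\Delta^{-\frac{\alpha}{2}}$ lands in $\L^\infty(G)$ compactly and the values $f=\Delta_p^{-\frac{\alpha}{2}}g$ are continuous functions (being in the domain), one obtains relative compactness of $L_\mu$ inside $\C(G)$, completing the verification. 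The main subtlety to handle carefully is bookkeeping between the homogeneous seminorm $\norm{\cdot}_{\dot\L^p_\alpha(G)}$ modulo constants and the honest inverse $\Delta_p^{-\frac{\alpha}{2}}$ on the mean-zero subspace $\L_0^p(G)$, using the decomposition \eqref{decompo-reflexive} and $\ker\Delta_p^{\frac{\alpha}{2}}=\ker\Delta_p=\Cbb 1$ from \eqref{inclusion-range}; with this identification the inequality $\norm{f}_{\dot\L^p_\alpha(G)}\leq 1$ translates into membership of $g$ in the unit ball on which the compact operator acts, so I expect this to be the only place requiring real care.
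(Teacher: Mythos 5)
Your proof takes exactly the paper's route: the same reduction to Proposition \ref{Prop-carac-quantum2} with hypotheses 1, 2, 3 (kernel) and 5 supplied by Lemma \ref{Lemma-recapitulatif} and the Leibniz rule \eqref{Leibniz-Coulhon}, the same choice of the Haar state, the same factorization $f=\Delta^{-\frac{\alpha}{2}}\Delta_p^{\frac{\alpha}{2}}f$ on the mean-zero subspace justified by $\Ran \Delta_p^{\frac{\alpha}{2}} \subset \ovl{\Ran \Delta_p}$ from \eqref{inclusion-range}, and the same invocation of Proposition \ref{prop-interpolation-1} with $z=\frac{\alpha}{2}$, $q=\infty$ to get compactness of $\Delta^{-\frac{\alpha}{2}} \co \L^p_0(G) \to \L^\infty_0(G)$ under $p>\frac{d}{\alpha}$. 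Up to that point everything you write is correct and matches the paper's proof.

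There is, however, one genuine gap: your claim that the hypotheses of Proposition \ref{prop-interpolation-1} ``are exactly what the earlier lemmas have verified.'' Lemma \ref{Lemma-compact-resolvent} only yields the bound \eqref{estimates-L1-Linfty} for $0<t\leq 1$, whereas Proposition \ref{prop-interpolation-1} (i.e.\ \cite[Theorem 1.1.7]{JuM10}) requires $\norm{T_t}_{\L_0^1(G)\to\L^\infty(G)}\lesssim t^{-\frac{d}{2}}$ for \emph{all} $t>0$: the underlying argument rests on the representation $\Delta^{-z}=\frac{1}{\Gamma(z)}\int_0^\infty t^{z-1}\,T_t\,\d t$, so one needs genuine decay as $t\to\infty$, and since $t^{-\frac{d}{2}}\to 0$ this does \emph{not} follow from contractivity of the semigroup for $t\geq 1$. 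The paper closes exactly this point by quoting \cite[p.~38]{DtER03} for the exponential decay \eqref{A-l-infini}, namely $\norm{T_t}_{\L_0^1(G)\to\L^\infty(G)}\lesssim \e^{-\omega t}$ for $t\geq 1$ (a spectral-gap statement on the mean-zero subspace, available because $G$ is compact), and combines it with \eqref{estimates-L1-Linfty} to obtain the polynomial bound for all $t>0$ before applying Proposition \ref{prop-interpolation-1}. Once this large-time estimate is inserted, your verification of hypothesis 4 is complete and the proof coincides with the paper's.
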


\begin{proof}
We will prove the assumptions of Proposition \ref{Prop-carac-quantum2}.  The third point of Proposition \ref{Prop-carac-quantum2} is satisfied by \eqref{Leibniz-Coulhon} and the first two points by Lemma \ref{Lemma-recapitulatif}.

Since the normalized integral $\int_G \co \C(G) \to \Cbb$ is a state of the unital $\C^*$-algebra $\C(G)$, it suffices to show that
\begin{equation}
\label{equ-proof-prop-Fourier-quantum-compact-metric}
\left\{ f  \in \dom \norm{\cdot}_{\dot{\L}^p_\alpha(G)} : \norm{f}_{\dot{\L}^p_\alpha(G)} \leq 1 ,\: \int_G f = 0 \right\}
\end{equation} 
is relatively compact in $\C(G)$.

Note that \cite[p.~38]{DtER03} contains a proof of the existence of $\omega>0$ such that
\begin{equation}
\label{A-l-infini}
\norm{T_t}_{\L_0^1(G) \to \L^\infty(G)} 
\lesssim \e^{-\omega t}  \quad t \geq 1.
\end{equation}
Combined with \eqref{estimates-L1-Linfty}, we deduce the estimate
\begin{equation*}
\norm{T_t}_{\L_0^1(G) \to \L^\infty(G)} 
\lesssim \frac{1}{t^\frac{d}{2}}  \quad t > 0.
\end{equation*}
With Lemma \ref{Lemma-compactness}, we conclude that the assumptions of Proposition \ref{prop-interpolation-1} are satisfied. Using this result with $z=\frac{\alpha}{2}$ and $q=\infty$, the operator $\Delta^{-\frac\alpha2} \co \L^p_0(G) \to \L^\infty_0(G)$ is compact if $p > \frac{d}{\alpha}$. So the image $\mathcal{I}$ by $\Delta^{-\frac\alpha2}$ of the closed unit ball $\{g \in \ovl{\Ran \Delta_p} : \norm{g}_{\L^p(G)} \leq 1\}$ of $\L^p_0(G)=\ovl{\Ran \Delta_p}$ is relatively compact. Note that $\Ran \Delta_p^{\frac\alpha2} \subset \ovl{\Ran \Delta_p}$ by \eqref{inclusion-range}. Hence the subset (write $f=\Delta^{-\frac\alpha2}\Delta_p^{\frac\alpha2}f$)
$$
\Big\{ f \in \C(G)_0 \cap \dom \Delta_p^{\frac\alpha2} : \bnorm{\Delta_p^{\frac\alpha2}(f)}_{\L^p(G)} \leq 1 \Big\} 
$$ 
of $\mathcal{I}$ is relatively compact in $\C(G)$ where $\C(G)_0$ is the subspace of continuous functions with null integral. 
%Note that for any $f \in \dom \Delta_p^{\frac12}$ we have
%\begin{align*}
%\MoveEqLeft
%\bnorm{\Delta_p^{\frac12}(f)}_{\L^p(G)} 
%\ov{\eqref{max-Gamma-2-group}}{\lesssim_{p}} \max \Big\{ \bnorm{\Gamma(f,f)^{\frac12} }_{\L^p(G)}, \bnorm{\Gamma(f^*,f^*)^\frac12}_{\L^p(G)} \Big\} 
%\ov{\eqref{def-gamma-psi-p}}{=} \norm{f}_{1,p}.           
%\end{align*} 
%We deduce from this inequality that 
%$$
%\big\{x \in \mathrm{C}(G)_{0} \cap \dom \Delta_p^{\frac12} : \norm{f}_{1,p} \leq 1  \big\}
%$$
%is relatively compact in $\mathrm{C}(G)$. 
Since we have
\begin{align*}
\MoveEqLeft
\big\{f \in \C(G)_{0} \cap \dom \Delta_p^{\frac\alpha2}: \bnorm{\Delta_p^{\frac\alpha2}(f)}_{\L^p(G)} \leq 1 \big\} \\
&\ov{\eqref{Def-Lpalpha}\eqref{def-domaine}}{=} \bigg\{ f \in \dom \norm{\cdot}_{\dot{\L}^p_\alpha(G)} : \norm{f}_{\dot{\L}^p_\alpha(G)} \leq 1, \: \int_G f 
= 0 \bigg\}
\end{align*} 
we deduce that the subset \eqref{equ-proof-prop-Fourier-quantum-compact-metric} is also relatively compact in $\C(G)$. The proof is complete.
\end{proof}

\begin{remark} \normalfont
The result is sharp. Consider the abelian compact group $G = \T^2$ and the Laplacian $\Delta_2 \co \dom \Delta_2 \subset \L^2(\T^2) \to \L^2(\T^2)$, $ \e^{n\i \cdot} \ot \e^{m\i \cdot} \mapsto -(n^2 + m^2)\e^{n\i \cdot} \ot \e^{m\i \cdot}$ and $\alpha=1$. By Example \ref{Ex-dimension-torus}, the local dimension $d$ of $\T^2$ is 2. In \cite[Remark 5.3]{ArK22} and its proof, it is showed that the set
$$
\big\{ f \in \mathrm{C}(\T^2)_0 \cap \dom \Delta^{\frac12}_2 : \: \bnorm{\Delta_2^{\frac12}(f)}_{\L^2(\T^2)} \leq 1 \big\}
$$
is not bounded, in particular, not relatively compact. With the notation \eqref{Def-Lpalpha}, this set can be written
\begin{equation}
\left\{ f  \in \dom \norm{\cdot}_{\dot{\L}^2_1(\T^2)} : \norm{f}_{\dot{\L}^2_1(\T^2)} \leq 1 ,\: \int_{\T^2} f = 0 \right\}.
\end{equation} 
By Proposition \ref{Prop-carac-quantum}, we conclude that we does not have in general a quantum compact metric space under the critical condition $p=\frac{d}{\alpha}$.
\end{remark}

\begin{remark} \normalfont
The inequality \eqref{Leibniz-Coulhon} is open if $\alpha>1$. It would be interesting to find a counter-example. 
\end{remark}

\begin{remark} \normalfont
We can replace the subelliptic Laplacian $\Delta$ of \eqref{def-de-Delta} by a real second order subelliptic operator 
$H 
\ov{\mathrm{def}}{=} -\sum_{i,j=1}^{m} c_{ij} X_iX_j$ where $c_{ij} \in \R$, satisfying the condition $\frac{1}{2}(C+C^T) \geq \mu\I$ for some $\mu >0$ and $C=[c_{ij}]$. The $\L^p$-realization $H_p$ of this operator is a closed operator with domain $\dom H_p=\L'_{p,2}$. See \cite[Chapter II]{DtER03} for more information. In the case of a compact connected Lie group $G$, the boundedness of Riesz transforms is proved in \cite[p.~39]{DtER03}. Moreover, for any $f \in \L'_{2,1}$ we have by \cite[pp.~16-17]{DtER03}
$$
\Re\, \langle f, H_2f \rangle_{\L^2(G)}
\geq \mu \sum_{k=1}^{m} \norm{X_k f}_{\L^2(G)}^2.
$$
In particular, $H_2f=0$ if and only if for any $k \in \{1,\ldots,m\}$ we have $X_kf=0$. This observation is useful for obtaining a suitable generalization of the third point of Lemma \ref{Lemma-recapitulatif} (unfortunately, this argument only works in the case $p \geq 2$).

The generalization of the Leibniz's rule \eqref{Leibniz-Coulhon} for these operators for $\alpha \in ]0,1[$ is an open question.
\end{remark}

\begin{remark} \normalfont
It may be worthy to study the family of the quantum compact metric spaces $\big(\C(G),\norm{\cdot}_{\dot{\L}^p_\alpha(G)}\big)$ when $p \to \frac{d}{\alpha}$ from the perspective of the quantum Gromov-Haudorff distance. What can be said about the ``limit'' ?
\end{remark}

\begin{remark} \normalfont
It is unclear if there exists a formula for the restriction of the Monge-Kantorovich metric \eqref{MongeKant-def} on the subset of pure states of $\C(G)$, i.e. the map
$$
(s,s') \mapsto \sup \big\{|f(s)-f(s')| : f \in  \C(G,\R),\norm{f}_{\dot{\L}^p_\alpha(G)} \leq 1\big\}.
$$ 
on $G \times G$. It would be interesting to understand this quantity to equivalence with respect to a constant. The question is natural when we compare to the next situation of Theorem \ref{Th-recover-metric}.
\end{remark}

%%%%%%%%%%%%%%%%%%%%%%%%%%%%%%%%%%%%%%%%%%%%%%%%%%%%%%%%%%%%%%%%%%%%%%%%%%%%%%%%%%%%%%%%%%%%%%%
\section{Quantum locally compact metric spaces}
\label{Sec-locally}

\paragraph{Quantum locally compact metric spaces} 
The basic reference is \cite{Lat13}. A topography on a C*-algebra $A$ is an abelian C*-subalgebra $\frak{M}$ of $A$ containing an approximate identity for $A$. A topographic quantum space $(A,\frak{M})$ is an ordered pair of a C*-algebra $A$ and a topography $\frak{M}$ on $A$. Let $(A,\frak{M})$ be a topographic quantum space. A state $\varphi \in \S(A)$ is local when there exists a compact $K$ of the Gelfand spectrum of $\frak{M}$ such that $\varphi(1_K) = 1$. A Lipschitz triple $(A,\norm{\cdot},\frak{M})$ is a triple where $(A,\norm{\cdot})$ is a Lipschitz pair and $\frak{M}$ is a topography on $A$.

%The set $\S(A|\frak{M})$ of all local states of $(A,\frak{M})$ is called the local state space of $(A,\frak{M})$.

%A Lipschitz triple $(A,\norm{\cdot},\frak{M})$ is a quantum locally compact quantum metric space when the associated Monge-Kantorovich \eqref{MongeKant-def} metrizes the weak* topology restricted to any tame subset of $\S(A)$.

Let $(A,\norm{\cdot},\frak{M})$ be a Lipschitz triple. The definition of quantum locally compact quantum metric spaces of \cite{Lat13} is equivalent to say that $(A,\norm{\cdot},\frak{M})$ is a quantum locally compact quantum metric space if and only if for any compactly supported element $g, h \in \frak{M}$ and for some local state $\mu$ of $A$, the set:
\begin{equation*}
\big\{ g a h : a \in (uA)_\sa, \norm{a} \leq 1, \mu(a) = 0 \big\}
\end{equation*}
is relatively compact for the topology associated to $\norm{\cdot}_A$. Here we identify $\mu$ with its unique extension $a+\lambda 1 \mapsto \mu(a) +\lambda$ as a state of the unital C*-algebra $uA$.

%\item for any compactly supported element $g,h \in \frak{M}$ and for some local state $\mu$ of $A$, the set:
%\begin{equation*}
%\big\{ g a h : a \in (uA)_\sa, \norm{a} \leq 1, \mu(a) = 0 \big\}
%\end{equation*}
%is relatively compact for $\norm{\cdot}_A$.
%\end{enumerate}

\paragraph{Quasi-Leibniz quantum locally compact metric space} Unfortunately, Latr\'emoli\`ere did not generalize the notion of definition of quasi-Leibniz quantum compact metric spaces of Section \ref{Sec-quantum-compact-matric-spaces} to the locally compact case. We make an attempt by saying that a quantum locally compact quantum metric space $(A,\norm{\cdot},\frak{M})$ is a quasi-Leibniz quantum locally compact metric space if the restriction of $\norm{\cdot}$ on $A_{\sa}$ satisfies the points \eqref{Jordan-Lie} and \eqref{semicontinuous-1} which is slightly less general than \cite[Section 5.5]{ArK22}.

\paragraph{Criterion of relative compactness} The following is a locally compact group generalization \cite[Exercise 26 VIII.72]{Bou04} \cite[Problem 4 p.~283]{Dieu83} (see also \cite[Theorem A.4.1]{DtER03} for a particular case) of the classical Fr\'echet-Kolmogorov theorem on relative compactness.%[Yosida]

\begin{thm}
\label{Th-compactness-Rob}
Let $G$ be a locally compact group equipped with a left Haar measure. Suppose $1 \leq p < \infty$. Let $\cal{F}$ be a subset of the Banach space $\L^p(G)$. Then $\cal{F}$ is relatively compact if and only if there exists $M>0$ such that
\begin{equation}
\label{}
\lim_{s \to e} \sup_{f \in \cal{F}} \norm{\lambda_s f-f}_{\L^p(G)}
=0,
\end{equation}
\begin{equation}
\label{}
\sup_{f \in \cal{F}} \norm{f}_{\L^p(G)} \leq M
\quad \text{and} \quad \lim_{r \to \infty} \sup_{f \in \cal{F}} \int_{G-B(e,r)} |f(s)|^p \d\mu_G(s)
=0.
\end{equation}
\end{thm}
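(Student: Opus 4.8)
The plan is to adapt the classical Fr\'echet--Kolmogorov argument to the group setting, proving the two implications separately.

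For \emph{necessity}, suppose $\mathcal{F}$ is relatively compact, hence totally bounded since $\L^p(G)$ is complete; boundedness is then immediate. For the remaining two conditions, fix $\epsi>0$ and choose a finite $\epsi$-net $f_1, \ldots, f_n \in \mathcal{F}$. Each single function already has the required behaviour: the orbit map $s \mapsto \lambda_s f_i$ is continuous at $e$ because the left regular representation is strongly continuous on $\L^p(G)$ for $1 \le p < \infty$, and $\int_{G - B(e,r)} |f_i|^p \d\mu_G \to 0$ as $r \to \infty$ by integrability. Since each $\lambda_s$ is an isometry of $\L^p(G)$ for the left Haar measure, a three-term triangle inequality transfers both properties uniformly from the net to all of $\mathcal{F}$.

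For \emph{sufficiency} --- the substantial direction --- I would prove that $\mathcal{F}$ is totally bounded by approximating it, within any prescribed $\epsi$, by a relatively compact family. Using the tightness hypothesis, choose $r$ so large that $\int_{G - B(e,r)} |f|^p \d\mu_G \le \epsi^p$ for all $f \in \mathcal{F}$; replacing $f$ by its restriction $f 1_K$ to the compact set $K \ov{\mathrm{def}}{=} \ovl{B(e,r)}$ perturbs it by at most $\epsi$ in $\L^p(G)$, so it suffices to treat the truncated family. Next, introduce an approximate identity $(\phi_U)$ of nonnegative continuous functions supported in a shrinking neighbourhood $U$ of $e$ with $\int_G \phi_U \d\mu_G = 1$. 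Viewing the convolution \eqref{Convolution-formulas} as a vector-valued integral and using Minkowski's inequality \eqref{Minkowski-Lp}, one gets
$$
\norm{\phi_U * f - f}_{\L^p(G)}
= \Bnorm{\int_G \phi_U(r)\,(\lambda_r f - f)\,\d\mu_G(r)}_{\L^p(G)}
\le \sup_{r \in U} \norm{\lambda_r f - f}_{\L^p(G)},
$$
which the equicontinuity hypothesis forces to be uniformly small once $U$ is small enough. Thus $\{\phi_U * f : f \in \mathcal{F}\}$ is uniformly $\epsi$-close to $\mathcal{F}$ for one fixed small $U$.

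It then remains to show that the regularized family $\{\phi_U * (f 1_K) : f \in \mathcal{F}\}$ is relatively compact. For the fixed kernel $\phi_U$, H\"older's inequality bounds these functions uniformly in the sup norm (by $\norm{\phi_U}_{\L^{p^*}(G)} M$, with $\frac1p + \frac1{p^*} = 1$), and the uniform continuity of $\phi_U$ together with the uniform $\L^p$-bound makes them equicontinuous; since they are supported in the fixed compact $K$, Arzel\`a--Ascoli gives relative compactness in $\C(K)$, and uniform convergence on the finite-measure set $K$ upgrades to $\L^p(G)$-convergence. Combining the two approximation steps shows that $\mathcal{F}$ is totally bounded, hence relatively compact. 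The step I expect to be the main obstacle is the measure-theoretic bookkeeping in this sufficiency direction when $G$ is genuinely non-unimodular: the convolution identities and the H\"older/translation estimates must be carried out with the modular function tracked correctly, since the clean identities \eqref{Convolution-formulas} hold as stated only for unimodular $G$. For the groups relevant to this paper --- connected Lie groups of polynomial growth, which are unimodular --- these complications disappear, and for the fully general statement one may alternatively invoke the cited references.
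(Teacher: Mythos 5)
Your proof is correct, and it is worth noting that the paper itself offers no proof of Theorem \ref{Th-compactness-Rob} at all: it simply refers to \cite[Exercise 26 VIII.72]{Bou04}, \cite[Problem 4 p.~283]{Dieu83} and \cite[Theorem A.4.1]{DtER03}. What you have written out is essentially the standard truncation-plus-mollification Fr\'echet--Kolmogorov argument that those references contain: necessity by transferring the three conditions from a finite $\epsi$-net, using that left translations are isometries of $\L^p(G)$ under left Haar measure; sufficiency by showing $\mathcal{F}$ lies within $\epsi$ of the family $\{\phi_U * (f 1_K)\}$, which is relatively compact by Arzel\`a--Ascoli. The key estimates check out: the identity $\phi_U * f - f = \int_G \phi_U(r)\,(\lambda_r f - f)\,\d\mu_G(r)$ together with Minkowski's inequality \eqref{Minkowski-Lp} gives exactly the bound you state, and the same vector-valued computation yields Young's inequality $\norm{\phi_U * g}_{\L^p(G)} \leq \norm{\phi_U}_{\L^1(G)}\norm{g}_{\L^p(G)}$, which is the one-line remark needed to splice your two approximation steps, via $\norm{\phi_U * f - \phi_U * (f 1_K)}_{\L^p(G)} \leq \norm{f - f 1_K}_{\L^p(G)} \leq \epsi$. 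Two cosmetic corrections: the regularized functions are supported in $\ovl{U}K$, not in $K$ (harmless, since this is still a fixed compact set), so the Arzel\`a--Ascoli step should be run on $\C(\ovl{U}K)$. Finally, the obstacle you flag at the end is not actually one: everything you use involves only left translations against left Haar measure --- the formula $(f*g)(s)=\int_G f(r)g(r^{-1}s)\,\d\mu_G(r)$ is the generally valid one, and $\lambda_s$ is isometric by left invariance alone --- while in the compactness step the modular function enters only through its values on the fixed compacts $\ovl{U}$ and $K$, where it is continuous hence bounded; so your argument in fact proves the theorem for arbitrary (possibly non-unimodular) locally compact groups, with constants depending on $U$ and $K$. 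The only hypothesis genuinely inherited from the statement is that the balls $B(e,r)$ exhaust $G$, which is what makes the single-function tightness in your necessity step work, and which holds in the Carnot--Carath\'eodory setting where the theorem is applied.
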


Now, consider a connected Lie group $G$ equipped with a family $X$ of left-invariant H\"ormander vector fields with polynomial volume growth and local dimension $d$. We suppose that $G$ is \textit{not} compact. Let $K$ be a compact subset of $G$. We denote by $\C_K(G)$ the space of continuous functions on $G$ with support in $K$.

Suppose $1<p<\infty$ and $\alpha \geq 0$. Following essentially \cite[p.~287]{CRT01} we define the subspace
\begin{equation}
\label{Def-Sobolev}
\L^p_\alpha(G) 
\ov{\mathrm{def}}{=} \dom \Delta^{\frac{\alpha}{2}}_p
\end{equation}
of $\L^p(G)$. If $f \in \L^p_\alpha(G) $, we will use the notation
\begin{equation}
\label{Def-Lpalpha-bis}
\norm{f}_{\L^p_\alpha(G)}
\ov{\mathrm{def}}{=} \bnorm{\Delta_p^{\frac{\alpha}{2}}(f)}_{\L^p(G)}+\norm{f}_{\L^p(G)} 
\ov{\eqref{equiv-Egert}}{\approx} \bnorm{(\Id+\Delta_p)^{\frac{\alpha}{2}}(f)}_{\L^p(G)}.
\end{equation}
We refer to \cite[Section II.5]{EnN00} and \cite[Section 15.E]{KuW04} for the link with Sobolev towers. If $\alpha p>d$, we have by \cite[p.~287]{CRT01} \cite[Theorem 4.4 (c)]{BPTV19} a Sobolev embedding $\L^p_\alpha(G) \subset \L^\infty(G)$: 
\begin{equation}
\label{Sobolev-embedding-bis}
\norm{f}_{\L^\infty(G)} 
\lesssim \norm{f}_{\L_\alpha^p(G)}, \quad f \in \dom \Delta_p^{\frac{\alpha}{2}}.
\end{equation}
Note that by \cite[Proposition 3.2.3]{Haa06} the Bessel potential $(\Id+\Delta)^{-\alpha}$ is a \textit{bounded} operator on the Banach space $\L^p(G)$ for any $\alpha \in \Cbb$ with $\Re \alpha>0$. Consequently if $0 < \alpha \leq \beta$ it is obvious to check with \eqref{equiv-Egert} that
\begin{equation}
\label{increasing-Bessel}
\norm{f}_{\L_\alpha^p(G)} 
\lesssim \norm{f}_{\L_\beta^p(G)}.
\end{equation}
A contractive inclusion for the case $G=\R^n$ is proved in \cite[p.~135]{Ste70} with a different argument. A \textit{contractive} version of \eqref{increasing-Bessel} is stated without proof in the inequality following \cite[(3.1)]{BPTV19} but it is a mistake confirmed by the authors of this paper.

\begin{prop}
\label{Prop-loc-1}
Let $G$ be a non-compact connected Lie group equipped with a family $(X_1, \ldots, X_m)$ of left-invariant H\"ormander vector fields. Suppose that $G$ has polynomial volume growth. Let $\alpha \geq 1$ and $\max\{1,\frac{d}{\alpha}\} < p < \infty$. If $g \co G \to \Cbb$ is a compactly supported continuous function then the subset
\begin{equation}
\label{equ-proof-prop-Schur-quantum-compact-metric-3-bis}
g \big\{f \in \C_0(G) \cap \dom \Delta_p^{\frac\alpha2}  : \norm{f}_{\L^p_\alpha(G)} \leq 1, f(e)=0 \big\}
\end{equation}
is relatively compact in $\L^\infty(G)$.
\end{prop}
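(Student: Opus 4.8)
The plan is to reduce the statement to the Arzel\`a--Ascoli theorem. Every element $gf$ of the set in \eqref{equ-proof-prop-Schur-quantum-compact-metric-3-bis} is a continuous function supported in the fixed compact set $K \ov{\mathrm{def}}{=} \supp g$, and on continuous functions the $\L^\infty(G)$-norm agrees with the uniform norm; moreover a subset of a relatively compact set is relatively compact, so I may discard the constraint $f(e)=0$. Hence it suffices to show that the family $\{gf\}$ is uniformly bounded and uniformly equicontinuous, since then Arzel\`a--Ascoli gives relative compactness in $\C(K)$, and therefore in $\L^\infty(G)$. Uniform boundedness is immediate from the Sobolev embedding \eqref{Sobolev-embedding-bis}: $\norm{gf}_{\L^\infty(G)} \leq \norm{g}_{\L^\infty(G)} \norm{f}_{\L^\infty(G)} \lesssim \norm{g}_{\L^\infty(G)} \norm{f}_{\L^p_\alpha(G)} \leq \norm{g}_{\L^\infty(G)}$.

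The core of the argument is to show that the family $\{f\}$ is itself equicontinuous in $\L^\infty(G)$, that is $\sup_{f} \norm{(\Id-\lambda_s)f}_{\L^\infty(G)} \to 0$ as $s \to e$. Fix $f$ and put $h \ov{\mathrm{def}}{=} (\Id-\lambda_s)f$. First I bound $h$ in $\L^p(G)$: since $\alpha \geq 1$, the monotonicity \eqref{increasing-Bessel} together with Proposition \ref{Prop-domain} gives $f \in \dom \Delta_p^{\frac12} = \dom X_{1,p} \cap \cdots \cap \dom X_{m,p}$ with $\big(\sum_k \norm{X_{k,p}f}_{\L^p(G)}^p\big)^{\frac1p} \lesssim \norm{f}_{\L^p_1(G)} \lesssim \norm{f}_{\L^p_\alpha(G)} \leq 1$, so Lemma \ref{Lemma-useful-estimate} yields $\norm{h}_{\L^p(G)} \lesssim \dist_\CC^{p^*}(s,e)$, which tends to $0$ as $s \to e$ uniformly in $f$. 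On the other hand, since $\lambda_s$ commutes with $\Delta$ by left invariance and acts isometrically on $\L^p(G)$, one has $\norm{h}_{\L^p_\alpha(G)} \approx \norm{(\Id-\lambda_s)(\Id+\Delta)^{\frac\alpha2}f}_{\L^p(G)} \leq 2\norm{(\Id+\Delta)^{\frac\alpha2}f}_{\L^p(G)} \lesssim \norm{f}_{\L^p_\alpha(G)} \leq 2$.

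Now I interpolate these two bounds. Because $\alpha p > d$ I may fix $\beta$ with $\frac{d}{p} < \beta < \alpha$ and set $\theta \ov{\mathrm{def}}{=} \frac{\beta}{\alpha} \in (0,1)$. The moment inequality for the fractional powers of the invertible sectorial operator $\Id+\Delta$ on $\L^p(G)$ (equivalently, complex interpolation of the Bessel-potential scale) gives, via \eqref{equiv-Egert}, $\norm{h}_{\L^p_\beta(G)} \lesssim \norm{h}_{\L^p(G)}^{1-\theta} \norm{h}_{\L^p_\alpha(G)}^{\theta} \lesssim \dist_\CC^{p^*}(s,e)^{1-\theta}$. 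Since $\beta p > d$, the Sobolev embedding (the analogue of \eqref{Sobolev-embedding-bis} with $\beta$ in place of $\alpha$) then yields $\norm{h}_{\L^\infty(G)} \lesssim \dist_\CC^{p^*}(s,e)^{1-\theta}$, all constants being independent of $f$; as $s \to e$ this is exactly the uniform equicontinuity of $\{f\}$.

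It remains to transfer this to $\{gf\}$ and conclude. Writing $(\Id-\lambda_s)(gf) = (\Id-\lambda_s)(g)\,f + \lambda_s(g)\,(\Id-\lambda_s)(f)$ and taking $\L^\infty$-norms gives $\norm{(\Id-\lambda_s)(gf)}_{\L^\infty(G)} \leq \norm{(\Id-\lambda_s)g}_{\L^\infty(G)}\norm{f}_{\L^\infty(G)} + \norm{g}_{\L^\infty(G)}\norm{(\Id-\lambda_s)f}_{\L^\infty(G)}$. The first term tends to $0$ uniformly because a compactly supported continuous function is uniformly continuous, so $\norm{(\Id-\lambda_s)g}_{\L^\infty(G)} \to 0$, while $\norm{f}_{\L^\infty(G)} \lesssim 1$; the second tends to $0$ uniformly by the previous paragraph. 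Thus $\{gf\}$ is uniformly bounded, equicontinuous, and supported in $K$, and Arzel\`a--Ascoli finishes the proof. I expect the main obstacle to be the interpolation step $\norm{h}_{\L^p_\beta(G)} \lesssim \norm{h}_{\L^p(G)}^{1-\theta}\norm{h}_{\L^p_\alpha(G)}^{\theta}$: it is what allows one to ``split a fractional derivative'' and so to cover the full range $\alpha \geq 1$ (in particular $\alpha=1$, where a direct first-order estimate à la Lemma \ref{Lemma-useful-estimate} would require the strictly stronger $\alpha > 1 + \frac{d}{p}$), and it relies on the good functional-calculus properties of $\Delta$ on $\L^p(G)$ guaranteed by Gaussian heat-kernel bounds for groups of polynomial growth.
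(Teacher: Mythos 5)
Your proof is correct, and it takes a genuinely different route from the paper's, although the two share the same analytic core: the translation estimate $\norm{(\Id-\lambda_s)f}_{\L^p(G)} \lesssim \dist_\CC^{p^*}(s,e)$, which you derive exactly as the paper does from Lemma \ref{Lemma-useful-estimate}, \eqref{Riesz1} and \eqref{increasing-Bessel}, together with the Sobolev embedding \eqref{Sobolev-embedding-bis}. From there the paper stays in $\L^p(G)$: it feeds this estimate into the Fr\'echet--Kolmogorov criterion (Theorem \ref{Th-compactness-Rob}) to get relative compactness of the sets $E_{K,p,M}$ in $\L^p(G)$, transfers to $\L^\infty(G)$ through the Bessel potential $(\Id+\Delta)^{-\frac{\alpha}{2}}$, and absorbs the factor $g$ with the Leibniz rule \eqref{Leibniz-Coulhon-bis}, which places the set \eqref{equ-proof-prop-Schur-quantum-compact-metric-3-bis} inside some $F_{K,p,M}$. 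You instead work directly in $\L^\infty(G)$: the moment inequality for the invertible sectorial operator $\Id+\Delta_p$, with $\frac{d}{p}<\beta<\alpha$ and $\theta=\frac{\beta}{\alpha}$, combined with \eqref{equiv-Egert} and the commutation of $\lambda_s$ with $\Delta_p$, upgrades the $\L^p$ estimate to $\norm{(\Id-\lambda_s)f}_{\L^\infty(G)} \lesssim \dist_\CC^{p^*}(s,e)^{1-\theta}$ after a Sobolev embedding at the intermediate level $\beta$; Arzel\`a--Ascoli then replaces Fr\'echet--Kolmogorov, and the factor $g$ is handled by the elementary identity $(\Id-\lambda_s)(gf)=((\Id-\lambda_s)g)f+\lambda_s(g)(\Id-\lambda_s)f$ instead of \eqref{Leibniz-Coulhon-bis}. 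What each buys: the paper's route recycles Theorem \ref{Th-compactness-Rob} and runs essentially verbatim in the homogeneous case of Proposition \ref{Prop-less}; your route is self-contained at the $\L^\infty$ level and, notably, your interpolation step supplies precisely the uniform-continuity estimate that the paper's transfer step compresses --- mere boundedness of $(\Id+\Delta)^{-\frac{\alpha}{2}}\co \L^p(G)\to\L^\infty(G)$ applied to the merely bounded family $(\Id+\Delta_p)^{\frac{\alpha}{2}}f$ does not by itself convert $\L^p$-relative compactness of the $f$'s into $\L^\infty$-relative compactness, whereas your bound $\norm{h}_{\L^\infty(G)}\lesssim \norm{h}_{\L^p(G)}^{1-\theta}$ for differences $h$ within the family does. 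Two corrections to your commentary rather than to your argument: the moment inequality $\norm{A^{\gamma}x}\lesssim\norm{x}^{1-\gamma/\delta}\norm{A^{\delta}x}^{\gamma/\delta}$ ($0<\gamma<\delta$) holds for every sectorial operator (see \cite{Haa06}), so applied to $A=\Id+\Delta_p$ it requires no Gaussian heat-kernel bounds or $\H^\infty$-calculus, contrary to your closing caveat; and both proofs implicitly use that $\dist_\CC^{p^*}(s,e)\to 0$ as $s\to e$, i.e.\ the compatibility of the Carnot--Carath\'eodory topologies with that of $G$, which is standard but worth recording. Finally, dropping the constraint $f(e)=0$ is indeed harmless here, since the inhomogeneous norm already yields uniform boundedness via \eqref{Sobolev-embedding-bis}.
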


\begin{proof}
Let $K$ be a compact subset of $G$. For any $M \geq 0$, consider the subset
\begin{equation}
\label{Interminable-subset-bis}
E_{K,p,M} 
\ov{\mathrm{def}}{=} \Big\{ f \in \C_K(G) \cap \dom \Delta_p^{\frac{\alpha}{2}} : \norm{f}_{\L^p_\alpha(G)} \leq M \Big\} 
\end{equation} 
of the Banach space $\L^p(G)$. If $f \in E_{K,p,M}$, using the Sobolev embedding $\L^p_\alpha(G) \subset \L^\infty(G)$ we obtain
$$
\norm{f}_{\L^p(G)} 
\lesssim_{K,p} \norm{f}_{\L^\infty(G)} 
\ov{\eqref{Sobolev-embedding-bis}}{\lesssim} \norm{f}_{\L_\alpha^p(G)}
\leq M.
$$
Consequently, the subset $E_{K,p,M}$ is bounded in $\L^p(G)$. Moreover, using Lemma \ref{Lemma-useful-estimate}, we have for any function $f \in E_{K,p,M}$ and any $s \in G$
\begin{align*}
\MoveEqLeft
\norm{(\Id-\lambda_s)f}_{\L^p(G)}
\ov{\eqref{Ine-pratique}}{\leq} \dist_\CC^{p^*}(s,e) \bigg(\sum_{k=1}^m \norm{X_{k,p}(f)}_{\L^p(G)}^p \bigg)^{\frac{1}{p}}
\approx_p \dist_\CC^{p^*}(s,e) \sum_{k=1}^{m} \bnorm{X_{k,p}(f)}_{\L^p(G)} \\
&\ov{\eqref{Riesz1}}{\lesssim_{p}} \dist_\CC^{p^*}(s,e) \bnorm{\Delta_p^{\frac12}(f)}_{\L^p(G)}\ov{\eqref{Def-Lpalpha-bis}}{\leq} \dist_\CC^{p^*}(s,e) \norm{f}_{\L^p_1(G)} \\
&\ov{\eqref{increasing-Bessel}}{\lesssim} \dist_\CC^{p^*}(s,e) \norm{f}_{\L^p_\alpha(G)}
\ov{\eqref{Interminable-subset-bis}}{\leq} M \dist_\CC^{p^*}(s,e).
\end{align*} 
With Theorem \ref{Th-compactness-Rob}, we obtain the relative compactness of the subset $E_{K,p,M}$ in $\L^p(G)$ and of its subset
$$
F_{K,p,M} 
\ov{\mathrm{def}}{=} \Big\{ f \in \C_K(G) \cap \dom \Delta_p^{\frac{\alpha}{2}} : \norm{f}_{\L_\alpha^p(G)} \leq M, f(e)=0 \Big\}. 
$$
The operator $(\Id+\Delta)^{-\frac\alpha2} \co \L^p(G) \to \ovl{\Ran \Delta_\infty}$ is bounded by \eqref{Sobolev-embedding-bis} and \eqref{Def-Lpalpha-bis} (hence uniformly continuous). Applying this operator to the previous subset by writing $f=(\Id+\Delta)^{-\frac\alpha2}(\Id+\Delta_p)^{\frac\alpha2}f$, we obtain that the set $F_{K,p,M}$ is relatively compact in $\L^\infty(G)$, hence in $\C_0(G)$. Note that if $f$ belongs to $\C_0(G) \cap \dom \Delta_p^{\frac\alpha2}$ and satisfies $\norm{f}_{\L^p_\alpha(G)} \leq 1$ and if $g \in \C_c(G)$, we have
\begin{align*}
\MoveEqLeft
\norm{gf}_{\L^p_\alpha(G)}          
\ov{\eqref{Leibniz-Coulhon-bis}}{\lesssim_{p}} \norm{g}_{\L^p_\alpha(G)} \norm{f}_{\L^\infty(G)} + \norm{g}_{\L^\infty(G)} \norm{f}_{\L^p_\alpha(G)} \\ 
&\ov{\eqref{Sobolev-embedding-bis}}{\lesssim} \norm{f}_{\L^p_\alpha(G)} \Big[\norm{g}_{\L^\infty(G)} +  \norm{g}_{\L^p_\alpha(G)}\Big] 
\leq \norm{g}_{\L^\infty(G)} +  \norm{g}_{\L^p_\alpha(G)}.
\end{align*}
Consequently, if $\supp g \subset K$, we obtain that the subset \eqref{equ-proof-prop-Schur-quantum-compact-metric-3-bis} is included in some subset $F_{K,p,M}$ with $M \ov{\mathrm{def}}{=} \norm{g}_{\L^\infty(G)} +  \norm{g}_{\L^p_\alpha(G)}$. 
\end{proof}

\begin{remark} \normalfont
In \cite[Proposition 3 p.~138]{Ste70}, it is proved that a measurable function $f$ belongs to the space $\L^p_1(\R^n)$ if and only if $\norm{(\Id-\lambda_s)f}_{\L^p(\R^n)}=O(|s|)$. So, we are not confident in a possible generalization of Proposition \ref{Prop-loc-1} to the case $0<\alpha<1$. Note also that in \cite[Ex 6.1 p.~159]{Ste70}, it is stated that a measurable function $f$ belongs to the space $\L^p_1(\R^n)$ if and only if $f$ belongs to $\L^p(\R^n)$, $f$ is absolutely continuous and the partial derivatives $\frac{\partial f}{\partial x_1},\ldots,\frac{\partial f}{\partial x_n}$ belong to $\L^p(\R^n)$.
\end{remark}

\begin{remark} \normalfont
It is transparent for the author that some form of local ultracontractivity \cite[Definition 2.11]{GrT12} can be used to give some variants or generalizations of the previous proof to other contexts.
\end{remark}

We also define the seminorm $\norm{\cdot}_{\L^p_\alpha(G)}$ on the space $(\C_0(G) \cap \dom \Delta_p^{\frac{\alpha}{2}}) \oplus \mathbb{C}1$ by letting $\norm{f}_{\L^p_\alpha(G)} \ov{\mathrm{def}}{=} \norm{f_0}_{\L^p_\alpha(G)}$ for any element $f=f_0+ \lambda 1$ of the space $(\C_0(G) \cap \dom \Delta_p^{\frac{\alpha}{2}}) \oplus \Cbb 1$. Note that the latter space is a subspace of the unitization $\C_0(G) \oplus \Cbb 1$ of the non-unital algebra $\C_0(G)$. 

%\textbf{IMPROVE}
%\textbf{a seminorm $\norm{\cdot}$ defined on a dense subspace $\dom \norm{\cdot}$ of the selfadjoint part $(uA)_\sa$}

\begin{lemma}
\label{Lemma-lower-5}
The restriction of the seminorm $\norm{\cdot}_{\L^p_\alpha(G)}$ on the subspace $(\C_0(G) \cap \dom \Delta_p^{\frac{\alpha}{2}})$ is lower semicontinuous.% \textbf{au dela de la restriction ?}
\end{lemma}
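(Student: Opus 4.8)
The plan is to transcribe the weak-compactness argument used for the fourth point of Lemma~\ref{Lemma-recapitulatif}, the only genuinely new difficulty being that on a non-compact group the uniform norm no longer dominates the $\L^p$-norm. Lower semicontinuity of $\norm{\cdot}_{\L^p_\alpha(G)}$ on $D\ov{\mathrm{def}}{=}\C_0(G)\cap\dom\Delta_p^{\frac\alpha2}$ (with respect to the sup-norm topology of $\C_0(G)$) amounts to showing that whenever $(f_n)$ is a sequence in $D$ converging uniformly to $f\in\C_0(G)$, one has $f\in D$ and $\norm{f}_{\L^p_\alpha(G)}\leq\liminf_n\norm{f_n}_{\L^p_\alpha(G)}$. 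Setting $L\ov{\mathrm{def}}{=}\liminf_n\norm{f_n}_{\L^p_\alpha(G)}$ I may assume $L<\infty$ and, after passing to a subsequence, that $\norm{f_n}_{\L^p_\alpha(G)}\to L$.

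By the definition \eqref{Def-Lpalpha-bis}, the sequences $(f_n)$ and $(\Delta_p^{\frac\alpha2}f_n)$ are both bounded in $\L^p(G)$. Since $1<p<\infty$, the space $\L^p(G)\times\L^p(G)$ is reflexive, so by \cite[Theorem 2.8.2]{Meg98} there is a subnet $\big(f_{n_j},\Delta_p^{\frac\alpha2}f_{n_j}\big)$ converging weakly to a pair $(h,y)$. The crucial point is to identify $h$ with $f$. In the compact case one used $\norm{\cdot}_{\L^p}\leq\norm{\cdot}_{\C(G)}$ to get $\L^p$-convergence for free; here this inequality fails because $\mu_G(G)=\infty$. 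Instead, fix $\phi\in\C_c(G)$ with support $S$. Then $\big|\int_G(f_{n_j}-f)\phi\,\d\mu_G\big|\leq\norm{f_{n_j}-f}_{\L^\infty(G)}\norm{\phi}_{\L^1(G)}\to0$ because $f_{n_j}\to f$ uniformly and $\phi$ is integrable, while $\int_G f_{n_j}\phi\,\d\mu_G\to\int_G h\phi\,\d\mu_G$ by weak convergence and $\phi\in\C_c(G)\subset\L^{p^*}(G)$. Hence $\int_G f\phi\,\d\mu_G=\int_G h\phi\,\d\mu_G$ for every $\phi\in\C_c(G)$, and since $f-h\in\L^1_{\loc}(G)$ this forces $f=h$ almost everywhere. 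In particular $f\in\L^p(G)$ and $f_{n_j}\to f$ weakly in $\L^p(G)$.

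It remains to run the closed-graph step. The pairs $\big(f_{n_j},\Delta_p^{\frac\alpha2}f_{n_j}\big)$ lie in the graph of the closed operator $\Delta_p^{\frac\alpha2}$, which is a closed linear (hence convex) subspace of $\L^p(G)\times\L^p(G)$ and therefore weakly closed by \cite[Theorem 2.5.16]{Meg98}. Consequently its weak limit $(f,y)$ belongs to the graph, that is $f\in\dom\Delta_p^{\frac\alpha2}$ and $y=\Delta_p^{\frac\alpha2}f$; together with $f\in\C_0(G)$ this gives $f\in D$. Finally, the weak lower semicontinuity of the norm \cite[Theorem 2.5.21]{Meg98} applied to each coordinate yields $\bnorm{\Delta_p^{\frac\alpha2}f}_{\L^p(G)}\leq\liminf_j\bnorm{\Delta_p^{\frac\alpha2}f_{n_j}}_{\L^p(G)}$ and $\norm{f}_{\L^p(G)}\leq\liminf_j\norm{f_{n_j}}_{\L^p(G)}$; adding these and using \eqref{Def-Lpalpha-bis} gives
\begin{equation*}
\norm{f}_{\L^p_\alpha(G)}
\leq\liminf_j\norm{f_{n_j}}_{\L^p_\alpha(G)}
=L,
\end{equation*}
which is the desired inequality.

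I expect the identification $f=h$ to be the only delicate step, precisely because the passage from the compact to the non-compact setting removes the domination of the $\L^p$-norm by the uniform norm, so the shortcut available in Lemma~\ref{Lemma-recapitulatif} is unavailable; testing against compactly supported functions and invoking local integrability repairs this. The remaining reflexivity, closed-graph and weak-lower-semicontinuity arguments are routine and essentially identical to those in the proof of Lemma~\ref{Lemma-recapitulatif}.
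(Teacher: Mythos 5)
Your proof is correct and follows essentially the same route as the paper: the paper likewise identifies the weak $\L^p$-limit by testing against $\C_c(G)$ (using uniform convergence and density of $\C_c(G)$ in $\L^{p^*}(G)$), then extracts a weakly convergent subnet, invokes weak closedness of the graph of the closed operator $\Delta_p^{\frac{\alpha}{2}}$, and concludes by weak lower semicontinuity of the norm. The only cosmetic difference is that the paper proves weak convergence of the full sequence $(f_n)$ to $f$ first and extracts a subnet only for $(\Delta_p^{\frac{\alpha}{2}}f_n)$, whereas you extract a subnet of the pair and identify the first coordinate afterwards via $\L^1_{\loc}$ testing; both are sound.
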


\begin{proof}
Let $f \in \C_0(G)$ and $(f_n)$ be a sequence of elements of $\C_0(G) \cap \dom \Delta^{\frac\alpha2}_p$ such that $(f_n)$ converges to $f$ for the norm topology of $\C_0(G)$ and $\norm{f_n}_{\L^p_\alpha(G)} \leq 1$ for any $n$, that is $\bnorm{\Delta_p^{\frac{\alpha}{2}}(f_n)}_{\L^p(G)}+\norm{f_n}_{\L^p(G)} \leq 1$ by \eqref{Def-Lpalpha-bis}. Note that in particular that the sequences $(f_n)$ and $(\Delta_p^{\frac{\alpha}{2}}(f_n))$ are bounded in the Banach space $\L^p(G)$. We have to prove that the function $f$ belongs to $\dom \Delta^{\frac\alpha2}_p$ and that $\norm{f}_{\L^p_\alpha(G)} \leq 1$. 

First, we show that the sequence $(f_n)$ converges to $f$ for the weak topology of the Banach space $\L^p(G)$. Indeed, for any function $g \in \C_c(G)$, we have
$$
\left|\int_G (f_n-f)g \d \mu_G\right| 
\leq \norm{f_n-f}_{\L^\infty(G)} \int_G |g| \d \mu_G
\xra[n \to +\infty]{} 0.
$$
Using the boundedness of the sequence $(f_n)$ in $\L^p(G)$, we obtain the claim with \cite[2.71 p.~234]{Meg98} since we have the convergence with any function $g$ of the dense subspace $\C_c(G)$ of the Banach space $\L^{p^*}(G)$.

Since the sequence $\big(\Delta_p^{\frac{\alpha}{2}}f_n\big)$ is bounded in the Banach space $\L^p(G)$ and since bounded sets are weakly relatively compact by \cite[Theorem 2.8.2]{Meg98}, there exists a weakly convergent subnet $\big(\Delta_p^{\frac{\alpha}{2}}f_{n_j}\big)$. Then $\big(f_{n_j},\Delta_p^{\frac{\alpha}{2}}f_{n_j}\big)$ is a weakly convergent net in the graph of the closed operator $\Delta_p^{\frac{\alpha}{2}}$. Note that this graph is closed and convex, hence weakly closed by \cite[Theorem 2.5.16]{Meg98}. Thus the limit of $\big(f_{n_j},\Delta_p^{\frac{\alpha}{2}}f_{n_j}\big)$ belongs again to the graph and is of the form $\big(g,\Delta_p^{\frac{\alpha}{2}}g\big)$ for some $g \in \dom \Delta_p^{\frac{\alpha}{2}}$. In particular, the net $(f_{n_j})$ converges weakly to $g$ and $\Delta_p^{\frac{\alpha}{2}}(f_{n_j})$ converges weakly to $\Delta_p^{\frac{\alpha}{2}}(g)$. We infer that $f=g$. We conclude that $f$ belongs to $\dom \Delta_p^{\frac{\alpha}{2}}$. Moreover, using the weakly lower semicontinuity of the norm \cite[Theorem 2.5.21]{Meg98}, we obtain
\begin{align*}
\MoveEqLeft
\norm{f}_{\L^p_\alpha(G)}
\ov{\eqref{Def-Lpalpha-bis}}{=} \bnorm{\Delta_p^{\frac{\alpha}{2}}(f)}_{\L^p(G)}+\norm{f}_{\L^p(G)} \leq \liminf_j \Big[\bnorm{\Delta_p^{\frac{\alpha}{2}}(f_{n_j})}_{\L^p(G)}+\liminf_j\norm{f_{n_j}}_{\L^p(G)}\Big]
\leq 1.
\end{align*}
\end{proof}

\begin{cor}
\label{Cor-global-1}
Let $G$ be a non-compact connected Lie group equipped with a family $(X_1, \ldots, X_m)$ of left-invariant H\"ormander vector fields. Suppose that $G$ has polynomial volume growth. Let $\alpha \geq 1$ and $\max\{1,\frac{d}{\alpha}\} < p < \infty$ where $d$ is the local dimension defined in \eqref{local-dim}. Then $\big(\C_0(G),\norm{\cdot}_{\L^p_\alpha(G)},\C_0(G)\big)$ defines a $(C_{\alpha,p},0)$-quasi-Leibniz quantum locally compact metric space for some constant $C_{\alpha,p}>0$.
\end{cor}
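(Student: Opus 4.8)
The plan is to verify the two ingredients bundled into the definition of a quasi-Leibniz quantum locally compact metric space: first that the Lipschitz triple $\big(\C_0(G),\norm{\cdot}_{\L^p_\alpha(G)},\C_0(G)\big)$ is a quantum locally compact metric space in the sense recalled above, and second that the restriction of $\norm{\cdot}_{\L^p_\alpha(G)}$ to the selfadjoint part satisfies the Jordan-Lie inequalities \eqref{Jordan-Lie} together with the lower semicontinuity \eqref{semicontinuous-1}. Throughout I would take the topography to be $\frak{M}=\C_0(G)$ itself, an abelian C*-subalgebra containing an approximate identity (a net of compactly supported bump functions), so that its compactly supported elements are exactly $\C_c(G)$, and I would use the point evaluation $\mu=\delta_e \co f \mapsto f(e)$ as local state: it is a state of $\C_0(G)$ and it is local since $\delta_e(1_K)=1$ for any compact neighbourhood $K$ of $e$.

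For the first ingredient I must show that for every pair $g,h \in \C_c(G)$ the set $\big\{gah : a \in (uA)_\sa,\ \norm{a}_{\L^p_\alpha(G)} \leq 1,\ \mu(a)=0\big\}$ is relatively compact in $\C_0(G)$ for the uniform norm. Writing $a=f+c1$ with $f \in \C_0(G) \cap \dom\Delta_p^{\frac{\alpha}{2}}$ real-valued and $c \in \R$, the constraints read $\norm{f}_{\L^p_\alpha(G)} \leq 1$ and $f(e)+c=0$. Since $\C_0(G)$ is commutative, $gah=\phi a$ with $\phi \ov{\mathrm{def}}{=} gh \in \C_c(G)$; set $K \ov{\mathrm{def}}{=} \supp\phi$. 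The decisive observations are that $(\phi a)(e)=\phi(e)a(e)=0$ because $\mu(a)=a(e)=0$, that $\phi a \in \C_K(G) \cap \dom\Delta_p^{\frac{\alpha}{2}}$, and that the Sobolev seminorm of $\phi a$ is uniformly controlled: using $\phi a=\phi f+c\phi$, the Leibniz rule \eqref{Leibniz-Coulhon-bis}, and the Sobolev embedding \eqref{Sobolev-embedding-bis} (which bounds both $\norm{f}_{\L^\infty(G)}$ and $|c|=|f(e)|$ by $\norm{f}_{\L^p_\alpha(G)} \leq 1$), one gets
\begin{equation*}
\norm{\phi a}_{\L^p_\alpha(G)}
\lesssim \norm{\phi}_{\L^\infty(G)} + \norm{\phi}_{\L^p_\alpha(G)},
\end{equation*}
a bound depending only on $\phi$, not on $a$. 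Hence every $gah$ lies in the single set $F_{K,p,M}$ of the proof of Proposition \ref{Prop-loc-1}, for $K=\supp\phi$ and a suitable $M$ proportional to $\norm{\phi}_{\L^\infty(G)}+\norm{\phi}_{\L^p_\alpha(G)}$, and that set is relatively compact in $\L^\infty(G)$, hence in $\C_0(G)$. This gives the quantum locally compact metric space property.

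For the second ingredient, commutativity of $\C_0(G)$ makes the Lie product $\{a,b\}=\frac{1}{2\i}(ab-ba)$ vanish identically, so the second inequality in \eqref{Jordan-Lie} is trivial, while the Jordan product reduces to $a \circ b=ab$ and the first inequality in \eqref{Jordan-Lie} is exactly the Leibniz estimate \eqref{Leibniz-Coulhon-bis} with $\norm{\cdot}_A=\norm{\cdot}_{\L^\infty(G)}$, valid since $\alpha \geq 1$ and $p>\frac{d}{\alpha}$; this fixes the constant $C_{\alpha,p}$. The lower semicontinuity \eqref{semicontinuous-1} of the restriction of $\norm{\cdot}_{\L^p_\alpha(G)}$ to $\C_0(G)_\sa$ is precisely Lemma \ref{Lemma-lower-5}. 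It remains to record that $\big(\C_0(G),\norm{\cdot}_{\L^p_\alpha(G)}\big)$ is a Lipschitz pair: the domain $\big(\C_0(G) \cap \dom\Delta_p^{\frac{\alpha}{2}}\big)\oplus\Cbb 1$ is dense in $(uA)_\sa$ (it contains $\C_c(G)\oplus\Cbb 1$), and $\norm{f+c1}_{\L^p_\alpha(G)}=\norm{f}_{\L^p_\alpha(G)}=0$ forces $f=0$, so the null space is exactly $\R 1_{uA}$, as required by \eqref{point1}.

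I expect the only genuinely delicate point to be the reduction in the second paragraph: one must check that multiplying by the compactly supported $\phi=gh$ simultaneously kills the value at $e$, keeps the element in $\C_K(G) \cap \dom\Delta_p^{\frac{\alpha}{2}}$, and keeps the Sobolev seminorm uniformly bounded over the whole family, so that the family embeds into one relatively compact set $F_{K,p,M}$ rather than a moving target. A minor but necessary care is that $a$ lives in the unitization $uA$, so the constant $c=-f(e)$ must be handled explicitly both when bounding $\norm{\phi a}_{\L^p_\alpha(G)}$ and when verifying $(\phi a)(e)=0$. Everything else is bookkeeping against the definitions, since Proposition \ref{Prop-loc-1} already carries the analytic weight through the Fr\'echet-Kolmogorov criterion of Theorem \ref{Th-compactness-Rob} and the estimate of Lemma \ref{Lemma-useful-estimate}.
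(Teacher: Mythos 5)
Your proof is correct and follows essentially the same route as the paper: the Lipschitz-pair and kernel checks via the seminorm on the unitization, the local state $\delta_e$, the Leibniz rule \eqref{Leibniz-Coulhon-bis}, lower semicontinuity from Lemma \ref{Lemma-lower-5}, and the compactness input of Proposition \ref{Prop-loc-1}. The only difference is presentational: you spell out the unitization bookkeeping that the paper compresses into ``we conclude with Proposition \ref{Prop-loc-1}'' --- writing $a=f+c1$ with $c=-f(e)$ controlled by the Sobolev embedding \eqref{Sobolev-embedding-bis}, using commutativity to reduce $gah$ to $(gh)a$, and checking that the whole family lands in a single relatively compact set $F_{K,p,M}$ from the proof of Proposition \ref{Prop-loc-1}.
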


\begin{proof}
Parts 1 and 2 of Lemma \ref{Lemma-recapitulatif} say that $\dom \norm{\cdot}_{\dot\L^p_\alpha(G)}\ov{\eqref{def-domaine}}{=} \C_0(G) \cap \dom \Delta_p^{\frac{\alpha}{2}}$ is closed under the adjoint operation and dense in the space $\C_0(G)$. Consequently $(\C_0(G) \cap \dom \Delta_p^{\frac{\alpha}{2}}) \oplus \mathbb{C}1$ is also closed under the adjoint operation of the algebra $\C_0(G) \oplus \Cbb 1$ and dense in $\C_0(G) \oplus \Cbb 1$.

Let $f=f_0+ \lambda 1$ be an element of $(\C_0(G) \cap \dom \Delta_p^{\frac{\alpha}{2}}) \oplus \mathbb{C}1$. Suppose that $\norm{f}_{\L^p_\alpha(G)}=0$. Then by definition 
$$
\bnorm{\Delta_p^{\frac{\alpha}{2}}(f_0)}_{\L^p(G)}+\norm{f_0}_{\L^p(G)} 
\ov{\eqref{Def-Lpalpha-bis}}{=} \norm{f_0}_{\L^p_\alpha(G)}
=0.
$$ 
Hence $\norm{f_0}_{\L^p(G)}=0$ and finally $f_0=0$. We conclude that $f =\lambda 1$. So \eqref{point1} is satisfied. So we have a Lipschitz pair $(\C_0(G),\norm{\cdot}_{\L^p_\alpha(G)})$.

This Dirac measure $\delta_e$ is clearly a local state since it is supported by the compact $\{e\}$. The Leibniz rule is given by \eqref{Leibniz-Coulhon-bis}. The lower semicontinuity is given by Lemma \ref{Lemma-lower-5}. We conclude with Proposition \ref{Prop-loc-1}.
\end{proof}

In the end of this section, we will investigate what happens when we replace the operator $\Id+\Delta_p$ by the subelliptic Laplacian $\Delta_p$ in one case. The obtained result of Proposition \ref{Prop-less} is a bit different. Indeed, it is obvious that the addition of the identity to the operator $\Delta$ removes global phenomenons.

Suppose that the connected Lie group $G$ is equipped with a family $(X_1, \ldots, X_m)$ of left-invariant H\"ormander vector fields and has polynomial growth with $d<D$. Such group is not compact. For example by \cite[p.~273]{Rob91}, this condition is satisfied if $G$ is simply connected, nilpotent with $G \not\approx \R^d$. 
%not stratified\footnote{\thefootnote. The local dimension and the dimension at infinity of a stratified group are equal, see Example \ref{Ex-dimension-Stratified} \cite[page 273]{Rob91}.} 
Consider some $1 < p < \infty$ and some $\alpha >0$. If $d<\alpha p< D$, it is stated in \cite[p.~288]{CRT01} and \cite[p.~197]{CoS91} that
\begin{equation}
\label{Sobolev-embedding}
\norm{f}_{\L^\infty(G)} 
\lesssim \norm{f}_{\dot{\L}_\alpha^p(G)},\quad f \in \C_c^\infty(G).
\end{equation}
Now, we prove an analogue of Proposition \ref{Prop-loc-1}.

\begin{prop}
\label{Prop-less}
Let $G$ be a connected Lie group equipped with a family $(X_1, \ldots, X_m)$ of left-invariant H\"ormander vector fields. Suppose that $G$ has polynomial growth with $d<D$. Assume that $d<p<D$. If $g \co G \to \Cbb$ is a compactly supported continuous function then the subset
\begin{equation}
\label{equ-proof-prop-Schur-quantum-compact-metric-3}
g \big\{f \in \C_0(G) \cap \dom \Delta_p^{\frac12}  : \norm{f}_{\dot{\L}^p_1(G)} \leq 1, f(e)=0 \big\}
\end{equation}
is relatively compact in $\L^\infty(G)$.
\end{prop}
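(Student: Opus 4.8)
The plan is to follow the scheme of Proposition~\ref{Prop-loc-1}, substituting the \emph{homogeneous} Sobolev embedding \eqref{Sobolev-embedding} for \eqref{Sobolev-embedding-bis} and the Riesz operator $\Delta^{-\frac12}$ for the Bessel operator $(\Id+\Delta)^{-\frac12}$. Write $K \ov{\mathrm{def}}{=} \supp g$ and, for $M \geq 0$, introduce
\[
F_{K,p,M} \ov{\mathrm{def}}{=} \Big\{\phi \in \C_K(G) \cap \dom \Delta_p^{\frac12} : \norm{\phi}_{\dot{\L}^p_1(G)} \leq M,\ \phi(e)=0\Big\}.
\]
First I would show that $F_{K,p,M}$ is relatively compact in $\L^p(G)$ through the Fréchet--Kolmogorov criterion (Theorem~\ref{Th-compactness-Rob}). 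Because each $\phi \in F_{K,p,M}$ is supported in $K$, the embedding \eqref{Sobolev-embedding} gives $\norm{\phi}_{\L^\infty(G)} \lesssim \norm{\phi}_{\dot{\L}^p_1(G)} \leq M$, whence $\norm{\phi}_{\L^p(G)} \leq \mu_G(K)^{\frac1p}\norm{\phi}_{\L^\infty(G)} \lesssim_{K,p} M$; this yields the uniform $\L^p$-bound, and the tail condition is automatic since the supports lie in $K$. For the equicontinuity, Lemma~\ref{Lemma-useful-estimate} together with \eqref{Riesz1} gives
\[
\norm{(\Id-\lambda_s)\phi}_{\L^p(G)} \lesssim_p \dist_\CC^{p^*}(s,e)\,\bnorm{\Delta_p^{\frac12}(\phi)}_{\L^p(G)} \leq M\,\dist_\CC^{p^*}(s,e),
\]
which tends to $0$ as $s \to e$ uniformly over $F_{K,p,M}$. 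Theorem~\ref{Th-compactness-Rob} then delivers the relative compactness in $\L^p(G)$.

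Next I would upgrade this to relative compactness in $\L^\infty(G)$, following the final step of Proposition~\ref{Prop-loc-1}: the embedding \eqref{Sobolev-embedding} says precisely that $\Delta^{-\frac12}$ is bounded from $\ovl{\Ran \Delta_p} \subset \L^p(G)$ into $\L^\infty(G)$, and since $\Ran \Delta_p^{\frac12} \subset \ovl{\Ran \Delta_p}$ by \eqref{inclusion-range}, writing $\phi = \Delta^{-\frac12}\Delta_p^{\frac12}(\phi)$ transports the $\L^p$-relative compactness of $F_{K,p,M}$ to $\L^\infty(G)$. It is here that the two-sided constraint $d<p<D$ is consumed: the lower bound $d<p$ makes the local embedding available with room to interpolate, while the upper bound $p<D$ is exactly the condition under which the \emph{homogeneous} embedding \eqref{Sobolev-embedding} holds on a group of polynomial growth with dimension at infinity $D$.

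It then remains to place the set \eqref{equ-proof-prop-Schur-quantum-compact-metric-3} inside some $F_{K,p,M}$. Suppose first that $g$ is smooth. For any $f$ from the indexing family we have $\norm{f}_{\dot{\L}^p_1(G)} \leq 1$ and, by \eqref{Sobolev-embedding}, $\norm{f}_{\L^\infty(G)} \lesssim 1$; the Leibniz inequality \eqref{Leibniz-Coulhon} then shows $gf \in \dom \Delta_p^{\frac12}$ with
\[
\norm{gf}_{\dot{\L}^p_1(G)} \lesssim_p \norm{g}_{\dot{\L}^p_1(G)}\norm{f}_{\L^\infty(G)} + \norm{g}_{\L^\infty(G)}\norm{f}_{\dot{\L}^p_1(G)} \lesssim \norm{g}_{\dot{\L}^p_1(G)} + \norm{g}_{\L^\infty(G)},
\]
while $\supp(gf) \subset K$ and $(gf)(e) = g(e)f(e) = 0$; hence \eqref{equ-proof-prop-Schur-quantum-compact-metric-3} is contained in $F_{K,p,M}$ for a suitable $M$ and is therefore relatively compact in $\L^\infty(G)$. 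A merely continuous compactly supported $g$ is then reduced to this case by uniform approximation: choosing smooth $g_\epsilon$ with $\norm{g-g_\epsilon}_{\L^\infty(G)} \to 0$ and supports in a fixed compact neighbourhood of $K$, the estimate $\norm{(g-g_\epsilon)f}_{\L^\infty(G)} \leq \norm{g-g_\epsilon}_{\L^\infty(G)}\sup_f\norm{f}_{\L^\infty(G)}$ exhibits \eqref{equ-proof-prop-Schur-quantum-compact-metric-3} as a uniform limit of relatively compact sets, hence as a totally bounded, thus relatively compact, subset of $\L^\infty(G)$.

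I expect the genuinely delicate point to be the passage from $\L^p$ to $\L^\infty$. Unlike the Bessel setting of Proposition~\ref{Prop-loc-1}, where only $\alpha p > d$ was needed, the homogeneous argument here truly requires the \emph{upper} bound $p<D$ in order to have \eqref{Sobolev-embedding} at all, so the standing hypothesis $d<D$ is indispensable; concretely, the transfer of compactness is best justified by interpolating \eqref{Sobolev-embedding} against $\L^p(G)$ so as to turn an $\L^p$-Cauchy sequence with uniformly bounded homogeneous seminorm into an $\L^\infty$-Cauchy one. A secondary technical matter is that \eqref{Sobolev-embedding} is stated only for $f \in \C_c^\infty(G)$; I would first extend it to all of $\dom \Delta_p^{\frac12} \cap \C_0(G)$ using that $\C_c^\infty(G)$ is a core of $\Delta_p^{\frac12}$ together with the lower semicontinuity of the $\L^\infty$-norm along an almost-everywhere convergent subsequence.
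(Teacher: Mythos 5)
Your proposal is correct and follows essentially the same route as the paper's proof: the same auxiliary sets $F_{K,p,M}$ of functions supported in $K$, the Fr\'echet--Kolmogorov criterion of Theorem \ref{Th-compactness-Rob} fed by Lemma \ref{Lemma-useful-estimate} and \eqref{Riesz1} for relative compactness in $\L^p(G)$, the homogeneous embedding \eqref{Sobolev-embedding} (whose validity is exactly where $d<p<D$ is consumed) to pass to $\L^\infty(G)$, and the Leibniz rule \eqref{Leibniz-Coulhon} to place the set \eqref{equ-proof-prop-Schur-quantum-compact-metric-3} inside some $F_{K,p,M}$. Your two added precautions actually tighten steps the paper compresses: the paper uses $\norm{g}_{\dot{\L}^p_1(G)}$ for a merely continuous compactly supported $g$ without comment (your uniform-approximation reduction to smooth $g$ repairs this), and the paper's one-line transfer of compactness from $\L^p$ to $\L^\infty$ via $f=\Delta^{-\frac12}\Delta_p^{\frac12}f$ is, as written, only a boundedness argument, so your interpolation (moment-inequality) justification turning an $\L^p$-Cauchy sequence with bounded homogeneous seminorm into an $\L^\infty$-Cauchy one is the right way to make that step airtight.
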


\begin{proof}
Let $K$ be a compact subset of $G$. For any $M \geq 0$, consider the subset
\begin{equation}
\label{Interminable-subset}
E_{K,p,M} 
\ov{\mathrm{def}}{=} \Big\{ f \in \C_K(G) \cap \dom \Delta_p^{\frac{1}{2}} : \norm{f}_{\dot{\L}^p_1(G)} \leq M \Big\} 
\end{equation} 
of the space $\L^p(G)$. If $f \in E_{K,p,M}$, using the Sobolev embedding $\dot{\L}^p_1(G) \subset \L^\infty(G)$ of \eqref{Sobolev-embedding}, we obtain 
$$
\norm{f}_{\L^p(G)} 
\lesssim_{K,p} \norm{f}_{\L^\infty(G)} 
\ov{\eqref{Sobolev-embedding}}{\lesssim} \norm{f}_{\dot{\L}_1^p(G)} 
\leq M.
$$ 
We infer that the subset $E_{K,p,M}$ is bounded in $\L^p(G)$. Furthermore, using Lemma \ref{Lemma-useful-estimate}, we have for any function $f \in E_{K,p,M}$
\begin{align*}
\MoveEqLeft
\norm{(\Id-\lambda_s)f}_{\L^p(G)}
\ov{\eqref{Ine-pratique}}{\leq} \dist_\CC^{p^*}(s,e) \bigg(\sum_{k=1}^m \norm{X_{k,p}(f)}_{\L^p(G)}^p \bigg)^{\frac{1}{p}}
\approx_p \dist_\CC^{p^*}(s,e) \sum_{k=1}^{m} \bnorm{X_{k,p}(f)}_{\L^p(G)} \\
&\ov{\eqref{Riesz1}}{\lesssim_{p}}\dist_\CC^{p^*}(s,e) \bnorm{\Delta_p^{\frac12}(f)}_{\L^p(G)} 
\ov{\eqref{Def-Lpalpha}}{=} \dist_\CC^{p^*}(s,e) \norm{f}_{\dot{\L}^p_1(G)}
\ov{\eqref{Interminable-subset}}{\leq} M\dist_\CC^{p^*}(s,e).
\end{align*} 
By Theorem \ref{Th-compactness-Rob}, the subset $E_{K,p,M}$ is relatively compact in $\L^p(G)$. Hence, its subset
$$
F_{K,p,M} 
\ov{\mathrm{def}}{=} \Big\{ f \in \C_K(G) \cap \dom \Delta_p^{\frac{1}{2}} : \norm{f}_{\dot{\L}_1^p(G)} \leq M, f(e)=0 \Big\} 
$$
is also relatively compact in $\L^p(G)$. The operator $\Delta^{-\frac12} \co \L^p(G) \to \ovl{\Ran \Delta_\infty}$ is bounded by \eqref{Sobolev-embedding}, hence uniformly continuous. Applying this operator to the previous subset by writing $f=\Delta^{-\frac12}\Delta_p^{\frac12}f$ we obtain that the set $F_{K,p,M}$ is relatively compact in $\L^\infty(G)$, hence in the space $\C_0(G)$. Note that we have
\begin{align*}
\MoveEqLeft
\norm{gf}_{\dot{\L}^p_1(G)}          
\ov{\eqref{Leibniz-Coulhon}}{\lesssim_{p}} \norm{g}_{\dot{\L}^p_1(G)} \norm{f}_{\L^\infty(G)} + \norm{g}_{\L^\infty(G)} \norm{f}_{\dot{\L}^p_1(G)} \\ 
&\ov{\eqref{Sobolev-embedding}}{\lesssim} \norm{f}_{\L^p_1(G)} \Big[\norm{g}_{\L^\infty(G)} +  \norm{g}_{\dot{\L}^p_1(G)}\Big] 
\ov{\eqref{Interminable-subset}}{\lesssim} \norm{g}_{\L^\infty(G)} +  \norm{g}_{\dot{\L}^p_1(G)}.
\end{align*}
Consequently, if $\supp g \subset K$, we obtain that the subset \eqref{equ-proof-prop-Schur-quantum-compact-metric-3} is included in some subset $F_{K,p,M}$ with $M \ov{\mathrm{def}}{=} \norm{g}_{\L^\infty(G)} +  \norm{g}_{\dot{\L}^p_1(G)}$. 
\end{proof}

Similarly to Corollary \ref{Cor-global-1}, we obtain the following result where the seminorm $\norm{\cdot}_{\dot{\L}^p_\alpha(G)}$ is defined on $\dom \norm{\cdot}_{\dot{\L}^p_1(G)} \ov{\eqref{def-domaine}}{=} \C_0(G) \cap \dom \Delta_p^{\frac{1}{2}}$. Unfortunately, we are not able to prove the lower semicontinuity of the seminorm $\norm{\cdot}_{\dot{\L}^p_1(G)}$. So we cannot make the statement that we have a \textit{quasi-Leibniz} quantum locally compact metric space.

\begin{cor}
\label{Cor-global-2} 
Let $G$ be a connected Lie group equipped with a family $(X_1, \ldots, X_m)$ of left-invariant H\"ormander vector fields. Suppose that $G$ has polynomial growth with $d<D$. Then the triple $\big(\C_0(G),\norm{\cdot}_{\dot{\L}^p_1(G)},\C_0(G)\big)$ defines a quantum locally compact metric space.
\end{cor}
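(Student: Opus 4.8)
The plan is to follow the scheme of Corollary \ref{Cor-global-1}, but this time to verify only the plain definition of a quantum locally compact metric space recalled above, since lower semicontinuity of $\norm{\cdot}_{\dot{\L}^p_1(G)}$ is not at our disposal. First I would confirm that $\big(\C_0(G),\norm{\cdot}_{\dot{\L}^p_1(G)}\big)$ is a Lipschitz pair and that $\frak{M}=\C_0(G)$ is a topography on $A=\C_0(G)$. The topography is automatic: $\C_0(G)$ is abelian and contains an approximate identity for itself. For the Lipschitz pair, parts 1 and 2 of Lemma \ref{Lemma-recapitulatif} give density of $\C_0(G)\cap\dom\Delta_p^{\frac12}$ in $\C_0(G)$ and stability under $f\mapsto\ovl f$, and passing to the unitization $\C_0(G)\oplus\Cbb 1$ exactly as in Corollary \ref{Cor-global-1} preserves both. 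The kernel axiom \eqref{point1} then follows from \eqref{equ-2-proof-prop-Fourier-quantum-compact-metric-bis}: if $a=f_0+\lambda 1$ has $\norm{a}_{\dot{\L}^p_1(G)}=\norm{f_0}_{\dot{\L}^p_1(G)}=0$, then, $G$ being non-compact of polynomial growth, $f_0=0$ and $a$ is a scalar multiple of the unit. As the required local state I would take the Dirac measure $\delta_e$, which is local because its support $\{e\}$ is a compact subset of the Gelfand spectrum $G$ of $\frak{M}$.

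The main work is the relative compactness condition. Fix compactly supported $g,h\in\C_0(G)$ and an element $a=f_0+\lambda 1\in(uA)_\sa$ with $\norm{a}_{\dot{\L}^p_1(G)}\leq 1$ and $\delta_e(a)=0$; thus $f_0\in\C_0(G)\cap\dom\Delta_p^{\frac12}$ is real-valued with $\norm{f_0}_{\dot{\L}^p_1(G)}\leq 1$ and $\lambda=-f_0(e)$. Because $A$ is commutative, $gah=(gh)(f_0-f_0(e)1)$. Choosing $\chi\in\C_c^\infty(G)$ with $\chi\equiv 1$ on a compact neighbourhood $K$ of $\supp(gh)$, I would rewrite this as $gah=(gh)\,u$ with $u\ov{\mathrm{def}}{=}\chi f_0-f_0(e)\chi\in\C_K(G)\cap\dom\Delta_p^{\frac12}$, which vanishes at $e$. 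The Leibniz rule \eqref{Leibniz-Coulhon} bounds $\norm{\chi f_0}_{\dot{\L}^p_1(G)}$ by $\norm{\chi}_{\dot{\L}^p_1(G)}\norm{f_0}_{\L^\infty(G)}+\norm{\chi}_{\L^\infty(G)}\norm{f_0}_{\dot{\L}^p_1(G)}$, while the scalar term contributes $|f_0(e)|\,\norm{\chi}_{\dot{\L}^p_1(G)}$; invoking the Sobolev embedding \eqref{Sobolev-embedding} (valid precisely when $d<p<D$) to control $\norm{f_0}_{\L^\infty(G)}\lesssim\norm{f_0}_{\dot{\L}^p_1(G)}\leq 1$, I obtain $\norm{u}_{\dot{\L}^p_1(G)}\leq M$ for a constant $M$ depending only on $\chi$, hence only on $g,h$.

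Consequently every $gah$ belongs to $(gh)\big\{u\in\C_K(G)\cap\dom\Delta_p^{\frac12}:\norm{u}_{\dot{\L}^p_1(G)}\leq M,\ u(e)=0\big\}$, and this set is relatively compact in $\L^\infty(G)$ by Proposition \ref{Prop-less} applied to the compactly supported multiplier $gh$ (after rescaling the unit-ball statement by $M$). This yields the relative compactness of $\big\{gah:a\in(uA)_\sa,\ \norm{a}_{\dot{\L}^p_1(G)}\leq 1,\ \delta_e(a)=0\big\}$ and finishes the proof.

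The step I expect to be most delicate is the reduction to Proposition \ref{Prop-less} through the unitization: the element $a$ carries a scalar part $\lambda 1$ that is not an honest function on the non-compact group $G$, so it must first be turned into a compactly supported function vanishing at $e$. Commutativity collapses the two-sided product $gah$ to the one-sided $(gh)(f_0-f_0(e)1)$, and the cutoff $\chi$ makes this an element of $\C_K(G)\cap\dom\Delta_p^{\frac12}$; the only genuinely quantitative point is the seminorm bound on $u$, which is exactly where the Leibniz rule and the Sobolev embedding, and hence the hypothesis $d<p<D$, are needed.
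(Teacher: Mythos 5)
Your proof is correct and follows the paper's route: the corollary is obtained there "similarly to Corollary \ref{Cor-global-1}", i.e.\ via Lemma \ref{Lemma-recapitulatif} (with the kernel identity \eqref{equ-2-proof-prop-Fourier-quantum-compact-metric-bis}, licit since $d<D$ forces $G$ non-compact) for the Lipschitz pair, the local state $\delta_e$, and Proposition \ref{Prop-less} for the compactness condition, with the lower-semicontinuity/quasi-Leibniz claim dropped exactly as you do. Your cutoff $\chi$ simply makes explicit the reduction of the definition's set $\{gah\}$ — whose elements carry a scalar part $\lambda 1$ from the unitization — to the set of Proposition \ref{Prop-less} via \eqref{Leibniz-Coulhon} and \eqref{Sobolev-embedding} under $d<p<D$, a step the paper leaves implicit (note only that your $u$ is supported in $\supp\chi$ rather than in $K$, a harmless relabeling).
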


%%%%%%%%%%%%%%%%%%%%%%%%%%%%%%%%%%%%%%%%%%%%%%%%%%%%%%%%%%%%%%%%%%%%%%%%%%%%%%%%%%%%%%%%%%%%%%
\section{Compact spectral triples and spectral dimension}
\label{Sec-spectral-triples}

\paragraph{Possibly kernel-degenerate compact spectral triples}
Consider a triple $(A,Y,\slashed{D})$ constituted of the following data: a Banach space $Y$, a closed unbounded operator $\slashed{D}$ on $Y$ with dense domain $\dom \slashed{D} \subset Y$, an algebra $A$ equipped with a homomorphism $\pi \co A \to \B(Y)$. In this case, we define the Lipschitz algebra
\begin{align}
\label{Lipschitz-algebra-def}
\MoveEqLeft
\Lip_\slashed{D}(A) 
\ov{\mathrm{def}}{=} \big\{a \in A : \pi(a) \cdot \dom \slashed{D} \subset \dom \slashed{D} 
\text{ and the unbounded operator } \\
&\qquad \qquad  [\slashed{D},\pi(a)] \co \dom \slashed{D} \subset Y \to Y \text{ extends to an element of } \B(Y) \big\}. \nonumber
\end{align} 
We say that $(A,Y,\slashed{D})$ is a (possibly kernel-degenerate) compact spectral triple if in addition $Y$ is a Hilbert space $H$, $A$ is a $\C^*$-algebra, $D$ is a selfadjoint operator on $Y$ and if we have  
\begin{enumerate}
\item{} $\slashed{D}^{-1}$ is a compact operator on $\ovl{\Ran \slashed{D}} \ov{\eqref{lien-ker-image}}{=} (\ker \slashed{D})^\perp$,
\item{} the subset $\Lip_\slashed{D}(A)$ is dense in $A$.
\end{enumerate}

%\vspace{2cm}

%A (possibly kernel-degenerate) compact spectral triple $(A,H,\slashed{D})$ consists of a unital $\mathrm{C}^*$-algebra $A$, a complex Hilbert space $H$, an unbounded operator $\slashed{D}$ on $H$ and a \textbf{faithful} representation $\pi \co A \to \B(H)$ which satisfy the following properties.
%\begin{enumerate}
%\item $\slashed{D}$ is selfadjoint.
	%\item $\slashed{D}^{-1}$ is compact on $\ovl{\Ran \slashed{D}}\ov{\eqref{lien-ker-image}}{=} (\ker \slashed{D})^\perp$. 
	%%\textbf{or $(I+\slashed{D}^2)^{-1}$ is compact or $|\slashed{D}|$ has compact resolvent. There are different definitions. Choose the appropriate.}$\slashed{D}^{-1}$ or 
	%\item The set 
%\begin{align}
%\MoveEqLeft
%\label{Def-LipDA}
%\Lip_\slashed{D}(A) 
%\ov{\mathrm{def}}{=}\big\{ a \in A : \pi(a) \cdot \dom \slashed{D} \subset \dom \slashed{D} 
%\text{ and the unbounded operator } \\
%&\qquad \qquad \qquad [\slashed{D},\pi(a)] \co \dom \slashed{D} \subset H \to H \text{ extends to an element of } \B(H)\big\} \nonumber
%\end{align} 
%is dense in $A$.
%\end{enumerate}
We essentially follow \cite[Definition 2.1]{CGIS14} and \cite[Definition 5.10]{ArK22}. Note that there exists different variations of this definition in the literature, see e.g. \cite[Definition 1.1]{EcI18}. Moreover, we can replace $\slashed{D}^{-1}$ by $|\slashed{D}|^{-1}$ in the first point by an elementary functional calculus argument.

We equally refer to \cite[Definition 5.10]{ArK22} for the notion of compact Banach spectral triple which is a generalization for the case of an operator $\slashed{D}$ acting on a Banach space $Y$ instead of a Hilbert space $H$.

\begin{example} \normalfont%voir Carey example; voir livre Nicolaescu
If $M$ is a compact oriented Riemannian manifold $M$, we can associate the spectral triple $(\C(M),\L^2(\wedge T^*M),D)$ where $\L^2(\wedge T^*M)$ is the Hilbert space of square-integrable complex-valued forms on $M$ and where $D$ is the Hodge-Dirac operator (also called Hodge-de Rham operator). If $M$ is in addition a spin manifold, we can also consider the spectral triple $(\C(M),\L^2(M,S),\scr{D})$ obtained by using the Hilbert space $\L^2(M,S)$ the space of square-integrable spinors on $M$, and the Dirac operator $\scr{D}$. In both cases, the functions of $\C(M)$ act on the Hilbert space by multiplication operators.
\end{example}

\paragraph{Spectral dimension}
Let $(A,H,\slashed{D})$ be a compact spectral triple. By \cite[Proposition 5.3.38]{Ped89}, we have $\ker |\slashed{D}|=\ker \slashed{D}$. Moreover, the operator $|\slashed{D}|^{-1}$ is well-defined on $\ovl{\Ran \slashed{D}} \ov{\eqref{lien-ker-image}}{=} (\ker \slashed{D})^\perp$. Furthermore, we can extend it by letting $|\slashed{D}|^{-1}=0$ on $\ker \slashed{D}$. Following \cite[p.~4]{EcI18}, we say that a compact spectral triple $(A,H,\slashed{D})$ is $\alpha$-summable for some $\alpha > 0$ if $\tr |\slashed{D}|^{-\alpha} < \infty$, 
that is if the operator $|\slashed{D}|^{-1}$ belongs to the Schatten class $S^\alpha(H)$. In this case, the spectral dimension of the spectral triple is defined by
\begin{equation}
\label{Def-spectral-dimension}
\dim(A,H,\slashed{D})  \ov{\mathrm{def}}{=} \inf\big\{\alpha > 0 : \tr |\slashed{D}|^{-\alpha} < \infty\big\}.
\end{equation}
See also \cite[p.~450]{GVF01} and \cite[p.~38 and Definition 6.2 p.~47]{CPR11} for a variation of this definition. %\textbf{(les deux remplacent $|D|$ by $(I+D^2)^{-\frac{1}{2}}$}) \cite[page 42]{Lan10} 

We will use the following lemma which is a slight variation of \cite[Lemma 10.8 p.~450]{GVF01}.

\begin{lemma} 
\label{Lemma-theta-summable}
If $|\slashed{D}|^{-\alpha}$ is trace-class then for any $t>0$ the operator $\e^{-t\slashed{D}^2}$ is trace-class and we have
\begin{equation}
\label{ul-esti2}
\tr \e^{-t\slashed{D}^2}
\lesssim \frac{1}{t^{\frac{\alpha}{2}}}, \quad t>0.
\end{equation}
\end{lemma}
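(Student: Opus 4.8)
The plan is to diagonalize both operators simultaneously by the spectral theorem and then reduce the estimate to a one-variable elementary inequality. Since $\slashed{D}$ is selfadjoint and $|\slashed{D}|^{-1}$ is compact on $\ovl{\Ran\slashed{D}}=(\ker\slashed{D})^\perp$, the positive operator $|\slashed{D}|$ restricted to $(\ker\slashed{D})^\perp$ has purely discrete spectrum: eigenvalues $0<\lambda_1\leq\lambda_2\leq\cdots$, repeated according to their (finite) multiplicities, with $\lambda_n\to+\infty$. In an orthonormal eigenbasis, $|\slashed{D}|^{-\alpha}$ acts by multiplication by $\lambda_n^{-\alpha}$ on $(\ker\slashed{D})^\perp$ and by $0$ on $\ker\slashed{D}$, so the hypothesis $\tr|\slashed{D}|^{-\alpha}<\infty$ reads $\sum_n\lambda_n^{-\alpha}<\infty$. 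The very same eigenbasis diagonalizes $\e^{-t\slashed{D}^2}$, which acts by $\e^{-t\lambda_n^2}$ on the $n$-th eigenvector and by the identity on $\ker\slashed{D}$.

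The key step is an elementary inequality: for any $\beta>0$ the function $x\mapsto x^{\beta}\e^{-x}$ is bounded on $[0,+\infty[$ by $C_\beta\ov{\mathrm{def}}{=}(\beta/\e)^\beta$ (its maximum, attained at $x=\beta$), so that $\e^{-x}\leq C_\beta x^{-\beta}$ for all $x>0$. Taking $\beta=\frac{\alpha}{2}$ and $x=t\lambda_n^2$ yields
\begin{equation*}
\e^{-t\lambda_n^2}
\leq C_{\alpha/2}\,(t\lambda_n^2)^{-\frac{\alpha}{2}}
=C_{\alpha/2}\,t^{-\frac{\alpha}{2}}\,\lambda_n^{-\alpha}.
\end{equation*}
Summing over $n$ and invoking the hypothesis,
\begin{equation*}
\sum_n\e^{-t\lambda_n^2}
\leq C_{\alpha/2}\,t^{-\frac{\alpha}{2}}\sum_n\lambda_n^{-\alpha}
=C_{\alpha/2}\big(\tr|\slashed{D}|^{-\alpha}\big)\,t^{-\frac{\alpha}{2}}
<\infty,
\end{equation*}
which simultaneously shows that the restriction of $\e^{-t\slashed{D}^2}$ to $(\ker\slashed{D})^\perp$ is trace-class and gives it the announced bound.

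It remains to account for the kernel, and this is the point I expect to require the most care. On $\ker\slashed{D}$ the operator $\e^{-t\slashed{D}^2}$ is the identity, so it contributes $\dim\ker\slashed{D}$ to the trace; thus
\begin{equation*}
\tr\e^{-t\slashed{D}^2}
=\dim\ker\slashed{D}+\sum_n\e^{-t\lambda_n^2}.
\end{equation*}
The trace-class assertion on all of $H$ therefore genuinely needs $\ker\slashed{D}$ to be finite-dimensional, which holds in the situations of interest (for a Hodge–Dirac operator it is the finite-dimensional space of harmonic forms). Since $\dim\ker\slashed{D}$ is a fixed constant whereas $t^{-\alpha/2}$ decays as $t\to+\infty$, the clean bound $\lesssim t^{-\alpha/2}$ holds verbatim for all $t>0$ when $\slashed{D}$ is injective, and in general on the range $0<t\leq 1$ (where $t^{-\alpha/2}\geq 1$ absorbs the constant), which is precisely the regime relevant to the subsequent computation of the spectral dimension. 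I would accordingly organize the write-up so that finite-dimensionality of $\ker\slashed{D}$ is invoked explicitly for the trace-class claim, the elementary inequality supplies the quantitative estimate on $(\ker\slashed{D})^\perp$, and the kernel constant is folded into the implicit constant on the relevant range of $t$.
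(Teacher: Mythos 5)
Your proof is correct, and its quantitative core is identical to the paper's: the paper factorizes $\e^{-t\slashed{D}^2}=|\slashed{D}|^{\alpha}\e^{-t\slashed{D}^2}\cdot|\slashed{D}|^{-\alpha}$, bounds the first factor in operator norm by the maximum $(\frac{\alpha}{2t})^{\frac{\alpha}{2}}\e^{-\frac{\alpha}{2}}$ of $\lambda\mapsto\lambda^{\alpha}\e^{-t\lambda^2}$, and concludes with the H\"older-type inequality $\norm{AB}_{S^1(H)}\leq\norm{A}_{\B(H)}\norm{B}_{S^1(H)}$. Your substitution $x=t\lambda_n^2$ in $x^{\beta}\e^{-x}\leq(\beta/\e)^{\beta}$ with $\beta=\frac{\alpha}{2}$ is exactly the same function study, carried out eigenvalue by eigenvalue after diagonalizing; the paper's operator-theoretic phrasing merely avoids invoking discreteness of the spectrum, which in any case is automatic here since $|\slashed{D}|^{-\alpha}$ trace-class forces $|\slashed{D}|^{-1}$ to be compact on $(\ker\slashed{D})^\perp$.

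Where you genuinely depart from the paper is the kernel, and you are right to insist on it: the paper's factorization identity fails on $\ker\slashed{D}$, where the right-hand side vanishes while $\e^{-t\slashed{D}^2}$ acts as the identity, so the paper's argument tacitly lives on $\ovl{\Ran\slashed{D}}=(\ker\slashed{D})^\perp$. Your three observations --- trace-classness on all of $H$ requires $\dim\ker\slashed{D}<\infty$; the bound holds verbatim for all $t>0$ only when $\slashed{D}$ is injective; in general it holds on $0<t\leq 1$, which is the only regime used in the spectral-dimension computation --- give the correct reading of the lemma. One caveat on your parenthetical reassurance, though: for the two-term Hodge--Dirac operator of this paper, $\ker\slashed{D}_2=\ker\nabla\oplus\ker\nabla^*$ and $\ker\nabla^*=(\Ran\nabla)^\perp$ is typically \emph{infinite}-dimensional (e.g.\ the weakly divergence-free fields on $\T^2$), not a finite-dimensional space of harmonic forms as in the full de Rham complex. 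So in the paper's own application the lemma must be read, as in the first part of your write-up, for the restriction of $\e^{-t\slashed{D}^2}$ to $(\ker\slashed{D})^\perp$; this suffices downstream, since the spectral-dimension argument only uses the $\Delta_2$-block, whose kernel is $\Cbb 1$ and whose unit contribution is absorbed by $t^{-\frac{\alpha}{2}}$ on $0<t\leq 1$.
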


\begin{proof}
Note that here the operator $|\slashed{D}|^{-\alpha}$ is defined and bounded on $\ovl{\Ran \slashed{D}}$. However, we can extend it by letting $|\slashed{D}|^{-\alpha}=0$ on $\ker \slashed{D}$. For any $t>0$, we have $\e^{-t\slashed{D}^2}=|\slashed{D}|^{\alpha}\e^{-t\slashed{D}^2}|\slashed{D}|^{-\alpha}$. An elementary study of the function $f \co \lambda \mapsto \lambda^{\alpha}\e^{-t\lambda^2}$ on $\R^+$ shows that $f'(\lambda)=\lambda^{\alpha-1}\e^{-t\lambda^2} (\alpha-2\lambda^2 t)$ for any $\lambda \geq 0$ and consequently that $f$ is bounded and that its maximum is $(\frac{\alpha}{2t})^{\frac{\alpha}{2}}\e^{-\frac{\alpha}{2}}$ in $\lambda=\sqrt{\frac{\alpha}{2 t}}$. We conclude by functional calculus that the operator $|\slashed{D}|^{\alpha}\e^{-t\slashed{D}^2}$ is bounded and that
$$
\tr \e^{-t\slashed{D}^2}
=\bnorm{\e^{-t\slashed{D}^2}}_{S^1(H)}
\leq \bnorm{|\slashed{D}|^{\alpha}\e^{-t\slashed{D}^2}}_{\B(H)} \norm{|\slashed{D}|^{-\alpha}}_{S^1(H)}
\lesssim \frac{1}{t^{\frac{\alpha}{2}}}.
$$
\end{proof}

\paragraph{Hodge-Dirac operator}
Let $G$ be a unimodular connected Lie group equipped with a family $(X_1, \ldots, X_m)$ of left-invariant H\"ormander vector fields and consider a Haar measure $\mu_G$ on $G$. Suppose $1 \leq p \leq \infty$. Recall that we have a canonical isometry $\ell^p_m(\L^p(G))=\L^p(G,\ell^p_m)$. We define the unbounded closed operator $\nabla_p$ from $\L^p(G)$ into the space $\L^p(G,\ell^p_m)$ by $\dom \nabla_p =\dom X_{1,p} \cap \cdots \cap \dom X_{m,p}$ and 
\begin{equation}
\label{Def-grad}
\nabla_p f
\ov{\mathrm{def}}{=} \big(X_{1,p} f, \ldots, X_{m,p} f\big), \quad f \in \dom \nabla_p.
\end{equation} 
If $1<p<\infty$, note that $\dom \nabla_p=\dom \Delta_p^{\frac12}$ by Proposition \ref{Prop-domain}. For any functions $f,g$ of $\dom \nabla_p \cap \L^\infty(G)$ then $fg$ belongs to $\dom \nabla_p \cap \L^\infty(G)$ and we have
\begin{equation}
\label{Leib-gradient}
\nabla_p(fg)            
%=\big(X_{1,p}(fg), \ldots, X_{m,p} (fg)\big)
%=\big(X_{1,p}(f)g+fX_{1,p}(g), \ldots, X_{m,p}(f)g+fX_{m,p}(g)\big) \\
%&=\big(gX_{1,p}(f), \ldots, gX_{m,p}(f)\big) +\big(fX_{1,p}(g), \ldots, fX_{m,p}(g)\big) 
=g\cdot\nabla_p(f) +f\cdot\nabla_p(g) 
\end{equation}
where $f\cdot (h_1,\ldots,h_m) \ov{\mathrm{def}}{=}(fh_1,\ldots,fh_m)$. See \cite[p.~289]{CRT01} for a generalization. 

If $1<p<\infty$, we introduce the unbounded closed operator
\begin{equation}
\label{Hodge-Dirac-I}
\slashed{D}_p
\ov{\mathrm{def}}{=}
\begin{bmatrix} 
0 & (\nabla_{p^*})^* \\ 
\nabla_p & 0 
\end{bmatrix}.
\end{equation}
on the Banach space $\L^p(G) \oplus_p \L^p(G,\ell^p_m)$ defined by
\begin{equation}
\label{Def-D-psi}
\slashed{D}_{p}(f,g)
\ov{\mathrm{def}}{=}
\big((\nabla_{p^*})^*(g),
\nabla_{p}(f)\big), \quad f \in \dom \nabla_{p},\  g \in \dom (\nabla_{p^*})^*.
\end{equation}
We call it the Hodge-Dirac operator of the subelliptic Laplacian $\Delta=-(X_1^2+\cdots+X_m^2)$. These operators are related by the computation
\begin{equation}
\label{carre-de-D}
\slashed{D}_{p}^2
\ov{\eqref{Hodge-Dirac-I}}{=}
\begin{bmatrix} 
0 & (\nabla_{p^*})^* \\ 
\nabla_p & 0 
\end{bmatrix}^2
=\begin{bmatrix} 
(\nabla_{p^*})^*\nabla_p & 0 \\ 
0 & \nabla_p (\nabla_{p^*})^*
\end{bmatrix}
=\begin{bmatrix} 
\Delta_p & 0 \\ 
0 & \nabla_p (\nabla_{p^*})^*
\end{bmatrix}. 
\end{equation} 
The operator $\slashed{D}_2$ is identical to the operator $\Pi$ of \cite[proof of Theorem 1.2]{BEM13} with $b=\Id$. 

We will use just the following lemma which describes a tractable subspace for the adjoint operator $(\nabla_{p^*})^*$. If $(\varphi_j)$ is a Dirac net of functions of $\C^\infty_c(G)$ and if $h=(h_1,\ldots,h_m)$, we will use the notation $\Reg_j h \ov{\mathrm{def}}{=}(h_1*\varphi_j,\ldots,h_m*\varphi_j)$ as soon as it makes sense.

\begin{lemma}
\label{Lemma-core}
Suppose $1<p<\infty$. The subspace $\C^\infty_c(G) \ot \ell^p_m$ is a core of the unbounded operator $(\nabla_{p^*})^*$.
\end{lemma}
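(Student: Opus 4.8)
The plan is to describe $T \ov{\mathrm{def}}{=} (\nabla_{p^*})^*$ through its weak characterization and then to approximate an arbitrary element of $\dom T$ by elements of $\C^\infty_c(G) \ot \ell^p_m$ in the graph norm, via a two-step procedure: first multiply by cutoff functions to gain compact support, then regularize with the operators $\Reg_j$ to gain smoothness. Identifying $\L^p(G,\ell^p_m)$ with the dual of $\L^{p^*}(G,\ell^{p^*}_m)$, an element $h=(h_1,\ldots,h_m)$ lies in $\dom T$ with $Th=u$ if and only if $\sum_{k=1}^m \langle X_{k,p^*} f, h_k\rangle = \langle f, u\rangle$ for every $f \in \dom \nabla_{p^*}$, the brackets denoting the $\L^{p^*}$-$\L^p$ duality. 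The two elementary facts underpinning everything are that right convolution by $\psi \in \C^\infty_c(G)$ commutes with each left-invariant field, $X_k(f * \psi) = (X_k f) * \psi$, and that its adjoint for this duality is right convolution by $\check\psi$, where $\check\psi(r) \ov{\mathrm{def}}{=} \psi(r^{-1})$, that is $\langle f * \psi, g\rangle = \langle f, g * \check\psi\rangle$ (here unimodularity of $G$ is used, together with \eqref{Convolution-formulas}).

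First I would show that both operations preserve $\dom T$, with explicit formulas. For the regularization, given $h \in \dom T$ and a Dirac net $(\varphi_j)$ in $\C^\infty_c(G)$, testing the characterization against $f * \check\varphi_j \in \dom \nabla_{p^*}$ and using the two commutation identities yields $\Reg_j h \in \dom T$ together with $T(\Reg_j h) = (Th) * \varphi_j$; since $w * \varphi_j \to w$ in $\L^p(G)$ for every $w \in \L^p(G)$, this gives $\Reg_j h \to h$ for the graph norm of $T$. For the cutoff, for $\chi \in \C^\infty_c(G)$ and $h \in \dom T$, a Leibniz computation inside the characterization (multiplication by $\chi$ preserves each $\dom X_{k,p^*}$, with $X_{k,p^*}(\chi f) = (X_k \chi) f + \chi X_{k,p^*} f$) shows $\chi h \in \dom T$ and the product rule $T(\chi h) = \chi\, Th - \sum_{k=1}^m (X_k\chi)\, h_k$, valid for a general, not necessarily smooth, $h$.

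Next I would produce a sequence of cutoffs $(\chi_n)$ in $\C^\infty_c(G)$ with $0 \leq \chi_n \leq 1$, $\chi_n \to 1$ pointwise, and $\max_k \norm{X_k \chi_n}_{\L^\infty(G)} \to 0$. These can be obtained from a smoothing $\tilde\rho$ of the proper function $\rho \ov{\mathrm{def}}{=} \dist_\CC(e,\cdot)$ with $\max_k\norm{X_k\tilde\rho}_{\L^\infty(G)} < \infty$ (the Carnot-Carath\'eodory distance is horizontally $1$-Lipschitz) by setting $\chi_n \ov{\mathrm{def}}{=} \theta(\tilde\rho/n)$ for a fixed $\theta \in \C^\infty(\R)$ equal to $1$ near $0$ and to $0$ near infinity, so that $X_k\chi_n = \frac1n \theta'(\tilde\rho/n)\, X_k\tilde\rho$, whence the gradient bound. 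With these in hand, for any $h \in \dom T$ the product rule gives $T(\chi_n h) = \chi_n Th - \sum_k (X_k\chi_n) h_k$, and dominated convergence together with $\norm{(X_k\chi_n)h_k}_{\L^p(G)} \leq \norm{X_k\chi_n}_{\L^\infty(G)} \norm{h_k}_{\L^p(G)} \to 0$ shows $\chi_n h \to h$ in graph norm. Each $\chi_n h$ is compactly supported, hence $\Reg_j(\chi_n h)$ lies in $\C^\infty_c(G)\ot\ell^p_m$ and converges to $\chi_n h$ in graph norm as $j \to \infty$; a diagonal extraction then exhibits every $h\in\dom T$ as a graph-norm limit of elements of $\C^\infty_c(G)\ot\ell^p_m$, proving that this subspace is a core.

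The routine points are the two $\L^p$-convergences of approximate identities and the dominated convergence arguments. The step requiring the most care is the commutation $T(\Reg_j h) = (Th)*\varphi_j$: it must be read off from the weak characterization of the adjoint rather than from a naive pointwise divergence formula, because for a general $h \in \dom T$ only the combination $-\sum_k X_k h_k$, and not the individual terms $X_k h_k$, is known to belong to $\L^p(G)$. The only genuinely analytic ingredient is the existence of the regularized distance $\tilde\rho$ with bounded horizontal derivatives, which is standard sub-Riemannian material and can be cited from \cite{VSCC92}.
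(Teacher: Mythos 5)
Your proof is correct and its engine is the same as the paper's: characterize $T=(\nabla_{p^*})^*$ weakly, regularize with a Dirac net $(\varphi_j)$, and pass $\Reg_j$ through the adjoint using the commutation of the $X_k$ with right convolution. Your identity $T(\Reg_j h)=(Th)*\varphi_j$ is exactly the paper's $(\nabla_{p^*})^*\Reg_j h=\Reg_j\big((\nabla_{p^*})^*h\big)$; the paper derives it with the bracket $\langle f,g\rangle=\int_G f(s)g(s^{-1})\,\d s$ and Dieudonn\'e's (14.10.9), you with the standard duality and the adjoint relation $\langle f*\psi,g\rangle=\langle f,g*\check{\psi}\rangle$ — equivalent bookkeeping. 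The genuine difference is your preliminary cutoff step, and it buys something real: the paper asserts that $\Reg_j h=(h_1*\varphi_j,\ldots,h_m*\varphi_j)$ lies in $\C^\infty_c(G)\ot\ell^p_m$, but for $h\in\L^p(G,\ell^p_m)$ the function $h_k*\varphi_j$ is smooth without being compactly supported when $G$ is non-compact, and the lemma is stated (and used in Proposition \ref{First-spectral-triple}) for general unimodular connected $G$. Your scheme — cutoffs $\chi_n=\theta(\tilde\rho/n)$ from a mollified, proper Carnot-Carath\'eodory distance with bounded horizontal gradient, the product rule $T(\chi h)=\chi\,Th-\sum_k (X_k\chi)h_k$ read off from the weak characterization, then regularization and a diagonal extraction — closes precisely this gap, so your argument is more complete than the paper's on this point. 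One caveat to record: the commutation $X_k(f*\psi)=(X_kf)*\psi$ is convention-sensitive; it is valid for the generators $X_k=\d\lambda(a_k)$ of the left regular representation, which is the convention the paper imports from \cite{DtER03} in \eqref{commute-tronc}, whereas for the opposite geometric convention one must mollify on the left instead — a cosmetic change that affects neither your structure nor the conclusion.
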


\begin{proof}
Is is easy to check (use \cite[Problem 5.24 p.~168]{Kat76}) that $\C^\infty_c(G) \ot \ell^p_m$ is a subset of $\dom (\nabla_{p^*})^*$. We consider a Dirac net $(\varphi_j)$ of functions of $\C^\infty_c(G)$. Let $h=(h_1,\ldots,h_m)$ be an element of $\dom(\nabla_{p^*})^*$. Then $\Reg_j h = (h_1*\varphi_j,\ldots,h_m*\varphi_j)$ belongs to $\C^\infty_c(G) \ot \ell^p_m$. It remains to show that $(\Reg_j h)$ converges to $h$ in the graph norm of $(\nabla_{p^*})^*$. By \cite[Proposition 20  VIII.44]{Bou04}, the net $(\Reg_j h)$ converges to $h$ in $\L^p(G,\ell^p_m)$. For any $1 \leq k \leq m$, we put $a_k \ov{\mathrm{def}}{=} X_k(e)$. If $f \in \C_c^\infty(G)$, using \cite[(9.19)]{Sch20} and the equalities $X_{k}=\d \lambda(a_k)$ \cite[p.~14]{DtER03}
%\cite[Statement 2 p. 194]{Sch20} 
in the second equality and \cite[Proposition 3.14]{Mag92} %\cite[Proposition 9.6 (iii)]{Sch20} 
in the third equality, we have
\begin{align}
\MoveEqLeft
\label{commute-tronc}
\nabla\Reg_j f
\ov{\eqref{Def-grad}}{=} \big(X_{1} (f*\varphi_j), \ldots, X_{m} (f*\varphi_j)\big)  
=\big(\d \lambda(a_1) (\lambda_f(\varphi_j)), \ldots, \d \lambda(a_m) (\lambda_f(\varphi_j))\big)\\
&=\big(\lambda_{X_1f}(\varphi_j), \ldots, \lambda_{X_mf}(\varphi_j)\big) 
=\big( (X_{1}f)*\varphi_j, \ldots, (X_{m}f)*\varphi_j\big) \nonumber\\
&=\Reg_j(X_{1} f, \ldots, X_{m} f) 
=\Reg_j(\nabla f) \nonumber           
\end{align} 
where $\lambda_f(g) \ov{\mathrm{def}}{=}f*g$ and where $\d\lambda$ is the derived representation \cite[Definition 3.12]{Mag92} of the left regular representation $\lambda$. Moreover, for any $g \in \C_c^\infty(G)$, we have using \cite[(14.10.9)]{Dieu83} in the second and the last inequalities
\begin{align*}
\MoveEqLeft
\big\langle (\nabla_{p^*})^* \Reg_jh, g \big\rangle_{\L^p(G),\L^{p^*}(G)} 
=\big\langle \Reg_j h, \nabla_{p^*}g \big\rangle_{\L^p(G,\ell^p_m),\L^{p^*}(G,\ell^{p^*}_m)} 
=\big\langle h, \Reg_j(\nabla_{p^*}g) \big\rangle \\
&\ov{\eqref{commute-tronc}}{=} \big\langle h, \nabla_{p^*} (\Reg_j g) \big\rangle
=\big\langle (\nabla_{p^*})^*(h), \Reg_j g \big\rangle 
=\big\langle \Reg_j (\nabla_{p^*})^*(h) , g \big\rangle_{\L^p(G),\L^{p^*}(G)}
\end{align*}
where here we use the bracket $\langle f, g \rangle_{\L^p(G),\L^{p^*}(G)}=\int_G f(s)g(s^{-1}) \d s$. Note that the use of the inversion map $G \to G$, $s \mapsto s^{-1}$ in the bracket simplifies \cite[(14.10.9)]{Dieu83}. By density and duality, we infer that $
(\nabla_{p^*})^*\Reg_j h 
= \Reg_j ((\nabla_{p^*})^*h)$ which converges to $(\nabla_{p^*})^*(h)$ in $\L^p(G)$.
\end{proof}

%\begin{proof}
%\textbf{To complete} Recall that $\ell^p_m \ot \C^\infty(G)$ is a subset of $\dom(\nabla_{p^*})^*$. Let $y \in \dom(\nabla_{p^*})^*$. Then $(\Id_{\ell^p_m} \ot \Tron_J)(y)$ belongs to $\ell^p_m \ot \C^\infty(G)$. It remains to show that $(\Id_{\ell^p_m} \ot \Tron_J)(y)$ converges to $y$ in the graph norm. Since $\Tron_J$ converges strongly to $\Id_{\L^p(G)}$ and $\Id_{\ell^p_m} \ot \Tron_J$ is contractive, we deduce that $(\Id_{\ell^p_m} \ot \Tron_J)(y)$ converges to $y$ in $\L^p(G,\ell^p_m)$. For any $i,j \in I$, using \cite[p. 14]{Rob91} in the third equality we have 
%\begin{align*}
%\MoveEqLeft
%\big\langle (\nabla_{p^*})^* (\Id \ot \Tron_J)(y), e_{ij} \big\rangle 
%=\big\langle (\Id \ot \Tron_J)(y), \nabla_{p^*}(e_{ij}) \big\rangle 
%=\big\langle y , (\Id \ot \Tron_J)(\nabla_{p^*}(e_{ij})) \big\rangle \\
%&\ov{\eqref{commute-troncature-Schur}}{=} \big\langle y , \nabla_{p^*}\Tron_J(e_{ij}) \big\rangle 
%=\big\langle (\nabla_{p^*})^*(y) , \Tron_J(e_{ij}) \big\rangle 
%=\big\langle \Tron_J (\nabla_{p^*})^*(y) , e_{ij} \big\rangle.
%\end{align*}
%By linearity and density, we infer that
%$$
%(\nabla_{p^*})^*(\Id_{\ell^p_m} \ot \Tron_J)(y) 
%= \Tron_J (\nabla_{p^*})^*(y)
%$$ 
%which converges to $(\nabla_{p^*})^*(y)$ in $\L^p(G)$.
%\end{proof}

If $f \in \L^\infty(G)$, we define the bounded operator $\pi(f) \co \L^p(G) \oplus_p \L^p(G,\ell^p_m) \to \L^p(G) \oplus_p \L^p(G,\ell^p_m)$ by
\begin{equation}
\label{Def-pi-a}
\pi(f)
\ov{\mathrm{def}}{=} \begin{bmatrix}
    \M_f & 0  \\
    0 & \tilde{\M}_f  \\
\end{bmatrix}, \quad f \in \L^\infty(G)
\end{equation}
where the linear map $\M_f \co \L^p(G) \to \L^p(G)$, $g \mapsto fg$ is the multiplication operator by the function $f$ and where 
$$
\tilde{\M}_f \ov{\mathrm{def}}{=} \Id_{\ell^p_m} \ot \M_f  \co \ell^p_m(\L^p(G)) \to \ell^p_m(\L^p(G)),\, (h_1,\ldots,h_m) \mapsto (fh_1,\ldots,fh_m)
$$ 
is also a multiplication operator (by the function $(f,\ldots,f)$ of $\ell^\infty_m(\L^\infty(G))$). Using \cite[Proposition 4.10 p.~31]{EnN00}, it is (really) easy to check that $\pi \co \L^\infty(G) \to \B(\L^p(G) \oplus_p \L^p(G,\ell^p_m))$ is an isometric homomorphism. Moreover, it is obviously continuous when the algebra $\L^\infty(G)$ is equipped with the weak* topology and when the space $\B(\L^p(G)) \oplus_p \L^p(G,\ell^p_m)$ is equipped with the weak operator topology. Note $\B(\L^p(G) \oplus_p \L^p(G,\ell^p_m))$ is a dual Banach space whose predual is the projective tensor
product $\L^p(G) \oplus_p \L^p(G,\ell^p_m)) \otpb (\L^{p^*}(G) \oplus_{p^*} \L^{p^*}(G,\ell^{p^*}_m))$. Using \cite[Theorem A.2.5 (2)]{BLM04}, it is not difficult to prove that $\pi$ is \textit{even} weak* continuous when we equip the space $\B(\L^p(G) \oplus_p \L^p(G,\ell^p_m))$ with the weak* topology.
%  \cite[]{AbA02}

\begin{prop}
\label{First-spectral-triple}
Let $G$ be a unimodular connected Lie group equipped with a family $X$ of left-invariant H\"ormander vector fields. Consider a Haar measure $\mu_G$ on $G$. Suppose $1<p<\infty$.
\begin{enumerate}
\item We have $(\slashed{D}_p)^*=\slashed{D}_{p^*}$. In particular, the unbounded operator $\slashed{D}_2$ is selfadjoint.

\item We have
\begin{equation}
\label{Lip-algebra-I-Schur}
\dom \nabla_{\infty}
\subset \Lip_{\slashed{D}_p}(\L^\infty(G)).
\end{equation}

\item For any $f \in \dom \nabla_{\infty}$, we have 
\begin{equation}
\label{norm-commutator}
\norm{\big[\slashed{D}_p,\pi(f)\big]}_{\L^p(G) \oplus_p \L^p(G,\ell^p_m) \to \L^p(G) \oplus_p \L^p(G,\ell^p_m)}
= \bnorm{\nabla_{\infty}(f)}_{\L^\infty(G,\ell^p_m)}.
\end{equation}

%\item Suppose $p \geq 2$. If $\G_\alpha >0$, if $\alpha \co I \to H$ is injective and if $H$ is finite-dimensional then the operator 
%$$
%|\D_{\alpha,q,p}|^{-1} \co \ovl{\Ran (\nabla_{\alpha,q,p^*})^*} \oplus \ovl{\Ran \nabla_{\alpha,q,p}} \to \ovl{\Ran (\nabla_{\alpha,q,p^*})^*} \oplus \ovl{\Ran \nabla_{\alpha,q,p}} 
%$$ 
%is compact.
\end{enumerate}
\end{prop}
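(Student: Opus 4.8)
The plan is to dispatch the three assertions in order: the first is a block-matrix adjoint computation, while the second and third amount to identifying the two off-diagonal blocks of the commutator $[\slashed{D}_p,\pi(f)]$ and controlling their norms. For the first point I would compute $(\slashed{D}_p)^*$ straight from the definition of the adjoint of an unbounded operator. Since $1<p<\infty$, all of $\L^p(G)$ and $\L^p(G,\ell^p_m)$ are reflexive, and by \eqref{Def-D-psi} the domain of $\slashed{D}_p$ is the product $\dom\nabla_p\times\dom(\nabla_{p^*})^*$; this product structure is what makes the computation clean, since it lets me test the adjoint relation separately on pairs $(f,0)$ and $(0,g)$. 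Varying $f$ alone identifies the lower-left entry of $(\slashed{D}_p)^*$ with $(\nabla_p)^*$, and varying $g$ alone identifies the upper-right entry with $((\nabla_{p^*})^*)^*$. Because $\nabla_{p^*}$ is closed and densely defined on a reflexive space, $((\nabla_{p^*})^*)^*=\nabla_{p^*}$, and since $(p^*)^*=p$ this gives $(\slashed{D}_p)^*=\slashed{D}_{p^*}$ with matching domains; the case $p=2$ yields $p^*=2$, hence $\slashed{D}_2$ selfadjoint.

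For the second and third points I would expand the commutator via \eqref{Hodge-Dirac-I} and \eqref{Def-pi-a} into an off-diagonal block operator, with lower-left block $\nabla_p\M_f-\tilde{\M}_f\nabla_p$ and upper-right block $(\nabla_{p^*})^*\tilde{\M}_f-\M_f(\nabla_{p^*})^*$. On $\dom\nabla_p$ the Leibniz rule \eqref{Leib-gradient} gives $\nabla_p(fu)=u\cdot\nabla_p(u)+f\cdot\nabla_p(u)$, so the lower-left block acts as $u\mapsto u\cdot\nabla_\infty(f)$, that is, pointwise multiplication by the $\ell^p_m$-valued function $\nabla_\infty(f)$. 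For $f\in\dom\nabla_\infty$ this function lies in $\L^\infty(G,\ell^p_m)$, so the block is bounded from $\L^p(G)$ into $\L^p(G,\ell^p_m)$ with norm exactly $\norm{\nabla_\infty(f)}_{\L^\infty(G,\ell^p_m)}$; in particular $\pi(f)$ carries the first summand of $\dom\slashed{D}_p$ back into $\dom\slashed{D}_p$ (using the generalized Leibniz property invoked after \eqref{Leib-gradient}), and testing the commutator on vectors $(u,0)$ already produces the lower bound in \eqref{norm-commutator}.

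The crux, and the step I expect to be the main obstacle, is the upper-right block, since it involves the adjoint $(\nabla_{p^*})^*$ for which no explicit formula is available a priori. Here I would invoke Lemma \ref{Lemma-core}: $\C^\infty_c(G)\ot\ell^p_m$ is a core of $(\nabla_{p^*})^*$, so it suffices to compute the block on this dense subspace, where an integration by parts (legitimate because $G$ is unimodular and the $X_k$ are left-invariant) represents $(\nabla_{p^*})^*$ as the divergence $h\mapsto-\sum_k X_k h_k$; the two Leibniz terms then telescope to the bounded contraction $h\mapsto-\sum_k (X_kf)\,h_k$. This shows that $\pi(f)$ preserves all of $\dom\slashed{D}_p$ and that $[\slashed{D}_p,\pi(f)]$ extends boundedly, establishing \eqref{Lip-algebra-I-Schur}. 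Finally, since the ambient space is an $\ell^p$-direct sum, the norm of an off-diagonal block operator is the maximum of the norms of its two blocks; combining the exact norm $\norm{\nabla_\infty(f)}_{\L^\infty(G,\ell^p_m)}$ of the lower-left block with the estimate for the upper-right contraction block yields \eqref{norm-commutator}. The two delicate points to watch are the domain bookkeeping (that $\pi(f)$ genuinely preserves $\dom(\nabla_{p^*})^*$, which is exactly what the core from Lemma \ref{Lemma-core} is for) and the precise comparison of the two block norms, which is where I would check that the upper-right block does not exceed the lower-left one.
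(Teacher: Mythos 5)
Your overall architecture follows the paper's proof: the same separate-variable computation of the adjoint for point 1 (testing on $(f,0)$ and $(0,g)$, exactly as the paper does; your appeal to reflexivity for $((\nabla_{p^*})^*)^*=\nabla_{p^*}$ is legitimate), the same off-diagonal block decomposition of $[\slashed{D}_p,\pi(f)]$, and the same use of Lemma \ref{Lemma-core}; your integration-by-parts representation of $(\nabla_{p^*})^*$ as $h\mapsto-\sum_k X_kh_k$ on the core is just the paper's duality-pairing computation in different clothing, and both yield the upper-right block $h\mapsto\pm\sum_k(X_kf)h_k=\pm J^*\M_{\nabla f}(h)$. But the step you yourself flagged is where your argument genuinely breaks. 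That upper-right block is \emph{not} a contraction: by pointwise H\"older (equivalently, by duality, since it is the Banach adjoint of a lower-left-type block at the exponent $p^*$), its norm is $\norm{\nabla_\infty(f)}_{\L^\infty(G,\ell^{p^*}_m)}$, not a quantity dominated by $\norm{\nabla_\infty(f)}_{\L^\infty(G,\ell^p_m)}$. Since $\norm{a}_{\ell^{p^*}_m}\geq\norm{a}_{\ell^p_m}$ pointwise when $p\geq 2$, your ``norm of the block matrix is the max of the block norms'' step produces $\max\big\{\norm{\nabla_\infty(f)}_{\L^\infty(G,\ell^p_m)},\norm{\nabla_\infty(f)}_{\L^\infty(G,\ell^{p^*}_m)}\big\}$, and the domination you promised to verify fails for $p>2$ (take $m=2$ and $\nabla f$ proportional to $(1,1)$ where its norm peaks: the two blocks have norms $\approx 2^{1/p}$ and $2^{1/p^*}$). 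The paper closes this step differently, asserting that ``by duality, the second non-null entry of the commutator has the same norm''; whatever one thinks of that assertion, your route (contraction plus max) is not a proof of the equality \eqref{norm-commutator}, so point 3 is unestablished in your write-up.

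There is a second, independent omission: the passage from smooth $f$ to general $f\in\dom\nabla_\infty$. The paper proves the block formulas and the norm identity first for $f\in\C^\infty_c(G)$ and then devotes the entire second half of its proof to a weak* approximation argument (bounded nets $f_j\to f$, $\nabla_\infty(f_j)\to\nabla_\infty(f)$ weak* via \cite[Lemma 1.6]{ArK22}, membership $f\in\Lip_{\slashed{D}_p}(\L^\infty(G))$ via \cite[Proposition 5.11 4.]{ArK22}, and identification of the limit commutator in the weak operator topology via a net version of Kato's lemma); this is what actually proves \eqref{Lip-algebra-I-Schur} and \eqref{norm-commutator} for all of $\dom\nabla_\infty$. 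You instead apply \eqref{Leib-gradient} directly with $f\in\dom\nabla_\infty$ and $u\in\dom\nabla_p$, but \eqref{Leib-gradient} is stated for two functions of $\dom\nabla_p\cap\L^\infty(G)$: on a noncompact $G$ an element of $\dom\nabla_\infty$ need not belong to $\dom\nabla_p$, and $u\in\dom\nabla_p$ need not be bounded, so your parenthetical appeal to a ``generalized Leibniz property'' is doing exactly the work the paper's approximation argument was written to justify, and it needs to be supplied rather than invoked.
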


\begin{proof}
1. By definition, an element $(z,t)$ of the Banach space $\L^{p^*}(G) \oplus_{p^*} \L^{p^*}(G,\ell^{p^*}_m)$ belongs to $\dom (\slashed{D}_{p})^*$ if and only if there exists $(h,k) \in \L^{p^*}(G) \oplus_{p^*} \L^{p^*}(G,\ell^{p^*}_m)$ such that for any $(f,g) \in \dom \nabla_{p} \oplus \dom (\nabla_{p^*})^*$ we have
\begin{align*}
\MoveEqLeft
\left\langle \begin{bmatrix} 
0 & (\nabla_{p^*})^* \\ 
\nabla_{p} & 0 
\end{bmatrix}\begin{bmatrix}
      f  \\
      g  \\
\end{bmatrix},\begin{bmatrix}
      z  \\
      t  \\
\end{bmatrix}\right\rangle            
=\left\langle 
\begin{bmatrix}
    f    \\
    g    \\
\end{bmatrix},\begin{bmatrix}
    h   \\
    k    \\
\end{bmatrix}\right\rangle, 
\end{align*}
that is 
\begin{equation}
\label{Relation99-group}
\big\langle (\nabla_{p^*})^*(g), z\big\rangle
+\big\langle \nabla_{p}(f),t \big\rangle
=\langle g,k\rangle+\langle f,h\rangle. 
\end{equation}
If $z \in \dom \nabla_{p^*}$ and if $t \in \dom (\nabla_{p})^*$ the latter holds with $k=\nabla_{p^*}(z)$ and $h=(\nabla_{p})^*(t)$. This proves that $\dom \nabla_{p^*} \oplus \dom (\nabla_{p})^* \subset \dom (\slashed{D}_{p})^*$ and that 
\begin{align*}
\MoveEqLeft
(\slashed{D}_{p})^*(z,t)
=\big((\nabla_{p})^*(t),\nabla_{p^*}(z)\big) 
= \begin{bmatrix} 
0 & (\nabla_{p})^* \\ 
\nabla_{p^*} & 0 
\end{bmatrix}
\begin{bmatrix}
    z    \\
    t    \\
\end{bmatrix} 
\ov{\eqref{Def-D-psi}}{=} \slashed{D}_{p^*}(z,t).            
\end{align*} 
Conversely, if $(z,t) \in \dom (\slashed{D}_{p})^*$, choosing $g=0$ in \eqref{Relation99-group} we obtain $t \in \dom (\nabla_{p})^*$ and taking $f=0$ we obtain $z \in \dom \nabla_{p^*}$. 

2. Let $f \in \C^\infty_c(G)$. A standard calculation shows that
\begin{align*}
\MoveEqLeft
\big[\slashed{D}_{p},\pi(f)\big]
\ov{\eqref{Hodge-Dirac-I}\eqref{Def-pi-a}}{=}\begin{bmatrix} 
0 & (\nabla_{p^*})^* \\ 
\nabla_{p}& 0
\end{bmatrix}\begin{bmatrix}
    \M_f & 0  \\
    0 & \tilde{\M}_f  \\
\end{bmatrix}
-\begin{bmatrix}
    \M_f & 0  \\
    0 & \tilde{\M}_f  \\
\end{bmatrix}
\begin{bmatrix} 
0 & (\nabla_{p^*})^* \\ 
\nabla_{p}& 0
\end{bmatrix}\\
&=\begin{bmatrix}
   0 &  (\nabla_{p^*})^*\tilde{\M}_f \\
   \nabla_{p}\M_f  &0  \\
\end{bmatrix}-\begin{bmatrix}
    0 &  \M_f (\nabla_{p^*})^*\\
    \tilde{\M}_f\nabla_{p} & 0  \\
\end{bmatrix} \\
&=\begin{bmatrix}
    0 &  (\nabla_{p^*})^*\tilde{\M}_f-\M_f (\nabla_{p^*})^* \\
   \nabla_{p}\M_f-\tilde{\M}_f\nabla_{p}  & 0  \\
\end{bmatrix}.
\end{align*}
We calculate the two non-zero entries of the commutator. For the lower left corner, if $g \in \C^\infty_c(G)$ we have
\begin{align}
\label{Bon-commutateur}
\MoveEqLeft
(\nabla_{p}\M_f-\tilde{\M}_f\nabla_{p})(g)           
=\nabla_{p}\M_f(g)-\tilde{\M}_f\nabla_{p}(g)
=\nabla_{p}(fg)-f\cdot\nabla_{p}(g) \\
&\ov{\eqref{Leib-gradient}}{=} g\cdot \nabla_p(f)
= \big(gX_{1,m}(f),\ldots,gX_{m,p}(f)\big)
=\M_{\nabla f}J(g) \nonumber
\end{align}
where $J \co \L^p(G) \to \ell^p_m(\L^p(G))$, $g \mapsto (g,\ldots,g)$ and where $\M_{\nabla(f)}$ is the multiplication operator on the $\L^p$-space $\ell^p_m(\L^p(G))$ by $\nabla(f)$. For the upper right corner, note that for any $h \in \C^\infty_c(G) \ot \ell^p_m$ and any $g \in \C^\infty_c(G)$ we have
\begin{align*}
\MoveEqLeft
\big\langle \big((\nabla_{p^*})^*\tilde{\M}_f-\M_f (\nabla_{p^*})^*\big)(h),g \big\rangle_{}            
=\big\langle (\nabla_{p^*})^*\tilde{\M}_f(h),g \big\rangle -\big\langle \M_f (\nabla_{p^*})^*(h),g \big\rangle_{}\\
&=\big\langle \tilde{\M}_f(h),\nabla_{p^*}(g) \big\rangle -\big\langle  (\nabla_{p^*})^*(h),\M_{f}(g) \big\rangle_{}
=\big\langle h,\tilde{\M}_{f}\nabla_{p^*}(g) \big\rangle -\big\langle h,\nabla_{p^*}\M_{f}(g) \big\rangle_{} \\
&=\big\langle h,\tilde{\M}_{f}\nabla_{p^*}(g)-\nabla_{p^*}\M_{f}(g) \big\rangle_{}
=\big\langle h,f \cdot \nabla_{p^*}(g)-\nabla_{p^*}(fg) \big\rangle_{} \\
&\ov{\eqref{Leib-gradient}}{=} - \big\langle h,g \cdot \nabla(f) \big\rangle_{}
=\big\langle h, -{\M}_{\nabla f}J(g)\big\rangle_{}
= \big\langle h, \M_{\nabla f}J(g) \big\rangle_{} \\
&=\big\langle {\M}_{\nabla f}(h), J(g) \big\rangle
=\big\langle J^*{\M}_{\nabla f}(h),g\big\rangle_{\L^p(G),\L^{p^*}(G)}.
\end{align*} 
%Here, $\E \co \ell^p_m(\L^p(G)) \to \L^p(G)$ denotes the canonical conditional expectation associated with $J$. 
We conclude that
\begin{equation}
\label{Commutateur-etrange}
\big((\nabla_{p^*})^*\tilde{\M}_f-\M_f (\nabla_{p^*})^*\big)(h)
=J^*{\M}_{\nabla f}(h), \quad h \in \C^\infty_c(G) \ot \ell^p_m.
\end{equation}
The two non-zero components of the commutator are bounded linear operators on $\C^\infty_c(G)$ and on $\C^\infty_c(G) \ot \ell^p_m$. We deduce that $\big[\slashed{D}_{p},\pi(f)\big]$ is bounded on the core $(\C^\infty_c(G) \oplus \ell^p_m) \ot \C^\infty_c(G)$ of the unbounded operator $\slashed{D}_{p}$ (here we use Lemma \ref{Lemma-core}). By \cite[Proposition 26.5]{ArK22}, this operator extends to a bounded operator on the Banach space $\L^p(G) \oplus_p \L^p(G,\ell^p_m)$. Hence $\C^\infty_c(G)$ is a subset of $\Lip_{\slashed{D}_p}(\L^\infty(G))$.

If $(g,h) \in \dom \slashed{D}_p$ and $f \in \C^\infty_c(G)$, we have in addition
\begin{align*}
\MoveEqLeft
\norm{\M_{\nabla f}J}_{\L^p(G) \to \ell^p_m(\L^p(G))} 
=\sup_{\norm{g}_{\L^p(G)}=1} \norm{\M_{\nabla(f)}J(g)}_{\ell^p_m(\L^p(G))}\\
&=\sup_{\norm{g}_{\L^p(G)}=1} \norm{\big(X_{1,p}(f)g, \ldots, X_{m,p}(f)g\big)}_{\ell^p_m(\L^p(G))}\\   
&=\sup_{\norm{g}_{\L^p(G)}=1} \norm{\big(X_{1,p}(f)g, \ldots, X_{m,p}(f)g\big)}_{\L^p(G,\ell^p_m)} \\ 
&=\sup_{\norm{g}_{\L^p(G)}=1} \bigg(\int_G |(X_{1,p}f)(s)g(s)|^p+\cdots+ |(X_{m,p}f)(s)g(s)|^p \d \mu_G(s)\bigg)^{\frac{1}{p}}\\
&=\sup_{\norm{g}_{\L^p(G)}=1} \bigg(\int_G \big[|(X_{1,p}f)(s)|^p+\cdots+ |(X_{m,p}f)(s)|^p\big] |g(s)|^p\d \mu_G(s)\bigg)^{\frac{1}{p}}\\ 
&=\sup_{\norm{h}_{\L^1(G)}=1, h \geq 0} \bigg(\int_G \big[|(X_{1,p}f)(s)|^p+\cdots+ |(X_{m,p}f)(s)|^p\big] h(s)\d \mu_G(s)\bigg)^{\frac{1}{p}}\\ 			
&=\bigg(\sup_{\norm{h}_{\L^1(G)}=1, h \geq 0} \big\la \norm{\nabla f}_{\ell^p_m}^p,h \big\ra_{\L^\infty(G),\L^1(G)}\bigg)^{\frac{1}{p}} \\
&=\norm{\norm{\nabla f}_{\ell^p_m}^p}_{\L^\infty(G)} ^{\frac{1}{p}}
=\norm{\nabla f}_{\L^\infty(G,\ell^p_m)}.
\end{align*} 
%\begin{align*}
%%\MoveEqLeft
%\norm{\M_{\nabla(f)}J(g)}_{\ell^p_m(\L^p(G))}            
%&=\norm{\big(X_{1,p}(f)g, \ldots, X_{m,p}(f)g\big)}_{\ell^p_m(\L^p(G))} \\
%&=\norm{\big(X_{1,p}(f)g, \ldots, X_{m,p}(f)g\big)}_{\L^p(G,\ell^p_m)} \\
%&=\bigg(\int_G |(X_{1,p}f)(s)g(s)|^p+\cdots+ |(X_{m,p}f)(s)g(s)|^p \d \mu_G(s)\bigg)^{\frac{1}{p}} \\
%&=\bigg(\int_G \big[|(X_{1,p}f)(s)|^p+\cdots+ |(X_{m,p}f)(s)|^p\big] |g(s)|^p\d \mu_G(s)\bigg)^{\frac{1}{p}}\\
%&\leq \sup_{s \in G} \big[|(X_{1,p}f)(s)|^p+\cdots+ |(X_{m,p}f)(s)|^p\big]^{\frac{1}{p}}
%\bigg(\int_G |g(s)|^p\d \mu_G(s)\bigg)^{\frac{1}{p}} \\ 
%&=\norm{\nabla f}_{\L^\infty(G,\ell^p_m)} \norm{g}_{\L^p(G)}.
%\end{align*} 
%Moreover, we have
%\begin{align*}
%\MoveEqLeft
%\norm{\M_{\nabla f}J(1)}_{\ell^p_m(\L^p(G))}             
%=\norm{M_{\nabla f}(1,\ldots,1)}_{\ell^p_m(\L^p(G))} 
%=\norm{\nabla f}_{\ell^p_m(\L^p(G))}
%\leq \norm{\nabla f}_{\L^\infty(G,\ell^p_m)}.
%\end{align*} 
By duality, the second non-null entry of the commutator has the same norm. So we have proved \eqref{norm-commutator} in the case where $f \in \C^\infty_c(G)$.

Let $f \in \dom \nabla_{\infty}$. Since $\C^\infty_c(G)$ is a weak* core of the operator $\nabla_{\infty}$, we can consider a net $(f_j)$ in $\C^\infty_c(G)$ such that $f_j \to f$ and $\nabla_{\infty}(f_j) \to \nabla_{\infty}(f)$ both for the weak* topology of $\L^\infty(G)$. By \cite[Lemma 1.6]{ArK22}, we can suppose that the nets $(f_j)$ and $(\nabla_{\infty}(f_j))$ are bounded. By \cite[Proposition 5.11 4.]{ArK22}, we deduce that $f \in \Lip_{\slashed{D}_{p}}(\L^\infty(G))$. By continuity of $\pi$, note that $\pi(f_j) \to \pi(f)$ for the weak operator topology. For any $\xi \in \dom \slashed{D}_{p}$ and any $\zeta \in \dom (\slashed{D}_{p})^*$, we have
\begin{align*}
\MoveEqLeft
\big\langle [\slashed{D}_{p},\pi(f_j)]\xi,\zeta \big\rangle_{\L^p(G) \oplus_p \L^{p}(G,\ell^{p}_m),\L^{p^*}(G) \oplus_{p^*} \L^{p^*}(G,\ell^{p^*}_m)}            
=\big\langle (\slashed{D}_{p}\pi(f_j)-\pi(f_j)\slashed{D}_{p})\xi, \zeta\big\rangle_{} \\
&=\big\langle \slashed{D}_{p}\pi(f_j)\xi, \zeta\big\rangle_{}-\big\langle \pi(f_j) \slashed{D}_{p}\xi, \zeta\big\rangle_{} 
=\big\langle \pi(f_j)\xi, (\slashed{D}_{p})^*\zeta\big\rangle-\big\langle \pi(f_j)\slashed{D}_{p}\xi, \zeta \big\rangle_{} \\
&\xra[j]{} \big\langle \pi(f)\xi, (\slashed{D}_{p})^*\zeta\big\rangle-\big\langle \pi(f)\slashed{D}_{p}\xi, \zeta \big\rangle_{} 
=\big\langle [\slashed{D}_{p},\pi(f)]\xi,\zeta \big\rangle_{}.
\end{align*}
The net $([\slashed{D}_{p},\pi(f_j)])$ is bounded since 
$$
\norm{\big[\slashed{D}_p,\pi(f_j)\big]}_{p \to p}
\ov{\eqref{norm-commutator}}{=} \bnorm{\nabla_{\infty}(f_j)}_{\L^\infty(G,\ell^p_m)}
\lesssim_{m,p} \bnorm{\nabla_{\infty}(f_j)}_{\L^\infty(G,\ell^\infty_m)}
\lesssim 1.
$$ 
We deduce that the net $([\slashed{D}_{p},\pi(f_j)])$ converges to $[\slashed{D}_{p},\pi(f)]$ for the weak operator topology by a ``net version'' of \cite[Lemma 3.6 p.~151]{Kat76}. Furthermore, it is (really) easy to checkthat $\M_{\nabla_{\infty}(f_j)}J \to \M_{\nabla_{\infty}(f)}J$ and $-\E{\M}_{\nabla_{\infty}(f_j)} \to -\E {\M}_{\nabla_{\infty}(f)}$ both for the weak operator topology. Indeed, recall that the composition of operators is separately continuous for the weak operator topology. By uniqueness of the limit, we deduce that the commutator is given by the same formula that in the case of elements of $\C^\infty_c(G)$. From here, we obtain \eqref{norm-commutator} as before.
\end{proof}

\begin{remark} \normalfont
\label{Intrigating-rem}
The inclusion \eqref{Lip-algebra-I-Schur} is probably an equality. We leave this intriguing question open. We sketch an incomplete proof. Let $f$ an element of $\Lip_{\slashed{D}_p}(\L^\infty(G))$. We consider a Dirac net $(\varphi_j)$ of functions of $\C^\infty(G)$. For any $j$, we let $f_j \ov{\mathrm{def}}{=} \Reg_j f$. By an obvious ``net version'' of \cite[14.11.1]{Dieu83}, the net $(f_j)$ converges to $f$ in $\L^\infty(G)$ for the weak* topology. The point is to prove that $(\nabla_\infty f_j)$ is a bounded net. If it is true, using Banach-Alaoglu theorem, we can suppose that $\nabla_{\infty}(f_j)\to g$ for the weak* topology for some function $g \in \L^\infty(G)$. Since the graph of the unbounded operator $\nabla_\infty$ is weak* closed, we would conclude that $f$ belongs to the subspace $\dom \nabla_\infty$. 

For the proof of the boundedness of the net, the writing 
$$
\norm{\nabla_\infty f_j}_{\L^\infty(G,\ell^\infty_m)}
\approx_{m,p} \norm{\nabla_\infty f_j}_{\L^\infty(G,\ell^p_m)}
\ov{\eqref{norm-commutator}}{=} \sup_{\norm{\xi} \leq 1, \: \norm{\eta} \leq 1} \left| \big\langle [\slashed{D}_p,\pi(f_j)] \xi, \eta \big\rangle \right|
$$ 
and maybe \cite[(14.10.9)]{Dieu83} could be useful. 

%Using the weak continuity of $\pi$, we see that if $\xi \in \dom \slashed{D}_p$ and $\eta \in \dom \slashed{D}_{p^*}$ we have
%\begin{align*}
%\MoveEqLeft
%\big\langle [\slashed{D}_p,\pi(f_j)] \xi, \eta \big\rangle 
%=\big\langle \pi(f_j) \xi, \slashed{D}_{p^*}\eta \big\rangle
%-\big\langle \pi(f_j)\slashed{D}_p \xi, \eta\big\rangle 
%\to \big\langle [\slashed{D}_p,\pi(f)] \xi, \eta \big\rangle  .
%\end{align*}
%But with the weak lower semicontinuity of the norm, this fact is insufficient.
%Moreover, we have formally 
%$$
%[\slashed{D}_{p},\pi(f_j)]
%=\bigg[\slashed{D}_{p},\pi\bigg(\bigg(\int_G \lambda_s \varphi_j(s) \d s\bigg) f\bigg)\bigg]
%=\int_G \lambda_s \varphi_j(s)[\slashed{D}_{p},\pi(f)] \d s.
%$$  
%Consequently $\bnorm{\nabla_{\infty}(f_j)}_{\L^\infty(G,\ell^2_m)}
%\ov{\eqref{norm-commutator}}{=} \norm{[\slashed{D}_{p},\pi(f_j)]}_{p \to p} \leq \norm{[\slashed{D}_{p},\pi(f)]}_{p \to p}$. Hence the net $(\nabla_{\infty}(f_j))$ is bounded. 
\end{remark}

\begin{thm}
\label{thm-spectral-triple}
Let $G$ be a compact connected Lie group equipped with a family $(X_1, \ldots, X_m)$ of left-invariant H\"ormander vector fields and consider the normalized Haar measure $\mu_G$ on $G$. The triple $(\C(G),\L^2(G) \oplus_2 \L^2(G,\ell^2_m), \slashed{D}_2)$ is a compact spectral triple.
\end{thm}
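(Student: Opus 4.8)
The plan is to check, one by one, the data and the two numbered conditions in the definition of a (possibly kernel-degenerate) compact spectral triple, most of which are already prepared by the preceding results. The underlying space $\L^2(G)\oplus_2\L^2(G,\ell^2_m)$ is a Hilbert space and $\C(G)$ is a unital $\C^*$-algebra acting through the homomorphism $\pi$ of \eqref{Def-pi-a} (restricted from $\L^\infty(G)$ to $\C(G)\subset\L^\infty(G)$, which is legitimate since $G$ is compact). Selfadjointness of $\slashed{D}_2$ is nothing but the second assertion of Proposition \ref{First-spectral-triple}(1), since $(\slashed{D}_2)^*=\slashed{D}_{2^*}=\slashed{D}_2$. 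So only the compactness of the inverse and the density of the Lipschitz algebra remain.

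For the first condition I would pass from $\slashed{D}_2^{-1}$ to $|\slashed{D}_2|^{-1}$, as permitted by the remark following the definition, and then reduce to the compactness of $(\slashed{D}_2^2)^{-1}$ on $(\ker\slashed{D}_2)^\perp$ using $\ker|\slashed{D}_2|=\ker\slashed{D}_2$ and the fact that the positive square root of a positive compact operator is again compact. By \eqref{carre-de-D} with $p=2$,
\begin{equation*}
\slashed{D}_2^2
=\begin{bmatrix} \Delta_2 & 0 \\ 0 & \nabla_2(\nabla_2)^* \end{bmatrix},
\end{equation*}
so $\ker\slashed{D}_2=\ker\Delta_2\oplus\ker(\nabla_2\nabla_2^*)$ and it suffices to invert each block on the orthogonal complement of its kernel. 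On $(\ker\Delta_2)^\perp=\L^2_0(G)$ the operator $\Delta_2^{-1}$ is compact by Lemma \ref{Lemma-compactness}. For the lower-right block, \eqref{Kadison-image} gives $\ker(\nabla_2\nabla_2^*)=\ker\nabla_2^*$ as well as $\ker\nabla_2=\ker(\nabla_2^*\nabla_2)=\ker\Delta_2$, and Theorem \ref{Th-unit-eq} applied with $T=\nabla_2$ shows that $\nabla_2\nabla_2^*$ on $(\ker\nabla_2^*)^\perp$ is unitarily equivalent to $\nabla_2^*\nabla_2=\Delta_2$ on $(\ker\Delta_2)^\perp$; since unitary equivalence preserves compactness, the inverse of the second block is compact too. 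Hence $(\slashed{D}_2^2)^{-1}$, and therefore $\slashed{D}_2^{-1}$, is compact on $\ovl{\Ran\slashed{D}_2}=(\ker\slashed{D}_2)^\perp$.

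For the second condition I would exhibit a dense subalgebra inside $\Lip_{\slashed{D}_2}(\C(G))$. Because $G$ is compact, $\C^\infty(G)=\C^\infty_c(G)\subset\dom\nabla_\infty$, so Proposition \ref{First-spectral-triple}(2) gives $\C^\infty(G)\subset\Lip_{\slashed{D}_2}(\L^\infty(G))$; as the Lipschitz condition is insensitive to whether $f$ is viewed in $\C(G)$ or in $\L^\infty(G)$ (the representation $\pi$ being the same) and $\C^\infty(G)\subset\C(G)$, we obtain $\C^\infty(G)\subset\Lip_{\slashed{D}_2}(\C(G))$. By regularization, exactly as in the proof of Lemma \ref{Lemma-recapitulatif}(1), $\C^\infty(G)$ is dense in $\C(G)$, so $\Lip_{\slashed{D}_2}(\C(G))$ is dense in $\C(G)$, which completes the verification.

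The block computation and the density argument are routine given the earlier results; the one step demanding genuine care is the compactness of the inverse of the lower-right block $\nabla_2\nabla_2^*$, where one must track the precise orthogonal complements $(\ker\nabla_2^*)^\perp$ and $(\ker\Delta_2)^\perp$ and transfer compactness through the unitary equivalence of Theorem \ref{Th-unit-eq} rather than argue with $\nabla_2\nabla_2^*$ directly. This is where I expect the main (albeit modest) obstacle to lie.
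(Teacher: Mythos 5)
Your proof is correct and follows essentially the same route as the paper: it likewise reduces via the block decomposition \eqref{carre-de-D} to the compactness of $\Delta_2^{-1}$ on $\L^2_0(G)$ (Lemma \ref{Lemma-compactness}) and transfers compactness to the block $\nabla_2\nabla_2^*$ through the unitary equivalence of Theorem \ref{Th-unit-eq} combined with \eqref{lien-ker-image} and \eqref{Kadison-image}. The only immaterial differences are that you take the square root after the unitary-equivalence transfer rather than before, and that you spell out the selfadjointness and the density of the Lipschitz algebra, which the paper leaves implicit in Proposition \ref{First-spectral-triple}.
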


\begin{proof}
Here, we use the notation $\nabla \ov{\mathrm{def}}{=} \nabla_2$. On the Hilbert space $(\ker \nabla^* \nabla)^\perp \oplus_2 (\ker \nabla \nabla^*)^\perp$ we have
\begin{align}
\label{D-1}
\MoveEqLeft
|\slashed{D}_2|^{-1}
=\big(\slashed{D}_2^2\big)^{-\frac{1}{2}}
\ov{\eqref{carre-de-D}}{=} \begin{bmatrix} 
\nabla^* \nabla & 0 \\ 
0 & \nabla \nabla^*
\end{bmatrix}^{-\frac{1}{2}}
=\begin{bmatrix} 
\big(\nabla^* \nabla\big)^{-\frac{1}{2}} & 0 \\ 
0 & \big(\nabla \nabla^*\big)^{-\frac{1}{2}}
\end{bmatrix}.            
\end{align} 
By Theorem \ref{Th-unit-eq}, we know that the operators $\nabla^* \nabla |_{(\ker \nabla)^\perp}$ and $\nabla \nabla^*|_{(\ker \nabla^*)^\perp}$ are unitarily equivalent. Moreover, we have
$$
(\ker \nabla)^\perp
\ov{\eqref{lien-ker-image}}{=} \ovl{\Ran \nabla^*}
\ov{\eqref{Kadison-image}}{=}
\ovl{\Ran \nabla^* \nabla}
\ov{\eqref{lien-ker-image}}{=} (\ker \nabla^* \nabla)^\perp
$$
and
$$
(\ker \nabla \nabla^*)^\perp
\ov{\eqref{Kadison-image}}{=} (\ker \nabla^*)^\perp.
$$
Consequently $(\nabla^* \nabla)^{-\frac{1}{2}}|_{(\ker \nabla^* \nabla)^\perp}$ and $(\nabla \nabla^*)^{-\frac{1}{2}}|_{(\ker \nabla \nabla^*)^\perp}$ are also unitarily equivalent. By Proposition \ref{Lemma-compactness}, the operator $(\nabla^* \nabla)^{-\frac12}=\Delta_2^{-\frac12} \co \ovl{\Ran \Delta_2} \to \ovl{\Ran \Delta_2}$ is compact (the square root does not change the compactness by \cite[Lemma 9.3]{Nee22}) on $\ovl{\Ran \Delta_2} \ov{\eqref{lien-ker-image}}{=} (\ker \nabla^* \nabla)^\perp$. Hence the operator $(\nabla \nabla^*)^{-\frac{1}{2}}|_{(\ker \nabla \nabla^*)^\perp}$ is also compact. We conclude that the operator $|\slashed{D}_2|^{-1}$ is compact. 
\end{proof}

\begin{remark} \normalfont
Recall that a compact spectral triple $(A,H,\slashed{D})$ is even if there exists a selfadjoint unitary operator $\gamma \co H \to H$ such that $\gamma \slashed{D}=-\slashed{D}\gamma$ and $\gamma \pi(a)=\pi(a)\gamma$ for any $a \in A$. Note that the spectral triple $(\C(G),\L^2(G) \oplus_2 \L^2(G,\ell^2_m),\slashed{D})$ is even. Indeed, the Hodge-Dirac operator $\slashed{D}_{p}$ anti-commutes with the involution 
\begin{align*}
\MoveEqLeft
\gamma_p
\ov{\mathrm{def}}{=}
\begin{bmatrix} 
-\Id_{\L^p(G)} & 0 \\ 
0& \Id_{\L^p(G,\ell^p_m)} 
\end{bmatrix} \co \L^p(G) \oplus_p \L^p(G,\ell^p_m) \to \L^p(G) \oplus_p \L^p(G,\ell^p_m) 
\end{align*}  
(which is selfadjoint if $p=2$), since
\begin{align*}
\MoveEqLeft
\slashed{D}_{p}\gamma_p+\gamma_p\slashed{D}_{p}
\ov{\eqref{Hodge-Dirac-I}}{=} 
\begin{bmatrix} 
0 & (\nabla_{p^*})^* \\ 
\nabla_{p}& 0 
\end{bmatrix}
\begin{bmatrix} 
-\Id_{} & 0 \\ 
0& \Id_{}
\end{bmatrix}
+\begin{bmatrix} 
-\Id_{} & 0 \\ 
0& \Id_{}
\end{bmatrix}
\begin{bmatrix} 
0 & (\nabla_{p^*})^* \\ 
\nabla_{p}& 0 
\end{bmatrix} \\
&=\begin{bmatrix} 
0 & (\nabla_{p^*})^* \\ 
-\nabla_{p}&  0
\end{bmatrix}+\begin{bmatrix} 
0 & -(\nabla_{p^*})^* \\ 
\nabla_{p}&  0
\end{bmatrix}
=0.            
\end{align*} 
Moreover, for any $f \in \L^\infty(G)$, we have
\begin{align*}
\MoveEqLeft
\gamma_p\pi(f)
\ov{\eqref{Def-pi-a}}{=} \begin{bmatrix} 
-\Id_{} & 0 \\ 
0& \Id_{}
\end{bmatrix} 
\begin{bmatrix}
    \M_f & 0  \\
    0 & \tilde{\M}_f  \\
\end{bmatrix}  
=\begin{bmatrix}
    -\M_f & 0  \\
    0 & \tilde{\M}_f  \\
\end{bmatrix}        
=\begin{bmatrix}
    \M_f & 0  \\
    0 & \tilde{\M}_f  \\
\end{bmatrix}\begin{bmatrix} 
-\Id_{} & 0 \\ 
0& \Id_{}
\end{bmatrix}
\ov{\eqref{Def-pi-a}}{=} \pi(f)\gamma_p.
\end{align*} 
\end{remark}

In the sequel, if $1 \leq p< \infty$ and if $H$ is a Hilbert space, we use the notation $S^p(H)$ for the space of the compact operators $T \co H \to H$ such that $\norm{T}_{S^p(H)} \ov{\mathrm{def}}{=}\big(\tr |T|^p\big)^{\frac{1}{p}}<\infty$. Moreover, recall that the local dimension $d$ of $(G,X)$ is defined in \eqref{local-dim}. 

\begin{prop}
\label{Prop-trace-class}
Assume that $G$ is compact. If $\alpha > d$, the operator $|\slashed{D}|^{-\alpha}$ is trace-class. 
\end{prop}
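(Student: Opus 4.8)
The plan is to reduce the trace-class property of $|\slashed{D}|^{-\alpha}$ to the finiteness of a zeta-type sum over the eigenvalues of the subelliptic Laplacian $\Delta_2$, and then to control that sum by the short-time behaviour of the heat trace. First I would record the spectral bookkeeping coming from the proof of Theorem \ref{thm-spectral-triple}. Writing $\slashed{D}=\slashed{D}_2$, we have on $(\ker\slashed{D})^\perp$ by \eqref{carre-de-D} that $\slashed{D}^2 = \mathrm{diag}(\nabla^*\nabla,\nabla\nabla^*) = \mathrm{diag}(\Delta_2,\nabla\nabla^*)$, and by Theorem \ref{Th-unit-eq} the restrictions $\nabla^*\nabla|_{(\ker\nabla)^\perp}$ and $\nabla\nabla^*|_{(\ker\nabla^*)^\perp}$ are unitarily equivalent. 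Hence the nonzero spectrum of $\slashed{D}^2$ consists of the nonzero eigenvalues $\mu$ of $\Delta_2$, each occurring with twice its multiplicity, so that $\tr|\slashed{D}|^{-\alpha} = 2\sum_{\mu>0}\mu^{-\alpha/2}$, the sum running over the positive eigenvalues of $\Delta_2$ counted with multiplicity (these form a discrete sequence tending to $+\infty$ since $\Delta_2$ has compact resolvent by Lemma \ref{Lemma-compact-resolvent}). It therefore suffices to prove $\sum_{\mu>0}\mu^{-\alpha/2}<\infty$ when $\alpha>d$.

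Next I would establish the short-time heat-trace estimate $\sum_{\mu\geq 0}\e^{-t\mu} = \tr T_t \lesssim t^{-d/2}$ for $0<t\leq 1$. Using that $T_{t/2}$ is a self-adjoint Hilbert-Schmidt convolution operator with kernel $(s,r)\mapsto K_{t/2}(r^{-1}s)$, the semigroup law gives $\tr T_t = \norm{T_{t/2}}_{S^2(\L^2(G))}^2 = \norm{K_{t/2}}_{\L^2(G)}^2$ exactly as in \eqref{Schmidt-calculus}. The key point is to interpolate rather than to bound crudely: by H\"older and the fact that $T_{t/2}$ is unital and positive (so $\norm{K_{t/2}}_{\L^1(G)}=1$) one gets $\norm{K_{t/2}}_{\L^2(G)}^2 \leq \norm{K_{t/2}}_{\L^\infty(G)}\norm{K_{t/2}}_{\L^1(G)} = \norm{K_{t/2}}_{\L^\infty(G)} \lesssim t^{-d/2}$ by the heat-kernel bound \eqref{estimate-kernel}.

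Finally, I would pass from the heat trace to the zeta sum through the Gamma-function identity $\mu^{-\alpha/2} = \frac{1}{\Gamma(\alpha/2)}\int_0^\infty t^{\alpha/2-1}\e^{-t\mu}\d t$. Summing over $\mu>0$ and exchanging sum and integral by Tonelli (all terms nonnegative) gives $\sum_{\mu>0}\mu^{-\alpha/2} = \frac{1}{\Gamma(\alpha/2)}\int_0^\infty t^{\alpha/2-1}\big(\sum_{\mu>0}\e^{-t\mu}\big)\d t$. Splitting the integral at $t=1$: on $(0,1]$ the factor $\sum_{\mu>0}\e^{-t\mu}\leq\tr T_t\lesssim t^{-d/2}$, so $\int_0^1 t^{\alpha/2-1-d/2}\d t<\infty$ precisely when $\alpha>d$; on $[1,\infty)$, writing $\e^{-t\mu}\leq\e^{-(t-1)\mu_1}\e^{-\mu}$ with $\mu_1>0$ the smallest positive eigenvalue yields $\sum_{\mu>0}\e^{-t\mu}\leq\e^{-(t-1)\mu_1}(\tr T_1-1)$, an exponentially decaying factor against which the polynomial weight integrates. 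Hence $\sum_{\mu>0}\mu^{-\alpha/2}<\infty$ for $\alpha>d$, which gives $\tr|\slashed{D}|^{-\alpha}<\infty$. The main obstacle, and really the only delicate point, is obtaining the sharp exponent $t^{-d/2}$ in the heat trace: a naive bound $\norm{K_{t/2}}_{\L^2(G)}^2\leq\norm{K_{t/2}}_{\L^\infty(G)}^2$ would only give $t^{-d}$ and the wrong threshold $\alpha>2d$, so it is essential to use the $\L^1$--$\L^\infty$ interpolation together with the markovianity $\norm{K_{t/2}}_{\L^1(G)}=1$, and to keep careful track of the doubling of eigenvalue multiplicities in the reduction step.
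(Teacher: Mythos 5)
Your proof is correct, and although it shares the paper's analytic core --- the short-time heat bound of order $t^{-d/2}$ coming from the kernel estimate \eqref{estimate-kernel}, a $\Gamma$-function integral representation, and Theorem \ref{Th-unit-eq} to transfer from $\Delta_2=\nabla^*\nabla$ to the block $\nabla\nabla^*$ --- its execution is genuinely different. The paper never diagonalizes: it restricts to $\L^2_0(G)$, computes $\norm{T_t}_{S^1(\L^2_0(G))}=\bnorm{K_{t/2}-1}_{\L^2(G)}^2$, converts this via Dunford--Pettis \eqref{Dunford-Pettis-2} into $\bnorm{T_{t/2}}_{\L^2_0(G)\to\L^\infty(G)}^2$, estimates that by interpolation \eqref{Rnpq}, handles $t\geq 1$ with the ultracontractive decay \eqref{A-l-infini}, and concludes through the operator-valued integral $\Delta^{-\frac{\alpha}{2}}=\frac{1}{\Gamma(\alpha/2)}\int_0^\infty t^{\frac{\alpha}{2}-1}T_t\,\d t$ (legitimate by exponential stability, \cite[Corollary 3.3.6]{Haa06}) together with the trace lemma \cite[Lemma 2.3.2]{SuZ18}. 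You instead exploit discreteness of the spectrum (compact resolvent, Lemma \ref{Lemma-compact-resolvent}) to reduce everything to the scalar zeta sum $2\sum_{\mu>0}\mu^{-\alpha/2}$, replace Dunford--Pettis and interpolation by the one-line H\"older bound $\norm{K_{t/2}}_{\L^2(G)}^2\leq\norm{K_{t/2}}_{\L^\infty(G)}\norm{K_{t/2}}_{\L^1(G)}=\norm{K_{t/2}}_{\L^\infty(G)}$ (markovianity giving $\norm{K_{t/2}}_{\L^1(G)}=1$), obtain the large-time decay for free from the spectral gap $\mu_1>0$ rather than from \eqref{A-l-infini}, and use Tonelli in place of Bochner integration of an operator-valued function. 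What your route buys is elementarity and an explicit eigenvalue formula for the trace, with the doubling of multiplicities made transparent; what the paper's operator-level route buys is independence from spectral discreteness and kernel positivity, which is precisely the form that survives in the generalization to markovian semigroups on von Neumann algebras announced in \cite{Arh22}. Two cosmetic points: your bound $\tr T_1-1$ tacitly uses that the eigenvalue $0$ has multiplicity one, i.e. $\ker\Delta_2=\Cbb 1$ (true here by the argument in part 3 of Lemma \ref{Lemma-recapitulatif}, since $G$ is compact and connected), though the cruder bound $\tr T_1$ already suffices; and your H\"older step can be shortened, since $\norm{K_{t/2}}_{\L^2(G)}^2=K_t(e)\lesssim t^{-d/2}$ directly, an identity the paper itself uses in the proof of the spectral-dimension theorem.
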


\begin{proof}
We consider the canonical projection $Q \co \L^2(G) \to \L^2(G)$ on $\L^2_0(G)$. We have $Q=\Id-\E$ where the conditional expectation $\E$ is defined in Section \ref{sec-preliminaries} by $\E(f)=\big(\int_G f\big)1$. For any $t >0$ and any function $f \in \L^2(G)$, we have
\begin{align}
\MoveEqLeft
\label{Sans-fin-3}
T_t Qf
=T_t(\Id-\E)f            
=T_tf-T_t\E f 
%=T_tf-T_t\bigg(\bigg(\int_G f\bigg)1\bigg)
=K_t*f-\int_G f.
\end{align} 
For almost all $s \in G$, we infer that using the translation invariance of the Haar measure
\begin{align*}
\MoveEqLeft
T_t Qf(s)            
\ov{\eqref{Sans-fin-3}}{=} (K_t*f)(s)-\int_G f
\ov{\eqref{Convolution-formulas}}{=} \int_G K_t(r)f(r^{-1}s) \d \mu_G(r)-\int_G f(r) \d \mu_G(r) \\
&=\int_G \big[K_t(r)-1\big]f(r^{-1}s) \d \mu_G(r).
\end{align*} 
We conclude that $T_tQ \co \L^2(G) \to \L^2(G)$ is a convolution operator by the function $K_t-1$. 

For any $t>0$, we have using \cite[Exercise 2.8.38 (ii) p.~170]{KaR97} 
\begin{align}
\MoveEqLeft
\label{Infinite-2}
\norm{T_{t}}_{S^1(\L^2_0(G))}
=\bnorm{T_{\frac{t}{2}}^2}_{S^1(\L^2_0(G))}          
= \bnorm{T_\frac{t}{2}}_{S^2(\L^2_0(G))}^2
=\bnorm{T_\frac{t}{2}Q}_{S^2(\L^2(G))}^2\\
&=\bnorm{K_{\frac{t}{2}}-1}_{\L^2(G)}^2
=\int_G |(K_{\frac{t}{2}}-1)(r)|^2 \d\mu_G(r) \nonumber
\end{align}  
(for the third equality, consider an orthonormal basis $(e_i)_{i \in I}$ of the Hilbert space $\L^2(G)$ adapted to the closed subspace $\L^2_0(G)$ and use the equality $\norm{T}_{S^2(\L^2(G))}=\big(\sum_{i \in I} \norm{T(e_i)}_{\L^2(G)}^2\big)^{\frac{1}{2}}$). Now, by translation invariance of the Haar measure and Dunford-Pettis theorem, we obtain 
\begin{align*}
\MoveEqLeft
\norm{T_{t}}_{S^1(\L^2_0(G))} 
=\esssup_{s \in G} \int_G |(K_{\frac{t}{2}}-1)(sr^{-1})|^2 \d\mu_G(r) \\
&\ov{\eqref{Dunford-Pettis-2}}{=} \bnorm{T_\frac{t}{2}Q}_{\L^2(G) \to \L^\infty(G)}^2 
=\bnorm{T_\frac{t}{2}}_{\L^2_0(G) \to \L^\infty(G)}^2.
\end{align*}
By interpolation, we have by \eqref{Rnpq} combinated with \eqref{estimates-L1-Linfty} the estimate 
$$
\bnorm{T_\frac{t}{2}}_{\L^2_0(G) \to \L^\infty(G)} \lesssim \frac{1}{t^{\frac{d}{4}}}, \quad 0 < t \leq 1.
$$ 
 %For any $t>0$, using noncommutative H\"older's inequality in the second equality, we have\footnote{\thefootnote. For the first equality, consider an orthonormal basis $(e_i)_{i \in I}$ of the Hilbert space $\L^2(G)$ adapted to the closed subspace $\L^2_0(G)$ and use the equality $\norm{T_t}_{S^1(\L^2(G))}=\sum_{i \in I} \la T_t(e_i),e_i \ra$.} 
%\begin{align*}
%\MoveEqLeft
%\norm{T_t}_{S^1(\L^2_0(G))}
%\leq \norm{T_t}_{S^1(\L^2(G))}
%=\bnorm{T_{\frac{t}{2}}T_{\frac{t}{2}}}_{S^1(\L^2(G))}          
%\leq \bnorm{T_\frac{t}{2}}_{S^2(\L^2(G))}^2.
%\end{align*}  
%Using Dunford-Pettis theorem \textbf{(a eclaircir)} we obtain\footnote{\thefootnote. For the first equality, consider an orthonormal basis $(e_i)_{i \in I}$ of the Hilbert space $\L^2(G)$ adapted to the closed subspace $\L^2_0(G)$ and use the equality $\norm{T}_{S^2(\L^2(G))}=\big(\sum_{i \in I} \norm{T(e_i)}_{\L^2(G)}^2\big)^{\frac{1}{2}}$.} 
%Since $G$ is compact, we have using Dunford-Pettis theorem
%\begin{align*}
%\MoveEqLeft
%\bnorm{T_\frac{t}{2}}_{S^2(\L^2(G))}            
%\ov{\eqref{Schmidt-calculus}}{=} \bnorm{K_{\frac{t}{2}}}_{\L^2(G)} 
%\leq \bnorm{K_{\frac{t}{2}}}_{\L^\infty(G)}
%\ov{\eqref{Dunford-Pettis-1}}{=} \bnorm{T_\frac{t}{2}}_{\L^1(G) \to \L^\infty(G)} 
%\ov{leq} \bnorm{T_\frac{t}{2}}_{\L^2(G) \to \L^\infty(G)}^2.
%\end{align*}
%Moreover, a particular case of \eqref{estimates-L1-Linfty} combined with an argument of interpolation \eqref{Rnpq} gives
%\begin{equation}
%\norm{T_t}_{\L^2(G) \to \L^\infty(G)}
%\lesssim \frac{1}{t^{\frac{d}{4}}}, \quad 0 <t \leq 1.
%\end{equation} 
We infer that 
\begin{equation}
\label{magic-estimates}
\norm{T_{t}}_{S^1(\L^2_0(G))}
\lesssim \frac{1}{t^{\frac{d}{2}}}, \quad  0 < t \leq 1.
\end{equation}
Now, if $t \geq 1$, we have since $G$ is compact
\begin{align*}
\MoveEqLeft
\norm{T_{t}}_{S^1(\L^2_0(G))}                    
\ov{\eqref{Infinite-2}}{=} \bnorm{K_{\frac{t}{2}}-1}_{\L^2(G)}^2 
\leq  \bnorm{K_{\frac{t}{2}}-1}_{\L^\infty(G)}^2 \\
&\ov{\eqref{Dunford-Pettis-1}}{=} \bnorm{T_\frac{t}{2}Q}_{\L^1(G) \to \L^\infty(G)}^2 
=\bnorm{T_\frac{t}{2}}_{\L^1_0(G) \to \L^\infty(G)}^2.
\end{align*}
With the estimate \eqref{A-l-infini}, we conclude that 
\begin{equation}
\label{magic-estimates-2}
\norm{T_{t}}_{S^1(\L^2_0(G))}
\lesssim \e^{-2\omega t},\quad   t \geq 1.
\end{equation}
Observe that the map is $\R^+ \mapsto \B(\L^2(G))$, $t \mapsto T_t$ is strong operator continuous hence weak operator continuous. Moreover, if $\alpha >d$, we have
\begin{align*}
\MoveEqLeft
\int_{0}^{\infty} t^{\frac{\alpha}{2}-1} \norm{T_t}_{S^1(\L^2_0(G))} \d t
=\int_{0}^{1} t^{\frac{\alpha}{2}-1} \norm{T_t}_{S^1(\L^2_0(G))} \d t
+\int_{1}^{\infty} t^{\frac{\alpha}{2}-1} \norm{T_t}_{S^1(\L^2_0(G))} \d t \\         
&\ov{\eqref{magic-estimates}}{=}\int_{0}^{1} t^{\frac{\alpha}{2}-1-\frac{d}{2}} \d t
+\int_{1}^{\infty} t^{\frac{\alpha}{2}-1} \e^{-2\omega t} \d t 
<\infty.
\end{align*}
By \cite[Lemma 2.3.2]{SuZ18}, we deduce that the operator $\int_{0}^{\infty} t^{\frac{\alpha}{2}-1} T_t \d t$ acting on the Hilbert space $\L^2_0(G)$ is well-defined and trace-class. Furthermore, we have $\norm{T_{t}}_{\L^2_0(G) \to \L^2_0(G)} \ov{\eqref{magic-estimates-2}}{\lesssim} \e^{-2\omega t}$ if $t \geq 1$, that means that $(T_t)_{t \geq 0}$ is an exponentially stable semigroup on $\L^2_0(G)$. Consequently, we know by \cite[Corollary 3.3.6]{Haa06} that
$$
\Delta^{-\frac{\alpha}{2}}
=\frac{1}{\Gamma(\frac{\alpha}{2})} \int_{0}^{\infty} t^{\frac{\alpha}{2}-1} T_t \d t.
$$
We obtain that if $\alpha>d$ then the operator $\Delta^{-\frac{\alpha}{2}}$ is trace-class. The operator $( \nabla \nabla^*)^{-\alpha}$ is also trace-class since unitarily equivalent to $\Delta^{-\frac{\alpha}{2}}$ as observed in the proof of Theorem \ref{thm-spectral-triple}. Finally, note that 
\begin{align}
\label{D-1}
\MoveEqLeft
|\slashed{D}_2|^{-\alpha}
\ov{\eqref{carre-de-D}}{=} \begin{bmatrix} 
\Delta_2^{-\frac{\alpha}{2}} & 0 \\ 
0 & (\nabla \nabla^*)^{-\frac{\alpha}{2}}
\end{bmatrix}.            
\end{align}
We conclude that the operator $|\slashed{D}_2|^{-\alpha}$ is trace-class if $\alpha >d$. 
\end{proof}

%The operator $|\slashed{D}|^{-\alpha}$ is trace-class if and only if $\alpha >d$.
\begin{thm}
\label{}
Assume that $G$ is compact. The spectral dimension \eqref{Def-spectral-dimension} of the spectral triple $(\C(G),\L^2(G) \oplus_2 \L^2(G,\ell^2_m),\slashed{D}_2)$ is equal to the local dimension $d$ of $(G,X)$.
\end{thm}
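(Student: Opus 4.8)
\noindent{\it Proof (sketch of the plan).}
The plan is to establish the two inequalities $\dim\big(\C(G),\L^2(G) \oplus_2 \L^2(G,\ell^2_m),\slashed{D}_2\big) \leq d$ and $\geq d$ separately. The upper bound is immediate from Proposition \ref{Prop-trace-class}: for every $\alpha > d$ the operator $|\slashed{D}_2|^{-\alpha}$ is trace-class, so by the definition \eqref{Def-spectral-dimension} of the spectral dimension we get $\dim \leq d$. All the difficulty lies in the reverse inequality, for which I would show that $|\slashed{D}_2|^{-\alpha}$ fails to be trace-class for every $0 < \alpha \leq d$, i.e. $\tr|\slashed{D}_2|^{-\alpha}=\infty$. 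First I would reduce to the sub-Laplacian: using the block description \eqref{D-1} together with the unitary equivalence of $\nabla \nabla^*$ on $(\ker \nabla \nabla^*)^\perp$ and $\nabla^*\nabla=\Delta_2$ on $(\ker \Delta_2)^\perp=\L^2_0(G)$ obtained in the proof of Theorem \ref{thm-spectral-triple}, and the unitary invariance of the trace, I obtain $\tr|\slashed{D}_2|^{-\alpha}=2\,\tr\big(\Delta_2^{-\frac{\alpha}{2}}|_{\L^2_0(G)}\big)$. So it suffices to prove that $\tr\big(\Delta_2^{-\frac{\alpha}{2}}|_{\L^2_0(G)}\big)=\infty$ whenever $\alpha\leq d$.

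Next I would pass to the heat trace by subordination. Since all the operators involved are positive, the identity $\lambda^{-\frac{\alpha}{2}}=\frac{1}{\Gamma(\alpha/2)}\int_0^\infty t^{\frac{\alpha}{2}-1}\e^{-t\lambda}\d t$ summed over the (strictly positive) eigenvalues of $\Delta_2$ on $\L^2_0(G)$, combined with Tonelli's theorem, yields the equality in $[0,\infty]$
\[
\tr\big(\Delta_2^{-\frac{\alpha}{2}}|_{\L^2_0(G)}\big)
=\frac{1}{\Gamma(\alpha/2)}\int_0^\infty t^{\frac{\alpha}{2}-1}\,\tr\big(T_t|_{\L^2_0(G)}\big)\d t.
\]
Here I would compute $\tr\big(T_t|_{\L^2_0(G)}\big)=K_t(e)-1$: the operator $T_t$ on $\L^2(G)$ is convolution by $K_t$, hence an integral operator with kernel $(s,r)\mapsto K_t(sr^{-1})$ by \eqref{Convolution-formulas}, so its trace on $\L^2(G)$ equals $\int_G K_t(e)\d\mu_G=K_t(e)$ by translation invariance of the normalized Haar measure; removing the one-dimensional kernel $\ker\Delta_2=\Cbb 1$ (the eigenvalue $0$, which contributes $1$ to the full trace) subtracts exactly $1$. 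Note that $K_t(e)-1\geq 0$ for all $t>0$, so restricting the integral to a small interval $(0,t_0]$ still gives a lower bound.

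Finally I would feed in the matching on-diagonal lower bound for the heat kernel. Parallel to the upper estimate \eqref{estimate-kernel}, the two-sided Gaussian bounds for subelliptic heat kernels (\cite[Chapter V]{VSCC92}) give a constant $c>0$ with $K_t(e)\geq c\,t^{-\frac{d}{2}}$ for $0<t\leq 1$. Choosing $t_0\in(0,1]$ small enough that $K_t(e)-1\geq \frac{c}{2}\,t^{-\frac{d}{2}}$ on $(0,t_0]$, the integral above is bounded below by a positive multiple of $\int_0^{t_0}t^{\frac{\alpha-d}{2}-1}\d t$, which diverges precisely when $\alpha\leq d$. Hence $\tr|\slashed{D}_2|^{-\alpha}=\infty$ for every $\alpha\leq d$, giving $\dim\geq d$; together with the first inequality this yields $\dim=d$.

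The only genuinely analytic ingredient, and the main obstacle, is the on-diagonal lower bound $K_t(e)\gtrsim t^{-\frac{d}{2}}$ for small $t$, which does \emph{not} follow from the upper bound \eqref{estimate-kernel} alone. I would derive it from the standard theory of Gaussian two-sided estimates, equivalently from the identity $K_t(e)=\norm{K_{t/2}}_{\L^2(G)}^2$ (using selfadjointness and unimodularity) combined with the conservativity $\int_G K_{t/2}=1$ and the concentration of the mass of $K_{t/2}$ in the ball $B(e,\sqrt t)$, together with $V(\sqrt t)\approx t^{\frac{d}{2}}$ from \eqref{local-dim}. Everything else is routine bookkeeping with the Mellin transform.
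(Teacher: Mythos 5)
Your proposal is correct and rests on exactly the same two pillars as the paper's proof: the upper bound $\dim \leq d$ read off from Proposition \ref{Prop-trace-class}, and, for the lower bound, the identification $\tr T_t = K_t(e)$ (via $K_t(e)=\bnorm{K_{t/2}}_{\L^2(G)}^2=\bnorm{T_{t/2}}_{S^2(\L^2(G))}^2$) combined with the on-diagonal heat-kernel lower bound $K_t(e) \gtrsim V(\sqrt t)^{-1} \approx t^{-d/2}$ for $0<t\leq 1$, which the paper takes from \cite[(3) p.~113]{VSCC92}; you correctly single this out as the one non-formal ingredient, and you are right that it does not follow from the upper estimate \eqref{estimate-kernel}. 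The only genuine divergence is in the final bookkeeping. The paper argues contrapositively: it invokes Lemma \ref{Lemma-theta-summable} to convert the hypothesis that $|\slashed{D}_2|^{-\alpha}$ is trace-class into the heat-trace decay $\tr \e^{-t\slashed{D}_2^2}\lesssim t^{-\alpha/2}$, notes $\tr \e^{-t\Delta_2}\leq \tr \e^{-t\slashed{D}_2^2}$ since $\Delta_2$ is a diagonal block of $\slashed{D}_2^2$ by \eqref{carre-de-D}, and concludes $t^{-d/2}\lesssim t^{-\alpha/2}$, hence $\alpha\geq d$. You instead run the Mellin transform forwards, applying Tonelli to the eigenvalue expansion of $\Delta_2|_{\L^2_0(G)}$ (legitimate, since the spectrum there is discrete and strictly positive by Lemmas \ref{Lemma-compact-resolvent} and \ref{Lemma-compactness}) to obtain an exact identity in $[0,\infty]$ and show the trace diverges. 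The two routes are essentially dual; yours dispenses with Lemma \ref{Lemma-theta-summable} and has the small bonus of proving non-summability at the endpoint $\alpha=d$ itself, which the paper's contrapositive leaves undecided (irrelevant for the infimum \eqref{Def-spectral-dimension}, but a slightly sharper statement). Your auxiliary reductions are also sound: the factor $2$ comes from the unitary equivalence of $\nabla\nabla^*$ and $\Delta_2=\nabla^*\nabla$ off their kernels (Theorem \ref{Th-unit-eq}, exactly as exploited in the proofs of Theorem \ref{thm-spectral-triple} and Proposition \ref{Prop-trace-class}), and $\tr(T_t|_{\L^2_0(G)})=K_t(e)-1$ is the correct subtraction of the eigenvalue $0$ on $\ker\Delta_2=\Cbb 1$, paralleling the paper's use of the projection $Q=\Id-\E$.
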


\begin{proof}
Note Proposition \ref{Prop-trace-class}. Now, suppose that the operator $|\slashed{D}|^{-\alpha}$ is trace-class. By Lemma \ref{Lemma-theta-summable}, we have $\tr \e^{-t\slashed{D}^2} \lesssim \frac{1}{t^{\frac{\alpha}{2}}}$ for any $t > 0$. Using \cite[Proposition II.3.1 p.~20]{DtER03}, the relation $K_{t}=\check{K_t}$ in the second equality, \cite[Exercise 2.8.38 (ii) p.~170]{KaR97} in the fourth equality, the selfadjointness of $T_t$ and finally \eqref{carre-de-D} in the last inequality, we deduce that
\begin{align*}%\ov{\eqref{Convolution-formulas}}{=}
\MoveEqLeft
K_t(e)
=\int_G K_{\frac{t}{2}}(r)K_{\frac{t}{2}}(r^{-1}) \d \mu_G(r)
=\int_G K_{\frac{t}{2}}^2
=\bnorm{K_{\frac{t}{2}}}_{\L^2(G)}^2
=\bnorm{T_{\frac{t}{2}}}_{S^2(\L^2(G))}^2 \\
&=\bnorm{T_{\frac{t}{2}}^2}_{S^1(\L^2(G))} 
=\norm{T_t}_{S^1(\L^2(G))}
=\tr T_t 
=\tr \e^{-t\Delta_2}
%\leq \tr \e^{-t\slashed{D}^2}         
\leq \frac{1}{t^{\frac{\alpha}{2}}}.
\end{align*}  
By \cite[(3) p.~113]{VSCC92} (see also \cite[p.~174]{DtER03} for a more general statement for selfadjoint subelliptic operators), we have 
$$
\frac{1}{t^\frac{d}{2}} 
\ov{\eqref{local-dim}}{\approx}  \frac{1}{V(\sqrt{t})} \lesssim K_t(e), \quad 0< t \leq 1.
$$ 
We conclude that $\frac{1}{t^\frac{d}{2}} \lesssim \frac{1}{t^{\frac{\alpha}{2}}}$ for any $0< t \leq 1$ and consequently $\alpha \geq d$.
\end{proof}

%\begin{remark} \normalfont
%We does not know of the operator $|\slashed{D}|^{-d}$ is trace-class. 
%\end{remark}

%For even spectral triples
%ergodic property give implies
%
%\begin{remark} \normalfont
%A similar method can be used for the spectral dimension of the spectral triples of \cite{ArK22} (and other situations). We \textbf{obtain} the dimension of the Hilbert space $H$ associated to the cocycle. This will be written somewhere.
%\end{remark}

%%%%%%%%%%%%%%%%%%%%%%%%%%%%%%%%%%%%%%%%%%%%%%%%%%%%%%%%%%%%%%%%%%%%%%%%%%%%%%%%%%%%%%%%%%%%%%%%
\section{Some remarks on Carnot-Carath\'eodory distances}
\label{Section-distances}

Let $G$ be a connected unimodular Lie group equipped with a family $(X_1, \ldots, X_m)$ of left-invariant H\"ormander vector fields. In this section, we will show in Theorem \ref{Th-recover-metric} that the Connes spectral pseudo-distance associated to our Hodge-Dirac operator allows us to recover the Carnot-Carath\'eodory distance. If $(A,Y,\slashed{D})$ is a triple as that precedes \eqref{Lipschitz-algebra-def}, recall that it is defined by 
\begin{equation}
\label{Connes-distance}
\dist_{\slashed{D}}(\varphi,\psi)
\ov{\mathrm{def}}{=}\sup \big\{ |\varphi(a) - \psi(a)| : a \in \Lip_\slashed{D}(A) \text{ and }\norm{[\slashed{D},\pi(a)]} \leq 1 \big\}
\end{equation}
where $\varphi$ and $\psi$ are two states of the algebra $A$ and where $\Lip_\slashed{D}(A)$ is defined in \eqref{Lipschitz-algebra-def}. The term pseudo-metric is used since $\dist(\varphi,\psi)$ is not necessarily finite. In general, $\Lip_\slashed{D}(A)$ is unknown and we replace this space by a dense subset of $A$ (or a weak* dense subset if $A$ is a dual space) which is contained in $\Lip_\slashed{D}(A)$. See the papers \cite{Lat21}, \cite{Pav98} and \cite{Rie98} for more information.

In \cite[Lemma 2.3 p.~265]{Rob91} and \cite[p.~24]{DtER03}, the following formula is stated for the Carnot-Carath\'eodory distance, i.e. the case $p=2$ of \eqref{distance-Carnot}. For any $s,s' \in G$, it is written that
\begin{equation}
\label{dual-carthodory}
\dist_\CC(s,s') 
\ov{\mathrm{def}}{=} \sup \left\{ |f(s) - f(s')| : f \in \C^\infty_c(G), \norm{\nabla f}_{\L^\infty(G,\ell^2_m)} \leq 1 \right\}.
\end{equation}
We will see that this formula is strongly related to the distance \eqref{Connes-distance} in our setting. Unfortunately, we are unable to understand the sketched proof. The writings ``Therefore'' and ``by a slight modification of the $\psi_n$ one can arrange'' of \cite[Lemma 2.3 p.~265]{Rob91} are obscure for us. %However, our approach allows us to obtain a complete proof. 
Moreover, the same reference \cite[p.~24]{DtER03} says without proof that we can replace the space $\C^\infty_c(G)$ by the subspace $\C^\infty_c(G,\R)$ of real-valued compactly supported continuous functions in this formula. 

Indeed, we can use the following elementary argument. We fix $s,s' \in G$. We write $f(s)-f(s')=|f(s)-f(s')|\e^{\i \theta}$ for some $\theta \in \R$. We consider the real-valued function $\tilde{f} \ov{\mathrm{def}}{=} \frac{1}{2}[f\e^{-\i\theta}+\ovl{f}\e^{\i\theta}]$. We have
$$
\norm{\nabla \tilde{f}}_{\L^\infty(G,\ell^2_m)}
=\frac{1}{2}\bnorm{\e^{-\i\theta}\nabla(f)+\e^{-\i\theta}\nabla(f)}_{\L^\infty(G,\ell^2_m)} 
\leq \norm{\nabla f}_{\L^\infty(G,\ell^2_m)}
$$ 
and
\begin{align*}
\MoveEqLeft
|\tilde{f}(s)-\tilde{f}(s')|           
=\frac{1}{2} \big|f(s)\e^{-\i\theta}+\ovl{f}(s)\e^{\i\theta}-f(s')\e^{-\i\theta}-\ovl{f}(s')\e^{\i\theta}\big| \\
&=\frac{1}{2} \big|\e^{-\i\theta}[f(s)-f(s')]+ \e^{\i\theta}[\ovl{f}(s)-\ovl{f}(s')]\big| \\
&=\frac{1}{2}\big||f(s)-f(s')|+\ovl{|f(s)-f(s')|} \big|
=|f(s)-f(s')|.
\end{align*}

Now, we introduce the following definition.

\begin{defi} 
\label{def Lip_CC}
Suppose $1<p<\infty$. Let $f \co G \to \Cbb$ be a function. The number
\begin{equation}
\label{Def-constant-Lip}
\Lip_{\CC}^p(f) 
\ov{\mathrm{def}}{=} \sup \left\{\frac{|f(s)-f(s')|}{\dist_{\CC}^p(s,s')}: s,s' \in G, 
s \neq s'\right\}
\end{equation}
of $[0,\infty]$ is called the $p$-Carnot-Carath\'{e}odory-Lipschitz constant of $f$. If $\Lip_{\CC}^p(f)$ is finite, we call $f$ a $p$-Carnot-Carath\'{e}odory-Lipschitz function.
%We denote the algebra of all Carnot-Carath\'{e}odory-Lipschitz functions on $G$, equipped with the semi-norm $\Lip_{\CC}^p(f)$, by $\Lip_{\CC}^p(G)$. 
\end{defi}

In \cite[Proposition 2.5 (i)]{BEM13}, it is stated that the domain $\dom \nabla_\infty$ is the space of the equivalence classes of \textit{bounded} $2$-Carnot-Carath\'{e}odory-Lipschitz function on $G$. Here, we complete this fact and we give a variant of \cite[Proposition 2.5]{BEM13}.

%We improve \cite[Proposition 2.5 (i)]{BEM13}. 

\begin{lemma} 
\label{lemmaLipCC-grad}
Suppose $1<p<\infty$. Then an essentially bounded function $f \co G \to \Cbb$ is a $p$-Carnot-Carath\'{e}odory-Lipschitz function if and only if its equivalence class belongs to the space $\dom \nabla_\infty$. In this case, we have
$$
\Lip_{\CC}^p(f) 
= \norm{\nabla f}_{\L^\infty(G,\ell^{p^*}_m)}.
$$
\end{lemma}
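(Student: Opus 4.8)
The plan is to prove the two implications together with the equality by establishing the two matching inequalities, the conjugate exponent $p^*$ arising each time from the duality $(\ell^p_m)^* = \ell^{p^*}_m$ applied to the chain rule \eqref{Chain-rule}.

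For the \emph{if} part, I would assume the class of $f$ lies in $\dom \nabla_\infty$ and regularize by a Dirac net: put $f_j \ov{\mathrm{def}}{=} \Reg_j f = f*\varphi_j \in \C^\infty(G)$. By the commutation identity \eqref{commute-tronc} we have $\nabla f_j = \Reg_j(\nabla f)$, and since $\varphi_j \geq 0$ with $\int_G \varphi_j = 1$, Minkowski's inequality \eqref{Minkowski-Lp} shows that convolution by $\varphi_j$ is an $\ell^{p^*}_m$-valued contraction on $\L^\infty$, whence $\norm{\nabla f_j}_{\L^\infty(G,\ell^{p^*}_m)} \leq \norm{\nabla f}_{\L^\infty(G,\ell^{p^*}_m)}$. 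For the smooth function $f_j$ and any admissible path $\gamma$ from $s$ to $s'$, the chain rule \eqref{Chain-rule} together with H\"older's inequality in the pairing $\ell^p_m$--$\ell^{p^*}_m$ gives
\[
|f_j(s)-f_j(s')| \leq \int_0^1 \Big(\sum_{k=1}^m |\dot\gamma_k(t)|^p\Big)^{\frac1p}\Big(\sum_{k=1}^m |(X_k f_j)(\gamma(t))|^{p^*}\Big)^{\frac{1}{p^*}} \d t \leq \norm{\nabla f}_{\L^\infty(G,\ell^{p^*}_m)}\,\ell_p(\gamma),
\]
and taking the infimum over $\gamma$ yields $|f_j(s)-f_j(s')| \leq \norm{\nabla f}_{\L^\infty(G,\ell^{p^*}_m)}\dist_{\CC}^p(s,s')$ by \eqref{distance-Carnot}. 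Thus the $f_j$ are uniformly $p$-Carnot-Carath\'eodory-Lipschitz; since the Carnot-Carath\'eodory metric induces the topology of $G$, they are equicontinuous and uniformly bounded, so by Arz\'ela-Ascoli a subnet converges locally uniformly to a continuous function $\tilde f$, which coincides with $f$ almost everywhere because $f_j \to f$ in $\L^1_{\loc}(G)$. Passing to the limit shows $\tilde f$ is $p$-Carnot-Carath\'eodory-Lipschitz with $\Lip_{\CC}^p(\tilde f) \leq \norm{\nabla f}_{\L^\infty(G,\ell^{p^*}_m)}$.

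For the \emph{only if} part, I would assume $f$ is essentially bounded with $L \ov{\mathrm{def}}{=} \Lip_{\CC}^p(f) < \infty$. Fix $a=(a_k) \in \ell^p_m$ with $\norm{a}_{\ell^p_m}=1$ and set $Y \ov{\mathrm{def}}{=} \sum_{k=1}^m a_k X_k$. The curve $t \mapsto s\exp(tY(e))$ is an admissible path whose $p$-length \eqref{Def-lpgamma} over $[0,\tau]$ equals $\tau$, so $\dist_{\CC}^p(s,s\exp(\tau Y(e))) \leq \tau$ and therefore $|f(s\exp(\tau Y(e)))-f(s)| \leq L\tau$. Testing the corresponding difference quotient against $\phi \in \C^\infty_c(G)$ and using unimodularity to change variables, the limit $\tau \to 0$ identifies the distributional derivative and gives $\big|\big\langle \sum_{k} a_k X_k f, \phi\big\rangle\big| \leq L\norm{\phi}_{\L^1(G)}$, so $\sum_{k} a_k X_k f \in \L^\infty(G)$ with norm $\leq L$. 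Letting $a$ range over a countable dense subset of the unit sphere of $\ell^p_m$ and using continuity in $a$, the duality $(\ell^p_m)^*=\ell^{p^*}_m$ yields $\norm{\nabla f(s)}_{\ell^{p^*}_m} = \sup_{\norm{a}_{\ell^p_m}\leq 1}\big|\sum_{k} a_k X_k f(s)\big| \leq L$ for almost every $s$. In particular each $X_k f$ belongs to $\L^\infty(G)$, hence $f \in \dom \nabla_\infty$ and $\norm{\nabla f}_{\L^\infty(G,\ell^{p^*}_m)} \leq \Lip_{\CC}^p(f)$.

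Combining the two inequalities gives the asserted equality. I expect the main obstacle to be the \emph{only if} direction: turning the pointwise Lipschitz bound along the one-parameter subgroups into a statement about the \emph{distributional} derivatives, and in particular passing the almost-everywhere estimate through the supremum over the continuum of directions $a$ so as to recover the sharp $\ell^{p^*}_m$ norm, together with the identification of ``distributional $X_k f \in \L^\infty(G)$'' with membership in $\dom \nabla_\infty$.
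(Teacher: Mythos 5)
Your proposal is correct, and its skeleton coincides with the paper's: the direction $\dom \nabla_\infty \Rightarrow$ Lipschitz is proved exactly as in the paper (chain rule \eqref{Chain-rule}, H\"older in the $\ell^p_m$--$\ell^{p^*}_m$ pairing, infimum over paths, then regularization --- the paper merely asserts the regularization step, which you spell out via \eqref{commute-tronc}, the contractivity of convolution by the Dirac net, and Arz\'ela--Ascoli), and your converse starts from the same one-parameter curves $\gamma(t)=s\exp\big(t\sum_k a_k X_k(e)\big)$ of $p$-length $\leq \tau$ over $[0,\tau]$, giving $|f(s\exp(\tau b))-f(s)| \leq \Lip_{\CC}^p(f)\,\tau$, just as in the paper's \eqref{dist-7}. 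Where you genuinely diverge is in converting this bound into membership of $\dom \nabla_\infty$: the paper keeps the difference quotients $\frac{1}{t}(\rho_{s_t}-\Id)f$ inside $\L^\infty(G)$, obtains their weak* convergence from almost-everywhere differentiability of $t \mapsto f(\gamma(t))$ plus dominated convergence, invokes the characterization of the weak* domain from \cite[p.~14]{DtER03}, and must then trade the resulting right-invariant derivatives $Y_k$ for the $X_k$ via unimodularity and the inversion map ($I_*X_k=-Y_k$); you instead pair the quotient with a test function $\phi$, change variables by unimodularity, and read off the distributional bound $\big|\big\langle \sum_k a_k X_k f, \phi \big\rangle\big| \leq \Lip_{\CC}^p(f)\norm{\phi}_{\L^1(G)}$, uniformly over a countable dense set of directions $a$ in the unit sphere of $\ell^p_m$, recovering the $\ell^{p^*}_m$-norm by duality. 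Your route avoids both the a.e.-differentiability/Fubini subtlety and the right/left-invariant bookkeeping, at the price of the step you yourself flag: identifying ``distributional $X_kf \in \L^\infty(G)$'' with $f \in \dom \nabla_\infty$. That step is real but standard and fillable with tools already in the paper: mollify with a Dirac net and use $X_k(f*\varphi_j)=(X_kf)*\varphi_j$ (the identity \eqref{commute-tronc}, extended to $\L^\infty$ by pairing against $\L^1$) to see that the difference quotients of $f$ are uniformly bounded by $\norm{X_kf}_{\L^\infty(G)}$ and converge weak*, then conclude by the same weak* domain characterization the paper cites --- so your argument closes and yields the same equality.
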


\begin{proof}
Suppose that $f \in \C_c^\infty(G)$. Let $s,s' \in G$ and let $\gamma \co [0,1] \mapsto G$ be an absolutely continuous path from $s$ to $s'$. We have
\begin{align*}
\label{}
\MoveEqLeft
f(s)-f(s') 
=f(\gamma(0))-f(\gamma(1)) 
=-\int_{0}^{1} \frac{\d}{\d t}f(\gamma(t)) \d t
\ov{\eqref{Chain-rule}}{=}-\int_{0}^{1} \sum_{k=1}^{m} \dot\gamma_k(t) (X_kf)(\gamma(t)) \d t.
\end{align*} 
Consequently, using H\"older's inequality, we obtain
\begin{align*}
\MoveEqLeft
\big|f(s)-f(s') \big|  
\leq \int_{0}^{1} \bigg|\sum_{k=1}^{m} \dot\gamma_k(t) (X_kf)(\gamma(t))\bigg| \d t \\       
&\leq \int_{0}^{1}  \bigg(\sum_{k=1}^{m} |\dot\gamma_k(t)|^p\bigg)^{\frac{1}{p}} \bigg(\sum_{k=1}^{m} \big|(X_k f)(\gamma(t))\big|^{p^*}\bigg)^{\frac{1}{p^*}} \d t.
\end{align*}
We deduce that
\begin{align*}
\MoveEqLeft
\big|f(s)-f(s') \big|            
\leq \int_{0}^{1}  \bigg(\sum_{k=1}^{m} |\dot\gamma_k(t)|^p\bigg)^{\frac{1}{p}} \d t\, \bnorm{(X_1f,\ldots,X_{m}f)}_{\L^\infty(G,\ell^{p^*}_m)} 
\ov{\eqref{Def-lpgamma}}{=} \bnorm{\nabla f}_{\L^\infty(G,\ell^{p^*}_m)}\ell_p(\gamma).
\end{align*} 
Passing to the infimum, we obtain $|f(s)-f(s')| 
\leq \norm{\nabla f}_{\L^\infty(G,\ell^{p^*}_m)} \dist_\CC^p(s,s')$ by \eqref{distance-Carnot}. Consequently, we have the inequality $\Lip_{\CC}^p(f) \leq \norm{\nabla f}_{\L^\infty(G,\ell^{p^*}_m)}$.  
%Now, suppose that the class of $f$ belongs to $\dom \nabla_\infty$. %By \cite[Proposition 2.5]{BEM13}, we can suppose that $f$ is left uniformly continuous. 
We conclude with a regularization argument for the general case of a function $f$ of $\dom \nabla_\infty$. %using L^\infty \subset L^1_{loc}$, apres ca convergence pp sur un compact apres extration s'une suite; vir p40 de mon receuil ``sobolev''; Le probleme est 'estimation \norm{Reg_j(\nabla f)}_{L^\infty(\ell^p)} \leq \norm{\nabla f}_{L^\infty(\ell^p)} ou alors au  Reg_j(\nabla f) \to \nabla f qyi donne norm{\nabla f)} \leq liminf \norm{Reg_j(\nabla f)}

Now, we prove the reverse inequality. Suppose that $f \co G \to \Cbb$ is a $p$-Carnot-Carath\'{e}odory-Lipschitz function. Let $\xi \in \ell_m^{p}$ with $\norm{\xi}_{\ell_m^{p}}=1$. For any $1 \leq k \leq m$, we put $a_k \ov{\mathrm{def}}{=} X_k(e)$. Consider some $s_0 \in G$ and the path 
\begin{equation}
\label{Def-gamma}
\gamma(t)
\ov{\mathrm{def}}{=} s_0\exp\bigg(t\sum_{k=1}^{m}\xi_ka_k\bigg), \quad t \in \R.
\end{equation}
By \cite[(19.8.11)]{Dieu77}, for any $t \in \R$ we have
\begin{align}
\label{gamma-prime}
\MoveEqLeft
\dot\gamma(t)            
%=Z_{\sum_{k=1}^{m} \xi_ka_k}(\gamma(t))
%=\sum_{k=1}^{m} \xi_k Z_{a_k}(\gamma(t))
=\sum_{k=1}^{m} \xi_k X_k(\gamma(t)).
\end{align}  
We infer that $\dot\gamma(t)$ belongs to the subspace $\Span \{ X_1|_{\gamma(t)}, \ldots, X_m|_{\gamma(t)} \}$ for all $t \in \R$. Moreover, the $p$-length of the restriction $\gamma|[c,d]$ is given by
\begin{align*}
\MoveEqLeft
\ell_p(\gamma|[c,d]) 
\ov{\eqref{Def-lpgamma}}{=} \int_c^d  \Big( \sum_{k=1}^m |\dot\gamma_k(t)|^p \Big)^{\frac{1}{p}} \d t
\ov{\eqref{gamma-prime}}{=} \int_c^d \norm{\xi}_{\ell_m^p} \d t 
=|c-d|\norm{\xi}_{\ell_m^{p}}
=|c-d|.
\end{align*} 
By \eqref{distance-Carnot}, we deduce that
\begin{equation}
\label{dist-7}
\dist_\CC^p(\gamma(c),\gamma(d)) 
\leq |d-c|.
\end{equation}
We consider the function $g \co \R \to \R$, $\dsp t \mapsto f(\gamma(t))$. Since $f$ is a $p$-Carnot-Carath\'{e}odory-Lipschitz function, we have
\begin{align*}
\MoveEqLeft
|g(c)-g(d)|
=|f(\gamma(c))-f(\gamma(d))|           
\leq \Lip_{\CC}^p(f) \dist_{\CC}^p(\gamma(c),\gamma(d)) 
\ov{\eqref{dist-7}}{\leq} \Lip_{\CC}^p(f) |d-c|.
\end{align*}  
Hence the function $g$ is a Lipschitz function on $\R$, hence differentiable almost everywhere. It is left to the reader to show that $X_jf$ exists almost everywhere on $G$. 

If $t>0$, using the notation $s_t \ov{\mathrm{def}}{=} \exp\big(t\sum_{k=1}^{m}\xi_ka_k\big)$ we deduce that
\begin{align*}
\MoveEqLeft
\Lip_{\CC}^p(f)            
\ov{\eqref{Def-constant-Lip}}{\geq} \frac{|f(\gamma(t))-f(s_0)|}{\dist_\CC^p(\gamma(t),s_0)} 
\ov{\eqref{dist-7}}{\geq} \frac{|f(\gamma(t))-f(s_0)|}{t} 
=\left|\frac{1}{t}\big((\rho_{s_t}-\Id)f\big)(s_0)\right|
\end{align*} 
where $\rho$ is the right regular representation of $G$. Consequently, for any $t>0$ we obtain
\begin{equation}
\label{Passing}
\norm{\frac{1}{t}(\rho_{s_t}-\Id)f}_{\L^\infty(G)} 
\leq \Lip_{\CC}^p(f).
\end{equation}
%Using \cite[page 14]{DtER03}, note that \textbf{vraiment pas calir}
%$$
%\rho_{s_t}
%=\e^{-t\d \lambda(\sum_{k=1}^{m}\xi_ka_k)}
%=\e^{-t\sum_{k=1}^{m}\xi_k X_k}
%$$
Now, $\left|\frac{1}{t}\big((\rho_{s_t}-\Id)f\big)(s_0)\right|=\frac{|f(\gamma(t))-f(s_0)|}{t}$ converges almost everywhere when $t \to 0$. Using dominated convergence theorem, we conclude that $\frac{1}{t}(\rho_{s_t}-\Id)f$ converges in $\L^\infty(G)$ for the weak* topology. 

%(see also \cite[Statement 2 p. 194]{Sch20})
Using \cite[p.~14]{DtER03}, we infer that the class of $f$ belongs to $\dom \nabla_\infty$ and that $\frac{1}{t}\big((\rho_{s_t}-\Id)f \to \sum_{k=1}^{m} \xi_k Y_{k,\infty}f$  when $t \to 0$ for the weak* topology of $\L^\infty(G)$ where $Y_{k}$ is the right invariant vector field associated to the element $a_k$. Passing to the limit in \eqref{Passing} when $t \to 0$, using the weak* lower semicontinuity of the norm \cite[Th 2.6.14 p.~227]{Meg98}, we obtain
$$
\norm{\sum_{k=1}^{m} \xi_k Y_{k,\infty}f}_{\L^\infty(G)}
\leq \Lip_{\CC}^p(f).
$$
Since $G$ is unimodular, we have $\norm{\sum_{k=1}^{m} \xi_k X_{k,\infty}f}_{\L^\infty(G)}=\norm{\sum_{k=1}^{m} \xi_k Y_{k,\infty}f}_{\L^\infty(G)}$ (if $I \co G \to G$, $s\mapsto s^{-1}$ is the inversion map and $I_*$ its associated push-forward map on vector fields, we have $I_*X_k=-Y_k$). We conclude by duality that
$$
\norm{\nabla f}_{\L^\infty(G,\ell^{p^*}_m)}
=\norm{(X_1f,\ldots,X_{m}f)}_{\L^\infty(G,\ell^{p^*}_m)}
\leq \Lip_{\CC}^p(f).
$$ 
\end{proof}

%\begin{remark} \normalfont
%With the same assumptions, $f$ admits a weak derivative $X_jf$. Indeed, for any $\in$, we have
%\begin{align*}
%\MoveEqLeft
%\int_{G} X_if(x)g(x) \d x 
%= \int_{G} \lim_{t \to 0}\frac{f(\exp(tX_i)x)g(x) - f(x)g(x)}{t} \d x  \\
%&= \int_{G} \lim_{t \to 0}\frac{f(x)g(\exp(-tX_i)x) - f(x)g(x)}{t} \d x \\
%&=-\int_{G} f(x)\lim_{t \to 0}\frac{g(\exp(-tX_i)x) - g(x)}{-t} \d x 
%=-\int_{G} f(x)X_ig(x)\d x.
%\end{align*}
%So $\langle X_if, g\rangle = -\langle f, X_ig \rangle$.
%\end{remark}

\begin{lemma} 
\label{Lemma-dCC}
Suppose $1<p< \infty$. For any $s,s' \in G$, we have
$$
\dist_{\CC}^p(s,s') 
= \sup \big\{ |f(s)-f(s')|: f \in \dom \nabla_\infty, \Lip_{\CC}^p(f) \leq 1 \big\}.
$$
Moreover, we can replace by $\dom \nabla_\infty$ by the space $\C_c^\infty(G)$.
\end{lemma}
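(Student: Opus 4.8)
The plan is to prove the identity for $\dom\nabla_\infty$ first and then upgrade to $\C^\infty_c(G)$ by regularization. Write $D(s,s')$ for the supremum on the right-hand side taken over $\dom\nabla_\infty$. The inequality $D(s,s') \le \dist_\CC^p(s,s')$ is immediate: any $f \in \dom\nabla_\infty \subset \L^\infty(G)$ is, by Lemma \ref{lemmaLipCC-grad}, a bounded $p$-Carnot-Carath\'eodory-Lipschitz function, so its continuous representative satisfies $|f(s)-f(s')| \le \Lip_\CC^p(f)\,\dist_\CC^p(s,s') \le \dist_\CC^p(s,s')$ by the very definition \eqref{Def-constant-Lip}. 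For the reverse inequality, the natural test function is the truncated distance $g \ov{\mathrm{def}}{=} \min\!\big(\dist_\CC^p(s',\cdot),R\big)$ with $R \ov{\mathrm{def}}{=} \dist_\CC^p(s,s')$, which is finite since the setting guarantees a horizontal path of finite length joining $s'$ and $s$. Using the symmetry and the triangle inequality of $\dist_\CC^p$ (both obtained by reversing and concatenating paths in \eqref{distance-Carnot}, the $p$-length \eqref{Def-lpgamma} being invariant under reversal), one checks $\Lip_\CC^p(g)\le 1$; since $g$ is bounded, Lemma \ref{lemmaLipCC-grad} puts its class in $\dom\nabla_\infty$. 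As $g(s')=0$ and $g(s)=R=\dist_\CC^p(s,s')$, we obtain $|g(s)-g(s')|=\dist_\CC^p(s,s')$, hence $D(s,s')=\dist_\CC^p(s,s')$.

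It remains to see that the supremum does not decrease when we restrict to $\C^\infty_c(G) \subset \dom\nabla_\infty$; only the lower bound needs work, and I would approximate $g$ inside $\C^\infty_c(G)$ without inflating its CC-Lipschitz constant. First I would mollify: with a Dirac net $(\varphi_j)$ of $\C^\infty_c(G)$, set $h_j \ov{\mathrm{def}}{=} \Reg_j g = g*\varphi_j$. Arguing as in the proof of Lemma \ref{Lemma-core}, left-invariant fields commute with right convolution, so $\nabla h_j = \Reg_j(\nabla g)$; since $\varphi_j \ge 0$ and $\int_G\varphi_j = 1$, averaging does not increase the supremum norm fibrewise in $\ell^{p^*}_m$, whence $\norm{\nabla h_j}_{\L^\infty(G,\ell^{p^*}_m)} \le \norm{\nabla g}_{\L^\infty(G,\ell^{p^*}_m)} = \Lip_\CC^p(g) \le 1$ by Lemma \ref{lemmaLipCC-grad}. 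Moreover $h_j$ is smooth, bounded by $R$, and $h_j(s)\to g(s)$, $h_j(s')\to g(s')$ because $g$ is continuous.

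On a non-compact $G$ the $h_j$ need not be compactly supported, so the last step is a spatial cut-off. I would take $\eta \co \R^+ \to [0,1]$ equal to $1$ on $[0,N]$, to $0$ on $[2N,\infty[$, with $|\eta'|\le 1/N$, and set $\psi_N$ to be a fine mollification of $\eta(\dist_\CC^p(e,\cdot))$; since $\dist_\CC^p(e,\cdot)$ is $1$-CC-Lipschitz, Lemma \ref{lemmaLipCC-grad} gives $\norm{\nabla\psi_N}_{\L^\infty(G,\ell^{p^*}_m)} \le C/N$, and as closed CC-balls are compact, $\psi_N \in \C^\infty_c(G)$ with $\psi_N \equiv 1$ near $s,s'$ once $N$ is large. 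The Leibniz rule \eqref{Leib-gradient} then gives, for $F \ov{\mathrm{def}}{=} \psi_N h_j \in \C^\infty_c(G)$,
\[
\norm{\nabla F}_{\L^\infty(G,\ell^{p^*}_m)} \le \norm{\nabla h_j}_{\L^\infty(G,\ell^{p^*}_m)} + R\,\norm{\nabla\psi_N}_{\L^\infty(G,\ell^{p^*}_m)} \le 1 + \tfrac{CR}{N},
\]
while $F(s)=h_j(s)$ and $F(s')=h_j(s')$. Dividing $F$ by $1+CR/N$ and letting $j\to\infty$ then $N\to\infty$ produces functions in $\C^\infty_c(G)$ with $\Lip_\CC^p \le 1$ and $|F(s)-F(s')| \to \dist_\CC^p(s,s')$ (for compact $G$ no cut-off is needed, as $\psi_N\equiv 1$). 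I expect the main obstacle to be exactly this truncation-plus-mollification bookkeeping — producing smooth compactly supported cut-offs with $\ell^{p^*}_m$-gradient of size $1/N$ and checking that neither operation destroys the Lipschitz bound — since the $\dom\nabla_\infty$ identity itself is essentially a direct consequence of Lemma \ref{lemmaLipCC-grad}.
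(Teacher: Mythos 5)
Your proof is correct and follows the same two-step skeleton as the paper (upper bound from Definition \ref{def Lip_CC}, lower bound from a distance test function, then regularization), but it differs in two places, and in both your version is actually tighter than the paper's own argument. First, for the lower bound the paper tests with the \emph{untruncated} function $h = \dist_{\CC}^p(s,\cdot)$ and asserts via Lemma \ref{lemmaLipCC-grad} that its class lies in $\dom\nabla_\infty$; but on a non-compact $G$ this $h$ is unbounded, whereas $\dom\nabla_\infty \subset \L^\infty(G)$ and Lemma \ref{lemmaLipCC-grad} is stated for essentially bounded functions, so your truncation $g = \min(\dist_\CC^p(s',\cdot),R)$ with $R=\dist_\CC^p(s,s')$ is exactly the repair needed to invoke the lemma legitimately (truncation by a constant does not increase $\Lip_\CC^p$, and $g$ still attains the values $0$ and $R$ at $s'$ and $s$). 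Second, for the passage to $\C^\infty_c(G)$ the paper only mollifies, estimating $\Lip_\CC^p(\varphi_j * f) \le \Lip_\CC^p(f)$ directly from the convolution formula and the left-invariance of $\dist_\CC^p$ — a cleaner route than yours, since it avoids the gradient-commutation identity altogether — but it produces smooth functions that are \emph{not} compactly supported when $G$ is non-compact, so the paper's proof of the final assertion is silently incomplete there; your cut-off $\psi_N$ with $\norm{\nabla\psi_N}_{\L^\infty(G,\ell^{p^*}_m)} \lesssim 1/N$, the Leibniz rule \eqref{Leib-gradient}, and the compactness of closed CC-balls fills precisely that gap. One point in your write-up deserves a word of justification: the identity $\nabla \Reg_j g = \Reg_j(\nabla g)$ is proved in Lemma \ref{Lemma-core} (equation \eqref{commute-tronc}) only for $f \in \C^\infty_c(G)$, so for a merely Lipschitz $g \in \dom\nabla_\infty$ you need an additional weak* limiting argument — or you can sidestep it entirely by estimating $\Lip_\CC^p(g*\varphi_j) \le \Lip_\CC^p(g)$ directly from \eqref{Convolution-formulas} and the invariance of the distance, as the paper does, and then convert back to the gradient bound with Lemma \ref{lemmaLipCC-grad}. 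With that one step patched, your argument is complete and, in the non-compact case, strictly more careful than the published proof.
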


\begin{proof}
Let $f \in \dom \nabla_\infty$ with $\Lip_{\CC}^p(f) \leq 1$. For any $s,s' \in G$, we have by definition
\begin{equation*} 
\left| f(s) - f(s') \right| 
\leq \Lip_{\CC}^{p}(f) \cdot \dist_{\CC}^{p}(s,s') 
\leq \dist_{\CC}^{p}(s,s').
\end{equation*}
We deduce that $\sup \left\{ |f(s)-f(s')|: f \in \dom \nabla_\infty, \Lip_{\CC}^p(f) \leq 1 \right\} \leq \dist_{\CC}^p(s,s')$. Now, we prove the reverse inequality. We fix $s \in G$. We consider the function $h \co G \to \R$, $s' \mapsto \dist_{\CC}^{p}(s,s')$. Since $\dist_{\CC}^p$ is a distance on $G$ we have for any $s'' \in G$
$$
\left|h(s')-h(s'')\right| 
= \left|\dist_{\CC}^{p}(s,s') - \dist_{\CC}^{p}(s,s'')\right| 
\leq \dist_{\CC}^{p}(s',s'').
$$
We infer that that $h$ is $p$-Carnot-Carath\'{e}odory-Lipschitz function (hence its class belongs to $\dom \nabla_\infty$ by Lemma \ref{lemmaLipCC-grad}) with $\Lip_{\CC}^{p}(h) \leq 1$. Since $|h(s)-h(s')| = \dist_{\CC}^{p}(s,s')$ we obtain
\begin{equation*} 
\dist_{\CC}^p(s,s') 
\leq \sup \left\{|f(s)-f(s')| : f \in \dom \nabla_\infty, \Lip_{\CC}^{p}(f) \leq 1 \right\}.
\end{equation*}
For the last assertion, we use a regularization argument. We consider a Dirac net $(\varphi_j)$ of functions of $\C^\infty(G)$ satisfying in particular $\int_G \varphi_j\d\mu_G=1$. For any $j$ we let $f_j \ov{\mathrm{def}}{=} \varphi_j * f$.
Using the left-invariance of the distance $\dist_{\CC}^p$ in the third equality, we have
\begin{align*}
\MoveEqLeft
\Lip_{\CC}^p(f_j)            
\ov{\eqref{Def-constant-Lip}}{=} \sup_{s \not= s'} \left|\frac{f_j(s)-f_j(s')}{\dist_{\CC}^p(s,s')} \right| 
\ov{\eqref{Convolution-formulas}}{=} \sup_{s \not= s'}  \left| \int_G \frac{ f(t^{-1}s)\varphi_j(t)-f(t^{-1}s')\varphi_j(t)}{\dist_{\CC}^p(s,s')} \d\mu_G(t)\right| \\
&=\sup_{s \not= s'}  \left| \int_G \frac{ f(t^{-1}s)-f(t^{-1}s')}{\dist_{\CC}^p(t^{-1}s,t^{-1}s')} \varphi_j(t)\d\mu_G(t)\right| 
\leq \int_G \Lip_{\CC}^p(f) \varphi_j(t) \d\mu_G(t) \\
&=\Lip_{\CC}^p(f)\int_G \varphi_j(t) \d\mu_G(t) 
\leq \Lip_{\CC}^p(f).   
\end{align*} 
Since $f$ is left uniformly continuous, the net $(f_j)$ converges uniformly by \cite[Proposition 2.44 p.~58]{Fol16} to $f$. The conclusion is obvious.
\end{proof}

With the terminology of \cite[Definition 1.8]{Lat21}, we can interpret the end of the following result by saying that $(\C(G),\L^2(G) \oplus_2 \L^2(G,\ell^2_m), \slashed{D}_2)$ is a metric spectral triple.

\begin{thm}
\label{Th-recover-metric}
Let $G$ be a connected unimodular Lie group equipped with a family $(X_1, \ldots, X_m)$ of left-invariant H\"ormander vector fields. Suppose $1<p<\infty$. For any $s,s' \in G$, we have
\begin{align}
\dist_\CC^p(s,s')
&=\sup \left\{ |f(s) - f(s')| : f \in \dom \nabla_\infty, \norm{\nabla f}_{\L^\infty(G,\ell^{p^*}_m)} \leq 1 \right\} \nonumber \\%\label{Connes-dist-2}  \\
&=\sup \left\{ |f(s) - f(s')| : f \in \dom \nabla_\infty, \norm{\big[\slashed{D}_{p^*},\pi(f)\big]}_{p^* \to p^*} \leq 1 \right\}. \label{Connes-dist-1}           
\end{align}
Moreover, we can replace by $\dom \nabla_\infty$ by the space $\C_c^\infty(G)$.
 
Finally, if $G$ is in addition compact, letting $\norm{f}_{\slashed{D}_p} \ov{\mathrm{def}}{=} \norm{\big[\slashed{D}_p,\pi(f)\big]}_{p \to p}$ for any $f \in \dom \nabla_\infty$, then the pair $(\C(G),\norm{\cdot}_{\slashed{D}_p})$ is a Leibniz quantum compact metric space.
\end{thm}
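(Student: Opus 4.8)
The plan is to read the first two displayed equalities (and the $\C^\infty_c(G)$ replacement) directly off the lemmas already established, and then to recognize that in the compact case the seminorm $\norm{\cdot}_{\slashed{D}_p}$ is exactly a Carnot--Carath\'eodory Lipschitz seminorm, so that the final assertion reduces to checking the hypotheses of Proposition \ref{Prop-carac-quantum2}.

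First I would establish the chain of equalities. By Lemma \ref{lemmaLipCC-grad}, for $f \in \dom\nabla_\infty$ one has $\Lip_\CC^p(f) = \norm{\nabla f}_{\L^\infty(G,\ell^{p^*}_m)}$, so the constraint $\norm{\nabla f}_{\L^\infty(G,\ell^{p^*}_m)} \leq 1$ is precisely $\Lip_\CC^p(f) \leq 1$; feeding this into Lemma \ref{Lemma-dCC} gives the first equality, and the very last assertion of that lemma yields the replacement of $\dom\nabla_\infty$ by $\C_c^\infty(G)$. For the second equality I would apply the norm formula \eqref{norm-commutator} of Proposition \ref{First-spectral-triple} with $p$ replaced by $p^*$: since $(p^*)^* = p$ this gives $\norm{[\slashed{D}_{p^*},\pi(f)]}_{p^*\to p^*} = \norm{\nabla_\infty f}_{\L^\infty(G,\ell^{p^*}_m)}$ for every $f \in \dom\nabla_\infty$, so the two constraint sets coincide and the two suprema agree.

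For the compact case, the key observation is that \eqref{norm-commutator} gives $\norm{f}_{\slashed{D}_p} = \norm{\nabla_\infty f}_{\L^\infty(G,\ell^p_m)}$, and Lemma \ref{lemmaLipCC-grad} applied with the exponent $p^*$ shows that this equals $\Lip_\CC^{p^*}(f)$. As $G$ is compact and $\dist_\CC^{p^*}$ is a finite metric (bi-Lipschitz to $\dist_\CC$, hence inducing the topology of $G$), the pair $\big(\C(G),\norm{\cdot}_{\slashed{D}_p}\big)$ is exactly the Lipschitz pair of the compact metric space $(G,\dist_\CC^{p^*})$ of Example \ref{ex-Lip-functions}. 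I would then verify the hypotheses of Proposition \ref{Prop-carac-quantum2}: closure of $\dom\nabla_\infty$ under $f \mapsto \ovl{f}$ (because the $X_k$ are real, $\nabla_\infty\ovl{f} = \ovl{\nabla_\infty f}$); the null-space condition $\{f : \norm{f}_{\slashed{D}_p} = 0\} = \Cbb 1$ (vanishing of the seminorm forces $X_kf = 0$ for all $k$, hence $f$ constant on the connected compact group $G$, exactly as in part 3 of Lemma \ref{Lemma-recapitulatif}); the Leibniz inequality with constant $C=1$, which follows from the gradient Leibniz rule \eqref{Leib-gradient} together with $\norm{g\cdot\nabla_\infty f}_{\L^\infty(G,\ell^p_m)} \leq \norm{g}_{\C(G)}\norm{\nabla_\infty f}_{\L^\infty(G,\ell^p_m)}$; and lower semicontinuity, which holds because $\Lip_\CC^{p^*}$ is a supremum of the $\C(G)$-continuous functionals $f\mapsto |f(s)-f(s')|/\dist_\CC^{p^*}(s,s')$.

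The main obstacle is the relative-compactness hypothesis (point 4 of Proposition \ref{Prop-carac-quantum2}): taking $\mu$ to be the normalized Haar integral $\int_G$, I must show that $\big\{f \in \dom\nabla_\infty : \norm{f}_{\slashed{D}_p} \leq 1,\ \int_G f = 0\big\}$ is relatively compact in $\C(G)$. Here the identification with $\Lip_\CC^{p^*}$ does the work: the bound $\Lip_\CC^{p^*}(f) \leq 1$ makes the family uniformly equicontinuous (since $\dist_\CC^{p^*}$ induces the topology of the compact group $G$), while the normalization $\int_G f = 0$ yields the uniform bound $\norm{f}_{\C(G)} \leq \sup_{s}\int_G \dist_\CC^{p^*}(s,t)\,\d\mu_G(t) < \infty$; Arz\'ela--Ascoli then gives the relative compactness. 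Finally, since $\C(G)$ is commutative the Lie product $\{f,g\}$ vanishes and the Jordan product $f\circ g$ is the ordinary product, so the single Leibniz inequality with $C=1$ upgrades the output of Proposition \ref{Prop-carac-quantum2} from $(C,0)$-quasi-Leibniz to a genuine Leibniz quantum compact metric space.
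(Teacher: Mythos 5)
Your proposal is correct and follows the paper's proof in its essentials: the two displayed equalities are obtained exactly as in the paper, by combining Lemma \ref{lemmaLipCC-grad} with Lemma \ref{Lemma-dCC} and then invoking the commutator norm formula \eqref{norm-commutator} with $p$ replaced by $p^*$, and the compact case is settled by verifying the five hypotheses of Proposition \ref{Prop-carac-quantum2}. You deviate from the paper in two local steps, both validly. For the relative-compactness hypothesis, the paper takes the Dirac state $\delta_{s_0}$ and the set $\{f : \norm{f}_{\slashed{D}_p} \leq 1, f(s_0)=0\}$, getting pointwise boundedness from $|f(s)| \leq \dist_\CC^{p^*}(s_0,s)$, whereas you take the Haar state and bound $\norm{f}_{\C(G)}$ by $\sup_s \int_G \dist_\CC^{p^*}(s,t)\,\d\mu_G(t)$; since Proposition \ref{Prop-carac-quantum2} only asks for one state, either choice works. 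For lower semicontinuity, the paper runs a functional-analytic argument (boundedness of $(\nabla_\infty f_j)$, Banach--Alaoglu, weak* closedness of the graph of $\nabla_\infty$, weak* lower semicontinuity of the norm), while you exploit the identification $\norm{f}_{\slashed{D}_p} = \Lip_\CC^{p^*}(f)$ to present the seminorm as a supremum of the norm-continuous functionals $f \mapsto |f(s)-f(s')|/\dist_\CC^{p^*}(s,s')$, with domain membership $f \in \dom\nabla_\infty$ recovered from the ``if and only if'' of Lemma \ref{lemmaLipCC-grad}; this is more elementary, though it leans on that lemma, which the paper's argument does not need at this step. Your closing remark that commutativity of $\C(G)$ upgrades the $(1,0)$-quasi-Leibniz conclusion to a genuine Leibniz one is a correct gloss on what the paper leaves implicit.
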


\begin{proof}
Combining Lemma \ref{lemmaLipCC-grad} and Lemma \ref{Lemma-dCC}, we obtain the first equality.  The second equality is a consequence of \eqref{norm-commutator}.

%\begin{prop}
%%\label{Prop-carac-quantum2}
%Let $A$ be a unital $\C^*$-algebra and $\norm{\cdot}$ be a seminorm defined on a dense $\Cbb$
%-subspace $\dom \norm{\cdot}$ of $A$, such that 
%\begin{enumerate}
	%\item $\dom \norm{\cdot}$ is closed under the adjoint operation,
	%\item $\{a \in \dom \norm{\cdot}: \norm{a} = 0\} = \Cbb 1_A$,
	%\item there exists a constant $C>0$ such that for all $a, b \in \dom \norm{\cdot}$, we have
%\begin{equation}
%\label{Leibniz}
%\norm{ab} 
%\leq  C\big[\norm{a}_A \norm{b} + \norm{a} \norm{b}_A\big],
%\end{equation}
%\item there exists a state $\mu \in \S(A)$ such that the set $\{a \in \dom \norm{\cdot}: \norm{a} \leq 1, \mu(a) = 0\}$ is relatively compact in $A$ for the topology of the norm of $A$. 
%\end{enumerate}
%If $\norm{\cdot}_\sa$ is the restriction of $\norm{\cdot}$ to $A_\sa \cap \dom\norm{\cdot}$, then $(A_\sa,\norm{\cdot}_\sa)$ is a $(C,0)$-quasi-Leibniz quantum compact metric space.
%\end{prop}

Now, we prove the last sentence. By \cite[Proposition 5.11 2.]{ArK22}, note that $\norm{\cdot}_{\slashed{D}_p}$ is a seminorm on $\Lip_{\slashed{D}_p}(\L^\infty(G))$, hence on the subspace $\dom \nabla_\infty$. We put $\dom \norm{\cdot}_{\slashed{D}_p} \ov{\mathrm{def}}{=} \dom \nabla_\infty$ and $A \ov{\mathrm{def}}{=} \C(G)$. We check the properties of Proposition \ref{Prop-carac-quantum2}. Note that with a positive answer to the question raised in Remark \ref{Intrigating-rem}, we could use \cite[Proposition 5.11 3. and Remark 5.7]{ArK22} for some assertions.

1. The domain $\dom \norm{\cdot}_{\slashed{D}_p} = \dom \nabla_\infty$ is clearly closed under $f \mapsto \ovl{f}$.

2. Let $f \in \dom \nabla_\infty$ with $\nabla_\infty f=0$. For any $s \in G$, we have
\begin{align*}
\MoveEqLeft
\norm{(\Id-\lambda_s)f}_{\L^p(G)}
\ov{\eqref{Ine-pratique}}{\leq} \dist_\CC^{p^*}(s,e) \bigg(\sum_{k=1}^m \norm{X_kf}_{\L^p(G)}^p \bigg)^{\frac{1}{p}} \\         
&\lesssim_{m,p} \dist_\CC^{p^*}(s,e) \norm{\nabla_\infty f}_{\L^\infty(G,\ell^{p}_m)} 
=0.
\end{align*} 
We deduce that the function $f$ is constant on $G$. The converse is obvious. Hence we have the equality $\big\{ f \in \dom \norm{\cdot}_{\slashed{D}_p} : \: \norm{f}_{\slashed{D}_p} = 0 \big\} = \Cbb 1_{\C(G)}$.

3. By \cite[Proposition 5.11 1.]{ArK22}, for any $f,g \in \Lip_{\slashed{D}_p}(\L^\infty(G))$ we have $fg \in \Lip_{\slashed{D}_p}(\L^\infty(G))$ and
$$
\norm{fg}_{\slashed{D}_p} 
\leq \norm{f}_{\C(G)} \norm{g}_{\slashed{D}_p} + \norm{f}_{\slashed{D}_p} \norm{g}_{\C(G)}.
$$

4. Let $s_0 \in G$ be any point. The Dirac probability measure $\delta_{s_0}$ is supported on the compact $\{s_0\}$. So it is a local state. We consider the subset $\{f \in \dom \norm{\cdot}_{\slashed{D}_p} : \norm{f}_{\slashed{D}_p} \leq 1, f(s_0) = 0\}$. By \eqref{Connes-dist-1} and Lemma \ref{lemmaLipCC-grad}, this subset is equicontinuous. Furthermore, it is pointwise bounded since
$$
|f(s)|
=|f(s)-f(s_0)|
\leq \dist_{\CC}^{p^*}(s_0,s)
\lesssim 1, \quad s \in G
$$ 
by continuity of the function $s \mapsto \dist_{\CC}^{p^*}(s_0,s)$ on the compact $G$. By Arz\`ela-Ascoli theorem, we conclude that it is relatively compact in the space $\C(G)$.

5. Suppose that the net $(f_j)$ converges to $f$ in the space $\L^\infty(G)$ and that $\norm{f_j}_{\slashed{D}_p} \leq 1$, that is $\norm{\nabla f_j}_{\L^\infty(G,\ell^{p^*}_m)} \leq 1$. It converges for the weak* topology. Consequently $(\nabla_\infty f_j)$ is a bounded net of $\L^\infty(G,\ell^{p^*}_m)$. Using Banach-Alaoglu theorem, we can suppose that $\nabla_{\infty}f_j \to g$ for the weak* topology for some function $g \in \L^\infty(G,\ell^{p^*}_m)$. Since the graph of the unbounded operator $\nabla_\infty$ is weak* closed, we conclude that $f$ belongs to the subspace $\dom \nabla_\infty$ and that $g=\nabla_\infty f$. The weak* lower semicontinuity of the norm \cite[Th 2.6.14 p.~227]{Meg98} reveals that 
$$
\norm{\nabla_\infty f}_{\L^\infty(G,\ell^{p^*}_m)} 
\leq \liminf_j \bnorm{\nabla_\infty f_j}_{\L^\infty(G,\ell^{p^*}_m)} 
\leq 1.
$$
%Using Banach-Alaoglu theorem and \cite[Proposition 5.11 4.]{ArK22}, it is not difficult to prove that the seminorm $\norm{\cdot}_{\slashed{D}_p}$ is lower semicontinuous
%\textbf{???: }By Gelfand-Naimark theory, recall that the characters of the algebras $\C(G)$ are the Dirac measures $\delta_s$ $(s \in G)$ and that we have an homeomorphism $G \to \mathrm{X}(\C(G))$, $s \mapsto \delta_s$ of $G$ onto the spectrum $\mathrm{X}(\C(G))$  where the weak* topology identifies to the usual manifold topology of $G$. Now, this topology coincides with the topology induced by the Carnot-Carath\'eodory metric $\dist_\CC^p$. Since
%$$
%\dist_\MK(\delta_s,\delta_{s'}) 
%\ov{\eqref{MongeKant-def}}{=} \sup \{|f(s) - f(s')| : \norm{f}_{\slashed{D}_p} \leq 1\}
%\ov{\eqref{Connes-dist-1}}{=} \dist_\CC^p(s,s'),
%$$
%we conclude that the Monge-Kantorovich metric $\dist_\MK$ induces the weak* topology on the state space $\S(\C(G))$,\cite[]{Bou19}
\end{proof}

\begin{remark} \normalfont
if $G$ is a unimodular Lie group, it seems apparent that the seminorm $\norm{\cdot}_{\slashed{D}_p}$ can be used to define quantum \textit{locally} compact metric spaces in the spirit of the ones of Section \ref{Sec-locally}. The proof is left to the reader as an exercise.
\end{remark}

%Let $X$ be a locally compact, connected and separable Hausdorff space and $\mu$ be a nonnegative
%Radon measure with support $X$.

%The main aim of this section is to  establish the coincidence of the intrinsic length structure and the gradient structure of Dirichlet forms under a doubling property, a weak Poincar\'e inequality
%and the Newtonian property; see Theorem \ref{t2.1} and Theorem \ref{t2.2} below.
%In this paper, $\L^p(X)$ with $p\in(1,\infty]$ is the space of integrable functions of order $p$ on $X$; $\C(X)$ (resp. $\C_0(X)$)  the collection of all continuous functions (with compact supports) on $X$, and 
We finish the paper by connecting our setting to the vast topic of Dirichlet forms. We refer to the books \cite{BoH91}, \cite{FOT11} and \cite{MaR92} for more information on Dirichlet forms and also to the papers \cite{BeF18}, \cite{Cip16}, \cite{KoY12}, \cite{Stu94} and \cite{Stu95} which are connected to our setting. Let $\Omega$ be a connected second countable Hausdorff locally compact space and $\mu$ be a positive Radon measure with support $\Omega$. We denote by $\cal{M}(\Omega)$ the collection of all signed Radon measures on $\Omega$.

Recall that a Dirichlet form $\cal{E}$ on $\L^2(\Omega)$ is a closed positive definite symmetric bilinear form defined on $\dom \cal{E} \times \dom \cal{E}$ where $\dom \cal{E}$ is a dense linear subspace of the Hilbert space $\L^2(\Omega)$. 

Beurling and Deny showed that if $\cal{E}$ has no killing measure and no jumping measure, it can be written as
$$
\cal{E}(f,g)
=\int_\Omega \d\Gamma(f,g), \quad  f,g \in \dom \cal{E}
$$
for a $\cal{M}(\Omega)$-valued positive definite symmetric bilinear form $\Gamma$ defined by the formula
\begin{equation}
\label{Def-Gamma}
\int_\Omega h \d\Gamma(f,g)
\ov{\mathrm{def}}{=} \frac12\big[\cal{E}(f,h g)+\cal{E}(g,h f)-\cal{E}(fg,h)\big]
\end{equation}
for all $f,g \in \dom \cal{E} \cap \L^\infty(\Omega)$ and $h \in \dom \cal{E} \cap \C_c(\Omega)$. The form $\Gamma$ is called the carr\'e du champ associated to $\cal{E}$. The Radon-Nikodym derivative $\frac{\d\Gamma(f,f)}{\d \mu}(x)$ plays (if it exists) the role of the square of the length of the gradient of $f \in \dom \cal{E}$ at $x \in \Omega$. 
 
An intrinsic pseudo-distance on $X$ associated to $\cal{E}$ is defined in \cite[(4.1)]{Stu94} by
\begin{equation}
\label{distance-intrinsic-0}
\dist_{\cal{E}}(x,y)
\ov{\mathrm{def}}{=} \sup\bigg\{|f(x)-f(y)|:\ f \in \dom \cal{E} \cap \C_c(\Omega), \frac{\d\Gamma(f,f)}{\d \mu} \leq  1\bigg\}.
\end{equation}
Here $\frac{\d\Gamma(f,f)}{\d \mu} \leq 1$ means that $\Gamma(f,f)$ is absolutely continuous with respect to $\mu$ and that $\frac{\d\Gamma(f,f) }{\d \mu} \leq 1$ almost everywhere. We warn the reader that there exist several variants of this distance, see \cite{Stu94} and \cite[p.~236]{Stu95}. 

Returning to the setting of Lie groups, we can consider the symmetric bilinear form
\begin{equation}
\label{Form-ultime}
\cal{E}(f,g)
=\int_G \big\langle \nabla f(s), \nabla g(s) \big\rangle_{\ell^2_m} \d \mu_G(s)
\end{equation}
whose domain is the subspace $\dom \cal{E}=\dom X_{1,2} \cap \cdots \cap \dom X_{m,2}=\dom \nabla_2$ of the Hilbert space $\L^2(G)$. This subspace is considered in the paper \cite{BEM13} and the book \cite{DtER03} and denoted respectively $\W_{1,2}'(G)$ and $\L_{2,1}'(G)$ (and equipped with a suitable norm). A simple computation for any $f,g \in \dom \cal{E} \cap \L^\infty(G)$ and any $h \in \dom \cal{E} \cap \C_c(G)$ gives
\begin{align*}
\MoveEqLeft
\frac12\big[\cal{E}(f,h g)+\cal{E}(g,h f)-\cal{E}(uv,h)\big]  \\          
&\ov{\eqref{Form-ultime}}{=} \frac12\int_G \Big[\big\langle \nabla f(s), \nabla(h g)(s) \big\rangle
+ \big\langle \nabla g(s), \nabla (hf)(s) \big\rangle 
- \big\langle \nabla (fg)(s), \nabla h(s) \big\rangle \Big] \d \mu_G(s) \\
%&=\frac12\int_G \Big[\big\langle \nabla f(s), h(s)\nabla g(s) \big\rangle+\langle \nabla f(s), g(s)\nabla h(s) \big\rangle
%+\big\langle \nabla g(s), h(s)\nabla f(s) \big\rangle+\big\langle \nabla g(s), f(s)\nabla h(s) \big\rangle \\
%&-\big\langle f(s)\nabla g(s), \nabla h(s) \big\rangle-\big\langle g(s)\nabla f(s), \nabla h(s) \big\rangle\Big] \d \mu_G(s) \\
&\ov{\eqref{Leib-gradient}}{=} \frac12\int_G \Big[h(s)\big\langle \nabla f(s), \nabla g(s) \big\rangle+g(s)\langle \nabla f(s), \nabla h(s) \big\rangle
+h(s)\big\langle \nabla g(s), \nabla f(s) \big\rangle \\
&+f(s)\big\langle \nabla g(s), \nabla h(s)\big\rangle-f(s)\big\langle \nabla g(s), \nabla h(s) \big\rangle-g(s)\big\langle \nabla f(s), \nabla h(s) \big\rangle\Big] \d \mu_G(s) \\
&=\int_G h(s)\big\langle \nabla f(s), \nabla g(s) \big\rangle \d \mu_G(s).
\end{align*} 
By \eqref{Def-Gamma}, we conclude (with no surpise) that $\frac{\d\Gamma(f,g)}{\d \mu_G}(s)=\big\langle \nabla f(s), \nabla g(s) \big\rangle$ almost everywhere on $G$. In particular, we have the equality $\frac{\d\Gamma(f,f)}{\d \mu_G}(s)=\norm{\nabla f(s)}_{\ell^2_m}^2$ almost everywhere. In this case, the intrinsic pseudo-distance \eqref{distance-intrinsic-0} is given by 
%\begin{align*}
%\MoveEqLeft
%\dist_{\cal{E}}(s,s')
%\ov{\eqref{distance-intrinsic}}{=} \sup\big\{|f(s)-f(s')|:\ f \in \mathbb{D}_\loc \cap \C(G), \norm{\nabla f(s)}_{\ell^2_m} \leq  1 \text{ a.e.}\big\} \\
%&?=?\sup \left\{ |f(s) - f(s')| : f \in \dom \partial_\infty, \norm{\nabla f}_{\L^\infty(G,\ell^{2}_m)} \leq 1 \right\} 
%\ov{\eqref{Connes-dist-2}}{=} \dist_\CC(s,s'),
%\end{align*}
%Using the density of $C_c$
\begin{align*}
\MoveEqLeft
\dist_{\cal{E}}(s,s')
\ov{\eqref{distance-intrinsic-0}}{=} \sup\Big\{|f(s)-f(s')|:\ f \in \W_{1,2}'(G) \cap \C_c(G), \norm{\nabla f(s)}_{\ell^2_m} \leq  1 \text{ a.e.} \Big\} \\
&=\sup \left\{ |f(s) - f(s')| : f \in \W_{1,2}'(G) \cap \C_c(G), \norm{\nabla f}_{\L^\infty(G,\ell^{2}_m)} \leq 1 \right\}
\end{align*}
where $s,s' \in G$. Using an approximation procedure similar to the one of the proof of Lemma \ref{Lemma-dCC} left to the reader, we could conclude that
\begin{align*}
\dist_{\cal{E}}(s,s')
&=\sup \left\{ |f(s) - f(s')| : f \in \C_c^\infty(G), \norm{\nabla f}_{\L^\infty(G,\ell^{2}_m)} \leq 1 \right\}
\ov{\eqref{dual-carthodory}}{=} \dist_\CC(s,s'),
\end{align*}
i.e. we obtain the Carnot-Carath\'eodory distance. %Consequently \cite{Stu94} would be satisfied  and \cite{Stu95}

\begin{remark} \normalfont
It is possible  that the result \cite[Corollay 2.1]{KoY12} can be used to recover a part of the case $p=2$ of Lemma \ref{lemmaLipCC-grad} with a very different argument.
\end{remark}

%%%%%%%%%%%%%%%%%%%%%%%%%%%%%%%%%%%%%%%%%%%%%%%%%%%%%%%%%%%%%%%%%%%%%%%%%%%%%%%%%%%%
\section{Some open problems on functional calculus}
\label{Sec-open-problems}

%For any angle $\sigma \in (0,\pi)$, we will use the open sector symmetric about the positive real axis with opening angle $2\sigma$
%$$
%\Sigma_\sigma^+ 
%= \Sigma_{\sigma} 
%\ov{\mathrm{def}}{=} \big\{ z \in \Cbb \backslash \{ 0 \} : \: | \arg z | < \sigma\big\}.
%$$
%It will be convenient to set $\Sigma_0^+ \ov{\mathrm{def}}{=} (0,\infty)$. 
%
%In a similar manner, for $\sigma \in [0,\frac{\pi}{2})$, we consider the open bisector $\Sigma_\sigma^\pm \ov{\mathrm{def}}{=} \Sigma_\sigma \cup (-\Sigma_\sigma)$\textbf{Angle 0}

Let $G$ be a connected Lie group of polynomial growth (hence unimodular) equipped with a family $(X_1, \ldots, X_m)$ of left-invariant H\"ormander vector fields and consider a Haar measure $\mu_G$ on $G$. We refer to \cite{Haa06} and \cite{HvNVW18} for more information on functional calculus. Here we use the bisector $\Sigma_\theta^\pm \ov{\mathrm{def}}{=} \Sigma_\theta \cup (-\Sigma_\theta)$ where $\Sigma_\theta^+ \ov{\mathrm{def}}{=} \big\{ z \in \Cbb \backslash \{ 0 \} : \: | \arg z | < \theta \big\}$ for any angle $\theta \in (0,\frac{\pi}{2})$. We will explain why the following conjecture is very natural. 

\begin{conj}
\label{Conj1}
Suppose $1<p<\infty$ with $p\not=2$. The unbounded operator $\slashed{D}_{p}$ is bisectorial and admits a bounded $\H^\infty(\Sigma_\theta^\pm)$ functional calculus on a bisector $\Sigma_\theta^\pm$ for some $0< \theta <\frac{\pi}{2}$ on the Banach space $\L^p(G) \oplus_p \L^p(G,\ell^p_m)$.
\end{conj}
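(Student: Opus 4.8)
The plan is to reduce the question to two well-understood ingredients — the bounded $\H^\infty$ functional calculus of the subelliptic Laplacian on $\L^p(G)$ and the $\L^p$-boundedness of the associated Riesz transforms — and then to feed these into the first-order operator machinery. Writing $\slashed{D}_p = \Gamma_p + (\Gamma_{p^*})^*$ with the nilpotent operator $\Gamma_p \ov{\mathrm{def}}{=} \left[\begin{smallmatrix} 0 & 0 \\ \nabla_p & 0 \end{smallmatrix}\right]$ (so that $\Gamma_p^2 = 0$), the operator $\slashed{D}_p$ falls squarely into the class of Hodge--Dirac operators studied by Axelsson--Keith--McIntosh and, in the $\L^p$-setting, by Hyt\"onen--McIntosh--Portal. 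Thus I would try to verify the hypotheses of that theory in the present geometric context, exactly as the self-adjoint case $p=2$ (trivialised by Theorem \ref{thm-spectral-triple} and the selfadjointness established in Proposition \ref{First-spectral-triple}) suggests.

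First I would establish that $\slashed{D}_p$ is bisectorial. Since $\slashed{D}_p^2 = \mathrm{diag}(\Delta_p, \nabla_p(\nabla_{p^*})^*)$ by \eqref{carre-de-D}, and $\Delta_p$ is sectorial while its companion block $\nabla_p(\nabla_{p^*})^*$ is handled through the unitary-equivalence mechanism of Theorem \ref{Th-unit-eq}, the square $\slashed{D}_p^2$ is sectorial of angle $<\pi$. Bisectoriality of $\slashed{D}_p$ then follows from the algebraic identity $(\lambda - \slashed{D}_p)^{-1} = (\lambda + \slashed{D}_p)(\lambda^2 - \slashed{D}_p^2)^{-1}$, which converts sectorial resolvent bounds for $\slashed{D}_p^2$ into bisectorial resolvent bounds for $\slashed{D}_p$ on some $\Sigma_\theta^\pm$ with $\theta<\frac{\pi}{2}$, provided one controls the cross term $\slashed{D}_p(\lambda^2-\slashed{D}_p^2)^{-1}$ — which is precisely a Riesz-transform-type operator.

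Second, for the holomorphic functional calculus itself, I would use an even/odd splitting of $f \in \H^\infty(\Sigma_\theta^\pm)$: the even part factors through the $\H^\infty$ calculus of $\slashed{D}_p^2$, hence through that of $\Delta_p$ and its companion block, while the odd part has the shape $\slashed{D}_p\, g(\slashed{D}_p^2)$ and is controlled once the Riesz transform $\nabla_p\Delta_p^{-\frac12}$ and its adjoint are bounded on $\L^p(G)$. The bounded $\H^\infty$ calculus of $\Delta_p$ is available because $-\Delta_p$ generates a symmetric markovian semigroup on $\L^p(G)$, so a Cowling-type transference yields a calculus on a sector of some half-angle $\omega_p < \frac{\pi}{2}$; and the Riesz transform is bounded on groups of polynomial growth by Alexopoulos \cite{Ale92}, consistently with the norm equivalence \eqref{Riesz1}.

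The hard part will be upgrading these scalar boundedness facts to the $R$-bounded quadratic estimates that the UMD space $\L^p(G)$ requires for a genuine $\H^\infty$ calculus of the \emph{first-order} operator $\slashed{D}_p$ when $p \neq 2$. Concretely one needs Gaffney--Davies off-diagonal decay for $(\I + t\slashed{D}_p)^{-1}$ together with square-function bounds $\int_0^\infty \|t\slashed{D}_p(\I+t^2\slashed{D}_p^2)^{-1} u\|_{\L^p(G)}^2 \frac{\d t}{t} \approx \|u\|_{\L^p(G)}^2$ in an $R$-bounded form. The Gaussian heat-kernel estimates underlying such off-diagonal decay are available on a group of polynomial growth from \cite{VSCC92}, but transplanting the Hilbert-space Axelsson--Keith--McIntosh argument to $\L^p(G)$ with $p\neq 2$, and tracking the dependence of the angle $\theta$ on $p$ along the lines of \cite{ArK22}, is exactly where the technical core lies; at $p=2$ selfadjointness of $\slashed{D}_2$ collapses everything, which is why only the regime $p \neq 2$ remains genuinely open.
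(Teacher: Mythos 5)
The statement you are attempting is Conjecture \ref{Conj1}: the paper deliberately leaves it open and contains no proof of it. Section \ref{Sec-open-problems} only argues that the conjecture is \emph{plausible}, by showing that its conclusion would imply the Riesz-transform equivalence \eqref{Riesz1} via the bounded operator $\sgn(\slashed{D}_p)$; nothing in the paper goes in the converse direction. Your text must therefore stand on its own, and as it stands it is a programme rather than a proof: the two steps you defer to "the technical core" are precisely the content of the conjecture. Bisectoriality of $\slashed{D}_p$ on $\L^p(G) \oplus_p \L^p(G,\ell^p_m)$ does not follow from sectoriality of $\slashed{D}_p^2$; via your identity $(\lambda - \slashed{D}_p)^{-1} = (\lambda + \slashed{D}_p)(\lambda^2 - \slashed{D}_p^2)^{-1}$ one needs the uniform resolvent-type bounds $\sup_\lambda \bnorm{\lambda \slashed{D}_p(\lambda^2 - \slashed{D}_p^2)^{-1}} < \infty$ on a bisector, and the boundedness of the Riesz transform $\nabla_p \Delta_p^{-\frac12}$ (Alexopoulos) does not yield these: they amount to gradient bounds for the resolvent of $\Delta_p$ at \emph{complex} spectral parameters, together with the analogous bounds for the second block, none of which you establish. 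Likewise your even/odd splitting reduces the $\H^\infty(\Sigma_\theta^\pm)$ calculus of $\slashed{D}_p$ to a bounded $\H^\infty$ calculus of $\slashed{D}_p^2 \ov{\eqref{carre-de-D}}{=} \mathrm{diag}\big(\Delta_p, \nabla_p(\nabla_{p^*})^*\big)$; Cowling-type transference handles $\Delta_p$, but nothing you cite handles the block $\nabla_p(\nabla_{p^*})^*$ on $\L^p(G,\ell^p_m)$.

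This last point also contains an outright error, not merely a gap: you invoke Theorem \ref{Th-unit-eq} to "handle" $\nabla_p(\nabla_{p^*})^*$, but that theorem is a Hilbert-space statement (unitary equivalence of $T^*T$ and $TT^*$ off their kernels) and has no analogue on $\L^p$ for $p \neq 2$ — there is no unitary, nor even a similarity, relating $(\nabla_{p^*})^*\nabla_p = \Delta_p$ to $\nabla_p(\nabla_{p^*})^*$ in that setting. This is exactly why the selfadjoint $p=2$ argument used in Theorem \ref{thm-spectral-triple} cannot be transplanted and why the regime $p \neq 2$ is open. Finally, the $R$-bounded quadratic estimates required by the Hyt\"onen--McIntosh--Portal machinery on the $\mathrm{UMD}$ space $\L^p(G)$ are acknowledged in your last paragraph but not proved, and for $p \neq 2$ they are not a formal consequence of the Gaussian heat-kernel bounds of \cite{VSCC92}. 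In short: your reduction strategy is reasonable and consistent with how one would expect the conjecture eventually to be attacked (and with the consequences the paper itself draws from it), but every genuinely hard step — $R$-bisectoriality, the calculus of the non-Laplacian block, and the square-function estimates — is restated rather than settled, so the conjecture remains unproved.
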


The case $p=2$ is of course obvious since $\slashed{D}_2$ is selfadjoint. The boundedness of the $\H^\infty(\Sigma_\theta^\pm)$ functional calculus of the unbounded operator $\slashed{D}_{p}$ implies the boundedness of the Riesz transforms and this result may be thought of as a strengthening of the equivalence \eqref{Riesz1}. Indeed, consider the function $\sgn \in \H^\infty(\Sigma_\theta^\pm)$ defined by $\sgn(z) \ov{\mathrm{def}}{=} 1_{\Sigma_\theta^+}(z)-1_{\Sigma_\theta^-}(z)$. If the operator $\slashed{D}_p$ has a bounded $\H^\infty(\Sigma^\pm_\theta)$ functional calculus on $\L^p(G) \oplus_p \L^p(G,\ell^p_m)$, the operator $\sgn(\slashed{D}_p)$ is bounded. Moreover, we have
\begin{equation}
\label{lien-sign-abs-Fourier}
|\slashed{D}_p|=\sgn(\slashed{D}_p) \slashed{D}_p
\quad \text{and} \quad
\slashed{D}_p
=\sgn(\slashed{D}_p)|\slashed{D}_p|.
\end{equation}
For any element $\xi$ of the space $\dom \slashed{D}_p = \dom |\slashed{D}_p|$, we deduce that
\begin{align*}
\MoveEqLeft
\bnorm{\slashed{D}_p(\xi)}_{\L^p(G) \oplus_p \L^p(G,\ell^p_m)} 
\ov{\eqref{lien-sign-abs-Fourier}}{=} \bnorm{\sgn(\slashed{D}_p) |\slashed{D}_p|(\xi)}_{\L^p(G) \oplus_p \L^p(G,\ell^p_m)} 
\lesssim_p \bnorm{|\slashed{D}_p|(\xi)}_{\L^p(G) \oplus_p \L^p(G,\ell^p_m)}
\end{align*}
and similarly
\begin{align*}
\MoveEqLeft
\bnorm{|\slashed{D}_{p}|(\xi)}_{\L^p(G) \oplus_p \L^p(G,\ell^p_m)} 
\ov{\eqref{lien-sign-abs-Fourier}}{=} \bnorm{\sgn(\slashed{D}_p) \slashed{D}_p(\xi)}_{\L^p(G) \oplus_p \L^p(G,\ell^p_m)} 
\lesssim_p \bnorm{\slashed{D}_p(\xi)}_{\L^p(G) \oplus_p \L^p(G,\ell^p_m)}.            
\end{align*}
Recall that on $\L^p(G) \oplus_p \L^p(G,\ell^p_m)$, we have
\begin{equation*}
|\slashed{D}_p|
\ov{\eqref{carre-de-D}}{=}\begin{bmatrix}
\Delta_p^{\frac{1}{2}} & 0 \\ 
0 & *
\end{bmatrix}.
\end{equation*}
Using \eqref{Hodge-Dirac-I} and by restricting to elements $\xi$ of the form $(f,0)$ with $f \in \dom \Delta_p^{\frac{1}{2}}$, we obtain the desired equivalence \eqref{Riesz1}.

\begin{remark} \normalfont
With a positive answer to Conjecture \ref{Conj1}, it is not difficult to show in the case where $G$ is compact that the triples $\big(\C(G),\L^p(G) \oplus_p \L^p(G,\ell^p_m), \slashed{D}_p\big)$ gives new examples of compact Banach spectral triples in the sense of \cite[Definition 5.10]{ArK22}. 
\end{remark}

We finish with an other related conjecture.

\begin{conj}
Suppose $1<p<\infty$. If $Y$ is a $\mathrm{UMD}$ Banach space, the unbounded operator $\Delta_p \ot \Id_Y$ is sectorial and admits a bounded $\H^\infty(\Sigma_\theta)$ functional calculus with $0 < \theta <\frac{\pi}{2}$ on the Bochner space $\L^p(G,Y)$.
\end{conj}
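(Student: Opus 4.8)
The plan is to derive the statement from the fact, essentially contained in Section~\ref{sec-preliminaries}, that $(T_t)_{t \geq 0}$ is a symmetric submarkovian semigroup on $G$: each $T_t$ is selfadjoint, positive and unital on $\L^\infty(G)$ and integral preserving, hence a contraction on every $\L^q(G)$ for $1 \leq q \leq \infty$ by duality and interpolation. For $Y=\Cbb$ the conclusion is then already a theorem rather than a conjecture. Indeed, by the functional calculus theorem of Carbonaro and Dragi\v{c}evi\'c for generators of symmetric contraction semigroups (or, with a slightly worse angle, by Cowling's transference), $\Delta_p$ admits a bounded $\H^\infty(\Sigma_\theta)$ calculus on $\L^p(G)$ for some $\theta<\frac{\pi}{2}$; since such a calculus forces sectoriality of angle $<\frac{\pi}{2}$, both assertions of the conjecture are handled at once in the scalar case. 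The real content is the extension to $\L^p(G,Y)$ for a UMD space $Y$, and the natural route is dilation followed by transference.

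First I would dilate the semigroup. Because $(T_t)$ is a positive contraction semigroup on the $\sigma$-finite space $G$, a theorem of Fendler furnishes a measure space $\tilde\Omega$, an isometric embedding $J \co \L^p(G) \to \L^p(\tilde\Omega)$, a contractive positive projection $P \co \L^p(\tilde\Omega) \to \L^p(G)$, and a strongly continuous group $(U_t)_{t \in \R}$ of positive invertible isometries of $\L^p(\tilde\Omega)$ with $T_t = P U_t J$ for all $t \geq 0$. Positive contractions, isometries and conditional-expectation-type projections between $\L^p$-spaces extend canonically to the corresponding Bochner spaces with unchanged norm, so tensoring with $\Id_Y$ produces the factorization $T_t \ot \Id_Y = (P \ot \Id_Y)(U_t \ot \Id_Y)(J \ot \Id_Y)$ on $\L^p(\tilde\Omega,Y)$, in which $(U_t \ot \Id_Y)_{t \in \R}$ is again a group of isometries.

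Next I would treat the group. Writing $\i B$ for the generator of $(U_t \ot \Id_Y)$, the bounded holomorphic functions of $B$ are controlled, through the Coifman-Weiss transference principle, by Fourier multipliers on the real line acting on $\L^p(\R,Y)$. As $Y$ is UMD, the Hilbert transform and the Mihlin-type multipliers which represent $\sgn(B)$ and the imaginary powers $B^{\i s}$ are bounded on $\L^p(\R,Y)$; this endows $B$, and hence $|B|$, with a bounded $\H^\infty(\Sigma_\theta)$ calculus on $\L^p(\tilde\Omega,Y)$ for some $\theta<\frac{\pi}{2}$. Compressing with $P \ot \Id_Y$ and $J \ot \Id_Y$ — the step codified by the Kunstmann-Weis / Cowling transference of $\H^\infty$ calculus under dilations — carries this calculus to $\Delta_p \ot \Id_Y$ on $\L^p(G,Y)$ with the same angle, which is the assertion.

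The hard part will be the vector-valued transference of the $\H^\infty$ calculus across the dilation: one must verify that the compression identity propagates from the semigroup to all bounded holomorphic functions of the generators, with bounds independent of $Y$, and that the sector angle survives when $Y$ is merely UMD rather than Hilbertian. A technically cleaner alternative, avoiding the dilation, is to establish two-sided vector-valued Littlewood-Paley-Stein square-function estimates for $(T_t \ot \Id_Y)$ in the spirit of Hyt\"onen and Xu and then to apply the Kalton-Weis theorem; there the difficulty simply migrates to proving those square-function bounds, which again rest on the UMD hypothesis through the boundedness of the relevant Rademacher sums.
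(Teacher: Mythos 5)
This statement is one of the two open conjectures of Section~\ref{Sec-open-problems}: the paper offers no proof at all, only the remarks that the case $G=\R^n$ is \cite[Theorem 10.2.25 p.~391]{HvNVW18} and that the scalar case $Y=\Cbb$ follows from \cite[Theorem 3.4]{DuR96}. So your proposal must be judged as new mathematics, and it contains a genuine gap, concentrated in one sentence: ``Compressing with $P \ot \Id_Y$ and $J \ot \Id_Y$ \dots carries this calculus to $\Delta_p \ot \Id_Y$ on $\L^p(G,Y)$ with the same angle.'' The dilation identity $T_t \ot \Id_Y=(P\ot\Id_Y)(U_t\ot\Id_Y)(J\ot\Id_Y)$ holds for real $t\geq 0$ only, so what it transfers is boundary data on the imaginary axis: writing $U_t=\e^{\i tB}$, for a Laplace--Stieltjes transform $f(z)=\int_0^\infty \e^{-tz}\d\mu(t)$ one gets $f(\Delta_p\ot\Id_Y)=(P\ot\Id_Y)\,g(B)\,(J\ot\Id_Y)$ with $g(s)=f(-\i s)$, and the rotation $z\mapsto-\i z$ maps the bisector $\Sigma_\omega^\pm$ onto the double cone around $\i\R$. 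Hence a bounded $\H^\infty(\Sigma_\omega^\pm)$ calculus for $B$, however small $\omega$, only yields a bounded $\H^\infty(\Sigma_\sigma)$ calculus for $\Delta_p\ot\Id_Y$ with $\sigma>\frac{\pi}{2}+\omega$: the angle does not survive compression, it jumps above $\frac{\pi}{2}$. This is exactly the content of the Hieber--Pr\"uss transference theorem, and it is the best any group-dilation argument can give; note also that $\Delta_p\ot\Id_Y$ is not a compression of $|B|$ or of $B^2$ --- only the semigroup itself is compressed, so nothing beyond the identity above is available. (A smaller, repairable slip: bounded imaginary powers and $\sgn(B)$ alone do not yield a bounded $\H^\infty$ calculus; one needs the full transference of the calculus, which is available, so this is not the essential obstruction.)

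What is missing in order to go below $\frac{\pi}{2}$ is quantified by the Kalton--Weis angle-improvement theorem (see \cite{KuW04}): once some bounded $\H^\infty$ calculus is known, the angle can be lowered to any $\theta$ exceeding the angle of $R$-sectoriality. One would therefore need $R$-sectoriality of $\Delta_p\ot\Id_Y$ of angle $<\frac{\pi}{2}$, equivalently $R$-analyticity of $(T_t\ot\Id_Y)_{t\geq 0}$ on $\L^p(G,Y)$ --- and even plain analyticity of the tensor extension of a symmetric submarkovian semigroup on $\L^p(\Omega,Y)$ for a general $\mathrm{UMD}$ space $Y$ is a well-known open problem (positive results exist for $\mathrm{UMD}$ lattices, via lattice-valued Littlewood--Paley--Stein theory, and for $G=\R^n$, where the vector-valued Mihlin theorem applies directly to the Fourier transform; this is precisely why \cite[Theorem 10.2.25 p.~391]{HvNVW18} exists and why nothing analogous is known on a general Lie group of polynomial growth, the missing tool being a $\mathrm{UMD}$-valued spectral multiplier theorem of Alexopoulos type \cite{Ale94}). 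Your alternative route through two-sided square-function estimates and the Kalton--Weis operator-valued calculus runs into the same wall, as you yourself acknowledge: those estimates encode exactly the missing $R$-analyticity. In sum, your outline reproves (modulo standard details) the known result with some angle $\sigma>\frac{\pi}{2}$, both assertions of the conjecture --- sectoriality of angle $<\frac{\pi}{2}$ and the calculus on $\Sigma_\theta$ with $\theta<\frac{\pi}{2}$ --- remain untouched; a feasible intermediate target would be the case where $Y$ is a $\mathrm{UMD}$ Banach lattice.
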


This is true for the classical Laplacian on $\R^d$ by \cite[Theorem 10.2.25 p.~391]{HvNVW18}. The scalar case $Y=\Cbb$ seems true by \cite[Theorem 3.4]{DuR96}. The very interesting case where $Y=S^p$ is a Schatten class could have applications in quantum information theory. It is apparent that \cite{Ale94} is related to this problem.

\vspace{0.2cm}

\textbf{Acknowledgement}
The author wishes to thank Tom ter Elst for a short discussion at the very early beginning of this work. I will also thank Fr\'ed\'eric Latr\'emoli\`ere, Jochen Gl\"uck, Peer Kunstmann, Bruno Iochum, Tommaso Bruno and Marco Peloso for short communications. The author is supported by the grant of the French National Research Agency ANR-18-CE40-0021 (project HASCON).
%The author acknowledges support by the grant ANR-18-CE40-0021 (project HASCON) of the French National Research Agency ANR. I will thank Luis Rodriguez-Piazza, Carlos Palazuelos, Hanna Podsedkowska and Ami Viselter for very short discussions and Li Gao for a short but very instructive discussion on von Neumann entropy and the physical sense of capacities of a quantum channel. We also thank Adrian Gonzalez P\'erez for an exciting discussion on the topic of Section \ref{completely-bounded-description}. I will thank Ion Nechita for a very interesting discussion at the conference ``Entangling Non-commutative Functional Analysis and Geometry of Banach Spaces'' and Adam Skalski for some questions. Finally, I will thank Christian Le Merdy for some old very instructive discussions on some results of the Appendices.

%%%%%%%%%%%%%%%%%%%%%%%%%%%%%%%%%%%%%%%%%
%%%%%%%%%%%%%%%%%%%%%%%%%%%%%%%%%%%%%%%%%
\small

{\footnotesize

\vspace{0.2cm}

\noindent C\'edric Arhancet\\ 
\noindent 6 rue Didier Daurat, 81000 Albi, France\\
URL: \href{http://sites.google.com/site/cedricarhancet}{https://sites.google.com/site/cedricarhancet}\\
cedric.arhancet@protonmail.com\\

}

\end{document}